\RequirePackage[l2tabu, orthodox]{nag} 
\documentclass[a4paper,reqno,11pt]{article} 
\usepackage[stretch=10]{microtype} 
\usepackage[german,french,UKenglish]{babel}
\usepackage[T1]{fontenc} 
\usepackage[utf8]{inputenc} 
\usepackage{csquotes} 
\usepackage{letltxmacro} 
\usepackage{amsmath,amsfonts,amssymb,amsthm,mathrsfs}
\usepackage[margin=1.3in]{geometry}
\usepackage{setspace}
\usepackage{enumitem}
\usepackage{calc}
\usepackage{makebox}
\usepackage{xspace}
\usepackage{xifthen}
\usepackage{titlesec} 
\usepackage{tikz}
\usepackage{tikz-cd} 
\usetikzlibrary{calc}
\usetikzlibrary{babel}
\usetikzlibrary{matrix,arrows}
\usetikzlibrary{decorations.markings}
\usepackage[lf]{Baskervaldx} 
\usepackage[bigdelims,vvarbb]{newtxmath} 
\usepackage[cal=boondoxo]{mathalfa} 

\usepackage{verbatim} 
\usepackage{aliascnt} 
\usepackage[unicode,colorlinks=true,urlcolor=blue!70!black,citecolor=blue!60!black,linkcolor=blue!60!black,pdfauthor={Remy van Dobben de Bruyn},pdftitle={A condensed proof of the pro-étale and étale exodromy theorems}]{hyperref} 

\titlespacing{\section}{0pt}{12pt plus 6pt minus 4pt}{0pt plus 4pt minus 2pt}
\titlespacing{\subsection}{0pt}{12pt plus 6pt minus 4pt}{0pt plus 4pt minus 2pt}
\titlespacing{\subsubsection}{0pt}{0pt plus 4pt minus 2pt}{0pt plus 3pt minus 2pt}

\titleformat{\section}[block]{\Large\bfseries\scshape\filcenter}{\thesection.}{1ex}{}
\titleformat{\subsection}{\large\scshape\filcenter}{\thesubsection}{1ex}{}

\numberwithin{equation}{subsection}
\newtheoremstyle{thms}{1em}{0pt}{\itshape}{}{\itshape\bfseries}{. ----}{ }{\thmname{#1}\xspace\thmnumber{#2}\thmnote{ \normalfont(#3)}}
\theoremstyle{thms}
\newaliascnt{Thm}{equation}							
\newtheorem{Thm}[Thm]{Theorem}
\aliascntresetthe{Thm}

\newaliascnt{Prop}{equation}						
\newtheorem{Prop}[Prop]{Proposition}
\aliascntresetthe{Prop}

\newaliascnt{Lemma}{equation}						
\newtheorem{Lemma}[Lemma]{Lemma}
\aliascntresetthe{Lemma}

\newaliascnt{Cor}{equation}							
\newtheorem{Cor}[Cor]{Corollary}
\aliascntresetthe{Cor}

\newaliascnt{Conj}{equation}						

\aliascntresetthe{Conj}

\newaliascnt{Question}{equation}					

\aliascntresetthe{Question}

\newtheoremstyle{defs}{1em}{0pt}{}{}{\itshape\bfseries}{. ----}{ }{\thmname{#1} \thmnumber{#2}}
\theoremstyle{defs}
\newaliascnt{Rmk}{equation}							
\newtheorem{Rmk}[Rmk]{Remark}
\aliascntresetthe{Rmk}

\newaliascnt{Fact}{equation}						

\aliascntresetthe{Fact}

\newaliascnt{Def}{equation}							
\newtheorem{Def}[Def]{Definition}
\aliascntresetthe{Def}

\newaliascnt{Ex}{equation}							
\newtheorem{Ex}[Ex]{Example}
\aliascntresetthe{Ex}

\newaliascnt{Con}{equation}							

\aliascntresetthe{Con}

\newaliascnt{Not}{equation}							

\aliascntresetthe{Not}

\newaliascnt{Setup}{equation}						

\aliascntresetthe{Setup}

\newaliascnt{Picture}{equation}						

\aliascntresetthe{Picture}

\newtheoremstyle{par}{1em}{0pt}{}{}{\itshape\bfseries}{. ----}{ }{\thmnumber{#2}}
\theoremstyle{par}
\newaliascnt{Par}{equation}							
\newtheorem{Par}[Par]{Paragraph}
\aliascntresetthe{Par}

\theoremstyle{thms}
\newtheorem{thm}{Theorem}
\newtheorem*{thm*}{Theorem}

\newtheorem*{lemma*}{Lemma}

\numberwithin{equation}{section}						
\theoremstyle{thms}
\newaliascnt{BThm}{equation}							

\aliascntresetthe{BThm}

\newaliascnt{BProp}{equation}							
\newtheorem{BProp}[BProp]{Proposition}
\aliascntresetthe{BProp}

\newaliascnt{BLemma}{equation}							
\newtheorem{BLemma}[BLemma]{Lemma}
\aliascntresetthe{BLemma}

\newaliascnt{BCor}{equation}							
\newtheorem{BCor}[BCor]{Corollary}
\aliascntresetthe{BCor}

\theoremstyle{defs}
\newaliascnt{BRmk}{equation}							
\newtheorem{BRmk}[BRmk]{Remark}
\aliascntresetthe{BRmk}

\newaliascnt{BDef}{equation}							
\newtheorem{BDef}[BDef]{Definition}
\aliascntresetthe{BDef}

\theoremstyle{par}
\newaliascnt{BPar}{equation}							
\newtheorem{BPar}[BPar]{Paragraph}
\aliascntresetthe{BPar}

\numberwithin{equation}{subsection}						

\LetLtxMacro\oldproof\proof	
\renewcommand{\proof}[1][Proof]{\oldproof[#1]\unskip}
\LetLtxMacro\oldendproof\endproof
\renewcommand{\endproof}{\oldendproof\unskip}

\newenvironment{itemize*} 
  {\begin{itemize}
    \setlength{\itemsep}{1em}
    \setlength{\parskip}{-1em}
    \setlength{\topsep}{0pt}
    \setlength{\partopsep}{0pt}}
  {\end{itemize}}
  
\newenvironment{enumerate*}
  {\begin{enumerate}
    \setlength{\itemsep}{1em}
    \setlength{\parskip}{-1em}
    \setlength{\topsep}{0pt}
    \setlength{\partopsep}{0pt}}
  {\end{enumerate}}
  
\setlist{itemsep=0em,topsep=0cm,partopsep=0em,parsep=\lineskip}

\setlist[enumerate]{label=\normalfont(\alph*), align=left, leftmargin=3em, labelwidth=1em, itemindent=0pt, listparindent=0pt, labelindent=1em, labelsep=*}

\setlist[itemize]{leftmargin=1.3em}

\renewcommand{\Top}{\operatorname{Top}}
\DeclareMathOperator{\SpecTop}{SpecTop}
\DeclareMathOperator{\LTop}{LTop}
\DeclareMathOperator{\RTop}{RTop}
\DeclareMathOperator{\PreTop}{PreTop}
\DeclareMathOperator{\LFib}{LFib}
\DeclareMathOperator{\Set}{Set}
\DeclareMathOperator{\Fin}{Fin}
\newcommand{\fin}{{\operatorname{fin}}}
\DeclareMathOperator{\Ab}{Ab}
\DeclareMathOperator{\ProFin}{ProFin}
\DeclareMathOperator{\Stone}{Stone}
\DeclareMathOperator{\ExtrDisc}{ExtrDisc}
\DeclareMathOperator{\Et}{\acute Et}
\DeclareMathOperator{\AffEt}{Aff\acute Et}
\DeclareMathOperator{\ProEt}{Pro\acute Et}
\DeclareMathOperator{\AffProEt}{AffPro\acute Et}
\DeclareMathOperator{\Sch}{Sch}
\DeclareMathOperator{\Aff}{Aff}
\DeclareMathOperator{\Cat}{Cat}
\DeclareMathOperator{\ob}{ob}
\DeclareMathOperator{\Core}{Core}
\DeclareMathOperator{\Pts}{Pts}
\newcommand{\PTS}{\mathbf{Pts}}
\DeclareMathOperator{\BoolAlg}{BoolAlg}
\DeclareMathOperator{\CRing}{CRing}
\newcommand{\PrL}{\operatorname{Pr}^{\operatorname{L}}}
\newcommand{\PrR}{\operatorname{Pr}^{\operatorname{R}}}
\DeclareMathOperator{\size}{size}
\newcommand{\coh}{{\operatorname{coh}}}
\newcommand{\hyp}{{\operatorname{hyp}}}
\DeclareMathOperator{\HypCov}{HypCov}
\newcommand{\loccoh}{{\operatorname{loc\ \!coh}}}
\newcommand{\wl}{{\operatorname{wl}}}
\newcommand{\wsl}{{\operatorname{wsl}}}
\newcommand{\Zar}{{\operatorname{Zar}}}
\newcommand{\et}{{\operatorname{\acute et}}}
\newcommand{\proet}{{\operatorname{pro\acute et}}}
\newcommand{\affproet}{{\operatorname{affpro\acute et}}}
\newcommand{\cons}[1][]{_{\ifthenelse{\equal{#1}{}}{{\operatorname{cons}}}{#1{\operatorname{-cons}}}}}
\newcommand{\lcons}{_{\operatorname{lcons}}}
\newcommand{\lc}{_{\operatorname{lc}}}
\DeclareMathOperator{\Pro}{Pro}
\DeclareMathOperator{\Ind}{Ind}
\DeclareMathOperator{\Hom}{Hom}
\DeclareMathOperator{\Hens}{Hens}
\DeclareMathOperator{\Fun}{Fun}
\DeclareMathOperator{\Map}{Map}

\DeclareMathOperator{\Cont}{Cont}
\DeclareMathOperator{\Sh}{Sh}
\DeclareMathOperator{\PSh}{PSh}
\DeclareMathOperator{\CSh}{CSh}
\DeclareMathOperator{\Open}{Open}
\DeclareMathOperator{\Clopen}{Clopen}
\DeclareMathOperator*{\colim}{colim}
\DeclareMathOperator{\Spec}{Spec}
\DeclareMathOperator{\Gal}{Gal}
\newcommand{\GAL}{\mathbf{Gal}}
\DeclareMathOperator{\Cond}{Cond}
\newcommand{\COND}{\mathbf{Cond}}
\newcommand{\cl}{{\operatorname{cl}}}
\newcommand{\op}{^{\operatorname{op}}}
\newcommand{\cts}{^{\operatorname{cts}}}
\newcommand{\lex}{{\operatorname{lex}}}
\DeclareMathOperator{\ev}{ev}

\DeclareMathOperator{\post}{post}
\DeclareMathOperator{\acyc}{acyc}

\newcommand{\punct}[1]{\makebox[0pt][l]{\,#1}} 

\makeatletter
\newcommand{\hathatInternal}[2]{%
	\begingroup%
	\let\macc@kerna\z@%
	\let\macc@kernb\z@%
	\let\macc@nucleus\@empty%
	\widehat{\raisebox{#2}{\vphantom{\ensuremath{#1}}}\smash{\widehat{#1}}}%
	\endgroup%
}
\makeatother
\newcommand{\widehatt}[1]{\mathchoice
	{\hathatInternal{#1}{.4ex}}
	{\hathatInternal{#1}{.3ex}}
	{\hathatInternal{#1}{-1.5pt}}
	{\hathatInternal{#1}{1pt}}
}

\tikzcdset{arrow style=math font}

\let\OLDthebibliography\thebibliography
\renewcommand\thebibliography[1]{
  \OLDthebibliography{#1}
  \setlength{\parskip}{0pt}
  \setlength{\itemsep}{0pt plus 0.1em}
}

\begin{document}

\renewcommand{\sectionautorefname}{Section}
\renewcommand{\subsectionautorefname}{Subsection}		

\begin{center}
\vspace*{-3em}
\noindent\makebox[\linewidth]{\rule{15cm}{0.4pt}}
\vspace{-.5em}

{\LARGE{\textbf{A condensed proof of the pro-\'etale \\[.2em] and \'etale exodromy theorems}}}

\vspace{.5em}

{\textsc{Remy van Dobben de Bruyn}}

\vspace*{.25em}
\leavevmode
\noindent\makebox[\linewidth]{\rule{11cm}{0.4pt}}
\vspace{1em}
\end{center}

\renewcommand{\abstractname}{\small\bfseries\scshape Abstract}

\begin{abstract}\noindent
The exodromy correspondence of Barwick, Glasman, and Haine computes constructible sheaves of spaces on a scheme $X$ as an $\infty$-category of continuous functors from the profinite category $\operatorname{Gal}(X)$. Viewing $\operatorname{Gal}(X)$ instead as a condensed category, this was extended by Wolf to an exodromy correspondence for pro-\'etale sheaves. Using the condensed perspective from the outset, we give a quick and self-contained proof of the pro-\'etale exodromy theorem. This is used to extract an exodromy theorem for (Postnikov complete) \'etale sheaves that does not yet appear in the literature, which is closely related to Lurie's work on ultracategories. Finally, we use this to give a new proof of the constructible \'etale exodromy correspondence of Barwick, Glasman, and Haine. Without additional effort, our method removes the qcqs hypotheses on the schemes, and gives versions for sheaves with coefficients in more general $\infty$-categories. Finally, we refine the methods to obtain a $\kappa$-condensed statement for any uncountable cardinal $\kappa$ such that $\kappa > \lvert \mathcal O_X(U) \rvert$ for every affine open $U \subseteq X$.
\end{abstract}\vspace{.2em}


\phantomsection
\section*{Introduction}
\addcontentsline{toc}{section}{Introduction}

\phantomsection
\subsection*{Background}
\addcontentsline{toc}{subsection}{Background}
If $X$ is a reasonable topological space, the \emph{monodromy correspondence} gives an equivalence between locally constant sheaves on $X$ with values in a presentable $\infty$-category $\mathscr E$ and the functor category $\Fun(\Pi_\infty(X),\mathscr E)$. For a scheme $X$, a similar description is given for locally constant sheaves of \emph{finite sets} (or, more generally, \emph{$\pi$-finite spaces}) using the \'etale fundamental group $\pi_1^\et(X)$ \cite[Exp.~V]{SGA1}, \cite[Exp.~X, \S6]{SGA3II}, \cite[\S7]{BhattScholze} (resp.\ \'etale homotopy type $\Pi_\infty^\et(X)$ \cite{ArtinMazur,Friedlander,Hoyois}).

This categorical description of locally constant sheaves on a topological space was extended to constructible sheaves by MacPherson (unpublished, first presented in Treumann's thesis \cite{TreumannThesis,Treumann}), and Lurie obtained an $\infty$-categorical version \cite[Thm.~A.9.3]{LurieHA}. An \'etale version was proven by Barwick, Glasman, and Haine \cite[Thm.~12.1.6]{BGH}, who coined the term \emph{exodromy correspondence}:

\begin{thm*}[Barwick--Glasman--Haine]
Let $X$ be a qcqs scheme. Then there is an exodromy correspondence
\[
\Sh\cons(X_\et,\mathcal S) \xrightarrow\sim \Fun\cts(\Gal(X),\mathcal S_\pi)
\]
between the $\infty$-category of constructible sheaves of spaces on $X$ and the $\infty$-category of continuous functors from the Galois category of $X$ to $\pi$-finite spaces.
\end{thm*}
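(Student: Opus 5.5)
The plan is to follow the abstract: reduce the constructible statement to a pro-étale exodromy theorem in which $\Gal(X)$ is viewed as a condensed category, then cut down to constructible objects. Concretely, $\Gal(X)$ will be the condensed $\infty$-category (in fact a condensed $1$-category, since stalks are taken at geometric points) whose value on an extremally disconnected set $S$ is the category of $S$-families of geometric points of $X$. A point of this category is a map $\Spec$ of a product of separably closed fields, or more intrinsically a w-strictly local pro-étale affine $U \to X$ with $\pi_0(U)=S$. Morphisms are specialisations between these families.

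The first and main step is the pro-étale exodromy equivalence
\[
\Sh(X_\proet,\mathcal S) \xrightarrow{\sim} \Fun\cts(\Gal(X),\mathcal S),
\]
meaning condensed functors into the condensed $\infty$-category of (condensed) anima. I would prove it using the basis of w-strictly local affines $U$ in $X_\proet$. A sheaf $F$ is determined by its values on this basis, and for such $U$ covering-sieves split, so $F(U)$ behaves like a "continuous family of stalks" over $\pi_0(U)$. The key computation is that the full subcategory of w-strictly local affines, with its induced topology, is equivalent as a site to the category of elements of $\Gal(X)$, which is fibred over $\ExtrDisc$. Once this is established, both sides become sheaves on the same site and the equivalence follows formally. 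I expect this identification to be the main obstacle. It needs the facts that every pro-étale affine admits a w-strictly local cover (Bhatt--Scholze), and that maps between w-strictly local affines over $X$ correspond exactly to continuous families of specialisations between their closed-point families. I would first handle $X$ affine and then glue, since both sides satisfy Zariski descent; this gluing also removes the qcqs hypothesis.

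Second, I would pass to étale sheaves. The pullback $\nu^*\colon \Sh(X_\et,\mathcal S)\to\Sh(X_\proet,\mathcal S)$ is fully faithful on Postnikov-complete objects. Its essential image consists of the sheaves that are determined by finite-level data, namely those $F$ with $F(\lim U_i)=\colim F(U_i)$ on pro-étale affines. Under the first step, I expect this to translate into the condition that the functor $\Gal(X)\to\mathcal S$ is "ultra-continuous", i.e.\ determined by its values on finite (discrete) $S$ compatibly with ultraproducts, in the spirit of Lurie's ultracategories.

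Third, I would impose constructibility. A constructible étale sheaf has stalks that are $\pi$-finite spaces, and it is locally constant on the strata of a finite constructible stratification. On the functor side, this corresponds to functors valued in $\mathcal S_\pi$ for which the condensed structure reduces to a continuous functor from the profinite category $\Gal(X)$ (for qcqs $X$) into the discrete, or rather profinite, category $\mathcal S_\pi$. The comparison uses two facts. Locally constant constructible sheaves with $\pi$-finite stalks are representable by pro-étale-locally finite data, which are automatically hypercomplete and Postnikov complete. And, by compactness of the constructible topology, continuity on the profinite level matches factoring through a finite stratification. Combining the three steps yields the Barwick--Glasman--Haine equivalence $\Sh\cons(X_\et,\mathcal S)\simeq\Fun\cts(\Gal(X),\mathcal S_\pi)$.
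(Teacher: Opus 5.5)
Your three-step architecture is exactly the paper's: pro-\'etale exodromy on the basis of w-strictly local affines, then cutting out \'etale sheaves, then constructible ones (\autoref{Thm proetale exodromy}, \autoref{Thm etale exodromy}, \autoref{Thm constructible exodromy}). Step~1 is essentially the strong-limit variant of \autoref{Rmk strong limit}, with two corrections. First, what this method proves is an equivalence for hypersheaves $\Sh^\hyp(X_\proet,\mathcal S)$: the right-hand side is built from condensed anima, and the passage to a basis (\autoref{Rmk hypersheaf basis}) is a statement about hypersheaves. Second, covering sieves split only on w-contractible objects, not on all w-strictly local ones: a surjection $T\twoheadrightarrow\pi_0(W)$ without a section gives an unsplit cover $W\times_{\pi_0(W)}T\to W$. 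Without a strong limit cardinal you therefore need \autoref{Lem w-local topology} and \autoref{Lem hypercover}. Also, if you \emph{define} $\Gal(X)$ through w-strictly local schemes, the ``key computation'' of Step~1 becomes a tautology, but you lose contact with the profinite $\Gal(X)$ in the statement. The paper instead defines $\GAL(X)(S)$ as locally coherent geometric morphisms $\Sh(X_\et)\to\Sh(S)$, which is what matches Barwick--Glasman--Haine via Hochster duality (\autoref{Rmk Gal(X) profinite}). The identification with w-strictly local schemes is then a theorem (\autoref{Thm equivalence}, \autoref{Lem coherent}); arbitrary points give acyclic schemes that need not be w-strictly local (\autoref{Ex acyclic not wsl}).

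The genuine gap is Step~2, which you only \emph{expect} to work, and ``ultra-continuity'' is not the condition that makes an argument go through. On the functor side, the condition is that for every locally coherent $S$-point, i.e.\ every $W\in\AffProEt_{/X}^\wsl$ with $\pi_0(W)=S$, the condensed anima $T\mapsto F(W\times_S T)$ over $S$ lies in $\Sh(S,\mathcal S)$. The hard direction is to deduce from this that an $n$-truncated $F$ satisfies $\colim_i F(Y_i)\simeq F(\lim_i Y_i)$ for every cofiltered diagram of affine pro-\'etale $Y_i$ (\autoref{Prop etale to proetale}). Note that this characterisation of the image of $\nu^*$ holds only for truncated sheaves, not Postnikov-complete ones, since the Postnikov limit need not commute with filtered colimits. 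The $Y_i$ are not w-strictly local and their transition maps are not w-local, so nothing reduces you to a single $S$.

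The missing idea (\autoref{Lem diagram of wsl cover}, \autoref{Cor diagram of wsl hypercover}) is as follows. Reduce to $X$ affine, fix \emph{one} w-strictly local $V\to X$ with $V^\cl\to X$ surjective and $S=\pi_0(V)$, and cover every $Y$ by $V\times_S\pi_0(V^\cl\times_X Y)$. This cover is functorial in $Y$, has w-local transition maps, and commutes with cofiltered limits. Every value has the form $V\times_S T$, so the hypothesis at the single point defined by $V$ applies. Iterating gives compatible hypercovers $W_{\bullet,i}\to Y_i$, and $n$-truncatedness cuts each totalisation down to a finite limit, which commutes with $\colim_i$.

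Step~3 has a similar hole in the converse direction: ``compactness of the constructible topology'' does not show that a functor valued in $\mathcal S_\pi$ comes from a constructible sheaf. The paper argues as follows for $X$ affine and $V$ as above. The pullback $s^*\colon\Sh(X_\et,\mathcal S)\to\Sh(S,\mathcal S)$ is a conservative coherent geometric morphism of coherent $\infty$-topoi, hence reflects truncatedness and $n$-coherence (\autoref{Lem reflect coherence}). So $F$ is constructible because $s^*F$ is.
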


Here, the \emph{Galois category} $\Gal(X)$ is a pro-object in $\pi$-finite layered $\infty$-categories whose `materialisation' (limit in $\Cat_\infty$) is the $1$-category $\Pts^*(X_\et) = \Fun^*(\Sh(X_\et),\Set)$ of points of the \'etale topos $\Sh(X_\et)$, and \emph{continuous functors} means functors in $\Pro(\Cat_\infty)$, where~$\mathcal S_\pi$ is treated as the constant pro-system.

Any profinite $\infty$-category, such as $\Gal(X)$, can be viewed as a \emph{consensed} (or \emph{pyknotic}) $\infty$-category by taking the limit in $\Cond(\Cat_\infty)$ of the diagram of `discrete' \makebox{$\infty$-categories,} as in \cite[\S13.5]{BGH}. Writing $\GAL(X)$ for the condensed $\infty$-category defined by $\Gal(X)$, Wolf \cite[Cor.~1.2]{Wolf} extended the theorem above to a pro-\'etale exodromy theorem:

\begin{thm*}[Wolf]
Let $X$ be a qcqs scheme. Then there is an exodromy correspondence
\[
\Sh^\hyp(X_\proet,\mathcal S) \xrightarrow\sim \Fun\cts(\GAL(X),\COND(\mathcal S)).
\]
{\normalfont Here $\COND(\mathcal S)$ is the condensed $\infty$-category taking a profinite space $S$ to $\Cond(\mathcal S)_{/S}$ (equivalently, $\Sh(\ProFin_{/S},\mathcal S)$), and \emph{continuous functors} are functors of condensed $\infty$-categories.} 
\end{thm*}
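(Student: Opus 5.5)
We prove Wolf's theorem by building the functor concretely from the affine pro-étale site and checking it is an equivalence using a basis of w-contractible objects. Let $X_\affproet$ denote the site of affine schemes pro-étale over $X$ (after reducing to $X$ affine, or working with affine pro-étale $U \to X$ in general). The basic input is Bhatt--Scholze: $X_\proet$ has a basis of \emph{w-contractible} affines $U$. For such $U$ the global sections functor on hypersheaves is exact and commutes with all colimits, and $\pi_0(U)$ is an extremally disconnected profinite set. The hypersheaf topos $\Sh^\hyp(X_\proet,\mathcal S)$ is therefore the $\infty$-category of presheaves $F$ on w-contractible affines that send finite disjoint unions to products and satisfy descent along split covers.

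\emph{Step 1.} Describe $\GAL(X)$ on extremally disconnected test objects. For $S$ extremally disconnected, I claim that $\GAL(X)(S)$ is equivalent to the $\infty$-category (in fact a $1$-category) of pairs $(U,s)$. Here $U \to X$ is a w-contractible affine in which every connected component is the spectrum of a strictly henselian valuation-type local ring, and $s\colon S \xrightarrow\sim \pi_0(U)$. Morphisms are specialisations in families, i.e.\ maps of such pro-étale $X$-schemes over $S$. The argument passes through the profinite system defining $\Gal(X)$. A continuous map $S \to \Gal(X)$ is a compatible family of maps $S \to$ (finite stratified étale data). By the universal property of w-localisation and strict henselisation, this glues to a pro-étale affine $U$ with $\pi_0(U)=S$ whose closed points lie over the given points of $X$. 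A $1$-morphism $\Delta^1\times S \to \Gal(X)$ similarly corresponds to a specialisation datum. The key lemma is that a map from $S$ into the space of points of $X_\et$ is the same as a pro-étale w-local $U$ with $\pi_0U \cong S$ and strictly henselian local rings.

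\emph{Step 2.} Construct the functor. Given a hypersheaf $F$, send the point $(U,s)\in\GAL(X)(S)$ to the object of $\COND(\mathcal S)(S)=\Sh(\ProFin_{/S},\mathcal S)$ given by
\[ T \mapsto F\bigl(U\times_{\pi_0U} T\bigr), \]
where $U\times_{\pi_0 U}T$ is again pro-étale over $X$. This is functorial in $S$ and in morphisms of $\GAL(X)$, so it defines a continuous functor $\Phi\colon \Sh^\hyp(X_\proet,\mathcal S)\to \Fun\cts(\GAL(X),\COND(\mathcal S))$.

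\emph{Step 3.} Show $\Phi$ is an equivalence.
\begin{enumerate}
\item Both sides are hypercomplete and hence generated under colimits by representables. For the target, these are the corepresentable ``left fibrations'' out of points $(U,s)$. The left adjoint sends such a corepresentable to the sheaf represented by the corresponding w-contractible $U$.
\item By Yoneda on both sides, full faithfulness reduces to a mapping space computation. Namely, $\Hom_X(U',U)$ should equal the $\GAL(X)$-mapping condensed space. This is exactly the content of Step 1, since for w-contractible $U'$ a map $U' \to U$ over $X$ is determined by the induced map of points together with specialisations.
\item Essential surjectivity follows because every w-contractible affine admits a cover by such ``totally strictly henselian'' ones, namely the w-strictly-local covers of Bhatt--Scholze. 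Descent along split hypercovers then matches the condensed sheaf condition.
\end{enumerate}

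The main obstacle is Step 1: identifying $S$-families of points and specialisations of $X_\et$ with actual pro-étale $X$-schemes over $S$. This identification must be precise enough that mapping spaces agree condensedly, not just on underlying sets. I would first prove it for $X$ the spectrum of a field, where it reduces to continuous maps $S\to$ profinite Galois sets. I would then treat strictly henselian local $X$, and finally glue along a constructible stratification using that the construction is compatible with pro-étale base change. The qcqs hypothesis enters only when reducing to finite stratifications.
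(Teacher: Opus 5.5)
Your overall architecture is sound and is essentially the paper's strong-limit variant (\autoref{Rmk strong limit}). You restrict to w-contractible affines and extremally disconnected $S$, where both descent conditions reduce to finite products, and you send $F$ to $T \mapsto F(U \times_{\pi_0 U} T)$ as in \autoref{Rmk description Ex}. The gap is Step~1, which carries all of the content and for which you give no working argument.

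First, the objects are misdescribed. The connected components of an $S$-point are spectra of strict henselisations $\mathcal O_{X,\bar x}^{\operatorname{sh}}$, i.e.\ arbitrary strictly henselian local rings, not valuation rings. The correct statement is that the $S$-points are the w-strictly local $U \in \AffProEt_{/X}$ with $\pi_0(U) \cong S$. These are automatically w-contractible when $S$ is extremally disconnected, which also makes your Step~3(c) automatic.

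Second, nothing in your sketch actually produces $U$ from a point. Starting from the pro-system $\Gal(X)$, you would first need higher Hochster duality to turn a continuous map $S \to \Gal(X)$ into a coherent geometric morphism $\Sh(S) \to \Sh(X_\et)$ (\autoref{Rmk Gal(X) profinite}). After that, `gluing by the universal property of w-localisation and strict henselisation' is not a construction.

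The proposed d\'evissage does not repair this. A finite constructible stratification does not reduce a general $X$ to fields or strictly henselian local schemes. In any case it only decomposes the \emph{objects} of a locally coherent $S$-point, since preimages of constructible strata are clopen in $S$. The morphisms between points in different strata --- the specialisations --- are not determined by the Galois categories of the strata. Compatibility with pro-\'etale base change cannot supply them, because a stratification is not a pro-\'etale cover. Nor is the strictly henselian local case simpler: its Galois category still contains all points of $\Spec R$ with their specialisations.

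The missing idea is the topos-theoretic Henselisation of \S2.3, which identifies the entire Grothendieck construction of $\GAL(X)$ with $\AffProEt_{/X}^{\wsl}$ at once (\autoref{Cor loccoh}). For $s_* \colon \Sh(S) \to \Sh(X_\et)$, let $X_{(s)}$ be the pro-object corepresenting $\Gamma(S,s^*(-))$. The argument then runs as follows.
\begin{enumerate}
\item Every epimorphism onto $\mathbf 1$ in $\Sh(S)$ splits and $\mathbf 1$ is quasi-compact. Hence affine \'etale $X$-schemes are coinitial in the indexing category, so $X_{(s)} \in \AffProEt_{/X}$ (\autoref{Lem Henselisation representable}).
\item The same splitting shows that $X_{(s)}$ is acyclic.
\item Testing against finite sets gives $\pi_0(X_{(s)}) \cong S$.
\item For $f \colon T \to S$ one has $X_{(s\circ f)} \cong X_{(s)} \times_S T$ (\autoref{Lem Henselisation base change}). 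This is what you need for the functoriality in Step~2, and for comparing maps $U' \to U$ over arbitrary $\pi_0(U') \to \pi_0(U)$ in Step~3(b). Your Step~1 only treats morphisms over a fixed $S$.
\item Conversely, an acyclic $W$ defines a geometric morphism $\Sh(\pi_0(W)) \to \Sh(W_\et) \to \Sh(X_\et)$, because $\Sh(W_\et) \to \Sh(\pi_0(W))$ is local (\autoref{Lem acyclic local}).
\item Morphisms are controlled by viewing $W$ as an $S$-cosheaf in $\Pro(\AffEt_{/X})$ (\autoref{Cor cosheaf fully faithful}).
\item Finally, $s$ is locally coherent if and only if $X_{(s)}$ is w-strictly local (\autoref{Lem coherent}).
\end{enumerate}
No d\'evissage and no qcqs hypothesis are needed. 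With this in hand your Steps~2--3 go through. A more direct way to finish is the paper's coend description of presheaves on the Grothendieck construction together with \autoref{Lem hypercover}.
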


\phantomsection
\subsection*{Main results}
\addcontentsline{toc}{subsection}{Main results}
In this paper, we take up a condensed perspective from the outset to give independent (and, in the author's opinion, more geometric, more direct, and more uniform) proofs of the theorems of Barwick--Glasman--Haine and Wolf, as well as a number of generalisations and variants.

Using a direct definition of the condensed Galois category $\GAL(X)$ (see \hyperref[Sec what is Galois]{\textsc{Construction of the Galois category}} further down in this introduction), we obtain a quick geometric proof of the following generalisation of Wolf's theorem:

\begin{thm}[\autoref{Thm proetale exodromy}]\label{thm 1}
Let $X$ be a scheme, and let $\mathscr E$ be an $\infty$-category that has small limits. Then there is an exodromy correspondence
\[
\mathcal{Ex} \colon \Sh^\hyp(X_\proet,\mathscr E) \xrightarrow\sim \Fun\cts(\GAL(X),\COND(\mathscr E)),
\]
where $\COND(\mathscr E)$ is the condensed $\infty$-category taking a profinite space $S$ to $\Sh(\ProFin_{/S},\mathscr E)$.
\end{thm}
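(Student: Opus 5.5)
We plan to realise both sides as sheaves on a common site of ``w-strictly local'' affines, and then identify them with functors out of $\GAL(X)$ by a Yoneda/descent argument. We take the condensed Galois category $\GAL(X)$ to be the condensed $\infty$-category whose value on a profinite set $S$ has objects the maps $\Spec A \to X$ from w-strictly local rings $A$ with $\pi_0(\Spec A) \cong S$, where each connected component is the spectrum of a strictly henselian local ring. Morphisms are specialisations of such families over $S$. In particular, on $S = *$ this recovers $\Pts^*(X_\et)$, whose points are geometric points and whose morphisms are specialisations.

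\textbf{Step 1 (basis of w-strictly local objects).} By Bhatt--Scholze, the w-strictly local affines $U$ in $X_\proet$ form a basis. Each such $U$ admits the map $\pi_0 \colon U \to \pi_0(U)$ to a profinite set, and $\Sh^\hyp(X_\proet,\mathscr E)$ is equivalent to $\mathscr E$-valued hypersheaves on this subsite. Here we use that $\mathscr E$ has small limits, so that right Kan extension along the basis inclusion is available. The point is that on w-strictly local $U$ the covers are controlled: every pro-étale cover of $U$ is refined by a disjoint union of a cover of $\pi_0(U)$ in $\ProFin$ together with a closed/cofiltered piece.

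\textbf{Step 2 (construct $\mathcal{Ex}$).} Given $F$, we define $\mathcal{Ex}(F)$ on the $S$-points of $\GAL(X)$ as follows. A point $U=\Spec A \to X$ lying over $S=\pi_0(U)$ is sent to the sheaf on $\ProFin_{/S}$ given by $T \mapsto F(U\times_S T)$. This makes sense because $U \times_S T$ is again w-strictly local with $\pi_0 = T$, and the resulting functor $\ProFin_{/S}\op \to \mathscr E$ is a sheaf precisely because finite disjoint unions and surjections of profinite sets induce pro-étale covers. Specialisation morphisms are sent to the corresponding restriction maps. The key geometric input here is that for a specialisation in a strictly henselian local scheme, the map between the two points is realised by the inclusion of a localisation. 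Naturality in $S$ then gives a map of condensed $\infty$-categories $\GAL(X) \to \COND(\mathscr E)$.

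\textbf{Step 3 (full faithfulness and essential surjectivity).} We reduce to the case $\mathscr E = \mathcal S$, or rather to the universal case of presheaves, by writing both sides as limits over the basis. On the functor side, a continuous functor is determined by its values on the ``universal'' $S$-points, namely the w-strictly local $U$ themselves. This is a Yoneda statement: $\Fun\cts(\GAL(X),\COND(\mathscr E))$ is a limit over the category of points $(S,U)$ of $\Sh(\ProFin_{/S},\mathscr E)$, modulo the specialisation data. The inverse functor sends $G$ to $U \mapsto \Gamma(\pi_0(U), G(U))$. Showing that this gives a hypersheaf, and that the two constructions are mutually inverse, amounts to checking two things:
\begin{enumerate}
\item sheaves on $\ProFin_{/S}$ match sheaves on the slice of w-strictly local objects over $U$ that are pro-étale over $U$ with the same closed points. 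This is the statement that w-contractible $U$ has $U_\proet$ with the ``same'' hypercovers as $\pi_0(U)$ up to henselian localisation;
\item the specialisation morphisms of $\GAL(X)$ exactly encode the remaining descent, coming from pro-Zariski localisations of $U$ at points.
\end{enumerate}

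\textbf{Main obstacle.} We expect the main difficulty to be the second check. One must show that the morphism spaces in $\GAL(X)(S)$, namely the condensed spaces of specialisations, are computed by the pro-étale site, so that continuity of $G$ along specialisations is equivalent to descent along covers that are not split over $\pi_0$. We would try first to prove that every hypercover of a w-strictly local $U$ is refined, after taking $\pi_0$-fibrewise strict henselisations, by one of the form $U\times_S T_\bullet$ together with localisations. This reduces hyperdescent to hyperdescent in $\ProFin$, which holds by definition of $\COND(\mathscr E)$.
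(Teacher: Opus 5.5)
The overall shape matches the paper: restrict to the basis of w-strictly local objects, rewrite presheaves there as an end of functor categories out of $\GAL(X)$, and compare hyperdescent on $W$ with hyperdescent on $\pi_0(W)$. There are, however, two genuine gaps.

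\textbf{First, the identification of $\GAL(X)$ is assumed, not proved.} You \emph{define} $\GAL(X)(S)$ as, roughly, w-strictly local $X$-schemes with $\pi_0\cong S$. In the statement, $\GAL(X)(S)$ is $\Fun^*_{\loccoh}(\Sh(X_\et),\Sh(S))$. Identifying its Grothendieck construction with $\AffProEt_{/X}^{\wsl}$ (\autoref{Thm technical}, proved as \autoref{Thm equivalence} and \autoref{Cor loccoh}) is the main technical input of the theorem, not a definition. It requires:
\begin{enumerate}
\item constructing the Henselisation $X_{(s)}$ along $s_*\colon\Sh(S)\to\Sh(X_\et)$;
\item showing $X_{(s)}$ is representable in $\AffProEt_{/X}$, with $\pi_0(X_{(s)})\cong S$, acyclic, and compatible with base change in $S$;
\item showing that $S$-cosheaves in $\AffProEt_{/X}$ embed fully faithfully into left exact functors $\Sh(X_\et)\to\Sh(S)$;
\item showing that local coherence of $s_*$ corresponds exactly to $W^{\cl}$ being closed. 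This last point is not automatic: acyclic but non-w-strictly-local $W$ give points that are not locally coherent.
\end{enumerate}
Your ad hoc definition is also imprecise in three ways. The objects must be weakly \'etale over $X$. The morphisms are all $X$-morphisms over $S$, including Galois automorphisms, not only ``specialisations''. And you would still have to show the result is a sheaf on $\ProFin$.

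\textbf{Second, the descent analysis rests on a false premise.} There is no ``remaining descent from pro-Zariski localisations'' for the morphisms of $\GAL(X)$ to encode. Those morphisms only supply the functoriality built into $\int_S\Fun(\GAL(X)(S),\PSh(\ProFin_{/S},\mathscr E))$, and continuity imposes no gluing condition along them. So if your hypercover refinement genuinely needed localisations, nothing would give descent along them, and the argument could not be completed.

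The missing idea is \autoref{Lem w-local topology}: on w-strictly local objects, the pro-\'etale topology equals the w-local topology. The argument runs as follows.
\begin{enumerate}
\item Take a cover $f\colon V\twoheadrightarrow W$ of w-strictly local schemes. Then $f^{-1}(W^{\cl})\subseteq V^{\cl}$, because fibres of weakly \'etale maps are $0$-dimensional.
\item Let $T'\subseteq\pi_0(V)$ be the corresponding closed subset. Then $V\times_{\pi_0(V)}T'\to W$ is w-local, and still surjective by going-up for flat maps.
\item By \autoref{Lem acyc profinite converse}, this map is $W\times_{\pi_0(W)}T'\to W$ for a surjection $T'\twoheadrightarrow\pi_0(W)$.
\end{enumerate}
Hence every cover is refined by one pulled back from $\ProFin$, with no localisations. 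The w-local hypercovers of $W$ are exactly $W\times_S T_\bullet$ for hypercovers $T_\bullet$ of $S$ (\autoref{Lem hypercover}). Since hypersheaves depend only on the topology, the hypersheaf condition reduces pointwise to $T\mapsto F(W\times_S T)$ being a hypersheaf on $\ProFin_{/S}$.
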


To extract the constructible \'etale exodromy from the one for pro-\'etale hypersheaves, we first prove that the pro-\'etale exodromy correspondence restricts to a statement for Postnikov complete \'etale sheaves:

\begin{thm}[\autoref{Thm etale exodromy}]\label{thm 2}
Let $X$ be a scheme, and let $\mathscr E$ be a compactly assembled presentable $\infty$-category. Then the pro-\'etale exodromy correspondence restricts to an \'etale exodromy correspondence
\[
\mathcal{Ex}^\et \colon \Sh^{\post}(X_\et,\mathscr E) \xrightarrow\sim \Fun\cts(\GAL(X),\underline{\mathscr E}),
\]
where $\underline{\mathscr E}$ is the condensed $\infty$-category $S \mapsto \Sh(S,\mathscr E)$.
\end{thm}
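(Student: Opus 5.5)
We deduce Theorem~\ref{thm 2} from Theorem~\ref{thm 1} by identifying both sides as full subcategories of the two sides of the pro-\'etale correspondence. The argument then reduces to a pointwise statement over profinite sets.

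The first step is on the sheaf side. We use the pullback $\nu^*\colon \Sh(X_\et,\mathscr E)\to\Sh(X_\proet,\mathscr E)$ along $\nu\colon X_\proet\to X_\et$. For Postnikov complete $\mathscr E$-valued sheaves, $\nu^*$ lands in hypersheaves. It is fully faithful, with essential image those pro-\'etale hypersheaves $F$ that are \emph{finitary}: for every cofiltered limit $U=\lim U_i$ of affine \'etale $X$-schemes, the map $\colim_i F(U_i)\to F(U)$ is an equivalence. This is where compact assembly of $\mathscr E$ is needed. Filtered colimits in $\mathscr E$ must commute with the limits computing sheaf values on w-contractible covers, or at least with the relevant totalisations. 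Compact assembly (as a retract of a compactly generated category, so that filtered colimits are exact in the required sense) is what makes $\colim_i F(U_i)$ satisfy descent for pro-\'etale covers.

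The second step is on the Galois side. The inclusion $\Sh(S,\mathscr E)\hookrightarrow\Sh(\ProFin_{/S},\mathscr E)$ of sheaves on the profinite set $S$ into condensed sheaves is by pullback along the site map. It is fully faithful, with image the sheaves $G$ satisfying $G(\lim T_j)=\colim_j G(T_j)$ for cofiltered limits of profinite sets over $S$. Again compact assembly is the essential input: for compactly assembled $\mathscr E$, sheaves on a Stone space are determined by their values on clopens and continuity is automatic. This gives a fully faithful map of condensed $\infty$-categories $\underline{\mathscr E}\hookrightarrow\COND(\mathscr E)$, and hence a fully faithful functor $\Fun\cts(\GAL(X),\underline{\mathscr E})\hookrightarrow\Fun\cts(\GAL(X),\COND(\mathscr E))$. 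Its image consists of the functors whose values on every profinite family of points land in $\underline{\mathscr E}$.

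The final step is to match the two subcategories under $\mathcal{Ex}$. Unwinding the construction of $\mathcal{Ex}$, its value on an $S$-family of geometric points $S\to\GAL(X)$ should be the restriction of $F$ to the pro-\'etale site of the corresponding w-strictly local affine pro-\'etale object (or of $S$ itself). The finitary condition on $F$ translates, by writing such a point family as a limit of \'etale neighbourhoods, into the continuity condition on each $\mathcal{Ex}(F)(S)$. For the converse we check finitariness affine-locally on w-contractible objects, whose $\pi_0$ is profinite and which are detected by their $S$-points.

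I expect this last step to be the main obstacle: showing that the continuity conditions on the two sides genuinely correspond. A value of $F$ on a general affine pro-\'etale $U$ must be recovered from its stalks along profinite point families, and the colimit over \'etale approximations must commute with the descent limit. I would first treat $\mathscr E$ compactly generated and reduce finitariness to $U$ w-contractible, where everything is computed by global sections over $\pi_0(U)$. I would then pass to retracts to handle the compactly assembled case.
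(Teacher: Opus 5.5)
There is a genuine gap, and it sits exactly where you place the role of compact assembly. Your Step 1 asserts that Postnikov complete $\mathscr E$-valued étale sheaves are precisely the finitary pro-étale hypersheaves. You say compact assembly is what makes $\colim_i F(U_i)$ satisfy descent. But compact assembly does not make filtered colimits commute with totalisations; this already fails in $\mathcal S$, which is compactly generated. The characterisation itself is false beyond the truncated case.

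For instance, let $X=(\mathbf N\cup\{\infty\})\times\Spec\mathbf R$. Let $U_j\in\AffEt_{/X}$ replace $\Spec\mathbf R$ by $\Spec\mathbf C$ over the points $0,\ldots,j-1$, put $V_j=\{n\geq j\}\cup\{\infty\}$, and take $\mathscr F=\prod_{n\geq 1}K(\mathbf F_2,n)$. Then $\pi_0\mathscr F(U_j)=\prod_n C(V_j,\mathbf F_2)$. On the other hand, the hypersheaf $\lim_m \nu^*\tau_{\leq m}\mathscr F$ corresponding to $\mathscr F$ has $\pi_0=\prod_n\mathbf F_2$ on $U=\lim_j U_j$. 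The class $(1_{\{n\}})_n$ is nonzero in $\colim_j\prod_n C(V_j,\mathbf F_2)$ but dies in $\prod_n\mathbf F_2$.

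The recognition principle \autoref{Prop etale to proetale} only holds for $n$-truncated sheaves. Accordingly, the paper proceeds in three stages:
\begin{enumerate}
\item prove the statement for $\Sh_{\leq n}(X_\et,\mathcal S)$ and $\underline{\mathcal S}_{\leq n}$;
\item pass to the limit over $n$, using that $\Sh(S,\mathcal S)$ is Postnikov complete;
\item only then tensor with $\mathscr E$.
\end{enumerate}
Compact assembly is used solely to make $\underline{\mathscr E}$ a condensed $\infty$-category, and to know that $-\otimes\mathscr E$ preserves limits \cite[Lem.~2.16]{HaineBaseChange}.

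Second, your Step 3 is the actual content of the theorem, and it is left without a mechanism. The suggested reduction to w-contractible $U$ does not work. The $U_i$ in $U=\lim U_i$ are not w-strictly local, and independently chosen w-strictly local covers of the $U_i$ are not compatible with the transition maps. So nothing lets you compute values on the limit from profinite point families.

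The missing idea is a hypercover of the whole cofiltered diagram by w-strictly local schemes with w-local transition maps. First reduce to a diagram $Y\colon\mathcal I\to\AffProEt^\kappa_{/X}$ with a terminal object and $X$ affine. Then fix one $V\in\AffProEt^{\kappa,\wsl}_{/X}$ with $V^\cl\to X$ surjective, and Henselise: $W_i=V\times_{\pi_0(V)}\pi_0(V^\cl\times_X Y_i)$. This construction commutes with cofiltered limits (\autoref{Lem diagram of wsl cover}, \autoref{Cor diagram of wsl hypercover}).

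Because the maps are w-local, $\mathscr F(W_{n,i})$ is computed on the strictly profinite schemes $W_{n,i}^\cl$ (\autoref{Rmk wsl coherent}), with $W_{n,\infty}^\cl=\lim_i W_{n,i}^\cl$. There the hypothesis on $\mathcal{Ex}(\mathscr F)$ gives the colimit formula. Finally, $n$-truncatedness lets you cut the descent totalisation at a finite stage, so that it commutes with $\colim_i$. Both ingredients are absent from your plan, and the second is exactly why the truncated case must come first.
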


Note that $\underline{\mathscr E}$ is a condensed $\infty$-category by \cite[Cor.~2.8]{HaineDescent} (this relies on the hypothesis that $\mathscr E$ is compactly assembled). Even for $\mathscr E = \mathcal S$, the result above does not appear in the literature, although Clark Barwick had already indicated awareness of a version of this result before the conception of the present paper. When $\mathscr E = \Set$ and $X$ is qcqs, this is almost the same statement as Lurie's version of Makkai's strong conceptual completeness theorem \cite[Thm.~0.0.6]{LurieUltra}. However, there is a small but important technical difference, which we comment on in \autoref{Rmk difference with Lurie}.

Finally, we obtain a new proof of the constructible \'etale exodromy theorem of Barwick--Glasman--Haine:

\begin{thm}[\autoref{Thm constructible exodromy}]\label{thm 3}
Let $X$ be a scheme. Then the \'etale exodromy correspondence restricts to a constructible \'etale exodromy correspondence
\[
\mathcal{Ex}^\et\cons \colon \Sh\cons(X_\et,\mathcal S) \xrightarrow\sim \Fun\cts(\GAL(X),\underline{\mathcal S}{}\cons),
\]
where $\underline{\mathcal S}{}\cons$ denotes the condensed $\infty$-category $S \mapsto \Sh\cons(S,\mathcal S)$.
\end{thm}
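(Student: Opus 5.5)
We deduce \autoref{thm 3} from \autoref{thm 2} with $\mathscr E = \mathcal S$ (compactly assembled, even compactly generated). Constructible sheaves are Postnikov complete: they are étale-locally, on a finite constructible stratification, constant with $\pi$-finite (hence truncated) values. So $\Sh\cons(X_\et,\mathcal S) \subseteq \Sh^{\post}(X_\et,\mathcal S)$, and $\mathcal{Ex}^\et$ is already fully faithful on it. Likewise $\underline{\mathcal S}{}\cons \subseteq \underline{\mathcal S}$ is a condensed full subcategory: constructibility is stable under pullback along maps of profinite sets and is checked on finite clopen covers, so $S \mapsto \Sh\cons(S,\mathcal S)$ is again a sheaf. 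Hence $\Fun\cts(\GAL(X),\underline{\mathcal S}{}\cons)$ is a full subcategory of $\Fun\cts(\GAL(X),\underline{\mathcal S})$. It remains to show that $\mathcal{Ex}^\et$ sends $\Sh\cons$ onto exactly this subcategory.

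Both conditions are local on $X$, so I first reduce to $X$ affine, hence qcqs. For constructibility this follows from the definition, which is Zariski local. For the functor side, $\GAL(X)$ should be glued from the $\GAL(U)$ along open immersions, with objects the geometric points. I then unwind what $\mathcal{Ex}^\et(F)$ is: on an $S$-family of geometric points $\bar x\colon S \to \GAL(X)$ (a pro-étale/w-contractible family), the value is the sheaf on $S$ obtained by pulling $F$ back. For the forward direction, take $F$ constructible and a finite stratification by constructible locally closed $Z_i$ with $F|_{Z_i}$ locally constant with $\pi$-finite stalks. The preimages of the $Z_i$ in $S$ give a finite constructible (clopen-refinable) stratification of $S$, and on each piece the pullback is locally constant $\pi$-finite. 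So the values are constructible sheaves on $S$, i.e.\ they land in $\underline{\mathcal S}{}\cons$.

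For the converse, let $F \in \Sh^{\post}$ with $\mathcal{Ex}^\et(F)$ valued in $\underline{\mathcal S}{}\cons$. Use an affine pro-étale cover $\widetilde X \to X$ with $\widetilde X$ w-contractible, whose points give a profinite family $S = \pi_0(\widetilde X)$-style datum. Constructibility of the pulled-back sheaf on the relevant profinite space then yields a finite stratification on which $F$ is locally constant with $\pi$-finite stalks. Stalks are $\pi$-finite because the value on each point is a constructible sheaf on a point. The main obstacle is descending this stratification from the profinite family to a constructible stratification of $X$ itself, i.e.\ showing that $F$ is constructible on $X_\et$ rather than merely on $X_\proet$.

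My plan for this obstacle is a spreading-out/compactness argument via the constructible topology. Work with the profinite space $X^{\cons}$ and the condensed family of all geometric points over it. Constructibility of the family sheaf over a profinite base gives finitely many clopen (i.e.\ constructible in $X$) strata, by quasi-compactness. Then use that local constancy with $\pi$-finite fibres on a stratum $Z$, which holds pro-étale locally, descends to étale-local constancy. This uses finite presentation of $\pi$-finite spaces together with $\Sh^{\post}(Z_\et)\to\Sh^\hyp(Z_\proet)$ being fully faithful, which should follow from \autoref{thm 1} versus \autoref{thm 2}. If this descent is hard, I would fall back to a truncation induction on Postnikov levels, reducing to sheaves of finite sets and finite groups, where the classical statement (locally constant constructible sheaves of sets descend from pro-étale to étale) is standard.
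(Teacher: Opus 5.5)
\textbf{There is a genuine gap in the converse direction.}

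Your set-up is fine: the reduction to $X$ affine, viewing both sides as full subcategories of the \'etale exodromy equivalence, and the forward direction. Pulling a stratification back along $W^\cl \to X$, where constructible subsets of the strictly profinite scheme $W^\cl$ are clopen, is a hands-on substitute for the paper's appeal to Lurie's result that coherent pullbacks preserve constructibility.

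The gap sits exactly at the step you call the main obstacle: your plan never actually produces the strata. For a locally coherent $S$-point corresponding to $W \in \AffProEt_{/X}^{\wsl}$, the value of $\mathcal{Ex}^\et(\mathscr F)$ is $\mathscr F|_{W^\cl}$ (\autoref{Rmk wsl coherent}), and $W^\cl \to X$ factors through $X\cons$. A finite clopen decomposition of $S$ on which this value is constant therefore only tells you about $\mathscr F$ on points with the constructible topology.
\begin{itemize}
\item The images in $X$ of the clopens of $S$ are merely pro-constructible and may overlap. So no clopen strata of $X\cons$ come out ``by quasi-compactness''.
\item Even on a constructible stratum $Z$, constancy after restricting to $Z\cons$ or to $W^\cl$ says nothing about the cospecialisation maps of $\mathscr F|_Z$. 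For example, take $W$ the spectrum of a strictly Henselian discrete valuation ring and $\mathscr F = i_*\mathbf Z/2$ supported at the closed point. Then $\mathscr F|_{W^\cl}$ is constant, but $\mathscr F|_W$ is not locally constant.
\end{itemize}
So the claim that $\mathscr F|_Z$ is ``locally constant pro-\'etale locally'' is the entire content of the converse, and it is not argued. Descending local constancy from pro-\'etale to \'etale, and your Postnikov fallback (which again only descends local constancy), address a step that is not the difficulty.

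\textbf{The missing idea: no stratification needs to be built.} For $X$ affine, $\Sh(X_\et,\mathcal S)$ is coherent, and its constructible objects are exactly the truncated coherent objects; the same holds on $S$. The argument then goes as follows.
\begin{enumerate}
\item Take a single $W \in \AffProEt_{/X}^{\wsl}$ with $W^\cl \to X$ surjective (\autoref{Prop enough wsl kappa}).
\item The induced $s_* \colon \Sh(S,\mathcal S) \to \Sh(X_\et,\mathcal S)$, with $S = \lvert W^\cl \rvert$, is a coherent geometric morphism between coherent $\infty$-topoi, and $s^*$ is conservative.
\item Since $s^*$ commutes with $\tau_{\leq n}$, it reflects effective epimorphisms and $n$-truncatedness. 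By induction it then reflects quasi-compactness and $n$-coherence (\autoref{Lem reflect coherence}).
\item Hence if $s^*\mathscr F$ is constructible, $\mathscr F$ is truncated and coherent, i.e.\ constructible.
\end{enumerate}
If you prefer to reduce to sheaves of sets, the classical input you need is descent of constructibility along a surjective qcqs morphism such as $W^\cl \to X$ \cite[Tag 095Q]{Stacks}, not descent of local constancy. You would also still have to justify the reduction from sheaves of spaces to homotopy sheaves over sections.
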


In particular, there are no qcqs hypotheses in \autoref{thm 1}, \autoref{thm 2}, and \autoref{thm 3}. This would be impossible to achieve via the profinite methods of \cite{BGH}, since the Galois category is only `locally profinite' in general. On the other hand, using the condensed point of view, this extension can easily be bootstrapped from the known results in the qcqs case by observing that $\GAL(X)$ is a Zariski cosheaf of condensed $\infty$-categories (our proofs do not require treating the qcqs case separately). In fact, using \autoref{thm 1}, we obtain the following Van Kampen type result:

\begin{thm}[\autoref{Thm van Kampen}]\label{thm 4}
The functor $\GAL \colon \Sch \to \Cond(\Cat_\infty)$ is a pro-\'etale hypercosheaf.
\end{thm}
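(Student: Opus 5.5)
The strategy is to deduce the cosheaf property of $\GAL$ from \autoref{thm 1} by a Yoneda-type argument, testing against $\COND(\mathscr E)$ for enough target $\infty$-categories $\mathscr E$.

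Let $\mathfrak U = (U_\bullet \to X)$ be a pro-\'etale hypercover. We must show that
\[
\colim_{[n] \in \Delta\op} \GAL(U_n) \to \GAL(X)
\]
is an equivalence in $\Cond(\Cat_\infty)$. Since $\Cond(\Cat_\infty)$ is a full subcategory of $\Fun(\ProFin\op,\Cat_\infty)$, a map $F \to G$ is an equivalence as soon as $\Fun\cts(G,\mathscr C) \to \Fun\cts(F,\mathscr C)$ is an equivalence for all $\mathscr C$ in a family that jointly detects equivalences. We will use the condensed $\infty$-categories $\COND(\mathscr E)$, as $\mathscr E$ ranges over $\infty$-categories with small limits; this family should suffice by a Yoneda argument. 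Note that $\Fun\cts(\colim_n \GAL(U_n),\COND(\mathscr E)) \simeq \lim_n \Fun\cts(\GAL(U_n),\COND(\mathscr E))$ holds formally.

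By \autoref{thm 1}, naturally in the scheme, this limit is identified with $\lim_n \Sh^\hyp(U_{n,\proet},\mathscr E)$. Pro-\'etale hyperdescent for hypersheaves then identifies this with $\Sh^\hyp(X_\proet,\mathscr E)$, and applying \autoref{thm 1} once more gives $\Fun\cts(\GAL(X),\COND(\mathscr E))$. For this chain to hold we need the exodromy equivalence to be natural in $X$, meaning compatible with pullback of sheaves and precomposition along $\GAL(U_n) \to \GAL(X)$. We take this from the functoriality of the construction of $\mathcal{Ex}$.

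The main obstacle is the detection step: showing that the objects $\COND(\mathscr E)$ jointly detect equivalences of condensed $\infty$-categories. Here is how I would try it. For a profinite set $S$ and a compact projective (extremally disconnected) $S$, the value of a condensed $\infty$-category $\mathscr C$ at $S$ is recovered from functors out of $\mathscr C$ into suitable presheaf-type targets. Concretely, I would choose $\mathscr E = \PSh(\mathscr D)$ with $\mathscr D$ large, so that the evaluations $\mathscr C(S)$ embed Yoneda-wise into $\Fun\cts(\mathscr C,\COND(\mathscr E))$ through the representable objects $S \times \mathscr C(S)\op \to \COND(\mathscr E)$.

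If this route proves too delicate, the fallback is to reduce to the case $\mathscr E = \mathcal S$. By the generalized $\infty$-Yoneda lemma for $\Cat_\infty$-valued sheaves, continuous functors into $\COND(\mathcal S)$ form the $\infty$-category of condensed left fibrations over $\mathscr C$, and these determine $\mathscr C$ up to equivalence.
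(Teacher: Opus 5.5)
Your formal steps are fine. $\Fun\cts(-,\COND(\mathscr E))$ turns colimits into limits (\autoref{Rmk internal hom colim}). Granting naturality of $\mathcal{Ex}$ in the scheme and hyperdescent for $\Sh^\hyp((-)_\proet,\mathscr E)$ (\autoref{Cor hypersheaf of categories}), you do get that $\colim_n \GAL(U_n) \to \GAL(X)$ induces an equivalence on $\Fun\cts(-,\COND(\mathscr E))$ for every $\mathscr E$ with small limits.

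The gap is the detection step, and it is not merely delicate: it is false. Each $\COND(\mathscr E)(S) = \Sh(\ProFin_{/S},\mathscr E)$ has small limits, hence is idempotent complete. Maps into such targets therefore cannot distinguish a condensed $\infty$-category from its idempotent completion. Concretely, let $\mathscr C$ be an $\infty$-category that is not idempotent complete (e.g.\ $BM$ for the monoid $M=\{1,e\}$ with $e^2=e$), and let $\mathscr C \to \mathscr C'$ be its idempotent completion. For the discrete condensed categories one has $\Fun\cts(\underline{\mathscr C},\COND(\mathscr E)) \simeq \Fun(\mathscr C,\Sh(\ProFin,\mathscr E))$, and likewise for $\mathscr C'$. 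So $\underline{\mathscr C} \to \underline{\mathscr C'}$ passes all of your tests without being an equivalence.

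Your fallback has the same defect. Left fibrations, even internal ones, recover a condensed $\infty$-category only up to idempotent completion; this is why the Martini--Wolf result quoted in \autoref{Rmk difference with Lurie} assumes idempotent completeness. You have no control over idempotent completeness of the abstract colimit $\colim_n \GAL(U_n)$. At best your argument yields a Morita-type equivalence, and even that needs an internal version (e.g.\ rerunning the argument for $X \times S$). It would then still have to be upgraded to an equivalence by a separate essential-surjectivity argument, which you do not supply.

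The paper avoids detection altogether by \emph{computing} the colimit. By \autoref{Lem Gal left fibration}, each $\GAL(U_n) \to \GAL(X)$ is exactly the left fibration corresponding to $h_{U_n}$ under $\Sh^\hyp(X_\proet,\mathcal S) \simeq \LFib_{/\GAL(X)}$ (exodromy for $\mathscr E=\mathcal S$ plus continuous straightening). This lemma is also where the naturality you "take from functoriality" is actually verified. By \autoref{Lem colim left fibrations}, a colimit of left fibrations over a fixed base, formed in $\Cond(\Cat_\infty)_{/\GAL(X)}$, is again a left fibration. The colimit can therefore be evaluated inside $\LFib_{/\GAL(X)}$, where it becomes $\colim_n h_{U_n} \to h_X$. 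That map is an equivalence of hypersheaves precisely because $U_\bullet \to X$ is a hypercover. The missing idea in your proposal is this identification of the maps $\GAL(U_n)\to\GAL(X)$ as left fibrations, which lets you compute the colimit directly rather than probe it with idempotent complete targets.
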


This is similar to two Van Kampen statements for $\GAL(X)$ and $\Pi_\infty^{\proet}(X)$ proved in \cite{HHLMMW}; see \ref{Par cosheaf results}.

While our results are stronger than those of \cite{BGH} in some ways, they are weaker in others. We only obtain $\GAL(X)$ as a condensed category, and do not recover the profinite structure when~$X$ is a qcqs scheme. Moreover, we do not get an exodromy equivalence for constructible \'etale sheaves with respect to a \emph{fixed} stratification (however, a quick geometric proof of a 1-categorical constructible \'etale exodromy correspondence with respect to a fixed stratification was obtained in \cite{vDdBExodromy}). We also do not pursue the question of classifying the $\infty$-topoi satisfying such a profinite exodromy correspondence, which was carried out in \cite[Thm.~9.3.1]{BGH} in the form of an $\infty$-categorical Hochster duality.

\phantomsection
\subsection*{Construction of the Galois category}\label{Sec what is Galois}
\addcontentsline{toc}{subsection}{Construction of the Galois category}
Given a qcqs scheme $X$, Barwick--Glasman--Haine construct the Galois category $\Gal(X)$ as a pro-object in (layered) $\infty$-categories \cite[Def.~12.1.3]{BGH}. As a functor from $\pi$-finite layered $\infty$-categories to spaces, it is given by $\Pi \mapsto \Map_{\RTop_\infty^\coh}(\Sh(X_\et,\mathcal S),\Fun(\Pi,\mathcal S))$ \cite[10.1.1]{BGH}. When $\Pi = BG$ for a finite group $G$, coherent geometric morphisms $\Sh(X_\et,\mathcal S) \to \Fun(G,\mathcal S)$ classify $G\op$-torsors over $X$. In general, there is a similar description in terms of filtering functors $\Pi\op \to \Sh\cons(X_\et,\mathcal S)$, which are roughly `$\Pi\op$-torsors' (see \cite[Ch.~VII, Def.~8.1 and Cor.~9.2]{MacLaneMoerdijk} for the statement for 1-topoi).

Such objects have not been classically studied in algebraic geometry (although this could be interesting to work out). For a working algebraic geometer, to get a more concrete grip on the exodromy correspondence, it is desirable to have a more geometric description of the profinite category $\Gal(X)$. This is the starting point of this paper.

As a first step, \cite[12.1.5]{BGH} gives a concrete description of the underlying category of the profinite category $\Gal(X)$ (obtained by taking the limit in $\Cat_\infty$): this is given by the category of points $\Pts^*(X_\et)$ of the \'etale topos, which by \cite[Exp.~VIII, Thm.~7.9]{SGA4II} is equivalent to the opposite of the category of strictly Henselian local rings with a weakly \'etale map to $X$. It is classical that an \'etale sheaf $\mathscr F$ comes equipped with cospecialisation maps $\mathscr F_{\bar x} \to \mathscr F_{\bar y}$ whenever $x \in \overline{\{y\}}$ \cite[Exp.~VIII, 7.7]{SGA4II}. Thus, ignoring continuity issues, the \'etale exodromy functor of \autoref{thm 2} and \autoref{thm 3} should take $\mathscr F$ to the functor $\Pts^*(X_\et) \to \Set$ taking $\bar x$ to the stalk $\mathscr F_{\bar x}$, and taking a map $\Spec \mathcal O_{X,\bar y}^{\operatorname{sh}} \to \Spec \mathcal O_{X,\bar x}^{\operatorname{sh}}$ to the cospecialisation $\mathscr F_{\bar x} \to \mathscr F_{\bar y}$.

To prove an exodromy correspondence via this approach, what is missing is a description of the profinite topology on $\Gal(X)$. We note that $\Gal(X)$ becomes a category \emph{internal} to Stone spaces, so we need to describe a profinite set of objects and a profinite set of morphisms with a continuous composition operation. While \cite[0.0.2]{BGH} explains the profinite topology on $\Hom(\bar x,\bar y)$ for geometric points $\bar x, \bar y \to X$, it is not clear how to describe a Stone space of all objects or of all morphisms in $\Gal(X)$.

The reason this is difficult to do is that the categorically/homotopically meaningful \makebox{$(2,1)$-category} of Stone categories is a \emph{localisation} of category objects in Stone spaces along the collection of internal functors that are (internally) fully faithful and essentially surjective (see \cite[Thm.~1.2.13]{LurieGoodwillie} for a version of this statement in an $\infty$-topos). Since surjections of Stone spaces do not always split, this means that such representatives are not unique, and to do this for $\Gal(X)$ relies on making choices --- similar to how a choice of pro-\'etale hypercover by w-strictly local schemes leads to a model for the pro-\'etale homotopy type, as suggested in \cite[Rmk.~4.2.9]{BhattScholze} and worked out in \cite[Appendix~A]{HemoRicharzScholbach}, \cite[Prop.~7.3.1.11]{WolfThesis}, and \cite[Rmk.~3.20]{HHLMMW}.

Instead of addressing this question directly, we opt to work with condensed categories in favour of Stone categories, since internal category theory is better understood in an ($\infty$-)topos. We replace the profinite definition of $\Gal(X)$ by a condensed one, defined as
\begin{align*}
\GAL(X) \colon \ProFin\op &\to \Cat \\
S &\mapsto \Fun^*_{\loccoh}(\Sh(X_\et),\Sh(S)).
\end{align*}
Such a definition was already suggested by \cite[Ex.~13.5.4]{BGH} (where the locally coherent condition is missing). A detailed comparison between the profinite and condensed Galois categories appeared in \cite[Prop.~7.1.2.8]{WolfThesis} (but see \autoref{Rmk Gal(X) profinite}), which was partially motivated by the present paper (see \hyperref[Sec acknowledgements]{\textsc{Acknowledgements}}) as well as by \cite{HHLMMW}. While this paper was in preparation, the above definition was also used as the definition in \cite[Def.~3.24]{HHLMMW}, \cite[Rec.~2.2.30]{MairThesis}, and \cite[Def.~2.24]{Mair}. In particular, while the results of the present paper are separate from those in \cite{HHLMMW}, the underlying philosophy is very similar.

This condensed Galois category generalises the observation above that the $*$-points of $\GAL(X)$ are given by points of the \'etale topos $\Sh(X_\et)$. To prove the exodromy theorem, we also need a generalisation of the classification of points of the \'etale topos via strictly Henselian schemes. Extending \cite[Exp.~VIII, Thm.~7.9]{SGA4II}, we prove the following:

\begin{thm}\label{Thm technical}
Let $X$ be a scheme, and let $\Stone_{/X}^{\loccoh} \to \ProFin$ be the Grothendieck construction of the functor $\GAL(X) \colon \ProFin\op \to \Cat$. Then $\Stone_{/X}^{\loccoh}$ is canonically equivalent to the category $\AffProEt_{/X}^\wsl$ of w-strictly local schemes that are limits of affine schemes in $\Et_{/X}$.
\end{thm}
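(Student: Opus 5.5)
We construct functors in both directions between $\AffProEt_{/X}^\wsl$ and $\Stone_{/X}^{\loccoh}$ over $\ProFin$ and show they are mutually inverse. An object of $\Stone_{/X}^{\loccoh}$ is a pair $(S,f)$ with $S$ profinite and $f^* \colon \Sh(X_\et) \to \Sh(S)$ a locally coherent geometric morphism. A morphism $(S,f) \to (T,g)$ is a continuous map $\phi \colon S \to T$ together with a natural transformation $\phi^* g^* \to f^*$.

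\textbf{From schemes to Stone spaces.} Given $Y = \lim Y_i$ w-strictly local with $Y_i \in \AffEt_{/X}$, set $S = \pi_0(Y)$. Since $Y$ is w-local, the closed points $Y^c \subseteq Y$ form a profinite set mapping homeomorphically onto $\pi_0(Y)$. We take the geometric morphism
\[
\Sh(S) \simeq \Sh(Y^c) \hookrightarrow \Sh(Y_\et) \to \Sh(X_\et).
\]
Here the first map uses that strict henselianity of the local rings at closed points gives $\Sh(Y_\et)|_{Y^c} \simeq \Sh(Y^c)$, as in \cite[Lemma~2.2.9]{BhattScholze}-type arguments. Concretely, $f^*\mathscr F$ is the sheaf on $S$ with sections over a clopen $U \subseteq S$ equal to $\mathscr F(Y_U)$, computed as $\colim_i \mathscr F(Y_{i,U})$. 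This description makes local coherence visible: for $V \in \AffEt_{/X}$, the pullback $f^*h_V$ is the profinite set $\Hom_X(Y,V) \to S$, i.e.\ $\lim_i$ of finite-type data, hence a coherent object of $\Sh(S)$, namely a Stone space over $S$. Functoriality in morphisms $Y \to Y'$ over $X$ is clear.

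\textbf{From Stone spaces to schemes.} Given $(S,f)$, define $Y$ as the limit over the cofiltered category of pairs $(V, s)$ with $V \in \AffEt_{/X}$ and $s$ a global section of $f^*h_V$. To make this a w-strictly local scheme with $\pi_0 = S$, we refine it: for each clopen partition of $S$ and each local section over the pieces, we take $\coprod_j V_j$. The key input is that $f^*$ is locally coherent and preserves finite limits. This makes the indexing category cofiltered, and it makes $Y \to X$ pro-(affine étale). We then check that $Y$ is w-strictly local by computing stalks: at $s \in S$, the composite $\Sh(X_\et) \to \Sh(S) \xrightarrow{s^*} \Set$ is a point of $\Sh(X_\et)$. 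By \cite[Exp.~VIII, Thm.~7.9]{SGA4II} this point corresponds to a strictly henselian local ring, and the fibre of $Y$ over $s$ recovers that $\Spec \mathcal O^{\mathrm{sh}}$. Surjectivity and injectivity of $\pi_0(Y) \to S$ come from evaluating on clopen decompositions of $S$.

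\textbf{Equivalence and main obstacle.} Full faithfulness reduces to the statement that maps $Y \to Y'$ over $X$ are $\lim_j \colim_i \Hom_X(Y_i, Y'_j)$. Morphisms in the Grothendieck construction are computed the same way, via the values of $f^*$ on representables $h_{Y'_j}$, because a natural transformation $\phi^*g^* \to f^*$ of finite-limit preserving functors is determined on the generating site $\AffEt_{/X}$. The two round trips are then identified using the formula $f^* h_V = \Hom_X(Y,V)$.

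The main obstacle is essential surjectivity: showing that the limit built from $(S,f)$ is genuinely w-strictly local. The delicate part is the w-local condition, that closed points form a closed subset and that every connected component is local. I would first try to deduce it from the pointwise identification with strict henselisations, combined with local coherence. Local coherence is what ensures the spectral topology of $Y$ restricted to closed points is exactly the profinite topology of $S$; without it, as the missing hypothesis in \cite[Ex.~13.5.4]{BGH} illustrates, one obtains a non-continuous family of points.
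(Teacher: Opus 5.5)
Most of what you actually carry out follows the paper's route. Your inverse construction is the Henselisation of \autoref{Def Henselisation}, which pro-corepresents $\Gamma(S,f^*(-))$. Full faithfulness is a clopen-by-clopen pro-Yoneda argument. Strict locality of the components, via \cite[Exp.~VIII, Thm.~7.9]{SGA4II} on fibres plus Artin's criterion, is the paper's second proof of \autoref{Lem Henselisation acyclic}.

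None of this uses local coherence, however. The following all hold for an arbitrary geometric morphism $\Sh(S)\to\Sh(X_\et)$:
the construction itself; its cofilteredness, which comes from quasi-compactness of $\mathbf 1\in\Sh(S)$ and the fact that epimorphisms in $\Sh(S)$ are surjective on global sections, not from local coherence; the identification $\pi_0(Y)\cong S$; and acyclicity. So the one step separating $\AffProEt_{/X}^\wsl$ from $\AffProEt_{/X}^{\acyc}$ is that $Y^\cl$ is closed in $Y$. This is exactly what your proposal leaves open.

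Your heuristic for this step also aims at the wrong target. For every acyclic $Y$, $Y^\cl$ is quasi-compact and $Y^\cl\to\pi_0(Y)$ is a continuous bijection onto a Hausdorff space, hence a homeomorphism (\autoref{Lem acyclic}(d)). Components are local regardless. So the topology on closed points is not what local coherence buys. In \autoref{Ex acyclic not wsl}, a non-coherent point of $\Spec\mathscr O_{\mathbf C_p}$ gives $W=X\times\{\infty\}\cup\{\eta\}\times S$. Its closed points carry the profinite topology of $S$, but their closure contains the non-closed point $(\eta,\infty)$.

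The missing idea is to turn local coherence into a statement about opens of $Y$. Write $Y=\lim_i Y_i$ with $Y_i\in\AffEt_{/X}$. Local coherence, applied to the sections $\mathbf 1\to f^*h_{Y_i}$ coming from $Y\to Y_i$, makes each $\Sh(S)\to\Sh(Y_{i,\et})$ coherent. Hence the induced $g_*\colon\Sh(S)\to\Sh(Y_\et)$, through which $f_*$ factors (cf.\ \autoref{Lem acyclic local}), is coherent, because $\Sh(Y_\et)^\coh=\colim_i\Sh(Y_{i,\et})^\coh$ \cite[Exp.~IX, Cor.~2.7.4]{SGA4III}.

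On subterminals, $g^*$ sends an open $U\subseteq Y$ to $U\cap Y^\cl\subseteq Y^\cl\cong S$, since opens are stable under generisation and each component has a unique closed point (\autoref{Rmk open}). So for quasi-compact $U$, the set $U\cap Y^\cl$ is a quasi-compact open, i.e.\ clopen, in $S$. Now let $w\in Y$ be non-closed and let $x$ be the closed point of its component. Pick a quasi-compact open $U\ni w$ with $x\notin U$, and let $\pi\colon Y\to\pi_0(Y)\cong S$ be the projection. Then $U\cap\pi^{-1}\bigl(S\setminus(U\cap Y^\cl)\bigr)$ is an open neighbourhood of $w$ missing $Y^\cl$. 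This is \autoref{Lem coherent}; without it, your essential surjectivity only lands in acyclic schemes.

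A minor point: $f^*h_V$ is not ``the profinite set $\Hom_X(Y,V)$''. That set is its global sections; its espace \'etal\'e is a clopen in $S\times\{1,\ldots,d\}$.
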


This is proved in \autoref{Thm equivalence} and \autoref{Cor loccoh}. An analogous result for arbitrary geometric morphisms from profinite sets to a coherent topos $\mathscr X$ is also the key ingredient for Lurie's proof of Makkai's strong conceptual completeness theorem \cite[Thm.~6.3.14]{LurieUltra}. The statement of \autoref{Thm equivalence} in itself can be deduced from Lurie's theorem, but we give a geometric proof in order to get the more refined versions of \autoref{Cor loccoh} and \autoref{Cor loccoh size}. In the special case where $X$ is affine and restricting to w-projective objects (see \autoref{Rmk w-projective}), a result similar to \autoref{Cor loccoh} was deduced from Lurie's version in \cite[Thm.~3.38]{Mair} around the same time the present paper appeared.

\phantomsection
\subsection*{On cardinality bounds}\label{Sec on cardinality}
\addcontentsline{toc}{subsection}{On cardinality bounds}
This paper heavily uses the language of condensed mathematics. Working with sheaves on the category of profinite sets inevitably leads to set-theoretic issues. We resolve these by restricting to profinite sets that are small with respect to some sufficiently large cardinal~$\kappa$. We prove a more precise version of \autoref{thm 1} taking these cardinals into account; see \autoref{Thm proetale exodromy} and \autoref{Def size scheme} for a precise formulation. We do a little extra work to avoid additional hypotheses on the cardinal $\kappa$: it need neither be regular nor a strong limit cardinal. In particular, we do not have access to enough w-projective objects in the pro-\'etale site. For example, this allows us to show that a version of \autoref{thm 1} holds in the \emph{light} condensed setting as long as $\mathscr O_X(U)$ is countable for every affine open $U \subseteq X$.

While it appears that the $\kappa$-condensed category $\GAL_\kappa(X)$ of a scheme $X$ depends on a choice of auxiliary cardinal $\kappa$, we prove an accessibility result in \hyperref[Sec change of cardinal]{\S4.2} for the functor $\GAL \colon \ProFin\op \to \Cat$, which implies that knowing $\GAL_\kappa(X)$ for a single (sufficiently large) value of $\kappa$ determines $\GAL_\lambda(X)$ for every (sufficiently large) cardinal $\lambda$.

\phantomsection
\subsection*{Outline of the proofs}
\addcontentsline{toc}{subsection}{Outline of the proofs}
We start by proving \autoref{Thm technical} by a similar strategy to \cite[Exp.~VIII, Thm.~7.9]{SGA4II}. Given a locally coherent geometric morphism $s_* \colon \Sh(S) \to \Sh(X_\et)$ from a profinite set~$S$, we construct a Henselisation $X_{(s)} \in \AffProEt_{/X}^\wsl$ of $X$ along $s$, which is a w-strictly local affine scheme whose profinite set of connected components is canonically identified with~$S$. Then the subscheme of closed points $Z = X_{(s)}^\cl$ is zero-dimensional with separably algebraically closed residue fields, so $\Sh(Z_\et) \simeq \Sh(S)$. We prove that $s_*$ is isomorphic to the pushforward $\Sh(Z_\et) \to \Sh(X_\et)$, and that every w-strictly local scheme in $\AffProEt_{/X}$ arises in this way.

With \autoref{Thm technical} in place, the proof of \autoref{thm 1} proceeds as follows. The functor category $\Fun\cts(\GAL(X),\COND(\mathscr E))$ may be viewed as a full subcategory of the category of $\mathscr E$-valued presheaves on the Grothendieck construction $\Stone_{/X}^{\loccoh} \simeq \AffProEt_{/X}^\wsl$. The same holds for $\Sh^\hyp(X_\proet,\mathscr E)$ since $\AffProEt_{/X}^\wsl$ forms a basis for the pro-\'etale topology, so it only remains to see that the descent conditions imposed on both subcategories agree. The key input for this verification is \autoref{Lem w-local topology}, which shows that the pro-\'etale topology on $\AffProEt_{/X}^\wsl$ agrees with the \emph{w-local topology} (\autoref{Def w-local topology}), whose covers are jointly surjective families of w-local maps in $\AffProEt_{/X}^\wsl$. (When working with $\kappa$-condensed sets for a strong limit cardinal $\kappa$, the proof simplifies even further by restricting to w-projective schemes and extremally disconnected sets: instead of the w-local topology, we need only consider functors that take finite coproducts to products. See \autoref{Rmk strong limit} for details.)

To obtain \autoref{thm 2} and \autoref{thm 3}, we identify both sides of the \'etale and constructible exodromy correspondences as full subcategories of the \makebox{$\infty$-categories} appearing in \autoref{thm 1}. To extract $\Sh^{\post}(X_\et,\mathcal S)$ inside $\Sh^\hyp(X_\proet,\mathcal S)$, we use the criterion that truncated \'etale sheaves are exactly the truncated pro-\'etale sheaves that take filtered limits of affine pro-\'etale schemes to colimits; see \autoref{Prop etale to proetale} (this is a variant of \cite[Lem.~5.1.2]{BhattScholze}). There is a similar description for $\Sh(S,\mathcal S)$ inside $\Sh(\ProFin_{/S},\mathcal S)$, which proves \autoref{thm 2}. To recover \autoref{thm 3}, we only need to observe that constructibility descends along surjective morphisms of schemes; see \autoref{Lem reflect coherence}.

\phantomsection
\subsection*{Structure of the paper}
\addcontentsline{toc}{subsection}{Structure of the paper}
We start in \hyperref[Sec prelim]{\S1} with some preliminary results. In \hyperref[Sec cardinality]{\S1.1}, we set up the cardinality bounds used throughout the paper. This is followed up in \hyperref[Sec acyclic]{\S1.2} by a quick review of acyclic and w-strictly local schemes, as defined by Artin \cite[\S3]{Artin} and Bhatt--Scholze \cite[Def.~2.2.1]{BhattScholze} respectively. In \hyperref[Sec kappa proet]{\S1.3}, we define a $\kappa$-small pro-\'etale site $X_{\kappa\text{-}\proet}$, as well as a new ($\kappa$-small) w-local site $X_{(\kappa\text{-})\wl}$. 

Next, \hyperref[Sec condensed]{\S2} is dedicated to the proof of \autoref{Thm technical}. In \hyperref[Sec condensed category]{\S2.1}, we give the definition of $\GAL(X)$ as a condensed category of points and relate it to the profinite definition of \cite{BGH}. As in the classical computation of the category of points of $\Sh(X_\et)$ \cite[Exp.~VIII, Thm.~7.9]{SGA4II}, we want to identify w-strictly local schemes $W \in \AffProEt_{/X}$ with the functors $\Sh(X_\et) \to \Set$ they corepresent. In fact, since every clopen in $W$ corepresents such a functor, these glue together to a functor $\Sh(X_\et) \to \Sh(S)$ where $S = \pi_0(W)$. We encode this extra data by exhibiting~$W$ itself as (the global sections of) an $S$-cosheaf in $\AffProEt_{/X}$; this theory is set up in \hyperref[Sec cosheaf]{\S2.2}. In \hyperref[Sec hensel]{\S2.3}, we define the Henselisation along a geometric morphism $s_* \colon \Sh(S) \to \Sh(X_\et)$, and show that they set up an equivalence between geometric morphisms $s_* \colon \Sh(S) \to \Sh(X_\et)$ and acyclic schemes $W \in \AffProEt_{/X}$ with $\pi_0(W) \cong S$ (see \autoref{Prop equivalence S}). This is used in \hyperref[Sec Galois]{\S2.4} to prove \autoref{Thm technical} (see \autoref{Thm equivalence} and \autoref{Cor loccoh}).

In \hyperref[Sec exodromy]{\S3}, we prove the pro-\'etale and \'etale exodromy theorems. The subsections \hyperref[Sec proetale exodromy]{\S3.1}, \hyperref[Sec etale exodromy]{\S3.2}, and \hyperref[Sec constructible exodromy]{\S3.3} are dedicated to the proofs of \autoref{thm 1}, \autoref{thm 2}, and \autoref{thm 3} respectively, along the lines outlined above.

We end in \hyperref[Sec formal]{\S4} with some formal properties of the condensed category of points: in \hyperref[Sec van Kampen]{\S4.1}, we prove the Van Kampen theorem (\autoref{thm 4}), and in \hyperref[Sec change of cardinal]{\S4.2}, we prove an accessibility result for the functor $\GAL(X) \colon \ProFin\op \to \Cat$.

In \hyperref[Sec appendix A]{\textsc{Appendix A}}, we include some general results on \makebox{$\infty$-topoi} that are used in \hyperref[Sec exodromy]{\S3} and \hyperref[Sec formal]{\S4}. The main results are the construction of a hypersheaf of large $\infty$-categories $U \mapsto \Sh^\hyp(\mathscr C_{/U},\mathscr E)$ on any $\infty$-site $\mathscr C$ with values in any $\infty$-category $\mathscr E$ with small limits (\autoref{Cor hypersheaf of categories}), and a computational tool for colimits of condensed $\infty$-categories (\autoref{Lem colim left fibrations}) that is used to prove \autoref{thm 4}. Finally, \hyperref[Sec appendix B]{\textsc{Appendix B}} gives an alternative, more algebro-geometric, way to think about the classification of w-strictly local schemes of \autoref{Thm equivalence}.

\phantomsection
\subsection*{Notational conventions}
\addcontentsline{toc}{subsection}{Notational conventions}
Our notations generally follow those of \cite{SGA4I,SGA4II,SGA4III,Stacks} on the one hand and \cite{LurieHTT,LurieHA,LurieSAG} on the other.

As in \cite[\S1.2.15 and \S5.4.1]{LurieHTT}, we assume the existence of sufficiently large strongly inaccessible cardinals. We will fix an uncountable strongly inaccessible cardinal once and for all (without name), and the notion of smallness is understood with respect to this cardinal. We will write $\Set$ (resp.\ $\mathcal S$, $\Cat_\infty$) for the category (resp.\ $\infty$-category) of small sets (resp.\ (essentially) small spaces, (essentially) small $\infty$-categories \cite[\S5.4.1]{LurieHTT}). We will write $\widehat{\mathcal S}$ (resp.\ $\widehat{\Cat}_\infty$) for the $\infty$-category of large spaces (resp.\ $\infty$-categories), which are assumed small with respect to some (fixed and unnamed) larger universe. In an even larger universe, we write $\widehatt{\Cat}_\infty$ for the $\infty$-category of very large $\infty$-categories. Unless otherwise noted, all spaces will be assumed small, and all $\infty$-categories will be assumed large (i.e., in~$\widehat{\Cat}_\infty$). As noted in \hyperref[Sec on cardinality]{\textsc{On cardinality bounds}}, we have made an effort to minimise the use of universes. The only place where it is really used is to be able to talk about large $\infty$-categories like $\Sh(\mathscr C,\mathscr E)$ for $\mathscr C$ a small $\infty$-site and $\mathscr E$ a (large) $\infty$-category with small limits. A change of universe argument is used in \autoref{Prop sheaf of categories} to apply a Yoneda embedding $\mathscr E \to \PSh(\mathscr E,\widehat{\mathcal S})$, but this can be avoided if $\mathscr E$ is assumed to be locally small. (It appears to be standard practice these days not to include this hypothesis.)

If $\mathscr C$ is an $\infty$-site and $\mathscr E$ an $\infty$-category, we write $\Sh(\mathscr C,\mathscr E)$ for the $\mathscr E$-valued sheaves on $\mathscr C$ (we recall this notion in \hyperref[Sec appendix A]{\textsc{Appendix A}}). Because a substantial part of the arguments are 1-categorical, we will reserve the notation $\Sh(\mathscr C)$ for $\Sh(\mathscr C,\Set)$ when $\mathscr C$ is a 1-site.

We denote the category of topological spaces by $\Top$, and use $\RTop$ and $\LTop$ for the \makebox{$(2,1)$-category} of topoi with right adjoints $f_*$ and left adjoints $f^*$ as morphisms, respectively. Likewise, $\RTop_\infty$ and $\LTop_\infty$ denote that $(\infty,1)$-category of $\infty$-topoi with right adjoints $f_*$ and left adjoints $f^*$ as morphisms, respectively.

\phantomsection
\subsection*{Lower versus higher categories}\label{Sec lower vs higher}
\addcontentsline{toc}{subsection}{Lower versus higher categories}
The object of study in this paper is $\mathscr E$-valued sheaves on the \'etale and pro-\'etale sites of a scheme~$X$. We have separated the geometric and (1-categorical) site-theoretic arguments on $X_\et$ and $X_\proet$ (\S1--2) from the higher categorical arguments about $\mathscr E$-valued sheaves (\S3--4 and \hyperref[Sec appendix A]{\textsc{Appendix A}}). In particular, the proof of \autoref{Thm technical} is entirely 1-categorical. 

Readers who are primarily interested in $\Set$- and $\Ab$-valued sheaves should be able to make the required modifications in \S3--4 to avoid higher category theory altogether. On the other hand, we chose to present the proof with the most general coefficients where it applies. For instance, applying this general version when $\mathscr E$ is the stable $\infty$-category $D(\mathbf Z)$ gives derived statements; see \autoref{Ex D(Z)} and \autoref{Ex D(Z) left completed}.

\phantomsection
\subsection*{Acknowledgements}\label{Sec acknowledgements}
\addcontentsline{toc}{subsection}{Acknowledgements}
I would like express great gratitude to Clark Barwick and Peter Haine. This work draws a lot of inspiration from their ideas, both from the paper \cite{BGH} and from many emails and conversations over the years. I thank Peter Scholze for helpful discussions and encouragement at the early stages, and Qingyuan Bai, Tobias Barthel, Max Blans, Thomas Blom, David Carchedi, Denis-Charles Cisinski, Johan Commelin, Gijs Heuts, Ieke Moerdijk, Luca Pol, and Mauro Porta for useful discussions on higher category theory, (higher) topos theory, and exodromy. Special thanks are due to Sebastian Wolf, who has been a collaborator through a large part of the project, though he has declined to be named as a coauthor. This work owes a lot to his ideas, and could not have been completed without his input.

This project was funded by the NWO grant VI.Veni.212.204, and finished at the Max Planck Institute for Mathematics under ERC Horizon grant no.~101042990. I am grateful to the MPIM for the excellent working conditions under which this project was completed.

\section{Preliminaries}\label{Sec prelim}

\subsection{Cardinality bounds}\label{Sec cardinality}
The main results of this paper are formulated in the language of condensed mathematics \cite{ClausenScholze}, which studies sheaves on the site $\ProFin$ of profinite sets. This leads to set-theoretic problems that can be resolved in a number of ways. We choose to work with $\kappa$-condensed sets, which for us means sheaves on the subcategory $\ProFin_\kappa$ of profinite sets $X$ such that $\lvert C(X,\mathbf F_2) \rvert < \kappa$ (see \ref{Par profinite kappa} below). For instance, for $\kappa = \aleph_1$, this gives \emph{light condensed sets}.

In this section, we set up size bounds on profinite sets and schemes that we will use throughout the paper.

\begin{Rmk}\label{Rmk colimit kappa small}
Let $\kappa$ be an infinite cardinal, and let $\mathcal I$ be a small category with $\lvert \ob \mathcal I \rvert \leq \kappa$. Let $\mathscr C$ be either $\Set$ or $\CRing$, let $A \in \mathscr C$, and let $D \colon \mathcal I \to \mathscr C_{A/}$ be a diagram such that $\lvert D(i) \rvert \leq \kappa$ for all $i \in \mathcal I$. Then $\lvert \colim D \rvert \leq \kappa$.
\end{Rmk}

\begin{Par}\label{Par Ind kappa}
If $\mathscr C$ is a small $\infty$-category and $\kappa$ is an infinite cardinal, we write $\Ind^\kappa(\mathscr C)$ for the full subcategory of $\Ind(\mathscr C)$ on the $\kappa$-small filtered colimits of objects of $\mathscr C$. Dually, write $\Pro^\kappa(\mathscr C)$ for the full subcategory of $\Pro(\mathscr C)$ on $\kappa$-small cofiltered limits of objects of~$\mathscr C$. We will say that an object of $\Ind(\mathscr C)$ (resp.~$\Pro(\mathscr C)$) is \emph{$\kappa$-small} if it is in $\Ind^\kappa(\mathscr C)$ (resp.~$\Pro^\kappa(\mathscr C)$).

When $\kappa$ is regular, these notations agree with other common notations. For instance, $\Ind^\kappa(\mathscr C)$ agrees with $\Ind_\omega^\kappa(\mathscr C)$ by \cite[Tags \href{https://kerodon.net/tag/0664}{0664} and \href{https://kerodon.net/tag/063R}{063R}(a)]{Kerodon}. If $\kappa$ is uncountable, this in turn coincides with the full subcategory $\Ind(\mathscr C)^\kappa \subseteq \Ind(\mathscr C)$ of $\kappa$-compact objects \cite[Tags \href{https://kerodon.net/tag/0692}{0692} and \href{https://kerodon.net/tag/061Q}{061Q}]{Kerodon}.

Some care should be taken when $\kappa = \omega$: every finite colimit of objects of $\mathscr C$ is compact, so we get $\Ind^\omega(\mathscr C) \subseteq \Ind(\mathscr C)^\omega$. Moreover, $\Ind(\mathscr C)^\omega$ is an idempotent completion of $\mathscr C$ \cite[Tag \href{https://kerodon.net/tag/0693}{0693}]{Kerodon}, so the inclusions $\mathscr C \hookrightarrow \Ind^\omega(\mathscr C) \hookrightarrow \Ind(\mathscr C)^\omega$ are equivalences when $\mathscr C$ is idempotent complete. However, when $\mathscr C \subseteq \mathcal S$ is the full subcategory of finite CW complexes, Wall's finiteness obstruction \cite[Thm.~F]{Wall} measures the difference between $\mathscr C = \Ind^\omega(\mathscr C)$ and $\Ind(\mathscr C)^\omega = \mathcal S^\omega$.

In the 1-categorical case, see also \cite[Rmk.~2.1.6 and Prop.~2.3.11]{MakkaiPare}.
\end{Par}

\begin{Par}\label{Par profinite kappa}
In case of the category of Stone topological spaces $\Pro(\Fin)$, we will simply write $\ProFin_\kappa$ for $\Pro^\kappa(\Fin)$. For instance, $\ProFin_{\aleph_1}$ is the category of light profinite sets. The following lemma is an easy consequence of the equivalence $\ProFin \simeq \BoolAlg\op$ given by $X \mapsto C(X,\mathbf F_2)$ and $\Spec A \mapsfrom A$.
\end{Par}

\begin{Lemma}\label{Lem kappa-small}
Let $\kappa$ be an infinite cardinal, and let $X \in \ProFin$. Then the following are equivalent:
\begin{enumerate}
\item $X \in \ProFin_\kappa$.
\item $\lvert C(X,\mathbf F_2) \rvert < \kappa$.
\item There exists a directed set $I$ with $\lvert I \rvert < \kappa$ and a diagram $I\op \to \Fin$ such that $X \cong \lim X_i$.
\item There exists a category $\mathscr C$ with $\lvert \operatorname{ob} \mathscr C \rvert < \kappa$ and $\lvert \operatorname{mor} \mathscr C \rvert < \kappa$ and a diagram $\mathscr C \to \Fin$ such that $X \cong \lim X_i$.\hfill\qed
\end{enumerate}
\end{Lemma}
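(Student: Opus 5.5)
I would prove the cycle of implications (a)$\Rightarrow$(c)$\Rightarrow$(d)$\Rightarrow$(b)$\Rightarrow$(a). Throughout I use Stone duality $\ProFin \simeq \BoolAlg\op$ from \ref{Par profinite kappa}. Under this duality, cofiltered limits of finite sets correspond to filtered colimits of finite Boolean algebras, and $C(-,\mathbf F_2)$ converts $\lim X_i$ into $\colim C(X_i,\mathbf F_2)$.

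\emph{(a)$\Rightarrow$(c).} By definition of $\Pro^\kappa(\Fin)$ in \ref{Par Ind kappa}, $X$ is a limit of a cofiltered diagram indexed by some $\mathcal J$ that is $\kappa$-small. I would replace $\mathcal J$ by a directed set of cardinality $<\kappa$. One option is to use that every $\kappa$-small filtered category admits a cofinal functor from a directed poset of size $<\kappa$, for instance the poset of finite subdiagrams closed under chosen cocones. Alternatively, I could simply pass through (b) first.

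\emph{(c)$\Rightarrow$(d)} is trivial, since a directed set is a category with $\lvert\ob\rvert,\lvert\mathrm{mor}\rvert\le\lvert I\rvert<\kappa$ (or finite).

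\emph{(d)$\Rightarrow$(b).} Here $C(X,\mathbf F_2)$ is the Boolean algebra $\colim_{\mathscr C\op} C(X_i,\mathbf F_2)$, computed in $\BoolAlg$ (equivalently in rings under $\mathbf F_2$). This colimit is a quotient of the free Boolean algebra on the set $\coprod_i C(X_i,\mathbf F_2)$. That set has cardinality $<\kappa$ when $\kappa$ is infinite, because it is a union of fewer than $\kappa$ finite sets. A free Boolean algebra on fewer than $\kappa$ generators has cardinality $<\kappa$ when $\kappa$ is infinite: if the generating set is finite, the algebra is finite; otherwise it has the same cardinality as the generating set. This is the same bookkeeping as \autoref{Rmk colimit kappa small}, but with strict inequalities, so I would argue directly rather than cite that remark.

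\emph{(b)$\Rightarrow$(a).} Write $B = C(X,\mathbf F_2)$ as the directed union of its finite subalgebras. Each finite subalgebra is generated by a finite subset of $B$, so the directed set $I$ of finite subalgebras has $\lvert I\rvert\le\lvert B\rvert^{<\omega}$. If $B$ is finite this is a finite set with a maximum. Otherwise $\lvert I\rvert=\lvert B\rvert<\kappa$. Dualising gives $X\cong\lim_I\Spec(B_i)$ with $\lvert I\rvert<\kappa$, which simultaneously proves (b)$\Rightarrow$(c) and (b)$\Rightarrow$(a).

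The only genuinely delicate step is matching the paper's notion of ``$\kappa$-small cofiltered limit'' with the directed-set description when $\kappa$ is singular or $\kappa=\omega$. Routing (a)$\Rightarrow$(b) through (d), a $\kappa$-small category being covered by (d), avoids any cofinality argument. The case $\kappa=\omega$ gives finite $X$ throughout, which is consistent.
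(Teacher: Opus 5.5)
Your proposal is correct and is essentially the argument the paper intends. The paper omits the proof and says only that the lemma is an easy consequence of Stone duality $\ProFin \simeq \BoolAlg\op$; your implications (a)$\Rightarrow$(d)$\Rightarrow$(b)$\Rightarrow$(a),(c) and (c)$\Rightarrow$(d), with the cardinality count for free Boolean algebras and for the directed set of finite subalgebras, are exactly how that consequence unpacks, and routing (a) through (d) rather than through your cofinality option correctly avoids any regularity assumption on $\kappa$ and handles $\kappa=\omega$ uniformly.
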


\begin{Par}\label{Par schemes kappa why}
For schemes, we want a notion of size such that every $\kappa$-small scheme has a pro-\'etale cover by a $\kappa$-small w-strictly local scheme. Since the algebraic closure of a finite field is countably infinite, our notion of smallness should not distinguish between finite and countable infinite sets. This motivates the following definitions.
\end{Par}

\begin{Def}\label{Def size profinite}
If $X$ is a profinite set, write $\size(X)$ for the supremum of $\aleph_0$ and $\lvert C(X,\mathbf F_2)\rvert$.
\end{Def}

\begin{Def}\label{Def size scheme}
Let $X$ be a scheme. Write $\size(X)$ for the supremum of $\aleph_0$ and the cardinalities $\lvert \mathscr O_X(U)\rvert$ taken over all affine opens $U \subseteq X$. If $\kappa$ is a cardinal, we will say that~$X$ is \emph{$\kappa$-small} if $\size(X) < \kappa$, and we write $\Sch^\kappa$ for the full subcategory of $\Sch$ on $\kappa$-small schemes $X$. If $A \in \CRing$, we write $\size(A) = \max(\aleph_0,\lvert A \rvert)$, and we write $\CRing^\kappa$ for the full subcategory of $\CRing$ on $\kappa$-small commutative rings.
\end{Def}

If $X = \Spec A$ is affine, then the same argument as in \cite[Tag \href{https://stacks.math.columbia.edu/tag/000P}{000P}]{Stacks} shows that $\size(X) = \size(A)$. In particular, although our notion of size does not agree with the definition of \cite[Tag \href{https://stacks.math.columbia.edu/tag/000H}{000H}]{Stacks}, they do agree in the affine case.

For any scheme $X$, we will prove an exodromy theorem for the $\kappa$-small pro-\'etale site on~$X$, for any cardinal $\kappa > \size(X)$.

\begin{Par}\label{Par pi0 profinite}
If $X$ is a qcqs scheme, recall that the quotient topology on $\pi_0(X)$ for the map $X \to \pi_0(X)$ is profinite \cite[Tags \href{https://stacks.math.columbia.edu/tag/0906}{0906} and \href{https://stacks.math.columbia.edu/tag/094L}{094L}]{Stacks}. In this paper, we will always consider $\pi_0(X)$ as a profinite set.
\end{Par}

\begin{Lemma}\label{Lem pi0 kappa}
Let $X$ be a qcqs scheme, and $\pi_0(X) \in \ProFin$ its profinite set of connected components. Then $\size(\pi_0(X)) \leq \size(X)$.
\end{Lemma}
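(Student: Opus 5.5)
The plan is to bound $C(\pi_0(X),\mathbf F_2)$, the Boolean algebra of clopen subsets of $\pi_0(X)$, by the ring-theoretic data of $X$. Since $X \to \pi_0(X)$ is a quotient map, clopens of $\pi_0(X)$ correspond bijectively to clopen subsets of $X$. So it suffices to show that $X$ has at most $\size(X)$ clopen subsets.

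For the affine case $X = \Spec A$, clopen subsets of $\Spec A$ correspond bijectively to idempotents of $A$ (for instance via \cite[Tag \href{https://stacks.math.columbia.edu/tag/00EE}{00EE}]{Stacks}). Hence the number of clopens is at most $\lvert A \rvert \leq \size(A) = \size(X)$.

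For general qcqs $X$, choose a finite cover $X = U_1 \cup \dots \cup U_n$ by affine opens, which is possible since $X$ is quasi-compact. A clopen $V \subseteq X$ is determined by the tuple $(V \cap U_1, \dots, V \cap U_n)$, and each $V \cap U_i$ is clopen in $U_i$. This gives an injection of the set of clopens of $X$ into $\prod_{i} \operatorname{Clopen}(U_i)$. That product has cardinality at most $\prod_i \lvert \mathscr O_X(U_i)\rvert \leq \size(X)^n = \size(X)$, using that $\size(X)$ is infinite. Therefore $\lvert C(\pi_0(X),\mathbf F_2)\rvert \leq \size(X)$. Taking the supremum with $\aleph_0$, and using that $\size(X) \geq \aleph_0$, gives $\size(\pi_0(X)) \leq \size(X)$.

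The only point needing care is the identification of clopens of $\pi_0(X)$ with clopens of $X$. Each clopen of $X$ is a union of connected components, so it is saturated for the quotient map, and its image is therefore open and closed in the quotient topology. Conversely, preimages of clopens are clopen. These two operations are inverse bijections. Quasi-separatedness is not needed for the counting itself; it is only used, through \ref{Par pi0 profinite}, to know that $\pi_0(X)$ is profinite.
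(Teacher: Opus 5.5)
Your proof is correct and is essentially the paper's argument. The paper identifies $C(X,\mathbf F_2)$ with the idempotents of $\mathscr O_X(X)$ and injects $\mathscr O_X(X)$ into $\prod_{i=1}^n \mathscr O_X(U_i)$ for a finite affine cover; you instead restrict clopens to each $U_i$ and count idempotents there, which is the same finite-cover counting in slightly different packaging.
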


\begin{proof}
The set $C(X,\mathbf F_2)$ can be identified with the set of idempotents in $\mathscr O_X(X)$, so the result follows from \autoref{Lem kappa-small} since $\mathscr O_X(X)$ injects into a finite product $\prod_{i=1}^n \mathscr O_X(U_i)$ for $U_i \subseteq X$ affine.
\end{proof}

\begin{Rmk}
There is a small inconsistency in our terminology, in that a scheme $X$ is $\kappa$-small if and only if $\size(X) < \kappa$, but the same does not quite hold for profinite sets. Indeed, by \ref{Par profinite kappa} and \autoref{Lem kappa-small}, a profinite set $X$ is $\kappa$-small if and only if $\lvert C(X,\mathbf F_2) \rvert < \kappa$, but $\size(X)$ is equal to $\max(\aleph_0,\lvert C(X,\mathbf F_2) \rvert)$ and not $\lvert C(X,\mathbf F_2) \rvert$. This issue will not come up in practice, as the main theorems will start with a $\kappa$-small scheme $X$, which forces $\kappa$ to be uncountable since $\kappa > \size(X) \geq \aleph_0$. As explained in \ref{Par schemes kappa why}, our definition of $\size(X)$ is chosen so that every $\kappa$-small scheme has a pro-\'etale cover by a $\kappa$-small w-strictly local scheme, and this property would fail with other conventions. The convention for the $\size$ of a profinite set is then justified by \autoref{Lem pi0 kappa}.
\end{Rmk}

\subsection{Review of acyclic and w-strictly local schemes}\label{Sec acyclic}
Recall the following definition:

\begin{Def}\label{Def acyclic}
A scheme $W$ is \emph{acyclic} if it is quasi-compact and every \'etale surjection $U \twoheadrightarrow W$ has a section. This is the same definition as \cite[\S3]{Artin}, except that we always assume that $W$ is quasi-compact. We say that a scheme $W$ is \emph{w-strictly local} if it is acyclic and the set of closed points $W^\cl \subseteq W$ is closed. This coincides with the definition of \cite[Def.~2.1.1 and Def.~2.2.1]{BhattScholze} since $W$ is automatically affine by \autoref{Lem acyclic} below.
\end{Def}

\begin{Rmk}\label{Rmk w-projective}
We will say that a scheme $W$ is \emph{w-projective} if it is w-strictly local and $\pi_0(W)$ is projective in compact Hausdorff spaces. Such schemes are called \emph{w-contractible} in \cite[Def.~2.4.1]{BhattScholze}, but the condition is more similar to projectivity than contractibility. Note that this notion is stronger than the \emph{weakly projective} objects of \cite[Def.~6.2.2]{LurieUltra} in $\AffProEt_{/X} = \Pro(\AffEt_{/X})$ (see \autoref{Def AffProEt} for notation), which correspond to acyclic objects in our definition.
\end{Rmk}

In \autoref{Ex acyclic not wsl}, we will see examples of acyclic schemes that are not w-strictly local. (These can also be constructed directly using only scheme-theoretic methods.)

\begin{Lemma}\label{Lem acyclic}
Let $W$ be an acyclic scheme. Then
\begin{enumerate}
\item $W$ is affine.
\item Every intersection of clopen subschemes of $W$ is acyclic.
\item Every connected component is the spectrum of a strictly Henselian local ring. In particular, it has a unique closed point, whose residue field is separably algebraically closed.
\item The composite $W^\cl \to W \to \pi_0(W)$ is a homeomorphism.
\end{enumerate}
\end{Lemma}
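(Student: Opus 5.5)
We prove the four parts of \autoref{Lem acyclic} in the order (a), (b), (c), (d); each part uses the previous ones.

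\textbf{Part (a).} Since $W$ is quasi-compact, choose a finite affine open cover $W = \bigcup_{i=1}^n U_i$. The map $\coprod_i U_i \to W$ is an étale surjection, so by acyclicity it has a section $\sigma$. The key step is to realise $W$ as a closed subscheme of the affine scheme $U = \coprod U_i$.
\begin{itemize}
\item A section of an étale morphism is an open immersion, since the diagonal of an étale map is an open immersion.
\item A section of a separated morphism is a closed immersion. The map $U \to W$ need not be separated.
\item Instead, consider $\sigma$ as a morphism from the quasi-compact scheme $W$ to the affine scheme $U$. It is an immersion whose image $\sigma(W)$ is open in $U$ and quasi-compact.
\item The image is also closed. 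The set $\sigma(W)$ equals the locus where $\sigma\circ p = \mathrm{id}$, which is a closed condition only under separatedness. So I argue differently: $p\colon U\to W$ restricted to $\sigma(W)$ is a homeomorphism onto $W$. Take $V=\sigma^{-1}(U_i)$; these are disjoint open subsets covering $W$.
\item Hence $W = \coprod_i V_i$, where $V_i$ is open in the affine scheme $U_i$ and is mapped isomorphically by $\sigma$.
\end{itemize}
This reduces to showing that each $V_i$ is affine. For this, apply acyclicity of $W$ to the étale cover of $V_i$ by finitely many principal opens $D(f)\subseteq U_i$, extended by the other $V_j$. The resulting section writes $V_i$ as a disjoint union of finitely many opens, each isomorphic to an open of a principal affine, hence of a principal open $D(f)$. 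Iterating this is circular, so the cleaner route is needed: $V_i\cong$ a quasi-compact open of $D(f)$ which maps isomorphically onto an open of $D(f)$.

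I expect this step to be the main obstacle. The intended trick is Artin's: such a section identifies $V_i$ with a clopen subset of a finite disjoint union of principal affine opens, and clopens of affines are affine. To make this precise, I would refine the cover so that every piece is principal in the previous affine.

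\textbf{Part (b).} Let $Z = \bigcap_\alpha Z_\alpha$ with each $Z_\alpha$ clopen. Then $Z$ is a cofiltered limit of affine schemes, so it is quasi-compact. An étale surjection onto $Z$ is finitely presented, so it descends to an étale map to some clopen $Z_\alpha$, and after finitely many steps it is surjective there. Extend it to an étale surjection onto $W$ by adjoining $W\setminus Z_\alpha$. A section over $W$ restricts to a section over $Z$.

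\textbf{Part (c).} A connected component $C$ is the intersection of the clopens containing it, so $C$ is acyclic by (b). A connected affine acyclic scheme is local: if it had two closed points, the open cover by their complements gives an étale surjection with no section, since a section would exhibit $C$ as disconnected. The local ring is then Henselian and has separably closed residue field, because every étale neighbourhood of the closed point, together with the complement of the closed point, splits.

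\textbf{Part (d).} The map $W^\cl\to\pi_0(W)$ is bijective by (c). It is continuous, and $\pi_0(W)$ is Hausdorff. To conclude it is a homeomorphism, I would show it is closed: since the closed points of an affine scheme form a quasi-compact-ish set, a closed subset $F\subseteq W^\cl$ has closure in $W$ mapping onto the same components, and the image of the closed set $\overline F$ under $W\to\pi_0(W)$ is closed because that map is closed for affine $W$.
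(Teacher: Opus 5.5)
Your part (a) does not actually prove that $W$ is affine. Since (b), (c) and (d) all depend on affineness, the proposal is incomplete as it stands: (b) needs the clopens $Z_\alpha$ to be affine for the limit argument, (c) needs components to be intersections of clopens, and (d) needs $\pi_0(W)$ to be Hausdorff.

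You correctly obtain the decomposition $W = \coprod_i V_i$ into disjoint opens $V_i = \sigma^{-1}(U_i)$, with $\sigma|_{V_i}$ the inclusion $V_i \subseteq U_i$. But you then treat $V_i$ as merely open in $U_i$, and the subsequent sketch (refining by principal opens, iterating) is circular, as you note yourself. The missing observation is one line. Because the $V_j$ are disjoint opens covering $W$, each $V_i = W \setminus \bigcup_{j \neq i} V_j$ is also closed in $W$. Hence $V_i = V_i \cap U_i$ is clopen in the affine scheme $U_i$, so it is cut out by an idempotent and is affine. Thus $W$ is a finite disjoint union of affine schemes, hence affine; equivalently, $\sigma(W)$ is clopen in the affine scheme $\coprod_i U_i$. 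No separatedness or diagonal argument is needed. The paper simply cites \cite[Prop.~3.1]{Artin} for this part.

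With that fixed, the rest is correct and more self-contained than the paper. The paper also cites Artin for (c), and for the passage from clopens to intersections of clopens in (b); it argues only the clopen case directly, by the same "adjoin the complement" trick you use.

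Two small repairs are needed. In (b), an étale surjection $V \to Z$ need not be of finite presentation. First replace $V$ by a finite disjoint union of affine opens still surjecting onto $Z$, which is possible since $Z$ is quasi-compact, and only then descend to some $Z_\alpha$.

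In (d), your route works: $W \to \pi_0(W)$ is closed because $W$ is quasi-compact and $\pi_0(W)$ is Hausdorff. But the equality $\pi(\overline F) = \pi(F)$ needs a justification. By (c), every point of a component specialises to that component's closed point, so the closed point of any component meeting $\overline F$ lies in $\overline F \cap W^\cl = F$. The paper's argument is shorter: $W^\cl$ is quasi-compact, being the maximal spectrum of a ring, so the continuous bijection onto the Hausdorff space $\pi_0(W)$ is closed.
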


\begin{proof}
Statements (a) and (c) are \cite[Prop.~3.1 and Prop.~3.2]{Artin}. For (b), let $U \subseteq W$ be clopen and let $f \colon V \twoheadrightarrow U$ be an \'etale surjection. Writing $U^c = W \setminus U$ for the complement, the surjection $V \amalg U^c \to U \amalg U^c = W$ has a section since $W$ is acyclic, hence so does $f$. This proves the result when $U$ is clopen, and the result for intersections of such follows from \cite[Prop.~3.3(i)]{Artin}. For (d), note that $W^\cl$ is quasi-compact as it is the maximum spectrum of a commutative ring. Thus, the continuous bijection $W^\cl \to \pi_0(W)$ is a closed map since $\pi_0(W)$ is compact Hausdorff (by \ref{Par pi0 profinite}), hence it is a homeomorphism.
\end{proof}

\begin{Def}\label{Def strictly profinite}
A scheme $S$ is called \emph{strictly profinite} if it is $0$-dimensional, reduced, qcqs, and all residue fields are separably algebraically closed.
\end{Def}

\begin{Rmk}\label{Rmk wsl}
If $S$ is strictly profinite, then it is automatically acyclic \cite[Prop.~3.2]{Artin}, hence affine by \autoref{Lem acyclic}, and w-strictly local since every point is closed. The condition that $S$ is reduced, affine, and $0$-dimensional means exactly that $S$ is an absolutely flat affine scheme \cite[Tag \href{https://stacks.math.columbia.edu/tag/092F}{092F}]{Stacks}, so a strictly profinite scheme is absolutely flat. Conversely, an absolutely flat and w-strictly local affine scheme is strictly profinite by \autoref{Lem acyclic}(c). In particular, if $W$ is a w-strictly local affine scheme, then the subscheme of closed points $W^\cl$ with the reduced induced scheme structure is strictly profinite \cite[Lem.~2.2.3 and Lem.~2.2.15]{BhattScholze}.
\end{Rmk}

\begin{Lemma}\label{Lem strictly profinite weakly etale}
Let $T \to S$ be a weakly \'etale map of affine schemes. If $S$ is strictly profinite, then so is $T$.
\end{Lemma}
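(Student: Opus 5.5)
We have to show that $T$ is $0$-dimensional, reduced and qcqs, and that all its residue fields are separably algebraically closed. Since $T$ is affine it is qcqs. The plan is to use the characterisation in \autoref{Rmk wsl}: a strictly profinite scheme is the same thing as an absolutely flat affine scheme whose residue fields are separably algebraically closed. Write $S = \Spec A$ and $T = \Spec B$, with $A \to B$ weakly \'etale.

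\emph{Step 1: $B$ is absolutely flat.} A key fact about weakly \'etale maps, due to Gabber and recorded in Bhatt--Scholze and the Stacks project, is that if $A$ is absolutely flat and $A \to B$ is weakly \'etale, then $B$ is absolutely flat. I would simply cite this. If a more hands-on argument is wanted, reduce to checking that $B_{\mathfrak q}$ is a field for every prime $\mathfrak q$ of $B$, lying over $\mathfrak p$ in $A$.
\begin{itemize}
\item The map $A_{\mathfrak p} \to B_{\mathfrak q}$ is a weakly \'etale local map out of a field $k = A_{\mathfrak p}$.
\item Weakly \'etale algebras over a field are ind-\'etale, by Bhatt--Scholze.
\item A localisation of an ind-\'etale $k$-algebra at a prime is a filtered colimit of localisations of finite products of finite separable extensions. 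Such a colimit is a field, so $B_{\mathfrak q}$ is a field.
\end{itemize}
Hence $B$ is reduced and $0$-dimensional.

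\emph{Step 2: the residue fields are separably algebraically closed.} Let $\mathfrak q \subseteq B$ lie over $\mathfrak p$. By hypothesis $\kappa(\mathfrak p)$ is separably algebraically closed.
\begin{itemize}
\item By Step 1, $\kappa(\mathfrak q) = B_{\mathfrak q}$.
\item Base change gives that $\kappa(\mathfrak p) \to \kappa(\mathfrak p)\otimes_A B$ is weakly \'etale, and $B_{\mathfrak q}$ is a localisation of this algebra. So $\kappa(\mathfrak p) \to \kappa(\mathfrak q)$ is weakly \'etale.
\item By the field case used in Step 1, $\kappa(\mathfrak q)$ is a filtered colimit of finite separable extensions of $\kappa(\mathfrak p)$. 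Each of these is trivial, since $\kappa(\mathfrak p)$ is separably closed.
\end{itemize}
Therefore $\kappa(\mathfrak q) = \kappa(\mathfrak p)$, which is separably algebraically closed.

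\emph{Main obstacle.} The only real input is the absolute flatness of $B$ in Step 1, that is, controlling the local rings of a weakly \'etale algebra over an absolutely flat ring. I expect to handle it by citing the Bhatt--Scholze or Stacks result rather than reproving it. Once absolute flatness is in hand, the residue field computation and the appeal to \autoref{Rmk wsl} are formal.
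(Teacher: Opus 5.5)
Your proposal is correct and is essentially the paper's proof. The paper also reduces, via \autoref{Rmk wsl}, to two facts: absolute flatness of weakly \'etale algebras over absolutely flat rings (the Stacks result you quote in Step 1, Tag 092I(1)), and separable algebraicity of residue field extensions of weakly \'etale maps, which the paper cites directly as Tag 092R and you rederive from ind-\'etaleness of weakly \'etale algebras over a field.
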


\begin{proof}
By \autoref{Rmk wsl}, a scheme is strictly profinite if and only if it is affine, absolutely flat, and all residue fields are separably algebraically closed. Thus, the result follows from \cite[Tags \href{https://stacks.math.columbia.edu/tag/092I}{092I}(1) and \href{https://stacks.math.columbia.edu/tag/092R}{092R}]{Stacks}.
\end{proof}

\begin{Lemma}\label{Lem acyclic iso closed points}
Let $W \to V$ be a weakly \'etale morphism of acyclic schemes. Then $W \to V$ is an isomorphism if and only if $W^\cl$ maps bijectively to $V^\cl$.
\end{Lemma}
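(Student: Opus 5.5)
Only the reverse direction needs an argument: if $W \to V$ is an isomorphism it is a homeomorphism, hence bijective on closed points. So suppose $f \colon W \to V$ is weakly \'etale between acyclic schemes and $f$ maps $W^\cl$ bijectively onto $V^\cl$. By \autoref{Lem acyclic}(a) both schemes are affine. By \autoref{Lem acyclic}(d), $W^\cl \cong \pi_0(W)$ and $V^\cl \cong \pi_0(V)$.

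The first step is to see that $f$ induces a bijection $\pi_0(W) \to \pi_0(V)$ compatible with the closed points. Every connected component of $W$ is $\Spec$ of a strictly Henselian local ring by \autoref{Lem acyclic}(c). Its closed point specialises every point of the component, so $f$ carries it to a point $v$ that is specialised by the image of the whole component. We need $v$ to be closed. I would argue this either from the hypothesis (the bijection is onto $V^\cl$, which tacitly requires $f(W^\cl) \subseteq V^\cl$), or directly: a weakly \'etale map induces algebraic (separable) residue field extensions on stalks, so closed points go to closed points. Each component of $W$ therefore maps into a single component of $V$, and the induced map $\pi_0(W) \to \pi_0(V)$ is a bijection. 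It is a continuous bijection of profinite sets, hence a homeomorphism.

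Next I reduce to the local case. For a component $C = \Spec A$ of $V$ with closed point $v$, let $D = \Spec B$ be the unique component of $W$ over it, with closed point $w$. Components of an acyclic scheme are intersections of clopens, so $C$ and $D$ are again acyclic by \autoref{Lem acyclic}(b), and local strictly Henselian. The preimage $f^{-1}(C)$ is the intersection of the preimages of the clopens cutting out $C$. By the homeomorphism on $\pi_0$, this preimage is exactly $D$. Hence $D \to C$ is the base change of $f$, and in particular it is weakly \'etale.

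Now $A \to B$ is a weakly \'etale local map of strictly Henselian local rings. Its residue extension $\kappa(v) \to \kappa(w)$ is separable algebraic, and $\kappa(v)$ is separably closed, so it is an isomorphism. For a weakly \'etale $A \to B$ with $A$ strictly Henselian, $B$ local, and trivial residue extension, $B$ is the strict henselisation of $A$ at $w$. This is the Stacks fact that weakly \'etale local homomorphisms of strictly Henselian rings are isomorphisms, cf.\ the ind-\'etale characterisation of Bhatt--Scholze. So $B = A$, and $f$ is an isomorphism on each connected component.

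The final step, and the main obstacle, is globalising from fibres over $\pi_0$ to an isomorphism $f$. Being an isomorphism is not generally detectable on connected components. I would use that $f$ is flat, since weakly \'etale maps are flat, and is bijective on points. For a flat morphism of affine schemes, being an isomorphism can be tested on stalks at all points of $V$, or better, faithful flatness plus $W \times_V W \to W$ being an isomorphism suffices. Every local ring $\mathcal O_{V,x}$ is a localisation of the local ring of the component through $x$. Concretely, $\mathcal O_{V,x} = \colim_{U \ni x} \mathcal O(U)$ over clopens, and similarly for $W$. Since $f^{-1}$ of a clopen neighbourhood system of $C$ is one of $D$, we get $\mathcal O_{W,y} \cong \mathcal O_{D,y} \cong \mathcal O_{C,x} \cong \mathcal O_{V,x}$ for the unique $y$ over $x$. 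So $f$ is a bijection inducing isomorphisms on all stalks. Finally, $f$ is a homeomorphism: it is a spectral bijection between affine schemes, and it is generalising (flat) and specialising, since specialisations lift within components where $f$ is an isomorphism. A homeomorphism that is an isomorphism on stalks is an isomorphism.
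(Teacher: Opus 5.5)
Your argument is correct and is essentially the paper's. Both reduce to a connected component $C$ of $V$ and then apply Olivier's theorem to the resulting local weakly \'etale map of strictly Henselian local rings. The paper obtains $C$ as $\Spec \mathscr O_{V,v}$ for a closed point $v$, so globalising is just the standard test of a ring map at maximal ideals; this makes your homeomorphism-and-stalks step unnecessary.

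The only flaw is your alternative ``direct'' justification that weakly \'etale maps send closed points to closed points, which is false. For example, the open immersion $\Spec \mathbf C_p \to \Spec \mathscr O_{\mathbf C_p}$ is an \'etale map of acyclic schemes that sends the closed point to the generic point. So you must use the hypothesis $f(W^\cl) \subseteq V^\cl$, exactly as the paper does.
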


\begin{proof}
If $W \to V$ is an isomorphism, then it clearly induces a bijection $W^\cl \to V^\cl$. Conversely, suppose that $W^\cl \to V$ lands in $V^\cl$ and induces a bijection $W^\cl \to V^\cl$. To check that $W \to V$ is an isomorphism, it suffices to do this after base change along $\Spec \mathscr O_{V,v} \to V$ for every closed point $v \in V$ \cite[Tags \href{https://stacks.math.columbia.edu/tag/00HN}{00HN}(6) and \href{https://stacks.math.columbia.edu/tag/00DK}{00DK}]{Stacks}. Since $V^\cl \to \pi_0(V)$ is a bijection, the localisation at a closed point $v$ agrees with the irreducible component containing this point, so $\mathscr O_{V,v}$ is a strictly Henselian local ring by \autoref{Lem acyclic}. Moreover, $\Spec \mathscr O_{V,v} \times_V W \subseteq W$ is an intersection of clopen subschemes of $W$, hence acyclic by \autoref{Lem acyclic}(b). Thus, we may replace $V$ by $\Spec \mathscr O_{V,v}$ and $W$ by the fibre product. Since $W^\cl \to V^\cl$ is bijective, we see that $W$ is acyclic with a unique closed point, hence the spectrum of a strictly Henselian local ring. The result follows from Olivier's theorem \cite[Thm.~4.1]{Olivier} (see also \cite[Tag \href{https://stacks.math.columbia.edu/tag/092Z}{092Z}]{Stacks} and \cite[Thm.~2.3.5]{BhattScholze}).
\end{proof}

\begin{Par}
The category of schemes is canonically tensored over profinite sets: for a finite set $S$, we have $X \times S = \coprod_{s \in S} X$, and this is extended to profinite sets by taking limits in $\Sch$. If $V$ is an $X$-scheme, then continuous maps $\pi_0(V) \to S$ are in bijection with morphisms $V \to X \times S$ in $\Sch_{/X}$. Given a map $T \to S$ in $\ProFin$ and a map $\pi_0(V) \to S$, write $V \times_S T$ for the fibre product $V \times_{X \times S} (X \times T)$; note that this does not depend on the base scheme~$X$.
\end{Par}

\begin{Lemma}\label{Lem acyc profinite}
Let $W$ be an acyclic (resp.\ w-strictly local) scheme, let $S = \pi_0(W)$, and let $T \to S$ be a map of profinite sets. Then $W \times_S T$ is acyclic (resp.\ w-strictly local), and the map $W \times_S T \to W$ is pro-(affine \'etale) and takes closed points to closed points. Moreover, the natural map $\pi_0(W \times_S T) \to T$ is a homeomorphism.
\end{Lemma}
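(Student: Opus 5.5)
Write $T = \lim_j T_j$ as a cofiltered limit of finite sets. Since $W$ is affine (\autoref{Lem acyclic}(a)), $W \times_S T = \lim_j W \times_S T_j$ is affine. Each image $S_j$ of $T_j \to S$ is finite, and I will work with the map $W \times_S T_j \to W$ directly. For $t \in T_j$, its image $s_t \in S$ determines the fibre $W_{s_t}$ of $W \to S$, and $W \times_S T_j = \coprod_{t \in T_j} W_{s_t}$. This description is not by itself pro-(affine \'etale), so I refine it: write $S = \lim_k S_k$ with $S_k$ finite quotients. Each $W_{s}$ is the intersection over $k$ of the clopen preimages $W_{k,s}$ of the images of $s$ in $S_k$. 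Reindexing, $W \times_S T$ is a cofiltered limit of finite disjoint unions of clopen subschemes of $W$. Concretely it is $\lim_{(j,k)} \coprod_{t \in T_j} W_{k,t}$, where $W_{k,t} \subseteq W$ is the clopen preimage of the image of $t$ in $S_k$; this requires choosing compatible maps $T_j \to S_k$, which exist after reindexing since $T \to S$ is continuous. Finite disjoint unions of clopen subschemes are affine \'etale over $W$, so the map is pro-(affine \'etale).

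For acyclicity, each finite stage $V_{jk} = \coprod_t W_{k,t}$ is acyclic by \autoref{Lem acyclic}(b), since a finite coproduct of acyclic schemes is acyclic. I then need that a cofiltered limit of qcqs acyclic schemes along affine transition maps is acyclic. I will argue this by noting that an \'etale surjection $U \to \lim V_{jk}$ is of finite presentation, so it descends to an \'etale surjection $U_{jk} \to V_{jk}$ at some stage; here surjectivity descends by quasi-compactness and constructibility of the image. That descended map has a section, and pulling back gives a section over the limit. This limit argument is the step I expect to need the most care, but it is standard limit technology (Stacks, limits of schemes).

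For the remaining claims I would compute connected components and closed points fibrewise over $T$. The fibre of $W \times_S T \to T$ over $t \in T$ is $W_{s}$ with $s$ the image of $t$. Here $W_s$ is a connected component of $W$, namely the spectrum of a strictly Henselian local ring by \autoref{Lem acyclic}(c), hence connected. On the other hand, continuous maps $\pi_0(W \times_S T) \to$ finite sets correspond to idempotents, which are the colimit of idempotents at finite stages. These are locally constant functions on $T_j$ refined by $S_k$, i.e. functions pulled back from $T$. Hence $C(W \times_S T, \mathbf F_2) = C(T, \mathbf F_2)$, and with \ref{Par pi0 profinite} this gives $\pi_0(W \times_S T) \cong T$. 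The closed points are those of the fibres $W_s$, each of which has a unique closed point lying over the closed point of $W_s \subseteq W$; being closed in a fibre over a closed point of the profinite set $T$ and quasi-compactness give closedness. Hence closed points map to closed points.

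In the w-strictly local case, the closed points of $W \times_S T$ are the preimage of $W^\cl$ under $W \times_S T \to W$. This uses the fibrewise description: a point is closed if and only if it is the closed point of its fibre, i.e. it maps into $W^\cl$. That preimage is closed since $W^\cl$ is closed, so $W \times_S T$ is w-strictly local.
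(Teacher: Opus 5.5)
Your proposal is correct and follows essentially the paper's route: your $W \times_{S_k} T_j = \coprod_{t \in T_j} W_{k,t}$ is exactly the paper's $W \times_S T_i$ with $T_i = T_j \times_{S_k} S$ a finite disjoint union of clopens of $S$. The paper passes to the limit by citing \cite[Prop.~3.3(i)]{Artin}, \cite[Lem.~2.1.9]{BhattScholze}, and the fact that $\Aff \to \Top$ preserves cofiltered limits, whereas you prove these steps by hand: via standard descent of \'etale surjections, via the fibrewise identification $(W \times_S T)^\cl = p^{-1}(W^\cl)$ for $p \colon W \times_S T \to W$ (which handles closed points in the acyclic and w-strictly local cases uniformly), and via idempotents. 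Two slips are cosmetic: the opening formula $W \times_S T = \lim_j W \times_S T_j$ makes no sense since there are no maps $T_j \to S$, but your $(j,k)$-indexed system repairs this; and an \'etale surjection onto the limit need not be of finite presentation until you replace it by a finite disjoint union of affine opens, which is harmless because the limit is quasi-compact.
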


\begin{proof}
Any map $T \to S$ of profinite sets can be written as a cofiltered limit of maps $T_i \to S$ where each $T_i$ is a finite disjoint union of clopens in $S$. Thus, the map $W \times_S T \to W$ is a limit of maps $W_i \to W$ where $W_i$ is a finite disjoint union $\coprod_{j=1}^n U_j$ of clopens in~$W$. In particular, it is pro-(affine \'etale), and each $W_i \to W$ is a map between acyclic (resp.\ w-strictly local) schemes that takes closed points to closed points. The same holds in the limit by \cite[Prop.~3.3(i)]{Artin} and \cite[Lem.~2.1.9]{BhattScholze}. The final statement is clear for each $T_i \to S$, and therefore holds in the limit since the forgetful functor $\Aff \to \Top$ preserves cofiltered limits.
\end{proof}

The following lemma provides a converse:

\begin{Lemma}\label{Lem acyc profinite converse}
Let $V \to W$ be a weakly \'etale map of acyclic schemes that takes closed points to closed points. Then the natural map $f \colon V \to W \times_{\pi_0(W)} \pi_0(V)$ is an isomorphism.
\end{Lemma}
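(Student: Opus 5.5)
The plan is to reduce to \autoref{Lem acyclic iso closed points}. Set $W' = W \times_{\pi_0(W)} \pi_0(V)$. By \autoref{Lem acyc profinite}, $W'$ is acyclic, the projection $W' \to W$ is pro-(affine \'etale) and takes closed points to closed points, and $\pi_0(W') \to \pi_0(V)$ is a homeomorphism.

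\textbf{Step 1: $f$ is a weakly \'etale morphism of acyclic schemes.} The composite $V \to W' \to W$ is the given weakly \'etale map. The projection $W' \to W$ is pro-\'etale, hence weakly \'etale. Since weakly \'etale maps satisfy two-out-of-three for compositions (Stacks, Tag 094P or the analogous result of Bhatt--Scholze), $f$ is weakly \'etale. Both source and target are acyclic, by hypothesis and by \autoref{Lem acyc profinite} respectively.

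\textbf{Step 2: $f$ restricts to a bijection $V^\cl \to W'^\cl$.} By \autoref{Lem acyc iso closed points}, it then suffices to check this. By \autoref{Lem acyclic}(d), $V^\cl \cong \pi_0(V)$ and $W'^\cl \cong \pi_0(W') \cong \pi_0(V)$, so both sides are identified with $\pi_0(V)$.

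It remains to see that $f$ maps $V^\cl$ into $W'^\cl$ compatibly with these identifications. Let $v \in V$ be closed, lying on the component $c \in \pi_0(V)$. Its image $w$ in $W$ is closed by hypothesis. The fibre of $W' \to W$ over the component of $w$ is the profinite set of $\pi_0(V)$-points over it, and $f(v)$ lies in the copy indexed by $c$. More precisely, $f(v)$ lies in the connected component $W'_c$ of $W'$ corresponding to $c$, because $\pi_0(f)$ is compatible with the projection $W' \to \pi_0(V)$. Now $W'_c \to W$ identifies $W'_c$ with the connected component of $W$ containing $w$: this is the base change of $W$ along the point $\{c\} \to \pi_0(W)$. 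Since $w$ is the unique closed point of that local scheme, $f(v)$ is the unique closed point of $W'_c$. Hence $f$ sends the closed point of the component $c$ to the closed point of $W'_c$, which is the required bijection.

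\textbf{Main obstacle.} I expect the only delicate part to be Step 2. There one must carefully identify the components of $W'$ with copies of components of $W$ and check that closedness is detected on the component. The underlying fact is that a point of an acyclic scheme is closed if and only if it is closed in its connected component, which follows from \autoref{Lem acyclic}(c)(d) since $W^\cl \to \pi_0(W)$ is a bijection. The rest is formal.
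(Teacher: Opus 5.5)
Your proposal is correct and follows essentially the same route as the paper. You show that $f$ is weakly \'etale by cancellation (Stacks Project Tag 0951); only this direction of two-out-of-three holds for weakly \'etale maps in general, but it is the one you use. You then show that $f$ sends closed points to closed points, deduce bijectivity on closed points from bijectivity on $\pi_0$, and conclude with \autoref{Lem acyclic iso closed points}.

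The one difference is in how you see that $f(v)$ is closed. The paper notes that $f(v)$ and the closed point of its component both lie over $w$ in a $0$-dimensional fibre of $W \times_{\pi_0(W)} \pi_0(V) \to W$. You instead identify that component with the component of $W$ through $w$. Both arguments work.
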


\begin{proof}
By \autoref{Lem acyc profinite}, the map $W \times_{\pi_0(W)} \pi_0(V) \to W$ is a weakly \'etale map of acyclic schemes that takes closed points to closed points. Since the same holds for $V \to W$, we see that $f$ is weakly \'etale \cite[Tag \href{https://stacks.math.columbia.edu/tag/0951}{0951}]{Stacks}. If $v \in V$ is closed, then so is its image $w \in W$ by assumption. Then $f(v)$ and the unique closed point of the component of $f(v)$ both map to $w$, so they agree since the fibres of $W \times_{\pi_0(W)} \pi_0(V) \to W$ are $0$-dimensional. This means that $f$ maps closed points to closed points. Since it induces a bijection on $\pi_0$, it induces a bijection on closed points, so $f$ is an isomorphism by \autoref{Lem acyclic iso closed points}.
\end{proof}

\begin{Cor}\label{Cor equiv wsl}
Let $W$ be an acyclic scheme, and let $S = \pi_0(W)$. Then the functors $W \times_S (-)$ and $\pi_0$ set up mutually inverse equivalences between $\ProFin_{/S}$ and the category of acyclic schemes with a weakly \'etale map to $W$ that takes closed points to closed points. \hfill\qedsymbol
\end{Cor}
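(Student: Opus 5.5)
The plan is to check that the two functors are well defined and that both composites are naturally isomorphic to the identity, using \autoref{Lem acyc profinite} and \autoref{Lem acyc profinite converse}.

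\textbf{Well-definedness.} For $T \in \ProFin_{/S}$, \autoref{Lem acyc profinite} shows three things:
\begin{enumerate}
\item $W \times_S T$ is acyclic;
\item the map $W \times_S T \to W$ is pro-(affine \'etale), hence weakly \'etale;
\item this map takes closed points to closed points.
\end{enumerate}
So $W \times_S (-)$ lands in the stated category. It is clearly functorial, since fibre products over $X \times S$ are functorial in $T$.

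Conversely, let $V \to W$ be in the category. Then $V$ is qcqs (acyclic implies affine by \autoref{Lem acyclic}), so $\pi_0(V)$ is profinite by \ref{Par pi0 profinite}. It comes with the map $\pi_0(V) \to \pi_0(W) = S$. Morphisms over $W$ induce maps on $\pi_0$ over $S$, so $\pi_0$ is a functor to $\ProFin_{/S}$.

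\textbf{The composite $\pi_0 \circ (W \times_S -)$.} The last statement of \autoref{Lem acyc profinite} says that the natural map $\pi_0(W \times_S T) \to T$ is a homeomorphism. It is natural in $T$ and compatible with the maps to $S$. This gives the isomorphism $\pi_0(W \times_S T) \cong T$ in $\ProFin_{/S}$.

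\textbf{The composite $W \times_S \pi_0(-)$.} The unit is the natural map $V \to W \times_{\pi_0(W)} \pi_0(V)$. By the universal property in the paragraph on tensoring over profinite sets, it is induced by $V \to W$ together with $\pi_0(V) \to \pi_0(V)$. \autoref{Lem acyc profinite converse} says exactly that this map is an isomorphism. Naturality in $V$ follows from the universal property of the fibre product.

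\textbf{Remaining check.} The only point needing care is that these isomorphisms are natural and compatible with the structure maps. This is formal from the universal properties involved. So the substantive input is entirely contained in \autoref{Lem acyc profinite converse}, which is already proved, and I expect no genuine obstacle.
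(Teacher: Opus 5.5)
Your proposal is correct and matches the paper's approach: the paper states this corollary without proof because it is immediate from \autoref{Lem acyc profinite} (well-definedness and the isomorphism $\pi_0(W \times_S T) \cong T$) and \autoref{Lem acyc profinite converse} (the isomorphism $V \cong W \times_{\pi_0(W)} \pi_0(V)$). These are exactly the two inputs you use.
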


\begin{Cor}\label{Cor w-local topology}
Consider a pullback diagram
\[
\begin{tikzcd}[row sep=1.1em,column sep=1.1em]
V' \ar{r}\ar{d} & W' \ar{d} \\
V \ar{r} & W
\end{tikzcd}
\]
of schemes such that $V$, $W$, and $W'$ are w-strictly local and $V \to W$ is w-local and weakly \'etale. Then $V'$ is w-strictly local and $V' \to W'$ is w-local and weakly \'etale.
\end{Cor}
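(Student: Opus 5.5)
The strategy is to rewrite $V \to W$ in product form using \autoref{Lem acyc profinite converse}, so that the base change along $W' \to W$ becomes a base change of profinite sets, to which \autoref{Lem acyc profinite} applies. Here \emph{w-local} means that closed points go to closed points; this is the condition needed for the earlier lemmas.

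First, $V \to W$ is weakly \'etale between acyclic schemes and takes closed points to closed points. By \autoref{Lem acyc profinite converse}, we may therefore identify $V \cong W \times_{\pi_0(W)} \pi_0(V)$, which is $W \times_{W\times S}(W \times T)$ with $S = \pi_0(W)$ and $T = \pi_0(V)$. Base changing along $W' \to W$ then gives
\[
V' \cong W' \times_{W \times S} (W \times T) \cong W' \times_{W' \times S} (W' \times T),
\]
where $W' \to W' \times S$ is induced by the composite $\pi_0(W') \to \pi_0(W) = S$. The right-hand side is $W' \times_{S'} T'$ with $S' = \pi_0(W')$ and $T' = S' \times_S T$, a profinite set mapping to $S'$. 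This step is just associativity of fibre products together with the tensoring of $\Sch$ over $\ProFin$, which commutes with limits.

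Now \autoref{Lem acyc profinite}, applied to the w-strictly local scheme $W'$ and the map $T' \to S'$, shows the following. The scheme $V' \cong W' \times_{S'} T'$ is w-strictly local. The map $V' \to W'$ is pro-(affine \'etale), hence weakly \'etale. This map also takes closed points to closed points, i.e.\ it is w-local.

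The main point needing care is the identification of $V'$ with $W'\times_{S'}T'$ as schemes over $W'$. In particular, one must check that the map $V' \to W'$ obtained from the pullback square agrees with the structure map of $W' \times_{S'} T'$. This follows because all the isomorphisms above are canonical and compatible with the projections to $W'$.
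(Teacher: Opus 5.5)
Your proof is correct and is essentially the paper's argument: use \autoref{Lem acyc profinite converse} to identify $V \cong W \times_{\pi_0(W)} \pi_0(V)$, base change to get $V' \cong W' \times_{\pi_0(W)} \pi_0(V)$, and conclude with \autoref{Lem acyc profinite}. Your rewriting of this as $W' \times_{\pi_0(W')} \big(\pi_0(W') \times_{\pi_0(W)} \pi_0(V)\big)$ only makes explicit a reindexing that the paper leaves implicit, so that \autoref{Lem acyc profinite} applies verbatim.
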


\begin{proof}
By \autoref{Lem acyc profinite converse}, the map $V \to W \times_{\pi_0(W)} \pi_0(V)$ is an isomorphism. Then the base change is given by $V' \cong W' \times_{\pi_0(W)} \pi_0(V) \to W'$, so \autoref{Lem acyc profinite} gives the result.
\end{proof}

\begin{Lemma}\label{Lem strictly profinite}
Let $S$ be a scheme such that all local rings $\mathscr O_{S,s}$ are strictly Henselian; for example, a strictly profinite scheme. Then the canonical geometric morphism $\Sh(S_\et) \to \Sh(S_\Zar)$ is an equivalence.
\end{Lemma}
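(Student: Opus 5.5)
We prove that the pushforward $\varepsilon_* \colon \Sh(S_\et) \to \Sh(S_\Zar)$ is fully faithful and essentially surjective, so that it is an equivalence with inverse $\varepsilon^*$. Equivalently, we show that the unit $\mathscr F \to \varepsilon^*\varepsilon_*\mathscr F$ is an isomorphism for every étale sheaf $\mathscr F$, and that the counit $\varepsilon_*\varepsilon^*\mathscr G \to \mathscr G$ is an isomorphism for every Zariski sheaf $\mathscr G$. We check both on stalks.

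The first step is to compare the points of the two topoi. Since $\Sh(S_\et)$ has enough points given by geometric points $\bar s \to S$, we have
\[
\mathscr F_{\bar s} = \colim_{(U,\bar u)} \mathscr F(U),
\]
where $(U,\bar u)$ runs over étale neighbourhoods of $\bar s$. Because $\mathscr O_{S,s}$ is strictly Henselian, with separably closed residue field $\kappa(s)$, every étale neighbourhood $(U,\bar u) \to (S,\bar s)$ is dominated by a Zariski open neighbourhood of $s$. Indeed, $U \times_S \Spec \mathscr O_{S,s} \to \Spec \mathscr O_{S,s}$ is étale with a $\kappa(s)$-point over the closed point. By Henselianity (\cite[Tag 04GG]{Stacks}), it has a section through $\bar u$. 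This section is a morphism $\Spec \mathscr O_{S,s} \to U$ over $S$. Since $U \to S$ is locally of finite presentation and $\Spec \mathscr O_{S,s} = \lim V$ over open $V \ni s$, the section spreads out to $V \to U$ for some open $V \ni s$.

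The second step is to deduce that the Zariski neighbourhoods are cofinal among the étale neighbourhoods. From the previous step we get
\[
\mathscr F_{\bar s} = \colim_{V \ni s} \mathscr F(V) = (\varepsilon_*\mathscr F)_s.
\]
To make this cofinality argument legitimate, we need it for maps of neighbourhoods and not only objects. This is routine: two maps from a connected neighbourhood agreeing at $\bar u$ agree after shrinking, because étale maps are unramified.

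The third step is to conclude. Stalks of $\varepsilon^*\mathscr G$ at $\bar s$ equal $\mathscr G_s$, so the unit and counit induce, on stalks, the identity maps under these identifications. Since $\Sh(S_\et)$ has enough points, the unit $\mathscr F \to \varepsilon^*\varepsilon_*\mathscr F$ is an isomorphism. The Zariski topos has enough points (the points of $S$), so the counit $\varepsilon_*\varepsilon^*\mathscr G \to \mathscr G$ is an isomorphism as well.

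For the parenthetical remark in the statement, note that for $S$ strictly profinite every local ring is a separably closed field. Such a ring is in particular strictly Henselian.

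The main obstacle is the spreading-out step: producing a section over an honest Zariski open rather than just over $\Spec \mathscr O_{S,s}$. This is needed for the non-quasi-compact $U$ as well. We handle it by replacing $U$ by an affine open containing the image of $\bar u$, and then applying the limit formalism (\cite[Tag 01ZC]{Stacks}).
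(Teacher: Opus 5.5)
Your proof is correct. The paper gives no argument of its own here: it simply cites \cite[Corollary~2.5]{Schroer}. So your stalkwise argument is a self-contained replacement for a citation.

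Its only real geometric input is Step~1. For an \'etale neighbourhood $(U,\bar u)$ of $\bar s$, the residue field $\kappa(u)$ is a finite separable extension of the separably closed field $\kappa(s)$, hence equal to it. The Henselian property of $\mathscr O_{S,s}$ then gives a section over $\Spec\mathscr O_{S,s}$, which spreads out to an affine open $V\ni s$ by \cite[Tag~01ZC]{Stacks}. Everything after that is formal.

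There are two small corrections.
\begin{itemize}
\item With $\varepsilon^*\dashv\varepsilon_*$, the unit is $\mathscr G\to\varepsilon_*\varepsilon^*\mathscr G$ and the counit is $\varepsilon^*\varepsilon_*\mathscr F\to\mathscr F$; you wrote both arrows backwards. Your stalk computation proves exactly that these two maps are isomorphisms, so nothing else changes.
\item The extra check on morphisms in Step~2 is unnecessary. The open neighbourhoods of $s$ form a \emph{full} subcategory of the cofiltered category of \'etale neighbourhoods of $\bar s$, since an $S$-map between opens is at most an inclusion. A full subcategory of a cofiltered category such that every object receives a map from one of its objects is automatically cofiltered and initial, so domination on objects suffices.
\end{itemize}

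The same Step~1 can also be packaged without stalks. A section of an \'etale morphism is an open immersion, so Step~1 shows every \'etale $U\to S$ is covered by opens mapping isomorphically onto opens of $S$. The comparison lemma then gives $\Sh(S_\et)\simeq\Sh(S_\Zar)$ directly. Your version has the advantage of making the identification $\mathscr F_{\bar s}\cong(\varepsilon_*\mathscr F)_s$ explicit. The comparison-lemma version is shorter and avoids computing points of the \'etale topos.
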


\begin{proof}
See \cite[Corollary~2.5]{Schroer}.
\end{proof}

All w-strictly local schemes constructed in \cite[\S2]{BhattScholze} are Henselian by definition, and we see that this is no coincidence:

\begin{Lemma}\label{Lem wsl Henselian}
Let $W$ be a w-strictly local scheme, and let $W^\cl \subseteq W$ be the reduced subscheme of closed points. Then $W$ is Henselian along $W^\cl$.
\end{Lemma}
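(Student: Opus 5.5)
To show that $W$ is Henselian along $W^\cl$, I would use the characterisation of Henselian pairs by lifting of idempotents along étale algebras. By \autoref{Lem acyclic}, write $W = \Spec A$ and let $I \subseteq A$ be the radical ideal with $V(I) = W^\cl$, which is closed by assumption. By \cite[Tag 09XI]{Stacks}, $(A,I)$ is Henselian if and only if, for every étale ring map $A \to A'$ and every $A$-algebra map $\sigma \colon A' \to A/I$, there is an $A$-algebra map $A' \to A$ lifting $\sigma$. Geometrically, we are given an étale $U = \Spec A' \to W$ with a section $\sigma$ over the closed subscheme $W^\cl$, and must extend $\sigma$ to a section over $W$.

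First, use $\sigma$ to cut $U$ down. The image $\sigma(W^\cl) \subseteq U$ is open in $U \times_W W^\cl$, since it is a section of an étale map. I want to replace $U$ by an open $U_0 \subseteq U$ with $U_0 \cap (U \times_W W^\cl) = \sigma(W^\cl)$. The set $(U\times_W W^\cl)\setminus \sigma(W^\cl)$ is closed in $U \times_W W^\cl$, hence closed in $U$, so $U_0 = U \setminus \bigl((U\times_W W^\cl)\setminus\sigma(W^\cl)\bigr)$ works. This $U_0$ is étale over $W$, but may fail to be surjective.

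Next, make the cover surjective. The image of $U_0$ is open and contains $W^\cl$, and every point of $W$ specialises to a closed point. Since open sets are stable under generisation, the image is all of $W$, so $U_0 \to W$ is surjective. This step does not even require adding the complement of the image. However, $U_0$ may fail to be quasi-compact. I would fix this by using that $W^\cl$ is quasi-compact, so $\sigma(W^\cl)$ is covered by finitely many affine opens of $U_0$. Let $U_1$ be their union. Then $U_1$ is quasi-compact, and its image is still an open containing $W^\cl$, hence equal to $W$. Moreover $U_1 \cap (U\times_W W^\cl) = \sigma(W^\cl)$.

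Now apply acyclicity: $U_1 \to W$ is an étale surjection, so it has a section $\tau \colon W \to U_1$. The remaining step, which I expect to be the main subtle point, is checking that $\tau$ restricts to $\sigma$ on $W^\cl$. For a closed point $w$, the point $\tau(w)$ lies in $U_1 \cap (U \times_W W^\cl) = \sigma(W^\cl)$ and lies over $w$, so $\tau(w) = \sigma(w)$ as points. As morphisms, $\tau|_{W^\cl}$ and $\sigma$ are two sections of the étale separated-on-fibres map $U\times_W W^\cl \to W^\cl$ that agree on points. Since sections of an étale morphism are open immersions determined by their image (\cite[Tag 024T]{Stacks}), they coincide. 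Here $W^\cl$ is reduced, which helps. Hence $\tau$ corresponds to the desired lift $A' \to A$, and $(A,I)$ is Henselian.
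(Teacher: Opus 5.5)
Your proof is correct, but it takes a different route from the paper's. The paper uses the finite-algebra criterion (3) of \cite[Tag~09XI]{Stacks}, namely that $\pi_0(Y \times_W W^\cl) \to \pi_0(Y)$ is bijective for every finite $Y \to W$. It reduces to the fibres over the connected components of $W$, then cites \autoref{Lem acyclic}(c) (Artin's result that each component is the spectrum of a strictly Henselian local ring), and finishes with the classical fact that a Henselian local ring is Henselian along its closed point. You instead use the \'etale lifting criterion (2), which matches the definition of acyclicity directly. You cut $U$ down so that its part over $W^\cl$ is exactly $\sigma(W^\cl)$, and note that an open set containing all closed points is all of $W$. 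You then apply acyclicity and identify $\tau|_{W^\cl}$ with $\sigma$. That last step holds because sections of \'etale (indeed unramified) morphisms are open immersions, so two sections with the same image coincide. What your route buys: it needs neither Artin's structure result for the components nor the paper's reduction to fibres over $\pi_0(W)$, which implicitly uses that $\pi_0$ of finite $W$-schemes can be computed fibrewise. It also proves slightly more. Nothing in your argument uses $Z = W^\cl$ beyond $Z$ being closed and containing every closed point, so any acyclic $W = \Spec A$ is Henselian along $V(\mathrm{Jac}(A))$. The paper's route is shorter because the local theory is already available at that point. Two cosmetic remarks: reducedness of $W^\cl$ plays no role in your final step, and passing to a quasi-compact $U_1$ is unnecessary, since the paper's definition of acyclic puts no finiteness condition on the \'etale surjection (it does no harm).
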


\begin{proof}
By \cite[Tag \href{https://stacks.math.columbia.edu/tag/09XI}{09XI}(3)]{Stacks}, we have to prove that if $Y \to W$ is finite and $Z \hookrightarrow Y$ is the pullback of $W^\cl \hookrightarrow W$, then $\pi_0(Z) \to \pi_0(Y)$ is a bijection. For this, we may work in the fibres above the connected components of $W$, so we reduce to the case where $W$ is connected. Then $W$ is the spectrum of a strictly Henselian local ring (see \autoref{Lem acyclic}), so the result holds as Henselian local rings are Henselian along their closed point.
\end{proof}

\subsection{The \texorpdfstring{$\kappa$}{κ}-small pro-\texorpdfstring{\'etale}{étale} site}\label{Sec kappa proet}
In this section, we introduce a $\kappa$-small variant of the pro-\'etale site on a scheme $X$. In \autoref{Prop enough wsl kappa}, we prove that it has a basis of $\kappa$-small w-strictly local schemes, analogous to the unbounded situation \cite[Prop.~4.2.8]{BhattScholze}.

We also introduce a new w-local site $X_\wl$ on a scheme $X$ (\autoref{Def w-local topology}), and study its relationship to the pro-\'etale site (\autoref{Lem w-local topology}) and the site of profinite sets (\autoref{Lem hypercover}). Finally, we show that any weakly \'etale map $W \to X$ from a w-strictly local scheme $W$ is a limit of \'etale maps (\autoref{Cor wsl affproet}).

\begin{Def}\label{Def AffProEt}
If $\kappa$ is a cardinal and $X$ is a scheme with $\size(X) < \kappa$, we define the following full subcategories of $\Sch_{/X}$.
\begin{itemize}
\item Write $\Et_{/X}$ for the category of \'etale morphisms $Y \to X$, and write $\AffEt_{/X}$ for the subcategory of $\Et_{/X}$ on maps $Y \to X$ with $Y$ affine.
\item Write $\ProEt_{/X}$ for the category of weakly \'etale morphisms $Y \to X$.
\item Write $\AffProEt_{/X}$ for the essential image of $\Pro(\AffEt_{/X}) \to \ProEt_{/X}$ taking a pro-system to its limit in $\Sch_{/X}$.
\item Write $\ProEt^\kappa_{/X}$ (resp.~$\AffProEt^\kappa_{/X}$) for the $\kappa$-small objects in $\ProEt_{/X}$ (resp.~$\AffProEt_{/X}$), as in \autoref{Def size scheme}.
\item If $X = \Spec A$ is affine, we write $\Et_A \subseteq \CRing_{A/}$ for the category of \'etale $A$-algebras. Thus, we have $\Et_A\op \simeq \AffEt_{/X}$ and $\Ind^\kappa(\Et_A)\op \simeq \AffProEt^\kappa_{/X}$.
\end{itemize}
We write $\AffProEt_{/X}^{\acyc}$ (resp.\ $\AffProEt_{/X}^\wsl$) for the full subcategory of acyclic (resp.\ w-strictly local) objects in $\AffProEt_{/X}$, and likewise for $\AffProEt_{/X}^{\kappa,\acyc}$ and $\AffProEt_{/X}^{\kappa,\wsl}$.
\end{Def}

\begin{Lemma}\label{Lem affproet}
If $X$ is a scheme, then the functor $\lim\colon \Pro(\AffEt_{/X}) \to \AffProEt_{/X}$ is an equivalence.
\end{Lemma}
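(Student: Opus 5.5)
Essential surjectivity holds by the definition of $\AffProEt_{/X}$ as the essential image of $\lim$, so the content of the statement is full faithfulness. I would reduce this to the standard fact that morphisms from a limit of qcqs schemes into a scheme locally of finite presentation factor through some stage of the limit.

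Let $Y = \lim_{i \in I} Y_i$ and $Z = \lim_{j \in J} Z_j$ be objects of $\Pro(\AffEt_{/X})$. By definition of morphisms in a pro-category,
\[
\Hom_{\Pro(\AffEt_{/X})}(Y,Z) = \lim_{j} \colim_{i} \Hom_{X}(Y_i, Z_j).
\]
Since $\Hom_X(\lim_i Y_i, \lim_j Z_j) = \lim_j \Hom_X(\lim_i Y_i, Z_j)$ by the universal property of the limit in $\Sch_{/X}$, it suffices to treat a single target $Z_j$. That is, for $Z \in \AffEt_{/X}$ I need the natural map
\[
\colim_i \Hom_X(Y_i, Z) \longrightarrow \Hom_X(\lim_i Y_i, Z)
\]
to be a bijection.

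To prove this, I first replace $I$ by a cofinal directed poset, which is always possible for a cofiltered index category. After this replacement, $(Y_i)$ is a directed system of affine, hence qcqs, $X$-schemes. Its transition maps are affine, since they are morphisms between affine schemes, and its limit exists in $\Sch_{/X}$ and is affine. The target $Z \to X$ is étale, hence locally of finite presentation. Now \cite[Tag \href{https://stacks.math.columbia.edu/tag/01ZC}{01ZC}]{Stacks} (EGA~IV, 8.14.2) applies over an arbitrary base scheme $S = X$ and gives exactly the required bijection. I also check that this bijection is the canonical comparison map, so that the identifications are natural in $Y$ and $Z$ and assemble into the statement that $\lim$ is fully faithful.

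The only point requiring care is that $X$ itself need not be qcqs, so one cannot pass to the affine algebra statement $\Hom_A(\colim B_i, C) = \colim \Hom_A(B_i, C)$ for a finitely presented $A$-algebra $C$. The cited Stacks result has no hypothesis on the base, only on the $Y_i$ (qcqs with affine transition maps), so this step is still covered. Alternatively, one could reduce to the affine case by covering $X$ by affine opens: since $Z$ is affine, its image lies in a quasi-compact open of $X$, and each $Y_i$ is quasi-compact. That reduction is more awkward, so I would rely on the general citation.
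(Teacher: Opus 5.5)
Your proposal is correct and follows the paper's own proof: both write $\Hom_X(\lim_i U_i,\lim_j V_j)\cong\lim_j\Hom_X(\lim_i U_i,V_j)$ and then identify each factor with $\colim_i\Hom_X(U_i,V_j)$ via Stacks Tag 01ZC, using that \'etale morphisms are locally of finite presentation. The only difference is that you spell out the passage to a cofinal directed index poset and the absence of hypotheses on the base $X$, both of which the paper leaves implicit.
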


\begin{proof}
The functor is essentially surjective by definition, so we only need to prove it is fully faithful. Let $(U_i)_{i \in I}$ and $(V_j)_{j \in J}$ be a cofiltered systems in $\AffEt_{/X}$. Then
\[
\Hom_X\Big(\lim_{\substack{\longleftarrow \\ i \in I}} U_i, \lim_{\substack{\longleftarrow \\ j \in J}} V_j\Big) \cong \lim_{\substack{\longleftarrow \\ j \in J}} \Hom_X\Big(\lim_{\substack{\longleftarrow \\ i \in I}} U_i, V_j\Big) \cong \lim_{\substack{\longleftarrow \\ j \in J}} \colim_{\substack{\longrightarrow \\ i \in I\op}} \Hom_X(U_i, V_j), 
\]
where the second step follows from \cite[Tags \href{https://stacks.math.columbia.edu/tag/01ZC}{01ZC} and \href{https://stacks.math.columbia.edu/tag/02GR}{02GR}]{Stacks}.
\end{proof}

\begin{Rmk}
We warn the reader that $\AffProEt_{/X}$ is not the same thing as the affine objects in $\ProEt_{/X}$. It is clear that $\AffProEt_{/X}$ is contained in the affine objects of $\ProEt_{/X}$, but not every affine weakly \'etale $X$-scheme is a limit of affine \'etale $X$-schemes (even when $X$ is affine). For instance, the example of \cite[Ex.~4.1.12]{BhattScholze} is affine since it is obtained by glueing two affine schemes along closed subschemes \cite[Tag \href{https://stacks.math.columbia.edu/tag/0ET0}{0ET0}]{Stacks}, but it is not in $\AffProEt_{/X}$.
\end{Rmk}

Nonetheless, we have the following result:

\begin{Lemma}\label{Lem affproet over}
Let $X$ be a scheme and let $W \in \AffProEt_{/X}$. Then the natural functor $\AffProEt_{/W} \to (\AffProEt_{/X})_{/W}$ is an equivalence. The same holds for $\AffProEt^\kappa$ if $\kappa > \size(X)$ and $\kappa > \size(W)$.
\end{Lemma}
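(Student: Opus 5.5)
The functor sends $V \to W$ in $\AffProEt_{/W}$ to the composite $V \to W \to X$. Since weakly étale maps compose, it lands in $\ProEt_{/X}$. I would show that it lands in $\AffProEt_{/X}$ and is fully faithful and essentially surjective. Full faithfulness is formal: both categories are full subcategories of $\Sch_{/W}$ (respectively of $(\Sch_{/X})_{/W} \simeq \Sch_{/W}$). Morphisms in either category are just morphisms of schemes over $W$, so the content is the comparison of objects.

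First I show that every $V \in \AffProEt_{/W}$ lies in $\AffProEt_{/X}$. Write $W = \lim_i W_i$ with $W_i \in \AffEt_{/X}$ cofiltered, and $V = \lim_j V_j$ with $V_j \in \AffEt_{/W}$. Each $V_j \to W$ is affine étale, hence of finite presentation. By the standard limit results (Stacks, Tags 01ZM/07RP), $V_j$ descends to an affine étale $V_{j,i} \to W_i$ for $i$ large, with $V_j \cong V_{j,i} \times_{W_i} W$. Then $V_{j,i}$ is affine and étale over $X$, so $V_j = \lim_{i' \geq i} V_{j,i} \times_{W_i} W_{i'}$ lies in $\AffProEt_{/X}$. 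Here I use that affine étale over affine étale over $X$ is affine étale over $X$. Finally, $V = \lim_j V_j$ is a cofiltered limit of objects of $\AffProEt_{/X} \simeq \Pro(\AffEt_{/X})$ (\autoref{Lem affproet}). Since $\Pro$ is closed under cofiltered limits and the limit functor to $\Sch_{/X}$ preserves them (limits of affine schemes), $V \in \AffProEt_{/X}$.

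Conversely, let $V = \lim_k U_k \in \AffProEt_{/X}$ with a map $V \to W$ over $X$. I want to show $V \in \AffProEt_{/W}$. By \autoref{Lem affproet} and the displayed formula there, the map $V \to W = \lim_i W_i$ corresponds to a compatible system of maps $U_{k(i)} \to W_i$. For each $i$ and $k \geq k(i)$, set $V_{k,i} = U_k \times_{W_i} W$. This is affine and étale over $W$, since $U_k \to W_i$ is a map of étale $X$-schemes and hence étale, and affine schemes are separated so the fibre product is affine. Then $\lim_{(k,i)} V_{k,i} \cong \lim_k U_k \times_{\lim_i W_i} W \cong V \times_W W = V$, where the indexing over pairs with $k \geq k(i)$ is cofiltered. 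This is the step I expect to be most delicate: the cofinality argument organising the pairs $(k,i)$ into a cofiltered system and checking that the limit computes $V$. The cleanest route is to note that $V \times_W W \to V$ is an isomorphism and that fibre products commute with cofiltered limits of affine schemes.

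For the $\kappa$-version, the same constructions apply, and I only track sizes. In the first direction, with $\kappa > \size(W)$ and $\kappa > \size(X)$, I choose presentations with index sets of size $< \kappa$. Each $V_{k,i}$ has coordinate ring of size at most $\max(\size(U_k), \size(W))$ by \autoref{Rmk colimit kappa small}, and the index set of pairs is $< \kappa$. Alternatively, since size is defined on the scheme $V$ itself and both sides impose the same condition $\size(V) < \kappa$, it suffices to note that smallness of $V$ is intrinsic. The $\kappa$-small objects on both sides are then the same schemes, and the equivalence restricts.
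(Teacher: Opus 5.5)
Your proposal is correct and follows the same route as the paper: you descend each affine étale $V_j \to W$ to some $W_i$ and extend by cofiltered limits, you get full faithfulness because both sides are full subcategories of $\Sch_{/W}$, you obtain essential surjectivity by writing $V$ as a limit of the base changes $U_k \times_{W_i} W$, and you note that $\kappa$-smallness is the intrinsic condition $\size(V) < \kappa$ on both sides. The only difference is that the paper gets the level representation of $V \to W$ as a cofiltered diagram in $\Fun([1],\AffEt_{/X})$ by citing Kashiwara--Schapira, Cor.~6.1.14, whereas you build the $(k,i)$-reindexing by hand; in that case a morphism $(k',i') \to (k,i)$ must require the two composites $U_{k'} \to U_k \to W_i$ and $U_{k'} \to W_{i'} \to W_i$ to agree, not merely $k \geq k(i)$ (this agreement holds for $k'$ large by the colimit formula, and it is what gives well-defined transition maps).
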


\begin{proof}
To define the functor, suppose $W = \lim_i W_i$ is a cofiltered limit of $W_i \in \AffEt_{/X}$. Then every $U \in \AffEt_{/W}$ is obtained by base change from $U_i \in \AffEt_{/W_i}$ for some~$i$ \cite[Tags \href{https://stacks.math.columbia.edu/tag/05N9}{05N9} and \href{https://stacks.math.columbia.edu/tag/07RI}{07RI}]{Stacks}, hence is also in $\AffProEt_{/X}$. This defines a functor \makebox{$\AffEt_{/X} \to (\AffProEt_{/X})_{/W}$,} and we can extend to $\AffProEt_{/X}$ by taking limits. It is fully faithful since both are full subcategories of $\Sch_{/W}$. If $V \to W$ is a morphism in $\AffProEt_{/X}$, there exists a cofiltered diagram $\{V_i \to W_i\}_{i \in \mathcal I}$ in $\Fun([1],\AffEt_{/X})$ whose limit is $V \to W$ \cite[Cor.~6.1.14]{KashiwaraSchapira}. The maps $V_i \to W_i$ are \'etale \cite[Tag \href{https://stacks.math.columbia.edu/tag/02GW}{02GW}]{Stacks} and the base change $V_i \times_{W_i} W$ is affine \cite[Tag \href{https://stacks.math.columbia.edu/tag/01JQ}{01JQ}]{Stacks}, so we may write $V$ as the limit of the diagram $\{V_i \times_{W_i} W\}_{i \in \mathcal I}$ in $\AffEt_{/W}$. The $\kappa$-small statement follows since both sides are the objects $V \to W \to X$ with $\size(V) < \kappa$.
\end{proof}

\begin{Def}\label{Def kappa pro etale}
Let $X$ be a scheme and let $\kappa$ be a cardinal with \makebox{$\size(X) < \kappa$.} Then the \emph{$\kappa$-pro-\'etale site} $X_{\kappa\text{-}\proet}$ (resp.\ \emph{$\kappa$-affine pro-\'etale site} $X_{\kappa\text{-}\affproet}$) is the category $\ProEt^\kappa_{/X}$ (resp.\ $\AffProEt^\kappa_{/X}$), with covering given by fpqc covers.
\end{Def}

The main local structure theory of the pro-\'etale site generalises to the $\kappa$-pro-\'etale site:

\begin{Prop}\label{Prop enough wsl kappa}
Let $X$ be a scheme and $\kappa > \size(X)$ a cardinal. If $V \in X_{\kappa\text{-}\proet}$, then there exists a covering $\{W_i \to V\}_{i \in I}$ in $X_{\kappa\text{-}\proet}$ where each $W_i$ is in $\AffProEt_{/X}^{\kappa,\wsl}$ as well as in $\AffProEt_{/V}^{\kappa,\wsl}$ such that every point $v \in V$ is the image of a closed point $w \in W_i$ for some $i \in I$.
\end{Prop}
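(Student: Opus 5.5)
We reduce to the affine case and then run the Bhatt--Scholze construction of w-strictly local covers, keeping track of cardinalities at each step. First, since $V$ is weakly \'etale over $X$, we cover $V$ by affine opens $V_j = \Spec B_j$ lying over affine opens $U_j = \Spec A_j \subseteq X$. Each $B_j$ is a weakly \'etale $A_j$-algebra with $\size(B_j) \le \size(V) < \kappa$, and $\size(A_j) < \kappa$. It then suffices, for each $j$, to produce a single $W_j \to V_j$ with three properties: it is pro-\'etale and faithfully flat, its closed points surject onto $V_j$, and $W_j$ lies both in $\AffProEt^{\kappa,\wsl}_{/X}$ and in $\AffProEt^{\kappa,\wsl}_{/V}$. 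Taking the family $\{W_j \to V\}_j$ then gives the required covering.

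For fixed $j$, write $A = A_j$ and $B = B_j$. Following \cite[Lem.~2.2.4 and Prop.~2.2.1]{BhattScholze}, we construct $W$ in two steps.

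\emph{Step 1.} We take the w-localisation $A' = A^Z$, an ind-Zariski localisation of $A$, so that $\Spec A'$ is w-local and its closed points map bijectively onto $\Spec A$. Concretely, $A'$ is built as a colimit over finite families of constructible subsets, and a crude bound gives $\size(A') \le \size(A)$: there are at most $\size(A)$ such indexing data, and each constituent is a localisation of a finite product of copies of $A$. By \autoref{Rmk colimit kappa small}, this yields $\size(A') < \kappa$.

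\emph{Step 2.} We strictly henselise along the closed points, i.e.\ we take a filtered colimit of \'etale $A'$-algebras that split every \'etale cover. Following Gabber's construction in \cite[Lem.~2.2.9]{BhattScholze}, we form the colimit over the set of isomorphism classes of finitely presented \'etale $A'$-algebras (of cardinality at most $\size(A')$) of the finite products of those that become surjective, and iterate this $\omega$ times. Each stage has size $\le \size(A')$, so again $\size(A'') < \kappa$ by \autoref{Rmk colimit kappa small}. Because $\kappa$ is uncountable, this is the point where countable colimits are harmless.

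This produces $W_X = \Spec A''$, which is w-strictly local and lies in $\AffProEt^\kappa_{/X}$ (ind-Zariski composed with ind-\'etale is a limit of affine \'etale maps over $U_j$, hence over $X$). Its closed points surject onto $U_j$.

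To handle $V$ as well, we do not apply the construction to $A$. We apply Steps 1 and 2 directly to $B$ in place of $A$, using \'etale $B$-algebras. This gives $W \in \AffProEt^{\kappa,\wsl}_{/V_j}$, hence in $\AffProEt^{\kappa,\wsl}_{/V}$, with closed points surjecting onto $V_j$ and $\size(W) < \kappa$. We then show that $W$ also lies in $\AffProEt_{/X}$.

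\emph{Main obstacle.} The difficulty is precisely this last point: $B$ is only weakly \'etale over $A$, not a filtered colimit of \'etale $A$-algebras. Our first attempt will be to invoke \cite[Thm.~2.3.4]{BhattScholze}, which says that a weakly \'etale map becomes ind-\'etale after the base change above. The plan runs as follows.
\begin{enumerate}
\item Take $W_X$ as above, constructed over $A$.
\item Form $W_X \times_{U_j} V_j$. This is weakly \'etale over the w-strictly local $W_X$.
\item Apply Steps 1 and 2 to $W_X \times_{U_j} V_j$, yielding some $W$.
\item Check that $W \to W_X$ is ind-\'etale, using that weakly \'etale algebras over a w-strictly local ring $C$ become ind-\'etale after passing to a w-strictly local cover. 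For this we combine Olivier's theorem with \autoref{Lem acyc profinite} and \autoref{Lem strictly profinite weakly etale} on $W_X^\cl$.
\end{enumerate}
Granting this, $W$ is ind-\'etale over $W_X$, which is ind-\'etale over $U_j$, so $W \in \AffProEt_{/X}$ (using \autoref{Lem affproet over}). The map $W \to V_j$ is also a pro-\'etale w-strictly local object over $V$. The size bounds persist because every step is a colimit indexed by at most $\max(\size(A),\size(B))$ objects of size below $\kappa$. Finally, the closed points of $W$ surject onto $V_j$ because each step is surjective on closed points over the previous one.
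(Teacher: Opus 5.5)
Your overall strategy matches the paper's: rerun the construction behind \cite[Lem.~4.2.4]{BhattScholze} and bound sizes with \autoref{Rmk colimit kappa small}. Your Step~1 is exactly the paper's bound for $A \mapsto A^Z$.

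\textbf{The gap is in Step~2.} Tensoring in every faithfully flat \'etale cover, $\omega$ times, only makes the ring \emph{acyclic}. Acyclic does not imply w-strictly local (\autoref{Ex acyclic not wsl}), and your construction in fact destroys w-locality. Your cardinality count for it is fine; the problem is structural.

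Here is a concrete failure. Take $A = R$ strictly Henselian local with $\Spec R = \{s,\eta\}$ and $\{\eta\} = D(\pi)$, for example $R = \mathscr O_{\mathbf C_p}$ and $\pi = p$. Let $A_n$ be the stages of your Step~2 starting from $A^Z$, and set $W = \Spec(\colim_n A_n)$.
\begin{enumerate}
\item Suppose $W$ were w-strictly local. Then $W^\cl$ would be an affine closed subscheme, so $D(\pi) \cap W^\cl$ would be a quasi-compact open, hence clopen, subset of the profinite space $W^\cl \cong \pi_0(W)$. Equivalently, the set $P \subseteq \pi_0(W)$ of components whose closed point lies over $s$ would be clopen.
\item But $P$ is nonempty and has empty interior. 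Any nonempty clopen of $W$ is pulled back from a clopen $D(e) \subseteq \Spec A_n$ for some $n$. By going-down for the flat map $\Spec A_n \to \Spec R$, $D(e)$ meets $D(\pi)$.
\item Since $A_{n+1}$ is faithfully flat over the cover $(A_n)_\pi \times A_n$, the summand $\Spec (A_n)_\pi$ cut down by $D(e)$ pulls back to a nonempty clopen of $W$. It lies inside the given clopen and entirely over $\eta$, so the given clopen is not contained in $P$.
\end{enumerate}
So your $W_X$ need not be w-strictly local, and your claims about its closed points are unsupported.

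\textbf{The fix.} What is missing is the second half of the Bhatt--Scholze construction, which is exactly what the paper bounds. Let $I \subseteq A^Z$ be the ideal of the closed set of closed points.
\begin{enumerate}
\item Replace the absolutely flat ring $A^Z/I$ by an ind-\'etale faithfully flat $A^Z/I \to \overline{A^Z/I}$, where $\overline{A^Z/I}$ is absolutely flat with separably closed residue fields \cite[Lem.~2.2.7]{BhattScholze}. The size bound is \cite[Tag~097S]{Stacks}.
\item Take $\Hens_{A^Z}(\overline{A^Z/I})$. It is a colimit indexed by pairs consisting of an \'etale $A^Z$-algebra (at most $\size(A^Z)$ isomorphism classes, each of size at most $\size(A^Z)$) and an $A^Z$-map from it to $\overline{A^Z/I}$. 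Hence its size is at most $\max(\size(A^Z),\size(\overline{A^Z/I}))$.
\end{enumerate}
The w-strict locality of this ring comes from the Henselian pair property \cite[Lem.~2.2.12]{BhattScholze} (or from \autoref{Lem Henselisation acyclic} and \autoref{Lem coherent}), not from splitting covers. Its closed points are identified with $\Spec \overline{A^Z/I}$, which surjects onto $\Spec A$.

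\textbf{The passage to $V$.} Your sketch of step~(4) only handles maps taking closed points to closed points; that is all \autoref{Lem acyc profinite} and \autoref{Lem acyc profinite converse} describe. But $W \to W_X$ need not be w-local. Instead, cite \cite[Thm.~2.3.4]{BhattScholze} directly, as the paper implicitly does, once you have checked that the constructions of \cite[\S2.2--2.3]{BhattScholze} stay below $\kappa$. You cannot shortcut this with \autoref{Cor wsl affproet}, since that corollary is deduced from this proposition.
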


\begin{proof}
The proof is the same as \cite[Lem.~4.2.4]{BhattScholze}. One has to check that all the constructions in \cite[\S2.2, 2.3]{BhattScholze} can be carried out within rings of cardinality $<\kappa$. We will show that each of the constructions involved is a colimit of rings of size $\leq \size(A)$ indexed by a poset of size $\leq \size(A)$, which gives the result by \autoref{Rmk colimit kappa small}

For the construction $A \mapsto A^Z$ of \cite[Lem.~2.2.4]{BhattScholze}, this follows since it is a filtered colimit of finite products of localisations of $A$ indexed by the set $\mathcal P_\fin(A)$ of finite subsets of $A$; see \cite[Tag \href{https://stacks.math.columbia.edu/tag/0973}{0973}]{Stacks}. For the construction $A \mapsto \overline A$ of \cite[Lem.~2.2.7]{BhattScholze}, this is \cite[Tag \href{https://stacks.math.columbia.edu/tag/097S}{097S}]{Stacks}. For the Henselisation of \cite[Def.~2.2.10]{BhattScholze}, we have
\[
\size(\Hens_A(B)) \leq \max(\size(A),\size(B)).
\]
Indeed, for any \'etale $A$-algebra $A'$, the set $\Hom_A(A',B)$ has cardinality $\leq \size(B)$, the set of \'etale $A$-algebras up to isomorphism has cardinality $\leq \size(A)$, and any \'etale $A$-algebra $A'$ has $\size(A') \leq \size(A)$.
\end{proof}

The usual argument \cite[Tag \href{https://stacks.math.columbia.edu/tag/03A0}{03A0}]{Stacks} therefore gives

\begin{Cor}
Let $X$ be a scheme, and let $\kappa > \size(X)$ be a cardinal. Then the inclusion $\AffProEt^\kappa_{/X} \hookrightarrow \ProEt^\kappa_{/X}$ induces an equivalence $\Sh(X_{\kappa\text{-}\affproet}) \simeq \Sh(X_{\kappa\text{-}\proet})$. \hfill\qedsymbol
\end{Cor}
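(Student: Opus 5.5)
The plan is to apply the comparison lemma (the argument of \cite[Tag \href{https://stacks.math.columbia.edu/tag/03A0}{03A0}]{Stacks}) to the fully faithful inclusion $u \colon \AffProEt^\kappa_{/X} \hookrightarrow \ProEt^\kappa_{/X}$, with both categories carrying the fpqc topology. We need to check three things: $u$ is continuous and cocontinuous, the induced topology on $\AffProEt^\kappa_{/X}$ is exactly the fpqc topology of $X_{\kappa\text{-}\affproet}$, and every object of $\ProEt^\kappa_{/X}$ admits a covering by objects in the image of $u$. Once these hold, restriction $u^*$ and right Kan extension give mutually inverse equivalences $\Sh(X_{\kappa\text{-}\proet}) \simeq \Sh(X_{\kappa\text{-}\affproet})$.

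The covering condition comes straight from \autoref{Prop enough wsl kappa}. For $V \in X_{\kappa\text{-}\proet}$, that proposition gives a covering $\{W_i \to V\}$ in $X_{\kappa\text{-}\proet}$ with each $W_i \in \AffProEt^{\kappa,\wsl}_{/X} \subseteq \AffProEt^\kappa_{/X}$. So every object is covered by objects of the subcategory.

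Next I would check that $u$ is compatible with covers and fibre products. Take $U' \to U \leftarrow U''$ in $\AffProEt^\kappa_{/X}$. Their fibre product in $\Sch_{/X}$ is again in $\AffProEt^\kappa_{/X}$: write everything as cofiltered limits of affine \'etale $X$-schemes and use \autoref{Lem affproet}, which identifies the fibre product with the limit of fibre products of affine \'etale schemes. Its size is bounded by \autoref{Rmk colimit kappa small}, since tensor products of rings of size $<\kappa$ have size $<\kappa$. Hence $u$ preserves fibre products, and a family in $\AffProEt^\kappa_{/X}$ is an fpqc cover in one site exactly when it is in the other, because the fpqc condition only depends on the underlying morphisms of schemes. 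This gives both continuity and the statement that the induced topology on the subcategory is the given one.

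For cocontinuity, let $\{V_j \to U\}$ be a covering in $X_{\kappa\text{-}\proet}$ with $U$ in the subcategory. Applying \autoref{Prop enough wsl kappa} to each $V_j$, I would refine the covering by objects $W_{jk} \in \AffProEt^\kappa_{/X}$. The composites $W_{jk} \to V_j \to U$ then form an fpqc covering of $U$ lying in the subcategory. Composites of fpqc covers are fpqc, but the fpqc condition involves quasi-compactness over affine opens. So one must make sure the family $\{W_{jk}\}$, or a finite subfamily over each affine, still satisfies it. \autoref{Prop enough wsl kappa} provides that every point is hit by a closed point of some $W_{jk}$, and since $U$ is affine, finitely many of the $W_{jk}$ suffice by the standard argument. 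I expect this fpqc quasi-compactness bookkeeping to be the only mildly delicate step; the rest is formal.
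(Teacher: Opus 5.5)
Your proposal is correct and is essentially the paper's argument: the paper just invokes the comparison lemma of \cite[Tag 03A0]{Stacks}, with \autoref{Prop enough wsl kappa} supplying the condition that every object of $\ProEt^\kappa_{/X}$ is covered by objects of $\AffProEt^\kappa_{/X}$, and you have written out the remaining routine checks. One small point: the quasi-compactness bookkeeping in your cocontinuity step is unnecessary, because fpqc coverings are stable under composition and coverings in both sites are simply fpqc coverings, so $\{W_{jk} \to U\}$ is already a covering of $X_{\kappa\text{-}\affproet}$ (and pointwise surjectivity alone would not give a finite subfamily anyway).
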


In the proof of the pro-\'etale exodromy theorem, the following site will be useful.

\begin{Def}\label{Def w-local topology}
Let $X$ be a scheme. Then the \emph{w-local site} $X_\wl$ is the category $\AffProEt_{/X}^\wsl$ together with the pretopology whose covers are finite jointly surjective families of \makebox{w-local} maps. This defines a pretopology in the sense of \cite[Tag \href{https://stacks.math.columbia.edu/tag/00VH}{00VH}]{Stacks} by \autoref{Cor w-local topology}. Likewise, define the $\kappa$-small w-local site $X_{\kappa\text{-}\wl}$ as the full subcategory $\AffProEt_{/X}^{\kappa, \wsl}$ of $\kappa$-small w-strictly local schemes, endowed with the w-local topology.
\end{Def}

\begin{Lemma}\label{Lem w-local topology}
Let $X$ be a scheme. Then w-local topology and the pro-\'etale topology on $\AffProEt_{/X}^{\wsl}$ agree. The same holds for $\AffProEt_{/X}^{\kappa, \wsl}$ if $\kappa > \size(X)$.
\end{Lemma}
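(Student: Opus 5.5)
We show that each topology is contained in the other, i.e.\ that every covering sieve for one is covering for the other. For this it suffices to compare covering families: a w-local cover should be a pro-\'etale cover, and every pro-\'etale cover of a w-strictly local $W$ should be refined by a w-local cover.

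\emph{W-local covers are pro-\'etale covers.} Let $\{V_j \to W\}_{j=1}^n$ be a finite jointly surjective family of w-local maps in $\AffProEt_{/X}^\wsl$. Each map is weakly \'etale, because it is a map between objects of $\ProEt_{/X}$, and it is affine. A finite jointly surjective family of flat maps of affine schemes is an fpqc cover. Hence the family is a pro-\'etale cover.

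\emph{Pro-\'etale covers are refined by w-local covers.} Let $\{U_i \to W\}$ be an fpqc cover in $\AffProEt_{/X}^\wsl$, or more generally in $\ProEt_{/X}$, with $W$ w-strictly local.
\begin{enumerate}
\item Since $W$ is quasi-compact, we may pass to a finite subfamily and replace it by the single affine map $U = \coprod U_i \to W$, which is a weakly \'etale surjection.
\item By \autoref{Prop enough wsl kappa}, there is a cover of $U$ by w-strictly local $W'$ in $\AffProEt_{/X}$ such that every point of $U$ is the image of a closed point. Taking a finite subcover (using quasi-compactness of $U$) and the disjoint union, we get a w-strictly local $W' \to W$ that is weakly \'etale and surjective on closed points.
\item Let $Z' \subseteq W'$ be the closed points lying over $W^\cl$. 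These form a closed subset.
\item Let $W'' = W \times_{\pi_0(W)} \pi_0(Z'')$ for a suitable profinite $Z''$. By \autoref{Lem acyc profinite} and \autoref{Cor equiv wsl}, $W''$ is w-strictly local, and the map $W'' \to W$ is w-local and surjective.
\item It remains to factor $W'' \to W$ through $W'$. Equivalently, the cover $W' \to W$ should admit a section over $W \times_{\pi_0(W)} T$ for some profinite surjection $T \to \pi_0(W)$.
\end{enumerate}

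To build the section, I would use that $W$ is Henselian along $W^\cl$ (\autoref{Lem wsl Henselian}), together with the identification $\Sh(W^\cl_\et) \simeq \Sh(\pi_0(W))$ from \autoref{Lem strictly profinite}. First write $W' \to W$ as a limit of affine \'etale maps $W'_k \to W$ using \autoref{Lem affproet over}. Then:
\begin{enumerate}
\item Each \'etale surjection $W'_k \to W$ has a section, since $W$ is acyclic. More importantly, \'etale maps to $W$ restricted over $W^\cl$ are just \'etale maps to a strictly profinite scheme, so they are locally trivial over $\pi_0(W)$.
\item Lifting along the Henselian pair, sections over $W^\cl \times_{\pi_0(W)} T$ extend to sections over $W \times_{\pi_0(W)} T$.
\item Take $T$ to be the profinite set of compatible systems of points, namely $T = \pi_0(W'^{\cl} \times_W W^\cl)$. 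Over $W^\cl$, the scheme $W'^{\cl} \times_W W^\cl$ is strictly profinite by \autoref{Lem strictly profinite weakly etale}, and it surjects onto $W^\cl \cong \pi_0(W)$.
\item The closed immersion $W^{\cl} \times_{\pi_0(W)} T \to W'$ then extends, by Henselianity applied in the limit of the \'etale $W'_k$, to a map $W \times_{\pi_0(W)} T \to W'$ over $W$.
\end{enumerate}
In the resulting map of $W$-schemes, the source $W \times_{\pi_0(W)} T$ is w-local over $W$, so it gives the required single-element w-local refinement.

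For the $\kappa$-small version, one checks that $T$ and all the intermediate objects stay $\kappa$-small, using \autoref{Lem pi0 kappa} and \autoref{Prop enough wsl kappa}.

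The main obstacle is the Henselian lifting in the last step. The difficulty is extending sections from $W^\cl$ to all of $W$ compatibly through a pro-\'etale limit. I would first attempt it levelwise for the \'etale $W'_k$, where the Henselian pair property (\cite[Tag 09XI]{Stacks}) gives the bijection between sections over $W\times_{\pi_0(W)}T$ and over its closed points. I would then pass to the limit, checking that the compatible sections over the closed points indeed glue.
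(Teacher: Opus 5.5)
Your argument is correct in outline, but it reaches the w-local refinement by a noticeably longer route than the paper.

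The paper never constructs a section. For a cover $f\colon V \twoheadrightarrow W$ with $V$ w-strictly local, it notes that $f^{-1}(W^\cl)$ consists of closed points of $V$, because fibres of weakly \'etale maps are $0$-dimensional. It lets $T \subseteq \pi_0(V) \simeq V^\cl$ be the corresponding closed subset and takes $U = V \times_{\pi_0(V)} T$. This is an intersection of clopens of $V$, so it maps to $V$ for free, and $U \to W$ is w-local and surjective by \autoref{Lem acyc profinite}.

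You instead start from $W \times_{\pi_0(W)} T$ and manufacture a map to $W'$ by Henselian lifting through the \'etale stages $W'_k$. The two objects coincide: by \autoref{Lem acyc profinite converse}, $U \cong W \times_{\pi_0(W)} \pi_0(U)$, and $\pi_0(U)$ is exactly your $T$. So the step you flag as the main obstacle can be bypassed entirely.

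Your version does work. Lifts along the Henselian pair $(W \times_{\pi_0(W)} T,\, W^\cl \times_{\pi_0(W)} T)$ are unique, since two sections of a separated \'etale map agree on a clopen containing all closed points. Hence the levelwise lifts are compatible and pass to the limit. However, even getting the closed-point map $W^\cl \times_{\pi_0(W)} T \to W'$ needs an identification you do not justify: $W' \times_W W^\cl \cong W^\cl \times_{\pi_0(W)} T$. This is \autoref{Lem acyc profinite converse} applied over the strictly profinite $W^\cl$, and the same lemma applied over $W$ makes the lifting redundant.

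Two smaller remarks:
\begin{itemize}
\item Your step 2 (refining via \autoref{Prop enough wsl kappa}) is unnecessary for covers inside $\AffProEt_{/X}^{\wsl}$, since $\coprod U_i$ is already w-strictly local. It only buys you the statement for arbitrary pro-\'etale covers of $W$.
\item To refine the original family $\{U_i \to W\}$ rather than $\coprod U_i \to W$, you must split your refinement along the clopens pulled back from the $U_i$. The paper does this with $\{V_j \times_V U \to W\}$.
\end{itemize}
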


\begin{proof}
Clearly the pro-\'etale topology is finer than the w-local topology. For the converse, we have to show that any pro-\'etale cover in $\AffProEt_{/X}^\wsl$ can be refined by a w-local cover. First, assume $f \colon V \twoheadrightarrow W$ is a one-object pro-\'etale covering in $\AffProEt_{/X}^\wsl$. The closed subset $Z = f^{-1}(W^\cl) \subseteq V$ is contained in $V^\cl$: if $v \in V$ maps to a closed point $w$, then $\overline{\{v\}}$ maps to $w$, which forces $\overline{\{v\}} = \{v\}$ since the fibres of a weakly \'etale map are \makebox{$0$-dimensional} \cite[Tags \href{https://stacks.math.columbia.edu/tag/092I}{092I}(1) and \href{https://stacks.math.columbia.edu/tag/092F}{092F}]{Stacks}. Let $S = \pi_0(V) \simeq V^\cl$, and let $T \subseteq S$ be the closed subset corresponding to $Z \subseteq V^\cl$. Then $U = V \times_S T$ is w-strictly local and weakly \'etale over $V$ by \autoref{Lem acyc profinite}. The composition $U \to V \to W$ is still surjective: it hits all closed points by assumption, hence is surjective since flat maps satisfy `going up' \cite[Tag \href{https://stacks.math.columbia.edu/tag/03HV}{03HV}]{Stacks}. Thus, $U \twoheadrightarrow W$ is a w-local weakly \'etale cover refining the given cover $V \twoheadrightarrow W$.

In general, if $\mathscr V = \{V_i \to W\}_{i \in I}$ is a pro-\'etale cover in $\AffProEt_{/X}^\wsl$, there exists a finite subset $J \subseteq I$ such that $V = \coprod_{j \in J} V_j \twoheadrightarrow W$ is a pro-\'etale cover (and $V$ is w-strictly local). By the above, there exists a w-local cover $U \twoheadrightarrow W$ in $\AffProEt_{/X}^\wsl$ refining $V \twoheadrightarrow W$. Then $\{V_j \times_V U \to W\}_{j \in J}$ is a w-local cover in $\AffProEt_{/X}^\wsl$ refining $\mathscr V$. This proves the first statement, and the second follows by noting that $\size(V_j \times_V U) \leq \size(V_j)$ since $V_j \times_V U \hookrightarrow U$ is a closed immersion of affine schemes.
\end{proof}

Given a site $\mathscr C$ and an object $U \in \mathscr C$, write $\HypCov(\mathscr C,U)$ for the category of hypercoverings of $U$ in $\mathscr C$ in the sense of \cite[Tag \href{https://stacks.math.columbia.edu/tag/01G5}{01G5}]{Stacks} (this depends on the pretopology on $\mathscr C$, not just the topology).

\begin{Lemma}\label{Lem hypercover}
Let $X$ be a scheme, let $W \in \AffProEt_{/X}^\wsl$, and let $S = \pi_0(W)$. Then the functors
\[
\pi_0 \colon \HypCov(X_\wl,W) \leftrightarrows \HypCov(\ProFin,S) \colon W \times_S (-)
\]
are mutually inverse equivalences (where $\ProFin$ is endowed with the pretopology whose covers are jointly surjective finite families of continuous maps).
\end{Lemma}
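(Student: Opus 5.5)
The plan is to upgrade \autoref{Cor equiv wsl} from an equivalence of categories to a compatibility with the pretopologies, and then apply it levelwise to simplicial objects. Let $\mathscr C_W \subseteq (X_\wl)_{/W}$ be the full subcategory of w-strictly local $V \to W$ in $\AffProEt_{/X}$ whose structure map is w-local (and weakly \'etale). By \autoref{Cor equiv wsl} and \autoref{Lem acyc profinite}, the functors $\pi_0$ and $W \times_S (-)$ are mutually inverse equivalences $\mathscr C_W \simeq \ProFin_{/S}$. For every $T \in \ProFin_{/S}$, $W \times_S T \to W$ is pro-(affine \'etale), and \autoref{Lem affproet over} puts it in $\AffProEt_{/X}$.

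\begin{enumerate}
\item \textbf{Hypercovers stay in $\mathscr C_W$.} Every term of a hypercovering of $W$ in $X_\wl$ lies in $\mathscr C_W$. The terms are built from finite disjoint unions of members of covering families and from fibre products of the form $V_n \times_{(\operatorname{cosk}_{n-1})_{n}} -$, taken in $X_\wl$. Members of w-local covers are w-local over their target, and compositions of w-local weakly \'etale maps are again w-local weakly \'etale. Fibre products of w-local maps between w-strictly local schemes exist and stay w-local by \autoref{Cor w-local topology}. So by induction on the simplicial degree each $V_n \to W$ lies in $\mathscr C_W$, and likewise every coskeleton term $(\operatorname{cosk}_{n-1} V)_n$.
\item \textbf{$\pi_0$ preserves the relevant limits.} On $\mathscr C_W$, $\pi_0$ takes fibre products to fibre products of profinite sets, and finite coproducts to finite coproducts. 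This holds because it is an equivalence with $\ProFin_{/S}$, and these fibre products in $\mathscr C_W$ agree with those in $X_\wl$ by \autoref{Cor w-local topology}. Hence $\pi_0$ commutes with the coskeleta used in the hypercovering condition.
\item \textbf{Covers correspond.} A finite family $\{V_i \to V\}$ in $\mathscr C_W$ consists of w-local maps. It is jointly surjective if and only if it is surjective on closed points: one direction is trivial, and the other follows from going up for flat maps, as in \autoref{Lem w-local topology}. Surjectivity on closed points is in turn equivalent to joint surjectivity of $\{\pi_0(V_i) \to \pi_0(V)\}$, using the homeomorphism $V^\cl \cong \pi_0(V)$ of \autoref{Lem acyclic}(d). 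So $\pi_0$ and $W \times_S (-)$ match the two pretopologies on $\mathscr C_W \simeq \ProFin_{/S}$.
\end{enumerate}

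Combining these, $\pi_0$ sends a hypercovering of $W$ in $X_\wl$ to a hypercovering of $S$ in $\ProFin$, since each map $V_{n+1} \to (\operatorname{cosk}_n V)_{n+1}$ goes to a covering. Conversely, $W \times_S (-)$ sends hypercoverings of $S$ to hypercoverings of $W$. The two composites are naturally isomorphic to the identity, levelwise by \autoref{Cor equiv wsl}, and this extends to morphisms of hypercoverings by naturality.

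The main obstacle is the bookkeeping in the first step: making sure that the Stacks definition of hypercovering (via semi-representable objects and $\operatorname{cosk}$ over the pretopology) only involves fibre products that exist in $X_\wl$ and remain in $\mathscr C_W$. In particular, $X_\wl$ need not have all fibre products, only those along w-local maps. I would handle this by noting that all structure maps in the hypercovering are over $W$, and that maps between objects of $\mathscr C_W$ are automatically w-local: a closed point maps to a closed point of $W$, so its image is closed, as in the proof of \autoref{Lem w-local topology}. This reduces everything to the equivalence with $\ProFin_{/S}$, where all finite limits exist.
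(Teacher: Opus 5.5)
Your proposal is correct and follows the same route as the paper. The paper's proof likewise observes that $\pi_0 \simeq (-)^\cl$ sends w-local covers to surjections of profinite sets, that $W \times_S (-)$ lands in hypercovers by \autoref{Lem acyc profinite}, and that the two functors are mutually inverse by \autoref{Cor equiv wsl}; your extra bookkeeping (maps in $\mathscr C_W$ are automatically w-local, so the fibre products needed for the coskeleta exist in $X_\wl$ by \autoref{Cor w-local topology} and are preserved by $\pi_0$) fills in details the paper leaves implicit.
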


\begin{proof}
Note that both constructions are indeed functorial and land in hypercovers. For $\pi_0 \simeq (-)^\cl$, this is because w-local covers define surjective maps on closed points, and for $W \times_S (-)$, this is \autoref{Lem acyc profinite}. They are mutual inverses by \autoref{Cor equiv wsl}.
\end{proof}

Finally, we will use the above to show that w-strictly local objects in $\ProEt_{/X}$ are always contained in $\AffProEt_{/X}$; see \autoref{Cor wsl affproet} below.

\begin{Lemma}\label{Lem affproet descent}
Let $X$ be a scheme, let $W \to X$ be a weakly \'etale morphism, let $\pi_0(W) \to S$ be a continuous map to a profinite set, and let $T \twoheadrightarrow S$ be a surjective continuous map of profinite sets. If $W \times_S T$ is in $\AffProEt_{/X}$, then so is $W$.
\end{Lemma}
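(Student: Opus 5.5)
The plan is to reduce to a factorisation criterion for ind-étale algebras. The structure of $W \times_S T \to W$ lets us transport factorisations from $W \times_S T$ back to $W$, one clopen piece at a time.

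\emph{Step 1: presentation of $W \times_S T$.} Write $T \to S$ as a cofiltered limit of maps $T_i \to S$, where each $T_i$ is a finite disjoint union of clopens of $S$. This is possible as in the proof of \autoref{Lem acyc profinite}. Since $T \to S$ is surjective, the clopens making up each $T_i$ cover $S$. Then $W_i := W \times_S T_i$ is a finite disjoint union of clopen subschemes of $W$ that cover $W$, and $W \times_S T = \lim_i W_i$ with affine transition maps. In particular $W$ is quasi-compact, being the image of the quasi-compact scheme $W \times_S T$. Each $W_i$ is quasi-compact because it is a finite disjoint union of clopen subsets of the quasi-compact $W$. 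Each $W_i$ is also quasi-separated, once we use that $\pi_0(W)$ is being treated as a profinite set (so $W$ is implicitly qcqs).

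\emph{Step 2: $W$ is affine.} Since $\lim_i W_i = W \times_S T$ is affine and the transition maps are affine, some $W_i$ is affine by \cite[Tag 01Z6]{Stacks}. Hence $W$ is covered by finitely many affine clopens $U_1,\dots,U_n$. We refine these to a clopen partition $P_1,\dots,P_m$ of $W$, each $P_l$ closed in some affine $U_{k(l)}$ and hence affine. Then $W = \coprod_l P_l$ is affine.

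\emph{Step 3: factorisation criterion.} Next we use the criterion (compare \autoref{Lem affproet} and \cite[Tag 00U2]{Stacks}) that an affine weakly \'etale $X$-scheme $V$ lies in $\AffProEt_{/X}$ if and only if the category of factorisations $V \to E \to P$ is cofiltered for every $P$ of finite presentation over $X$ (or, over affine pieces, every finitely presented algebra) with a map $V \to P$, where $E \in \AffEt_{/X}$. Concretely this means two things. Every map $V \to P$ factors through some $E \in \AffEt_{/X}$. And two such factorisations are equalised after a further factorisation.

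\emph{Step 4: transferring factorisations.} Given $W \to P$, we compose it to $W \times_S T \to W \to P$. Since $W \times_S T \in \AffProEt_{/X}$, this factors as $W \times_S T \to E \to P$ with $E \in \AffEt_{/X}$. Because $E$ is locally of finite presentation, the map $W \times_S T \to E$ factors through some $W_i = \coprod_k U_k$, after enlarging $i$ so that the two routes to $P$ already agree on $W_i$ (again by finite presentation of $P$). This gives maps $U_k \to E$ over $P$.

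Passing to the partition, we restrict $U_{k(l)} \to E$ to $P_l$. Since $W = \coprod_l P_l$, this defines a map
\[
W \longrightarrow \coprod_{l=1}^m E .
\]
The target $\coprod_{l=1}^m E$ is again in $\AffEt_{/X}$, and the fold map $\coprod_l E \to E \to P$ recovers $W \to P$. The equalising condition is handled the same way: two factorisations through $E, E'$ become equal on $W \times_S T$ after passing to some $E''$. They therefore become equal on some $W_i$, and hence on each piece $P_l$, and we glue over the partition as before.

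\emph{Main obstacle.} I expect the hardest part to be making the criterion of Step 3 precise when $X$ is not affine. Over $X = \Spec A$ it is the standard characterisation of ind-\'etale $A$-algebras. In general, I would instead phrase it as pro-representability: the presheaf $\Hom_X(W,-)$ on $\AffEt_{/X}$ is a filtered colimit of representables, equivalently its category of elements is cofiltered. I would check this by the same clopen-partition gluing, using \autoref{Lem affproet} to identify $\AffProEt_{/X}$ with $\Pro(\AffEt_{/X})$.
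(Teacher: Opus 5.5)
Your core argument is sound and takes a genuinely different route from the paper in the main step.

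\emph{The paper's route.} It gets affineness of $W$ in one line, by fpqc descent along $\Spec\mathbf Z\times T\twoheadrightarrow\Spec\mathbf Z\times S$. It then reduces to affine $X$ by producing a map $W\to U$ with $U\in\AffEt_{/X}$, via a section of some $T_i\twoheadrightarrow S$. Finally it uses the left adjoint $F$ of $\AffProEt_{/X}\hookrightarrow\Aff_{/X}$. Since $F$ commutes with finite coproducts and cofiltered limits, $F(W\times_S T)\cong F(W)\times_S T$. So the unit $W\to F(W)$ becomes an isomorphism after faithfully flat base change along $T\to S$, and is an isomorphism by descent.

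\emph{Your route.} You verify the compactness criterion for ind-\'etale algebras directly. You transport each factorisation from $W\times_S T$ to a finite stage $W_i$ by finite presentation, and then back to $W$ along a section $W\to W_i$ coming from a section of $T_i\twoheadrightarrow S$. This avoids $F$ and descent of isomorphisms, at the cost of having to justify the criterion. Over an affine base only the existence half is needed: \'etale algebras are closed under finite colimits, so the category of \'etale algebras over $B$ is filtered. You may also glue into a single copy of $E$ rather than $\coprod_l E$.

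Two points need repair.

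\emph{Quasi-separatedness.} Nothing in the hypotheses makes $W$ ``implicitly qcqs''; you are only given a continuous map $\pi_0(W)\to S$. Tag 01Z6 needs the $W_i$ to be qcqs, so you must prove it. For affine opens $V_1,V_2\subseteq W$, the preimages $V_a\times_S T$ are affine opens of the affine scheme $W\times_S T$. Hence their intersection is quasi-compact, and it surjects onto $V_1\cap V_2$. Alternatively, descend affineness directly as the paper does.

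\emph{The non-affine case.} Your proposed resolution does not work. Pro-representability of $\Hom_X(W,-)$ on $\AffEt_{/X}$ only produces a pro-object $F(W)$ with a canonical map $W\to F(W)$; it does not make this map an isomorphism. Indeed, over affine $X$ this functor is always pro-representable, being left exact on a category with finite limits. Yet not every affine weakly \'etale $X$-scheme lies in $\AffProEt_{/X}$ (see the remark following \autoref{Lem affproet}).

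The missing step is already your Step~4 with $P=X$. Any projection $W\times_S T\to E_0$ with $E_0\in\AffEt_{/X}$ factors through some $W_i$, and composing with a section $W\to W_i$ gives $W\to E_0$. By \autoref{Lem affproet over} you may then replace $X$ by the affine scheme $E_0$, where your criterion is the standard one. This is exactly the paper's reduction step.
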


\begin{proof}
Since $W \times_S T$ is the base change of $W \to \Spec \mathbf Z \times S$ along the fpqc surjection $\Spec \mathbf Z \times T \twoheadrightarrow \Spec \mathbf Z \times S$, we conclude from \cite[Tag \href{https://stacks.math.columbia.edu/tag/02L5}{02L5}]{Stacks} that $W$ is affine. Since $W \times_S T \in \AffProEt_{/X}$, there exists $U \in \AffEt_{/X}$ and a map $W \times_S T \to U$. We may write $T$ as a cofiltered limit of profinite sets $T_i$ that are finite coproducts of clopens in $S$. Then $\Hom_X(W \times_S T_i,U) \neq\varnothing$ for some $i$ by \cite[Tag \href{https://stacks.math.columbia.edu/tag/01ZC}{01ZC}]{Stacks}. Since $T_i \to S$ is surjective, it has a section, and we conclude that $\Hom_X(W,U) \neq \varnothing$. Choosing such a morphism $W \to U$ and replacing $X$ by $U$, we may assume $X$ is affine.

Then $\AffEt_{/X}$ has finite limits, so $\AffProEt_{/X} \simeq \Fun^\lex(\AffEt_{/X},\Set)\op$ \cite[Exp.~I, Thm.~8.3.3]{SGA4I}. The inclusion $\AffProEt_{/X} \to \Aff_{/X}$ has a left adjoint $F$ taking an affine \makebox{$X$-scheme} $Y$ to the left exact functor $\Hom_X(Y,-) \colon \AffEt_{/X} \to \Set$. Since $F$ is a localisation, the unit $Y \to F(Y)$ is an equivalence if and only if $Y \in \AffProEt_{/X}$. Since $F$ is a left adjoint, it preserves finite coproducts, so $F(W \times_S T_i) \cong F(W) \times_S T_i$. Moreover, $F$ preserves cofiltered limits: these are computed pointwise in $\Fun^\lex(\AffEt_{/X},\Set)\op$ since filtered colimits of sets commute with finite limits, so the claim follows by another application of \cite[Tag \href{https://stacks.math.columbia.edu/tag/01ZC}{01ZC}]{Stacks}. In particular, we get $F(W \times_S T) \cong F(W) \times_S T$. Since the natural map $W \times_S T \to F(W \times_S T) \cong F(W) \times_S T$ is an isomorphism and $T \to S$ is surjective, we conclude that $W \to F(W)$ was already an isomorphism \cite[Tag~\href{https://stacks.math.columbia.edu/tag/02L4}{02L4}]{Stacks}.
\end{proof}

\begin{Cor}\label{Cor wsl affproet}
Let $X$ be a scheme. Then every w-strictly local object of $\ProEt_{/X}$ is in $\AffProEt_{/X}$.
\end{Cor}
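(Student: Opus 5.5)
The plan is to reduce to \autoref{Lem affproet descent}: find a surjection of profinite sets $T \twoheadrightarrow S = \pi_0(W)$ such that $W \times_S T$ is visibly in $\AffProEt_{/X}$.

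Let $W \to X$ be weakly \'etale with $W$ w-strictly local; by \autoref{Lem acyclic} it is affine. First I reduce to $X$ affine. Since $W$ is quasi-compact, cover $X$ by affine opens $X_1,\dots,X_n$ so that the preimages $W_j \subseteq W$ cover $W$. I want to refine this Zariski cover of $W$ by a clopen decomposition. For a closed point $w \in W$, its component is $\Spec$ of a strictly Henselian local ring (\autoref{Lem acyclic}(c)), so it lies in any open containing $w$. Hence it lies in some $W_j$. Since components are intersections of clopens and $W$ is quasi-compact, some clopen neighbourhood of that component lies in $W_j$. By compactness of $\pi_0(W)$ I can then find a finite disjoint clopen decomposition $W = \coprod_k U_k$ with each $U_k$ contained in some $W_{j(k)}$. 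Each $U_k$ is w-strictly local by \autoref{Lem acyclic}(b) (clopens of w-strictly local schemes are w-strictly local). If each $U_k \to X_{j(k)}$ lies in $\AffProEt_{/X_{j(k)}}$, then it lies in $\AffProEt_{/X}$, because affine \'etale $X_j$-schemes are affine \'etale over $X$. Finite coproducts of objects of $\AffProEt_{/X}$ stay in $\AffProEt_{/X}$, since cofiltered limits commute with finite coproducts of affines. So we may assume $X = \Spec A$ is affine.

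For $X$ affine I would argue directly, avoiding the descent lemma if possible. Write $W = \Spec B$ with $A \to B$ weakly \'etale, and consider the left adjoint $F$ from the proof of \autoref{Lem affproet descent}. The object $F(W)$ is the limit over the category of \'etale $A$-algebras $A'$ with maps $A' \to B$, and I must show $\colim A' \to B$ is an isomorphism. The cleanest route is through the Henselisation construction of \cite[\S2.2]{BhattScholze}. Take $Z = W^\cl$, which is strictly profinite by \autoref{Rmk wsl}, and form $\Hens_A(\mathscr O(Z))$. This is a filtered colimit of \'etale $A$-algebras, hence lies in $\AffProEt_{/X}$. Its closed points are $Z$, and it is w-strictly local and Henselian along $Z$ \cite[Lem.~2.2.12]{BhattScholze}. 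Because $W$ is Henselian along $Z$ (\autoref{Lem wsl Henselian}), the universal property of Henselisation gives a map $W' := \Spec \Hens_A(\mathscr O(Z)) \to W$ over $X$ which restricts to the identity on $Z$. This map is weakly \'etale by \cite[Tag 0951]{Stacks}, and it is a map of acyclic schemes inducing a bijection on closed points. Hence it is an isomorphism by \autoref{Lem acyclic iso closed points}, so $W \cong W' \in \AffProEt_{/X}$.

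The main obstacle is the Henselisation step: checking that the universal property yields a map $W' \to W$ rather than only $W \to W'$, and that it is compatible with $Z$. If the direction is problematic, I would instead use the map $W \to W'$ supplied by $W'$ being ind-\'etale and $W$ being Henselian. Concretely, each \'etale $A' \to \mathscr O(Z)$ lifts uniquely to $B$ by the Henselian property of $(B, I_Z)$, giving $\Hens_A(\mathscr O(Z)) \to B$. Then \autoref{Lem acyclic iso closed points} applies to $W \to W'$, both of which are weakly \'etale over $X$. Should the Zariski reduction cause trouble instead, the fallback is \autoref{Lem affproet descent}, taking $T$ to be a disjoint union of the pieces $\pi_0(U_k)$.
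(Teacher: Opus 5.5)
Your route is different from the paper's and can be completed, but as written its decisive step is not justified.

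\textbf{Direction of the map.} Your primary direction is wrong. $W' = \Spec \Hens_A(\mathscr O(Z))$ is the limit of the affine \'etale $X$-schemes under $Z$. Its universal property therefore only computes maps into $W'$, or maps from $W'$ into (limits of) objects of $\AffEt_{/X}$. Producing $W' \to W$ this way presupposes $W \in \AffProEt_{/X}$, which is what you are proving.

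What actually gives a map is the Henselian property of $(W,Z)$ from \autoref{Lem wsl Henselian}. It yields bijections $\Hom_X(W,U) \cong \Hom_X(Z,U)$ for $U \in \AffEt_{/X}$: lifts exist, and they are unique because sections of separated \'etale maps agreeing on $Z$ agree. This gives a map $W \to W'$ whose restriction to $Z$ is the canonical map $Z \to W'$. It is weakly \'etale by \cite[Tag 0951]{Stacks}. So your fallback has to be the argument.

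\textbf{The real gap.} To apply \autoref{Lem acyclic iso closed points} to $W \to W'$, you need two facts: $W'$ is acyclic, and $Z = W^\cl \to W'$ is a bijection onto $W'^\cl$. You attribute both to \cite[Lem.~2.2.12]{BhattScholze}. But Bhatt--Scholze must first pass to the Zariski localisation $A^Z$ precisely in order to apply that lemma (cf.\ \autoref{Rmk construction wsl}). So it does not cover the Henselisation along an arbitrary map $A \to \mathscr O(Z)$.

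\textbf{How to repair it.} Both facts are true, and the paper supplies them without using the corollary.
\begin{itemize}
\item View $W'$ as the topos-theoretic Henselisation $X_{(s)}$ along $\Sh(\lvert Z\rvert) \simeq \Sh(Z_\et) \to \Sh(X_\et)$ (\ref{Par Henselisation coincides}).
\item $W'$ is acyclic by \autoref{Lem Henselisation acyclic}.
\item The canonical map $Z \to W'$ identifies clopens, so $\pi_0(W') \cong \lvert Z \rvert$ (\autoref{Lem pi0 iso}).
\item $Z$ lands in $W'^\cl$. Indeed, the fibre of $W'$ over $z$ is the strict Henselisation of $X$ at the geometric point $\Spec\kappa(z)$ (\autoref{Lem Henselisation base change}), and its closed point is the image of $\Spec\kappa(z)$.
\end{itemize}
Alternatively, \autoref{Lem homeomorphism} gives $Z \xrightarrow{\sim} W'^\cl$ directly. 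With this inserted, and your (correct) reduction to affine $X$, the proof is complete.

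\textbf{Comparison with the paper.} The paper argues by descent instead:
\begin{itemize}
\item it takes a w-local cover $W' \twoheadrightarrow W$ with $W' \in \AffProEt^{\wsl}_{/X}$ (\autoref{Prop enough wsl kappa}, \autoref{Lem w-local topology});
\item it identifies $W' \cong W \times_{\pi_0(W)} \pi_0(W')$ (\autoref{Lem acyc profinite converse});
\item it descends membership in $\AffProEt_{/X}$ along $\pi_0(W') \twoheadrightarrow \pi_0(W)$ (\autoref{Lem affproet descent}).
\end{itemize}
That argument is formal once enough w-strictly local objects exist, and it needs no reduction to affine $X$. Your rigidity argument instead relies on the Henselisation theory of \S2 and Appendix~B. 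In exchange, it gives the stronger canonical identification $W \cong \Hens_X(W^\cl)$ for every w-strictly local object of $\ProEt_{/X}$, as in \autoref{Prop Xcons}.
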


\begin{proof}
Suppose $W \to X$ is a weakly \'etale map from a w-strictly local scheme. By \autoref{Prop enough wsl kappa}, there exists a cover $\{W_i \to W\}_{i \in I}$ such that each $W_i$ is in $\AffProEt_{/X}^\wsl$ and $\AffProEt_{/W}^\wsl$. Applying \autoref{Lem w-local topology} to $W$, we may refine the cover by a w-local cover $\{W'_j \to W\}_{j \in J}$ over a finite index set $J$, and taking the disjoint union gives a w-local cover $W' \twoheadrightarrow W$ with $W' \in \AffProEt_{/X}^\wsl$. By \autoref{Lem acyc profinite converse}, we get $W' \cong W \times_{\pi_0(W)} \pi_0(W')$, so the result follows from \autoref{Lem affproet descent}.
\end{proof}

\section{The condensed Galois category}\label{Sec condensed}
The main result of this section is the classification of $\Fun^*(\Sh(X_\et),\Sh(S))$ for any profinite set $S$. Generalising the classification of points of the \'etale topos via spectra of strictly Henselian local rings in $\AffProEt_{/X}$ \cite[Exp.~VIII, Thm.~7.9]{SGA4II}, we prove in \autoref{Prop equivalence S} that geometric morphisms $s^* \colon \Sh(X_\et) \to \Sh(S)$ are classified by acyclic schemes $W \in \AffProEt_{/X}$ with $\pi_0(W) \simeq S$. This leads to an equivalence between the unstraightening of the functor $S \mapsto \Fun^*(\Sh(X_\et),\Sh(S))$ and the category $\AffProEt_{/X}^{\acyc}$; see \autoref{Thm equivalence}. This can also be deduced from \cite[Thm.~6.3.14]{LurieUltra}, but we give an independent and more geometric proof that also gives more refined statements for $\kappa$-bounded versions as well as for locally coherent geometric morphisms $\Sh(X_\et) \to \Sh(S)$.

In \hyperref[Sec condensed category]{\S2.1}, we give a direct definition of the condensed Galois category $\GAL(X)$ of a scheme $X$: it is given by the association taking $S \in \ProFin$ to the category $\Fun_\loccoh^*(\Sh(X_\et),\Sh(S))$ of locally coherent geometric morphisms $s^* \colon \Sh(X_\et) \to \Sh(S)$ (\autoref{Def Galois category}). When $X$ is qcqs, this is the condensed category defined by the profinite Galois category $\Gal(X)$ of \cite[Def.~12.1.3]{BGH}; see \autoref{Rmk Gal(X) profinite}.

The classical theory of \cite{SGA4II} says that geometric morphisms $s^* \colon \Sh(X_\et) \to \Set$ are always pro-represented by spectra of strictly Henselian local rings in $\AffProEt_{/X}$. To understand geometric morphisms $s^* \colon \Sh(X_\et) \to \Sh(S)$ for a profinite set $S$, we first study left exact functors $\mathscr C \to \Sh(S)$ for an arbitrary category $\mathscr C$. Instead of (pro-)representing such functors by a (pro-)object of $\mathscr C$, they are now represented by an \emph{$S$-cosheaf} in $\mathscr C$. This theory is set up in \hyperref[Sec cosheaf]{\S2.2}.

In \hyperref[Sec hensel]{\S2.3}, we define the Henselisation along a geometric morphism $s^* \colon \Sh(X_\et) \to \Sh(S)$, and show that it is always represented by an $S$-cosheaf in $\AffProEt_{/X}^{\acyc}$. This leads to the computation of geometric morphisms $s^* \colon \Sh(X_\et) \to \Sh(S)$ for a fixed profinite set $S$; see \autoref{Prop equivalence S}. In \hyperref[Sec Galois]{\S2.4}, we extract a statement about the unstraightening of the functor $S \mapsto \Fun^*(\Sh(X_\et),\Sh(S))$ from this; see \autoref{Thm equivalence}. We also obtain a version for locally coherent geometric morphisms; see \autoref{Cor loccoh}.

\subsection{The condensed category of points}\label{Sec condensed category}
\begin{Par}
Recall \cite[\S4.1]{Pyknotic}, \cite[\S13.3]{BGH}, \cite{ClausenScholze} that a ($\kappa$-)\emph{condensed $\infty$-category} is a hypersheaf $\ProFin\op \to \Cat_\infty$ (respectively $\ProFin_\kappa\op \to \Cat_\infty$) for the effective epimorphism topology. For ($\kappa$-)condensed $n$-categories for $n < \infty$ (in particular when $n=1$), it suffices to look at sheaves instead of hypersheaves, since $\Cat_n$ is an $(n+1)$-category.
\end{Par}

\begin{Par}\label{Par profinite scheme}
In this section, we need to prove a number of descent statements for surjective continuous maps $f \colon T \to S$ in $\ProFin$. Because the descent literature for schemes is very well developed, it will be beneficial to view $S \in \ProFin$ as the strictly profinite pro-\'etale $k$-scheme $S \times \Spec k$ for some separably algebraically closed field $k$. Then $\Sh((S \times \Spec k)_\et)$ is canonically identified with $\Sh(S)$ by \autoref{Lem strictly profinite}, and the structure map $S \times \Spec k \to \Spec k$ is affine pro-\'etale, hence integral \cite[Tag \href{https://stacks.math.columbia.edu/tag/0CKR}{0CKR}]{Stacks}. If $f \colon T \to S$ is a morphism in $\ProFin$, then $f \times \Spec k$ is integral \cite[Tag \href{https://stacks.math.columbia.edu/tag/035D}{035D}]{Stacks} and weakly \'etale \cite[Tag \href{https://stacks.math.columbia.edu/tag/092L}{092L}]{Stacks}, so in particular faithfully flat if $f$ is surjective.
\end{Par}

\begin{Lemma}\label{Lem Sh(S) sheaf}
The functor $\ProFin\op \to \LTop$ given by $S \mapsto \Sh(S)$ is a sheaf.
\end{Lemma}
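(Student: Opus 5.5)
We must show that for a surjection $f\colon T \twoheadrightarrow S$ of profinite sets (and finite disjoint unions), the functor $\Sh(S)\to \lim\bigl(\Sh(T) \rightrightarrows \Sh(T\times_S T) \triplearrows \cdots\bigr)$ is an equivalence in $\LTop$. Since $\LTop$ is a $(2,1)$-category, it suffices to treat the truncated diagram up to $T\times_S T\times_S T$. Limits in $\LTop$ are computed in $\Cat$ along the pullback functors: the forgetful functor $\LTop\to\Cat$ preserves limits, as is standard for topoi, since the limit category is again a topos and pullbacks are left exact left adjoints. So the claim reduces to a descent statement: sheaves on $S$ are equivalent to sheaves on $T$ with descent data relative to $T\times_S T$.

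The finite coproduct part is immediate, since $\Sh(S\amalg S')\simeq \Sh(S)\times\Sh(S')$ and $\Sh(\varnothing)$ is terminal. For the descent along $f$, I would use \ref{Par profinite scheme}. Fix a separably closed field $k$ and replace $S$ by the strictly profinite scheme $S_k = S\times\Spec k$, and similarly for $T$. By \autoref{Lem strictly profinite}, $\Sh(S)\simeq\Sh((S_k)_\et)$ compatibly with pullbacks, and $T_k\times_{S_k}T_k = (T\times_S T)_k$, since the product with $\Spec k$ commutes with these limits. The map $T_k\to S_k$ is integral, weakly \'etale, and faithfully flat. Moreover, every étale sheaf on an affine weakly étale $S_k$-scheme, such as any strictly profinite scheme, is representable by an étale algebraic space, indeed a scheme, over it. Étale sheaves on $S_k$ therefore correspond to étale $S_k$-schemes (sheaf spaces), and fpqc descent for étale morphisms, which is effective (Stacks, descent of étale morphisms along fpqc coverings via algebraic spaces or directly for quasi-affine pieces), gives the equivalence with descent data.

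I would rather avoid algebraic spaces with a more topological argument, which I would try first. A sheaf on $S$ is the same as a local homeomorphism $E\to S$, i.e.\ an étalé space. A surjection of compact Hausdorff spaces is a quotient map, indeed proper. The main task is then to check that local homeomorphisms descend along the proper surjection $f$: given $E'\to T$ étale with a descent isomorphism over $T\times_S T$ satisfying the cocycle condition, set $E = E'/{\sim}$. Properness of $f$ together with closedness of the equivalence relation (it lies in $E'\times_S E'$, which is proper over $E'$) shows that $E\to S$ is continuous with $E\times_S T\cong E'$. A local section argument, using that $f$ is a closed map and that stalks are computed as $E_s = E'_t$ for any $t\mapsto s$, then shows $E\to S$ is a local homeomorphism. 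Full faithfulness is easier: morphisms of sheaves descend along the effective epimorphism $f$ because $\Hom$ sheaves satisfy descent along proper surjections, via proper base change for $H^0$.

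The main obstacle is the effectivity step, namely showing the glued space is étale over $S$. I expect the cleanest route is the scheme-theoretic one through $S_k$, citing fpqc descent of étale quasi-compact morphisms and reducing to quasi-compact objects, since representable sheaves on clopens generate. Should that fail, the fallback is the topological quotient argument described above.
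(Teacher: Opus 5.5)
Your overall structure matches the paper's. Limits in $\LTop$ are computed in $\Cat$, finite clopen decompositions are trivial, and for a surjection $f$ you pass to $f\times\Spec k$ via \ref{Par profinite scheme} and invoke scheme-theoretic descent. The difference is the descent theorem you use. The paper uses that $f\times\Spec k$ is \emph{integral} and surjective, and cites effective descent of \'etale sheaves along such maps \cite[Tag~0GEZ]{Stacks}. That result works directly with sheaves, so no representability by \'etale schemes and no finiteness hypotheses are needed.

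Your fpqc route is weaker as written. The standard effectivity result for fpqc descent in schemes covers quasi-affine morphisms, which here means separated quasi-compact \'etale ones. A general sheaf on $T$, however, corresponds to a non-separated, non-quasi-compact \'etale $T_k$-scheme. Your ``reducing to quasi-compact objects'' is also not automatic: you would need every descent datum to be exhausted by quasi-compact sub-descent data, and the naive saturation of a compact open need not be open. For example, take $S=\mathbf N\cup\{\infty\}$, $T=S\amalg\{*\}$ with $*\mapsto\infty$, and $F'=f^*\underline{\{a,b\}}$. The class of the compact open $\{(*,a)\}$ is $\{(*,a),(\infty,a)\}$, which is not open. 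Producing enough quasi-compact stable opens is itself a proper base change argument, i.e.\ essentially the effectivity you wanted to cite. So you must either invoke a genuinely general theorem (effective descent of arbitrary \'etale algebraic spaces along fpqc or universally subtrusive maps, as in Rydh's work), which is much heavier than needed, or simply use integrality as the paper does.

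Your topological fallback is sound and is the topological counterpart of the integral descent the paper cites. Proper base change for $H^0$ along the proper surjection $f$ gives, through each point of $E=E'/{\sim}$, a descent-compatible section over $f^{-1}(V)$ for some neighbourhood $V$. The input is that $\phi$ restricted to $\{t\}\times f^{-1}(f(t))$ trivialises $F'$ on the fibre. This shows that $E\to S$ is a local homeomorphism and that $E\times_S T\cong E'$. That identification comes out of the local-section argument, not from closedness of the equivalence relation, since $E'$ need not be Hausdorff. So the local-section step has to come first.
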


This is a special case of \cite[Lem.~2.7(2)]{HaineDescent}. For completeness, we give a quick 1-categorical proof using descent results from the algebraic geometry literature.

\begin{proof}
Note that limits in $\LTop$ are computed as limits in $\Cat$ \cite[Exp.~VI, Prop.~7.4.7 and Exc.~7.4.14]{SGA4II} (for $\infty$-topoi, see \cite[Prop.~6.3.2.3]{LurieHTT}), so we may work in $\Cat$ instead. It suffices to prove descent for finite clopen decompositions $\{U_i \to S\}_{i \in I}$ and surjective maps $\{T \to S\}$, as these generate the topology on $\ProFin$. The result for a finite clopen decomposition is clear. If $f \colon T \to S$ is surjective map in $\ProFin$, apply \cite[Tag \href{https://stacks.math.columbia.edu/tag/0GEZ}{0GEZ}]{Stacks} to the surjective integral morphism of schemes $f \times \Spec k$ as in \ref{Par profinite scheme}.
\end{proof}

\begin{Def}\label{Def Pts}
Let $\mathscr X$ be a topos. The \emph{condensed category of points} $\PTS^*(\mathscr X)$ of $\mathscr X$ is the condensed category
\begin{align*}
\ProFin\op &\to \Cat\\
S &\mapsto \Fun^*(\mathscr X,\Sh(S)) \simeq \Fun_*(\Sh(S),\mathscr X)\op.
\end{align*}
This is a sheaf on $\ProFin$ by \autoref{Lem Sh(S) sheaf}. We denote the opposite condensed category by $\PTS_*(\mathscr X)$, as it is given by $S \mapsto \Fun_*(\Sh(S),\mathscr X)$. The underlying categories $\PTS^*(\mathscr X)(*)$ and $\PTS_*(\mathscr X)(*)$ are denoted by $\Pts^*(\mathscr X)$ and $\Pts_*(\mathscr X)$ respectively, which are the usual categories of points $\Fun^*(\mathscr X,\Set)$ and $\Fun_*(\Set,\mathscr X)$ (see also \autoref{Rmk op} for notational conventions appearing in the literature). We write $\PTS^*(X_\et)$ for $\PTS^*(\Sh(X_\et))$, and likewise for the other variants.
\end{Def}

\begin{Ex}\label{Ex Pts Zariski}
If $X$ is a scheme, then $\PTS_*(X_\Zar)$ is represented by the topological poset $(\lvert X\rvert,\geq) = (\lvert X \rvert,\leq)\op$, where $x \leq y$ if and only if $x \in \overline{\{y\}}$. Indeed, if $S \in \ProFin$, then \cite[Exp.~IV, 4.2.3]{SGA4I} shows that $\Fun_*(\Sh(S),\Sh(X_\Zar)) \simeq \Cont(S,\lvert X \rvert)$, viewed as a category by the partial order $\geq$ on $\lvert X\rvert$. Thus, we also see that $\PTS^*(X_\Zar)$ is represented by $(\lvert X\rvert,\leq)$.
\end{Ex}

\begin{Par}\label{Par coherent}
Recall that $\Sh(S)$ is a coherent topos for any profinite set $S$, and $\Sh(X_\et)$ is a locally coherent topos for any scheme $X$. Moreover, $\Sh(X_\et)$ is what \cite[Exp.~VI, Def.~2.3]{SGA4II} calls \emph{algebraic}: property \cite[Exp.~VI, Prop.~2.2(i ter)]{SGA4II} holds for the generating class $\AffEt_{/X} \subseteq \Sh(X_\et)$ since $U \to X$ is separated for all $U \in \AffEt_{/X}$ \cite[Tag \href{https://stacks.math.columbia.edu/tag/01KN}{01KN}]{Stacks}. Thus, all objects in $\Sh(S)$ or $\Sh(X_\et)$ are also algebraic \cite[Exp.~VI, 2.4.2]{SGA4II}.

Recall that a geometric morphism $f_* \colon \mathscr X \to \mathscr Y$ between algebraic topoi is \emph{coherent} \cite[Exp.~VI, Def.~3.1 and Prop.~3.2]{SGA4II} if $f^*$ takes coherent objects to coherent objects, and \emph{locally coherent} \cite[Exp.~VI, Def.~3.7]{SGA4II} if for every coherent object $X \in \mathscr X$, every coherent object $Y \in \mathscr Y$, and every map $X \to f^*Y$ in $\mathscr X$, the induced geometric morphism $\mathscr X_{/X} \to \mathscr Y_{/Y}$ is coherent. If $\mathscr X$ and $\mathscr Y$ are both coherent, then a geometric morphism $f_* \colon \mathscr X \to \mathscr Y$ is coherent if and only if it is locally coherent \cite[Exp.~VI, 3.7.1]{SGA4II}. We will write $\RTop^{\loccoh}$ and $\LTop^{\loccoh}$ for the subcategories of $\RTop$ and $\LTop$ of locally coherent topoi with locally coherent geometric morphisms.
\end{Par}

\begin{Lemma}
The functor $\ProFin\op \to \LTop^{\loccoh}$ given by $S \mapsto \Sh(S)$ is a sheaf.
\end{Lemma}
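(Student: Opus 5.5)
We reduce to \autoref{Lem Sh(S) sheaf}: the functor $S \mapsto \Sh(S)$ is already a sheaf with values in $\LTop$. Moreover, $\LTop^{\loccoh}$ is a non-full subcategory of $\LTop$: every $\Sh(S)$ is coherent, and the transition functors $f^*$ for $f \colon T \to S$ in $\ProFin$ are coherent. So it suffices to show two things for a cover $\{T \to S\}$, with \v{C}ech nerve $T_\bullet$.
\begin{enumerate}
\item The limit cone $\Sh(S) \to \lim \Sh(T_\bullet)$ lies in $\LTop^{\loccoh}$.
\item For any locally coherent topos $\mathscr X$, a geometric morphism $\mathscr X \to \Sh(S)$ is locally coherent as soon as its composites $\mathscr X \to \Sh(T)$ are. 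This reflection property is needed for the structure maps of any cone.
\end{enumerate}
As in the proof of \autoref{Lem Sh(S) sheaf}, it suffices to treat finite clopen decompositions and single surjections $f \colon T \twoheadrightarrow S$. The clopen case is immediate, since $\Sh(\coprod U_i) \simeq \prod \Sh(U_i)$ and coherence can be checked on each factor. So the real content is the surjective case.

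For (a), the maps $f^* \colon \Sh(S) \to \Sh(T)$ are coherent: $f^*$ sends the coherent objects of $\Sh(S)$, which are the sheaves represented by profinite sets over $S$ with finite fibres locally (equivalently, the constructible sheaves of finite sets), to coherent objects, since pullbacks of such étale spaces stay of the same kind. Between coherent topoi, coherent is the same as locally coherent by \ref{Par coherent}.

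For (b), let $g^* \colon \Sh(S) \to \mathscr X$ be given with $g^*$ followed by nothing else. Concretely, suppose $h^* \colon \mathscr X$-side maps with composites $\Sh(S) \to \mathscr X$ factoring through $\Sh(T)$. More precisely, the cone maps $\Sh(S) \to \mathscr X$ arise as $\mathscr X$-points; we must show: if $\phi^* \colon \Sh(S) \to \mathscr X$ is such that $\phi^*$ restricted along the descent data is locally coherent, then $\phi^*$ is. The key fact I would use is that every coherent object $F \in \Sh(S)$ is a \emph{quotient of a finite disjoint union of clopens} of $S$, with the kernel pair again of that form. Clopen representables $h_U$ are sent by any $\phi^*$ to complemented subobjects of the terminal object, which are coherent when $\mathscr X$ is coherent. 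Coherent objects are closed under finite coproducts and under quotients by coherent equivalence relations. Since $\phi^*$ is exact, it therefore sends coherent objects to coherent objects. This argument suggests that \emph{every} geometric morphism into $\Sh(S)$ from a coherent topos is coherent. For locally coherent $\mathscr X$, I would localise to a coherent object $X$ and use the slice formulation from \ref{Par coherent}, where the same argument works over $\mathscr X_{/X}$ and $\Sh(S)_{/Y}$, because $\Sh(S)_{/Y} \simeq \Sh(Y')$ for a profinite $Y'$ when $Y$ is coherent.

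With (a) and (b), limits in $\LTop^{\loccoh}$ of the relevant \v{C}ech diagrams coincide with those in $\LTop$, and the sheaf property follows from \autoref{Lem Sh(S) sheaf}.

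I expect the main obstacle to be the precise bookkeeping in (b), namely checking that maps in the limit cone computed in $\LTop$ are locally coherent, and relatedly checking that limits in $\LTop^{\loccoh}$ are detected in $\LTop$ at all. The observation that coherence into $\Sh(S)$ is automatic (via the clopen presentation) is what I would rely on to resolve both.
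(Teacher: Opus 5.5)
The reduction in your proposal matches the paper's. The transition functors $g^*$ are coherent, and everything then comes down to a reflection statement. For a surjection $g\colon T\twoheadrightarrow S$ and a locally coherent topos $\mathscr X$, a morphism $\mathscr X\to\Sh(S)$ in $\LTop$ must be locally coherent as soon as its composite with $g^*$ is. Your step (b) is stated correctly in these terms.

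The gap is in your proof of (b). A morphism $\mathscr X\to\Sh(S)$ in $\LTop$ is an inverse image functor $f^*\colon\mathscr X\to\Sh(S)$, i.e.\ a geometric morphism $f_*\colon\Sh(S)\to\mathscr X$ (an $S$-point of $\mathscr X$). Your argument instead treats a functor $\phi^*\colon\Sh(S)\to\mathscr X$, i.e.\ a geometric morphism $\mathscr X\to\Sh(S)$. The clopen-generator argument does show that such $\phi^*$ preserve coherent objects, but that is the wrong direction and plays no role in the sheaf condition.

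In the relevant direction, coherence is \emph{not} automatic. In \autoref{Ex acyclic not wsl}, the geometric morphism $s_*\colon\Sh(\mathbf N\cup\{\infty\})\to\Sh(X_\et)$ classified by the open $\mathbf N$ has $s^*$ sending the quasi-compact open $\{\eta\}$ to the non-quasi-compact open $\mathbf N$. If coherence were automatic, $\GAL(X)$ would coincide with $\PTS^*(X_\et)$. So the observation you plan to rely on in your last paragraph resolves nothing.

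What is actually needed is descent of coherence for objects of $\Sh(S)$ along $g$. Take coherent $\mathscr F\in\Sh(S)$, coherent $X\in\mathscr X$ with a map $\mathscr F\to f^*X$, and coherent $Y\to X$. You must show that $\mathscr G=\mathscr F\times_{f^*X}f^*Y$ is coherent.

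Since $g^*\mathscr F$ is coherent, local coherence of $f\circ g$ applied to $g^*\mathscr F\to g^*f^*X$ shows that $g^*\mathscr G$ is coherent. So it suffices to know that $g^*$ reflects coherence when $g$ is surjective.

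The paper proves this by two steps:
\begin{itemize}
\item it identifies coherent objects of $\Sh(S)$ with sheaves whose espace \'etal\'e is qcqs;
\item it uses fpqc descent of quasi-compactness and quasi-separatedness along the faithfully flat map $T\times\Spec k\to S\times\Spec k$ of \ref{Par profinite scheme}.
\end{itemize}
Alternatively, since $g^*$ is coherent and conservative, the argument of \autoref{Lem reflect coherence} gives the same conclusion.
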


\begin{proof}
Note that $\Sh(S)$ is indeed a locally coherent (in fact, coherent) topos, and the geometric morphism $\Sh(T) \to \Sh(S)$ induced by any map $T \to S$ in $\ProFin$ is locally coherent (equivalently, coherent). Given a locally coherent topos $\mathscr X$ and a covering $\{g_i \colon T_i \to S\}_{i \in I}$ in $\ProFin$, we have to show that a geometric morphism $f \colon \Sh(S) \to \mathscr X$ is locally coherent if each composition $f \circ g_i$ is. We may assume $I$ is finite since $S$ is compact. Writing $g \colon T \to S$ for the disjoint union of the $T_i$, we see that $f \circ g$ is locally coherent since this property is local on $T$ and is assumed to hold for each $T_i$. Thus, we are reduced to the case of a one-object cover $g \colon T \twoheadrightarrow S$.

Given coherent objects $\mathscr F \in \Sh(S)$ and $X \in \mathscr X$ with a map $\mathscr F \to f^*X$, we have to show that the geometric morphism $\Sh(S)_{/\mathscr F} \to \mathscr X_{/X}$ is coherent. We know that $g^*\mathscr F$ is coherent, so by assumption, the composite geometric morphism $\Sh(T)_{/g^*\mathscr F} \to \Sh(S)_{/\mathscr F} \to \mathscr X_{/X}$ is coherent. Thus, for every coherent $Y \in \mathscr X_{/X}$, the pullback $g^*f^*Y$ is coherent. Coherent objects in $\Sh(S)$ are exactly those whose espace \'etal\'e is qcqs, and this property descends under $g$ by applying \ref{Par profinite scheme} and \cite[Tags \href{https://stacks.math.columbia.edu/tag/02KQ}{02KQ} and \href{https://stacks.math.columbia.edu/tag/02KR}{02KR}]{Stacks}. Thus, $f^*Y$ is coherent since $g^*f^*Y$ is, hence $\Sh(S)_{\mathscr F} \to \mathscr X_{/X}$ is coherent.
\end{proof}

\begin{Def}\label{Def Galois category}
The \emph{Galois category} of a scheme $X$ is the full condensed subcategory $\GAL(X) \subseteq \PTS^*(X_\et)$ on locally coherent geometric morphisms $s_* \colon \Sh(S) \to \Sh(X_\et)$, which is a sheaf on $\ProFin$ by the lemma above. Explicitly, it sends $S$ to the full subcategory of $\Fun_*(\Sh(S),\Sh(X_\et))\op$ on those $s_* \colon \Sh(S) \to \Sh(X_\et)$ for which there is a finite clopen decomposition $S = \coprod_{i \in I} S_i$ such that each $(s|_{S_i})_*$ factors through a qcqs open $U_i \subseteq X$ and $(s|_{S_i})_* \colon \Sh(S_i) \to \Sh(U_{i,\et})$ is coherent. We denote the opposite of $\GAL(X)$ by $\PTS_*^{\loccoh}(X_\et)$.
\end{Def}

\begin{Rmk}\label{Rmk 1-topos vs infinity}
In the definitions of $\PTS^*(X_\et)$ and $\GAL(X)$, we could also use $\infty$-topoi: by \cite[Lem.~6.4.5.6]{LurieHTT}, the restriction $\Fun_*(\Sh(S,\mathcal S),\Sh(X_\et,\mathcal S)) \to \Fun_*(\Sh(S),\Sh(X_\et))$ is an equivalence, and a geometric morphism $s_* \colon \Sh(S,\mathcal S) \to \Sh(X_\et,\mathcal S)$ is coherent if and only if its restriction $\Sh(S) \to \Sh(X_\et)$ is. Indeed, if $\Sh(S,\mathcal S) \to \Sh(X_\et,\mathcal S)$ is coherent, then so is the restriction $\Sh(S) \to \Sh(X_\et)$. The converse follows as in property $(*)$ of the proof of \cite[Thm.~A.7.5.3]{LurieSAG} (which works for any site $\mathscr C$ satisfying the hypotheses of \cite[Prop.~A.3.1.3(3)]{LurieSAG}, such as the site $\Sh(X_\et)^\coh$ of $0$-truncated objects of $\Sh(X_\et,\mathcal S)^\coh$).
\end{Rmk}

\begin{Rmk}\label{Rmk op}
Beware that there are different conventions in the literature for the (condensed) category of points of a topos. In \cite[Exp.~IV, \S6]{SGA4I}, the category $\Pts_*(\mathscr X)$ is called the category of points, whereas \cite[Def.~3.11.1]{BGH} uses this term for $\Pts^*(\mathscr X)$. We note that the Galois group of a field $k$ is definitionally equal to $\Pts^*(\Spec k_\et)$, and likewise the \'etale fundamental group is defined as the category of fibre functors $\Sh\lc(X_\et) \to \Fin$. Thus, our definition of $\GAL(X)$ is naturally an extension of this.
\end{Rmk}

\begin{Rmk}\label{Rmk Gal(X) profinite}
In the case where the scheme $X$ is qcqs, the above definition agrees with the definition of $\GAL(X)$ that appears in \cite[Def.~12.1.3]{BGH}. This is a direct consequence of higher Hochster duality \cite[Thm.~9.3.1]{BGH}; see \cite[Prop.~7.1.2.8]{WolfThesis}.

We warn the reader that the proof in \cite[Prop.~7.1.2.8]{WolfThesis} is missing an `op' (and a similar issue shows up in \cite[10.3.1]{BGH}). For a finite poset $P$ and a profinite set~$K$, the $\infty$-categorical Hochster duality of \cite[Thm.~9.3.1]{BGH} identifies the mapping space $\Map(K \times P,\widehat{\Pi}_{(\infty,1)}^S(\mathscr X))$ with $\Map_{\RTop_\infty^\coh}(\Sh(K \times P),\mathscr X) \simeq \Map_{\LTop_\infty^\coh}(\mathscr X,\Sh(K \times P))\op$. Analogously to \cite[Ex.~3.12.15]{BGH}, we have $\Sh(K \times P) \simeq \Fun(P,\Sh(K))$, so the right hand side is $\Map(P,\Fun_\coh^*(\mathscr X,\Sh(K)))\op \simeq \Map(P\op,\Fun_*^\coh(\Sh(K),\mathscr X))$, where an undesired `op' has appeared at $P$. (When applied to $P = \Delta^n$, it is easy to overlook this since $\Delta^n$ is equivalent to its opposite, but the functor $(-)\op \colon \Delta \to \Delta$ is not equivalent to the identity.)

However, every $\infty$-groupoid is canonically equivalent to its opposite \cite[\S57]{RezkNotes}, so we may replace $\Map_{\LTop_\infty^\coh}(\mathscr X,\Sh(K \times P))\op$ by $\Map_{\LTop_\infty^\coh}(\mathscr X,\Sh(K \times P))$, which is equivalent to $\Map(P,\Fun_\coh^*(\mathscr X,\Sh(K)))$. Applying this to all $P \in \Delta$ shows that the embedding $\Pro(\operatorname{Str}_\pi) \to \Cond(\Cat_\infty)$ takes the profinite layered $\infty$-category $\widehat{\Pi}_{(\infty,1)}^S(\mathscr X)$ to the condensed $\infty$-category $\PTS_\coh^*(\mathscr X)$ for any spectral $\infty$-topos $\mathscr X$. Thus, \autoref{Def Galois category} is an extension of this to non-qcqs schemes.
\end{Rmk}

\begin{Ex}\label{Ex lcons}
We saw in \autoref{Ex Pts Zariski} that $\PTS^*(X_\Zar)$ is represented by the topological poset $(\lvert X \rvert,\leq)$. If we restrict to the locally coherent geometric morphisms \makebox{$s^* \colon \Sh(X_\Zar) \to \Sh(S)$,} this is represented by $(X\lcons,\leq)$, where $X\lcons$ is the set $X$ endowed with the `local constructible topology', whose opens are disjoint unions of sets $F$ that are constructible in some affine open $U \subseteq X$ (this is also the underlying topological space of the scheme $X\cons$ constructed in \ref{Def Xcons}). When $X$ is qcqs, this has the effect of replacing the spectral topological space $\lvert X \rvert$ with the profinite space $X\cons$. On $\lvert X\rvert$, the partial order~$\leq$ is redundant, but on $X\cons$ it is exactly the information needed to recover the Zariski topology on $X$.
\end{Ex}

\subsection{Profinite cosheaves}\label{Sec cosheaf}
In the computation of the condensed category of points of \hyperref[Sec Galois]{\S2.4}, we will need to understand natural transformations between left exact functors $\Sh(X_\et) \to \Sh(S)$ for any profinite set~$S$. When $S$ is a point, the involved functors will be represented by objects of $\AffProEt_{/X}$, so a variant of the Yoneda lemma (see \autoref{Lem fully faithful} below) computes natural transformations as maps in $\AffProEt_{/X}$. The goal of this section is to explain a similar statement for an arbitrary profinite set $S$: if $\AffProEt_{/X} \vec\times_{\ProFin} \{S\}$ denotes the comma category of pairs $(Y,\pi_0(Y) \to S)$ with $Y \in \AffProEt_{/X}$, then \autoref{Cor cosheaf fully faithful} produces a fully faithful functor $\AffProEt_{/X} \vec\times_{\ProFin} \{S\} \to \Fun^\lex(\Sh(X_\et),\Sh(S))$.

\begin{Par}\label{Par cosheaf}
Recall that if $S$ is a profinite set and $\mathscr C$ is a category with finite limits, then the forgetful functor $\Fun(\Open(S)\op,\mathscr C) \to \Fun(\Clopen(S)\op,\mathscr C)$ induces an equivalence between $\mathscr C$-valued sheaves on $S$ and functors $\Clopen(S)\op \to \mathscr C$ that preserve finite products. Thus, if $\mathscr C$ is a category that is only assumed to have finite products, we will write $\Sh(S,\mathscr C)$ for the category of functors $\Clopen(S)\op \to \mathscr C$ that preserve finite products. Likewise, if $\mathscr C$ has finite coproducts, an $\mathscr C$-valued cosheaf on $S$ is a functor $\Clopen(S) \to \mathscr C$ that preserves finite coproducts. In this section, we will think of this as some sort of diagram in $\mathscr C$, and we therefore refer to it as an \emph{$S$-cosheaf in $\mathscr C$}. We write $\CSh(S,\mathscr C)$ for the category of $S$-cosheaves in $\mathscr C$.
\end{Par}

In the following result, we write $\lvert - \rvert \colon \Sch_{/X} \to \Top$ (resp.\ $\pi_0 \colon \Sch_{/X} \to \Top$) for the underlying topological space (resp.\ the set of connected components, endowed with the quotient topology).

\begin{Lemma}\label{Lem cosheaf in schemes}
If $X$ is a scheme and $S$ is a profinite set, then the following categories are canonically equivalent:
\begin{itemize}
\item $\CSh(S,\Sch_{/X})$;
\item The comma category $(\lvert-\rvert \downarrow S)$ of $X$-schemes $Y$ together with a continuous map $\lvert Y \rvert \to S$;
\item The comma category $(\pi_0 \downarrow S)$ of $X$-schemes $Y$ together with a continuous map $\pi_0(Y) \to S$;
\item $\Sch_{/X \times S}$.
\end{itemize}
\end{Lemma}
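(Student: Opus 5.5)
I will prove the four categories equivalent by a cycle of functors, using that a clopen decomposition of a scheme is the same as a decomposition into open and closed subschemes.

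\emph{From (ii) to (iii).} Any continuous map $\lvert Y\rvert \to S$ to a profinite (hence totally disconnected) space is constant on connected components. It therefore factors uniquely through the quotient $\lvert Y\rvert \to \pi_0(Y)$. The factored map is continuous by the definition of the quotient topology. This gives an isomorphism of comma categories $(\lvert-\rvert\downarrow S) \cong (\pi_0\downarrow S)$.

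\emph{From (iii) to (iv).} Write $S = \lim_i S_i$ with $S_i$ finite; the limit defining $X \times S$ is a cofiltered limit of the affine-over-$X$ maps $X \times S_i \to X$. A morphism $Y \to X \times S$ over $X$ is then a compatible family of $X$-morphisms $Y \to X \times S_i = \coprod_{s\in S_i} X$. Such a morphism is the same as a decomposition of $Y$ into clopen subschemes indexed by $S_i$, i.e.\ a continuous map $\lvert Y\rvert \to S_i$. Passing to the limit identifies $\Hom_X(Y, X\times S)$ with continuous maps $\lvert Y\rvert \to S$. This is exactly the bijection recalled in the paragraph preceding Lemma~\ref{Lem acyc profinite}, and it is natural in $Y$. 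Hence $\Sch_{/X\times S} \simeq (\lvert-\rvert\downarrow S)$.

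\emph{From (ii) to (i).} Send $(Y, p\colon \lvert Y\rvert \to S)$ to the functor $U \mapsto p^{-1}(U)$, where $p^{-1}(U)$ is an open and closed subscheme of $Y$. This functor takes finite disjoint unions of clopens to coproducts in $\Sch_{/X}$, since a scheme that is the disjoint union of two clopen subschemes is their coproduct. So it is an $S$-cosheaf.

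For the inverse, let $F \colon \Clopen(S) \to \Sch_{/X}$ preserve finite coproducts and take $Y = F(S)$. For a clopen $U$ we have $F(S) = F(U) \amalg F(S\setminus U)$, so $F(U) \to Y$ is the inclusion of a clopen subscheme. Moreover $F(\varnothing) = \varnothing$ and $F(U\cap V)$ agrees with $F(U)\cap F(V)$: apply coproduct preservation to the decomposition of $S$ into $U\cap V$, $U\setminus V$, $V\setminus U$ and the complement of $U\cup V$. This yields a Boolean algebra map $\Clopen(S) \to \Clopen(\lvert Y\rvert)$. By Stone duality (\ref{Par profinite kappa}) and the Step~2 argument, such a map is the same as a continuous map $\lvert Y\rvert \to S$: each point $y$ determines an ultrafilter of clopens of $S$, hence a point of $S$, and preimages of clopens are clopen, so the map is continuous.

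Finally I check morphisms. A natural transformation $F \to G$ is determined by its component $F(S)\to G(S)$, since each $F(U)\to G(U)$ is a restriction. The naturality squares force this component to send $F(U)$ into $G(U)$, which is exactly compatibility with the maps to $S$. The two constructions are mutually inverse up to canonical isomorphism.

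The main obstacle is this last step, extracting the map $\lvert Y\rvert \to S$ from an abstract coproduct-preserving functor. The delicate points are that coproduct preservation forces the $F(U)$ to be genuine clopen subschemes of $F(S)$, and that $F(\varnothing)$ is the empty scheme (the initial object). The rest is bookkeeping with Stone duality.
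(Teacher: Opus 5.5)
Your proposal is correct and takes essentially the same route as the paper. It uses the same functors $U \mapsto p^{-1}(U)$ and $F \mapsto F(S)$, the same factorisation through $\pi_0(Y)$ via the quotient topology, and the same identification $\Hom_X(Y, X \times S) \cong \Cont(\lvert Y \rvert, S)$. The only cosmetic difference is how the map $\lvert Y\rvert \to S$ is built: you use ultrafilters on the Boolean algebra map $\Clopen(S) \to \Clopen(\lvert Y \rvert)$, while the paper assembles it from the compatible maps $\lvert Y\rvert \to T$ coming from the decompositions $S = \coprod_{t\in T} f^{-1}(t)$ over finite quotients $T$ of $S$, which amounts to the same thing.
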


\begin{proof}
Given a coproduct-preserving functor $F \colon \Clopen(S) \to \Sch_{/X}$, we consider the \makebox{$X$-scheme} $Y = F(S)$, which has a map $\lvert Y \rvert \to S$ constructed as follows: for every continuous map $f \colon S \to T$ to a finite set $T$, we get a clopen decomposition $S = \coprod_{t \in T} f^{-1}(t)$, whence a clopen decomposition $Y = F(S) = \coprod_{t \in T} F(f^{-1}(t))$, giving a continuous map $\lvert Y \rvert \to T$. These are compatible for all maps $S \to T$ to finite sets $T$, giving a continuous map $\lvert Y \rvert \to S$ since $S$ is the limit. Conversely, a continuous map $g \colon \lvert Y \rvert \to S$ gives a functor $F \colon \Clopen(S) \to \Sch_{/X}$ by $U \mapsto g^{-1}(U)$ (as open subscheme of $Y$), which preserves finite coproducts. It is a straightforward verification that these two constructions are functorial and inverse to each other, which proves the equivalence $\CSh(S,\Sch_{/X}) \simeq (\lvert-\rvert \downarrow S)$. The equivalence $(\lvert-\rvert \downarrow S) \simeq (\pi_0\downarrow S)$ follows since every continuous map $\lvert Y \rvert \to S$ factors uniquely through $\pi_0(Y)$ \cite[Tag \href{https://stacks.math.columbia.edu/tag/08ZL}{08ZL}]{Stacks}. The equivalence $(\pi_0\downarrow S) \simeq \Sch_{/X \times S}$ follows since $\Hom_X(Y,X \times S) \simeq \Hom(\pi_0(Y),S)$ for $Y \in \Sch_{/X}$ and $S \in \ProFin$.
\end{proof}

\begin{Par}\label{Par oriented fibre product}
In the notations $(\lvert-\rvert \downarrow S)$ and $(\pi_0 \downarrow S)$, we would like to restrict to certain full subcategories $\mathscr C \subseteq \Sch_{/X}$. Since $\Hom(\lvert Y\rvert,S) \simeq \Hom(\pi_0(Y),S)$ for all $Y \in \Sch_{/X}$, we may unambiguously write the category of \autoref{Lem cosheaf in schemes} as the oriented fibre product\footnote{This is an alternative name and notation for comma categories; see for instance \cite[Tag \href{https://kerodon.net/tag/02AP}{02AP}]{Kerodon}. The notation $\Sch_{/X} \vec\times_{\Top} \{S\}$ does not specify the functor $\Sch_{/X} \to \Top$, which can be either $\lvert-\rvert$ or $\pi_0$.} $\Sch_{/X} \vec\times_{\Top} \{S\}$. Likewise, for any full subcategory $\mathscr C \subseteq \Sch_{/X}$, we define $\mathscr C \vec\times_{\Top} \{S\}$ as the full subcategory of $\Sch_{/X} \vec\times_{\Top} \{S\}$ of pairs $(Y,\pi_0(Y) \to S)$ with $Y \in \mathscr C$. The oriented fibre product $\mathscr C \vec\times_{\Top} \{S\}$ contains the fibre product $\mathscr C \times_{\Top} \{S\}$ of pairs $(Y,\pi_0(Y) \to S)$ where the map $\pi_0(Y) \to S$ is an isomorphism. The latter is the (homotopy) fibre of $\pi_0 \colon \mathscr C \to \Top$ over $S$. When $\mathscr C$ consists of qcqs schemes, the image of $\pi_0 \colon \mathscr C \to \Top$ lands in $\ProFin$ by \ref{Par pi0 profinite}, and we will write $\mathscr C \vec\times_{\ProFin} \{S\}$ instead of $\mathscr C \vec\times_{\Top} \{S\}$, and likewise for $\mathscr C \times_{\ProFin} \{S\}$.
\end{Par}

Recall from \autoref{Lem affproet} that $\AffProEt_{/X} \simeq \Pro(\AffEt_{/X})$. This implies the following.

\begin{Lemma}\label{Lem fully faithful}
If $X$ is a scheme, then the functor $\AffProEt_{/X} \to \Fun^\lex(\Sh(X_\et),\Set)\op$ taking $f \colon Y \to X$ to the left exact functor $\mathscr F \mapsto \Gamma(Y,f^*\mathscr F)$ is fully faithful.
\end{Lemma}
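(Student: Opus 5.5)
The plan is to reduce the statement to the Yoneda lemma for pro-objects, combined with the identification $\AffProEt_{/X} \simeq \Pro(\AffEt_{/X})$ of \autoref{Lem affproet}. First, I will show that the functor factors through restriction to $\AffEt_{/X}$. For $Y = \lim_i Y_i$, a cofiltered limit with $Y_i \in \AffEt_{/X}$, and for $U \in \AffEt_{/X}$ with associated representable sheaf $h_U \in \Sh(X_\et)$, the value is
\[
\Gamma(Y,f^*h_U) \cong \Hom_X(Y,U) \cong \colim_i \Hom_X(Y_i,U).
\]
Here the first isomorphism holds because $f^*h_U$ is represented by $U \times_X Y$ on the small étale site of $Y$, so its sections are the sections of $U\times_X Y \to Y$. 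The second isomorphism is the limit argument already used in the proof of \autoref{Lem affproet} (\cite[Tags 01ZC, 02GR]{Stacks}).

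More generally, I will check that for any $\mathscr F$ we have $\Gamma(Y,f^*\mathscr F) \cong \colim_i \mathscr F(Y_i)$. This is the standard computation of étale cohomology in degree zero of a limit of qcqs schemes with affine transition maps. Concretely, the left exact functor attached to $Y$ is the filtered colimit $\colim_i \Gamma(Y_i,-)$ of the functors represented by the objects $h_{Y_i}$.

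Next, I will compute natural transformations. Write $\Phi_Y = \colim_i \Hom(h_{Y_i},-)$. For $Y' = \lim_j Y'_j$ we get
\[
\operatorname{Nat}(\Phi_{Y'},\Phi_Y) \cong \lim_j \operatorname{Nat}\bigl(\Hom(h_{Y'_j},-),\Phi_Y\bigr) \cong \lim_j \Phi_Y(h_{Y'_j}) \cong \lim_j \colim_i \Hom_X(Y_i,Y'_j),
\]
using the Yoneda lemma in $\Sh(X_\et)$ together with full faithfulness of $\AffEt_{/X} \to \Sh(X_\et)$. By \autoref{Lem affproet}, the right-hand side is $\Hom_X(Y,Y')$. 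Since the source category carries an $\op$, this is exactly full faithfulness.

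The step I expect to need the most care is checking that this chain of bijections is induced by the functor in the statement, rather than being merely an abstract bijection. For this I will verify naturality. A map $Y \to Y'$ induces the transformation obtained by pulling back sections along $Y \to Y'$, and on the representables $h_{Y'_j}$ this transformation sends the tautological element to the composite $Y \to Y' \to Y'_j$. That is precisely the element of $\colim_i \Hom_X(Y_i,Y'_j)$ corresponding to the given map.

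Left exactness of each $\Gamma(Y,f^*-)$ is clear, since $f^*$ is exact and $\Gamma$ is left exact. I would also note that the left-exactness hypothesis in $\Fun^\lex$ plays no role in full faithfulness; it only constrains the target category.
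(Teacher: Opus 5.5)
Your proposal is correct and follows essentially the same route as the paper. The paper views the functor as the composite of the fully faithful extension $\Pro(\AffEt_{/X}) \to \Pro(\Sh(X_\et))$ of the Yoneda embedding with the identification $\Pro(\Sh(X_\et)) \simeq \Fun^\lex(\Sh(X_\et),\Set)\op$, and it cites \cite[Exp.~VII, Rmq.~5.14(a)]{SGA4II} for $\colim_i \Gamma(U_i,\mathscr F) \cong \Gamma(Y,f^*\mathscr F)$. Your explicit computation of $\operatorname{Nat}(\Phi_{Y'},\Phi_Y)$ via Yoneda and \autoref{Lem affproet} is this same pro-Yoneda full faithfulness written out by hand, and your naturality check is what shows the bijection is the one induced by the given functor.
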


\begin{proof}
The Yoneda embedding $\AffEt_{/X} \to \Sh(X_\et)$ extends to a fully faithful functor $\Pro(\AffEt_{/X}) \to \Pro(\Sh(X_\et))$. If~$\mathscr C$ is a category with finite limits, there is a canonical equivalence $\Pro(\mathscr C) \simeq \Fun^\lex(\mathscr C,\Set)\op$, so we may think of objects of $\Pro(\Sh(X_\et))$ as left exact functors $\Sh(X_\et) \to \Set$. Then the functor given by a cofiltered diagram $(U_i)_{i \in I}$ in $\Pro(\AffEt_{/X})$ with limit $f \colon Y \to X$ in $\AffProEt_{/X}$ is the functor
\begin{align*}
\Sh(X_\et) &\to \Set \\
\mathscr F &\mapsto \colim_{\substack{\longrightarrow \\ i \in I\op}} \Gamma(U_i,\mathscr F) \cong \Gamma(Y,f^*\mathscr F),
\end{align*}
where the last isomorphism is \cite[Exp.~VII, Rmq.~5.14(a)]{SGA4II}.
\end{proof}

To ease notation, we will denote $\Gamma(Y,f^*\mathscr F)$ by $\Gamma(Y,\mathscr F)$ for $(f \colon Y \to X) \in \AffProEt_{/X}$.

\begin{Rmk}
The above functor extends to $\Sh(X_\proet) \to \Fun^\lex(\Sh(X_\et),\Set)\op$ taking a pro-\'etale sheaf $\mathscr G$ to $\mathscr F \mapsto \Hom(\mathscr G,\nu^*\mathscr F)$, where $\nu^* \colon \Sh(X_\et) \to \Sh(X_\proet)$ denotes the pullback. We note that this extension is \emph{not} fully faithful (or even conservative). For instance, if $X = \Spec k$ is the spectrum of an algebraically closed field $k$, then $\Sh(X_\proet) \simeq \Cond(\Set)$, and for every connected compact Hausdorff space $Y$, the functor $\Fin \to \Set$ given by $\Hom_{\Cond(\Set)}(Y,\nu^*-)$ is equivalent to the constant functor $*$. We do not know if the restricted functor $\ProEt_{/X} \to \Fun^\lex(\Sh(X_\et),\Set)\op$ is fully faithful. However, our main application is to w-strictly local objects $W \in \ProEt_{/X}$, which all lie in $\AffProEt_{/X}$ by \autoref{Cor wsl affproet}.
\end{Rmk}

\begin{Cor}\label{Cor cosheaf fully faithful}
Let $S$ be a profinite set and let $X$ be a scheme. Then the functor
\begin{align*}
\Phi_S \colon \big(\!\AffProEt_{/X} \underset{\ProFin}{\vec\times} \{S\}\big)\op &\to \Fun^\lex(\Sh(X_\et),\Sh(S)) \\
Y &\mapsto \Big( \mathscr F \mapsto \big(U \mapsto \Gamma(Y \times_S U,\mathscr F) \big) \Big)
\end{align*}
is fully faithful.
\end{Cor}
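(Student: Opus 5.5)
The plan is to reduce to \autoref{Lem fully faithful} clopen by clopen, using that a sheaf on $S$ is determined by its values on clopens and that $\Hom$ out of $Y$ decomposes along the clopen decomposition of $Y$ induced by $\pi_0(Y) \to S$.

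First we check that $\Phi_S$ is well defined. For a clopen $U \subseteq S$, the scheme $Y \times_S U$ is a clopen subscheme of $Y$, hence again in $\AffProEt_{/X}$; indeed, it is a limit of $Y \times_S T_i$ with $T_i$ finite, which is a direct factor of an affine pro-\'etale scheme. The assignment $U \mapsto \Gamma(Y \times_S U,\mathscr F)$ takes finite disjoint unions of clopens to products, so it is a sheaf on $S$ by \ref{Par cosheaf}. It is left exact in $\mathscr F$ because limits in $\Sh(S)$ are computed clopen-wise and each $\Gamma(Y\times_S U,-)$ is left exact.

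For full faithfulness, fix $(Y,\pi_0(Y)\to S)$ and $(Y',\pi_0(Y')\to S)$. A natural transformation $\eta \colon \Phi_S(Y') \to \Phi_S(Y)$ consists of natural maps $\eta_U \colon \Gamma(Y'\times_S U,\mathscr F) \to \Gamma(Y\times_S U,\mathscr F)$, compatible with restriction along inclusions of clopens. For each fixed $U$, the transformation $\eta_U$ is a map of left exact functors $\Sh(X_\et)\to\Set$. By \autoref{Lem fully faithful}, it therefore comes from a unique $X$-morphism $g_U \colon Y\times_S U \to Y'\times_S U$, which we compose to a map to $Y'$.

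Compatibility with restriction along $V \subseteq U$ shows, again by the faithfulness in \autoref{Lem fully faithful}, that $g_U|_{Y \times_S V}$ and $g_V$ agree as maps to $Y'$. The key point is then that $g_U$ actually lands in $Y'\times_S U$ and is compatible with the maps to $S$. To see this, we use the identity $\Gamma(Y\times_S U,\mathscr F) = \Phi_S(Y)(\mathscr F)(U)$ together with naturality in $U$. Take $\mathscr F$ to be the final sheaf $*$ and write $U \amalg U^c = S$. Then $\eta_S$ is the product of $\eta_U$ and $\eta_{U^c}$. Hence $g_S$ restricted to $Y\times_S U$ equals $g_U$, and $g_U$ factors through $Y'\times_S U$, because $g_U$ is obtained from $\eta_U$, whose source involves only $Y'\times_S U$.

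Consequently $g := g_S$ satisfies $g^{-1}(Y'\times_S U) \supseteq Y\times_S U$ for every clopen $U$. Applying this to both $U$ and $U^c$ gives equality, so $g$ is a morphism over $X \times S$, i.e.\ a morphism in the oriented fibre product. Conversely, $\eta$ is recovered from $g$ by restricting to each $Y\times_S U$. This yields bijectivity on $\Hom$ sets, and faithfulness follows directly from the faithfulness in \autoref{Lem fully faithful} at $U = S$.

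The main obstacle is the bookkeeping showing that the maps $g_U$ respect the maps to $S$, i.e.\ that the isomorphism $\Gamma(Y\times_S S,-)\cong\Gamma(Y\times_S U,-)\times\Gamma(Y\times_S U^c,-)$ matches the decomposition of $\Hom$ out of $Y = (Y\times_S U)\amalg(Y\times_S U^c)$ under the Yoneda correspondence of \autoref{Lem fully faithful}. I would verify this by checking that the functor of \autoref{Lem fully faithful} sends finite disjoint unions in $\AffProEt_{/X}$ to products of left exact functors.
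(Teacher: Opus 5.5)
Your proof is correct and is essentially the paper's argument written out on hom-sets. The paper identifies the source with $\Sh(S,\AffProEt_{/X}\op)$ via \autoref{Lem cosheaf in schemes}, postcomposes with the fully faithful functor of \autoref{Lem fully faithful}, and uses the currying identification $\Sh(S,\Fun^\lex(\Sh(X_\et),\Set)) \simeq \Fun^\lex(\Sh(X_\et),\Sh(S))$; this is exactly your componentwise construction of the $g_U$ followed by gluing them into a single map over $S$. (The detour through $\mathscr F = *$ is unnecessary, since naturality along the restriction $U \subseteq S$ already gives $g_S|_{Y\times_S U} = g_U$ as maps to $Y'$.)
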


\begin{proof}
By \autoref{Lem cosheaf in schemes}, we have
\[
\big(\!\AffProEt_{/X} \underset{\ProFin}{\vec\times} \{S\}\big)\op \simeq \CSh(S,\AffProEt_{/X})\op \simeq \Sh(S,\AffProEt_{/X}\op).
\]
Thus, \autoref{Lem fully faithful} shows that the functor to $\Sh(S,\Fun^\lex(\Sh(X_\et),\Set))$ is fully faithful. We have a diagram of fully faithful functors
\[
\begin{tikzcd}
\Sh(S,\Fun^\lex(\Sh(X_\et),\Set)) \ar[hookrightarrow]{d} & \Fun^\lex(\Sh(X_\et),\Sh(S)) \ar[hookrightarrow]{d} \\
\PSh(S,\Fun(\Sh(X_\et),\Set)) \ar{r}{\sim} & \Fun(\Sh(X_\et),\PSh(S))\punct{.}
\end{tikzcd}
\]
Under the bottom equivalence, the full subcategories coming from the vertical arrows agree: if $F \colon \Clopen(S)\op \times \Sh(X_\et) \to \Set$ is a functor, it is an $S$-sheaf in $\Fun(\Sh(X_\et),\Set)$ if and only if $F(-,\mathscr F)$ is an $S$-sheaf for all $\mathscr F \in \Sh(X_\et)$, and then the induced functor $\Sh(X_\et) \to \Sh(S)$ is left exact if and only if $F(U,-)$ is left exact for all $U \in \Clopen(S)$.
\end{proof}

\begin{Def}\label{Def cosheaf representable}
Let $X$ be a scheme, and $S$ a profinite set. We say that a left exact functor $F \colon \Sh(X_\et) \to \Sh(S)$ is \emph{pro-representable} if it is in the essential image of the fully faithful functor $\Phi_S \colon (\AffProEt_{/X} \vec\times_{\ProFin} \{S\})\op \to \Fun^\lex(\Sh(X_\et),\Sh(S))$ of \autoref{Cor cosheaf fully faithful}.
\end{Def}

\begin{Par}\label{Par pushforward}
Let $X$ be a scheme and let $\alpha \colon S \to T$ be a continuous map of profinite sets. There is a functor $\alpha_* \colon \AffProEt_{/X} \vec\times_{\ProFin} \{S\} \to \AffProEt_{/X} \vec\times_{\ProFin} \{T\}$ taking the structure map $\pi_0(Y) \to S$ to the composition $\pi_0(Y) \to S \to T$. The diagram
\[
\begin{tikzcd}
\AffProEt_{/X} \underset{\ProFin}{\vec\times} \{S\} \ar{r}{\Phi_S}\ar{d}[swap]{\alpha_*} & \Fun^\lex(\Sh(X_\et),\Sh(S))\op \ar{d}{\alpha_* \circ -} \\
\AffProEt_{/X} \underset{\ProFin}{\vec\times} \{T\} \ar{r}[swap]{\Phi_T} & \Fun^\lex(\Sh(X_\et),\Sh(T))\op
\end{tikzcd}
\]
commutes (up to natural isomorphism). Indeed, both compositions take an object \makebox{$Y \in \AffProEt_{/X} \vec\times_{\ProFin} \{S\}$} to the functor
\begin{align*}
\Clopen(T)\op \times \Sh(X_\et) &\to \Set \\
(U,\mathscr F) &\mapsto \Gamma\big(Y \times_T U,\mathscr F\big) \simeq \Gamma\big(Y \times_S \alpha^{-1}(U),\mathscr F\big).
\end{align*}
\end{Par}

\subsection{Topos-theoretic Henselisation}\label{Sec hensel}
If $X$ is any scheme, then the points of the \'etale site correspond to strict Henselisations along geometric points \cite[Exp.~VIII, Thm.~7.9]{SGA4II}. In this section, we generalise this to a classification of geometric morphisms $s^* \colon \Sh(X_\et) \to \Sh(S)$ for a (fixed) profinite set~$S$; see \autoref{Prop equivalence S}. In the next section, this will be used to compute the unstraightening of the condensed category of points.

\begin{Def}\label{Def Henselisation}
Let $X$ be a scheme, let $S \in \ProFin$, and let $s_* \colon \Sh(S) \to \Sh(X_\et)$ be a geometric morphism. The \emph{Henselisation} of $X$ along $s$ is the pro-object $X_{(s)} \in \Pro(\Sh(X_\et))$ corepresenting the left exact functor $\Gamma_* s^* \colon \Sh(X_\et) \to \Set$.
\end{Def}

It would arguably be more natural to view $X_{(s)}$ as an $S$-cosheaf in $\Pro(\Sh(X_\et))$ right away. In \autoref{Lem Henselisation representable}, we show that $X_{(s)}$ is representable in $\AffProEt_{/X}$, so by \autoref{Lem cosheaf in schemes} and \ref{Par oriented fibre product}, an $S$-cosheaf structure is the same thing as a map $\pi_0(X_{(s)}) \to S$. We think it is notationally beneficial to write $X_{(s)}$ for the underlying scheme and later equip it with this extra structure. (In fact, in \autoref{Lem pi0 iso}, we will show that the map $\pi_0(X_{(s)}) \to S$ is an isomorphism, so that $X_{(s)}$ lives in $\AffProEt_{/X} \times_{\ProFin} \{S\}$.)

\begin{Par}\label{Par profinite section}
Recall that if $S$ is a profinite set then any epimorphism $\mathscr F \to \mathscr G$ in $\Sh(S)$ is surjective on global sections (for instance, this follows easily from \cite[Tag \href{https://stacks.math.columbia.edu/tag/08ZZ}{08ZZ}]{Stacks}). In particular, every effective epimorphism $\mathscr F \twoheadrightarrow \mathbf 1$ in $\Sh(S)$ admits a section.
\end{Par}

\begin{Lemma}\label{Lem Henselisation representable}
Let $X$ be a scheme, let $S \in \ProFin$, and let $s_* \colon \Sh(S) \to \Sh(X_\et)$ be a geometric morphism. Then $X_{(s)}$ is representable by a scheme in $\AffProEt_{/X}$.
\end{Lemma}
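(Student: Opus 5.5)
We have to show that the left exact functor $\Gamma_* s^* \colon \Sh(X_\et) \to \Set$, i.e.\ $\mathscr F \mapsto \Gamma(S,s^*\mathscr F)$, is pro-represented by a cofiltered system in $\AffEt_{/X}$. By \autoref{Lem fully faithful} and the equivalence $\Pro(\AffEt_{/X}) \simeq \AffProEt_{/X}$ of \autoref{Lem affproet}, it suffices to show this. The plan is to realise $X_{(s)}$ as the cofiltered category of elements restricted to affine étale objects. Concretely, let $I$ be the category of pairs $(U,\sigma)$ with $U \in \AffEt_{/X}$ and $\sigma \in \Gamma(S,s^*h_U)$, where $h_U$ is the sheaf represented by $U$. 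The candidate pro-object is $(U)_{(U,\sigma) \in I}$.

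First I check that $I\op$ is filtered.
\begin{itemize}
\item \emph{Nonempty:} $s^*h_X = \mathbf 1$ has a global section, but $X$ need not be affine. Cover $X$ by affine opens $X_j$, so $\coprod_j h_{X_j} \twoheadrightarrow \mathbf 1$ is an effective epimorphism. Then $s^*$ of it is an effective epimorphism in $\Sh(S)$, which has a global section by \ref{Par profinite section}.
\item \emph{Using compactness:} since $S$ is compact, that section is supported on finitely many $X_j$ over a finite clopen decomposition of $S$. The corresponding finite disjoint union $\coprod_{j} X_j$ is affine étale over $X$, giving an object of $I$.
\item \emph{Products:} given $(U,\sigma),(V,\tau)$, the fibre product $U\times_X V$ is étale but only quasi-affine in general. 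Replace it by a finite disjoint union of affine opens, using the same compactness-plus-section argument applied to the epimorphism $\coprod h_{W_k} \twoheadrightarrow h_{U\times_X V}$.
\item \emph{Equalisers:} equalisers of parallel maps $U \rightrightarrows V$ are open subschemes of $U$ (as $V \to X$ is separated). They are handled identically by covering with affines and applying \ref{Par profinite section} to the pulled-back cover.
\end{itemize}

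Next I check that the pro-object corepresents $\Gamma_* s^*$. There is a canonical map $\colim_{(U,\sigma)\in I\op}\Hom(h_U,\mathscr F) \to \Gamma(S,s^*\mathscr F)$, sending $\phi \colon h_U \to \mathscr F$ to $s^*\phi(\sigma)$.
\begin{itemize}
\item \emph{Surjectivity:} write $\mathscr F$ as a quotient $\coprod_k h_{U_k} \twoheadrightarrow \mathscr F$ with $U_k$ affine étale. Lift a section of $s^*\mathscr F$ locally on $S$ to sections of the $s^*h_{U_k}$; this again uses \ref{Par profinite section}, applied to the pullback of this epimorphism along the section $\mathbf 1 \to s^*\mathscr F$. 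Compactness reduces to finitely many $k$, and assembling over a finite clopen decomposition gives a section of $s^*h_{\coprod U_k}$, i.e.\ an object of $I$.
\item \emph{Injectivity:} two elements with the same image can be compared in a common refinement $(U,\sigma)$. They give maps $\phi_1,\phi_2 \colon h_U \to \mathscr F$ with $s^*\phi_1(\sigma) = s^*\phi_2(\sigma)$. The equaliser $E \hookrightarrow h_U$ is a subsheaf through which $\sigma$ factors after applying $s^*$, since $s^*$ is left exact. Covering $E$ by affine étale objects and repeating the lifting argument produces a refinement on which $\phi_1 = \phi_2$.
\end{itemize}

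Naturality is formal. The main obstacle is the recurring issue that $\AffEt_{/X}$ is not closed under finite limits or covers when $X$ is not affine or separated, and that $s^*$ of a cover only gives local sections on $S$. Each time this is resolved by the same two-step lemma: effective epimorphisms in $\Sh(S)$ split on global sections (\ref{Par profinite section}), and compactness of $S$ together with local constancy of sections lets one pass to a finite clopen decomposition and hence a finite disjoint union of affines. I would isolate this as a preliminary claim and then run the filteredness and corepresentability checks uniformly.
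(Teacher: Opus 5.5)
Your proof is correct, and its key step is the same as the paper's. In both, an effective epimorphism $\coprod_k h_{U_k} \twoheadrightarrow \mathscr F$ is pulled back along a section $\mathbf 1 \to s^*\mathscr F$. Quasi-compactness of $\mathbf 1$ and \ref{Par profinite section} then give a section over a finite coproduct, and a finite disjoint union of objects of $\AffEt_{/X}$ is again in $\AffEt_{/X}$.

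The difference is in how the argument is organised.
\begin{itemize}
\item \emph{The paper's route.} It works with the comma category $(\mathbf 1 \downarrow s^*)$ of pairs $(\mathscr F,\alpha)$ with $\mathscr F$ an \emph{arbitrary} sheaf. The forgetful functor to $\Sh(X_\et)$ creates finite limits, so this category is automatically cofiltered and tautologically pro-represents $\Gamma_* s^*$. The only remaining task is to show that the objects with $\mathscr F$ affine \'etale are coinitial. Because finite limits exist, this reduces to one statement: every $(\mathscr F,\alpha)$ receives a map from some $(\Spec A,\beta)$. That is a single application of the lifting argument.
\item \emph{Your route.} Your category $I$ is exactly that coinitial full subcategory. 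Since $I$ lacks finite limits, you must check cofilteredness (nonempty, products, equalisers) and corepresentability (surjectivity, injectivity) by hand. Each check is the paper's lifting step applied to one object of the big comma category: $(\mathbf 1,\mathrm{id})$, then $(h_{U\times_X V},(\sigma,\tau))$, then $(\mathscr F,t)$, then $(E,\sigma)$.
\end{itemize}
Your route has the advantage that the indexing category is essentially small from the start, so you never need to interpret a limit over a large category. The paper's route is shorter.

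Two minor inaccuracies, neither affecting the argument:
\begin{itemize}
\item $U \times_X V$ need not be quasi-affine. If $X$ is not quasi-separated it need not even be quasi-compact. This does no harm, since you cover it by affines anyway.
\item The equaliser of $U \rightrightarrows V$ is open because $V \to X$ is unramified, and closed because $V \to X$ is separated. It is therefore clopen in $U$ and already affine, so no covering is needed in that step.
\end{itemize}
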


\begin{proof}
Write $\mathcal I$ for the comma category $(\mathbf 1 \downarrow s^*)$ consisting of pairs $(\mathscr F,\alpha)$ of a sheaf $\mathscr F \in \Sh(X_\et)$ and a section $\alpha \colon \mathbf 1 \to s^*\mathscr F$. Note that $\mathcal I$ has finite limits (in fact, the forgetful functor $\mathcal I \to \Sh(X_\et)$ creates finite limits), so in particular it is cofiltered. Moreover, we may write $X_{(s)}$ as the limit
\[
X_{(s)} \simeq \lim_{\substack{\longleftarrow \\(\mathscr F,\alpha) \in \mathcal I}}  \mathscr F.
\]
We claim that the full subcategory of $\mathcal I$ spanned by objects where $\mathscr F$ is representable by an affine scheme is coinitial. Since $\mathcal I$ has finite limits, it suffices to see that for any $(\mathscr F,\alpha) \in \mathcal I$, there exists a map from some object of the form $(\Spec(A),\beta)$. We may pick a surjection $\coprod_{i \in I} \Spec(A_i) \twoheadrightarrow \mathscr F$ for some set $I$ since $\Sh(X_\et)$ is generated by $\AffEt_{/X}$. Applying $s^*$ and pulling back along $\alpha$ gives an effective epimorphism
\[
\coprod_{i \in I} \Big( s^*\Spec(A_i) \underset{s^*\mathscr F}\times \mathbf 1\Big) \cong s^*\Big(\coprod_{i \in I}\Spec(A_i)\Big) \underset{s^*\mathscr F}\times \mathbf 1 \twoheadrightarrow \mathbf 1.
\]
Since $\mathbf 1 \in \Sh(S)$ is quasi-compact, the map $\coprod_{j \in J} s^*\Spec(A_j) \times_{s^*\mathscr F} \mathbf 1 \twoheadrightarrow \mathbf 1$ is still an epimorphism for some finite subset $J \subseteq I$. Setting $A = \prod_{j \in J} A_j$, it follows from \ref{Par profinite section} that there is a section $\mathbf 1 \to s^*\Spec(A) \times_{s^*\mathscr F} \mathbf 1$. This section induces a commutative triangle
\[
\begin{tikzcd}[row sep=1.2em,column sep=1.2em]
 & s^*\Spec(A) \ar{d} \\
\mathbf 1 \ar{r}[swap]{\alpha}\ar{ru}{\beta} & s^*\mathscr F\punct{,}
\end{tikzcd}
\]
as desired.
\end{proof}

\begin{Cor}\label{Cor cosheaf representable}
Let $X$ be a scheme, let $S \in \ProFin$, and let $s_* \colon \Sh(S) \to \Sh(X_\et)$ be a geometric morphism. Then the left exact functor $s^* \colon \Sh(X_\et) \to \Sh(S)$ is pro-represented by an object $\AffProEt_{/X} \vec\times_{\ProFin} \{S\}$.
\end{Cor}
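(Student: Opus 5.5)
We reduce to \autoref{Lem Henselisation representable} applied to each clopen piece of $S$, and then assemble the pieces into an $S$-cosheaf. For every clopen $U \subseteq S$, write $j_U \colon U \hookrightarrow S$ for the inclusion and $s_U = s \circ j_U$. The functor $\mathscr F \mapsto (s^*\mathscr F)(U) = \Gamma_*(j_U^* s^*\mathscr F)$ is the global sections of the geometric morphism $s_{U,*} \colon \Sh(U) \to \Sh(X_\et)$. By \autoref{Lem Henselisation representable}, it is corepresented by a scheme $X_{(s_U)} \in \AffProEt_{/X}$, meaning $(s^*\mathscr F)(U) \cong \Gamma(X_{(s_U)},\mathscr F)$ naturally in $\mathscr F$. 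For an inclusion $U \subseteq U'$ of clopens, the restriction maps $(s^*\mathscr F)(U') \to (s^*\mathscr F)(U)$ are natural in $\mathscr F$. By the full faithfulness of \autoref{Lem fully faithful}, they come from unique morphisms $X_{(s_U)} \to X_{(s_{U'})}$ in $\AffProEt_{/X}$. This defines a functor $C \colon \Clopen(S) \to \AffProEt_{/X}$.

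The next step is to check that $C$ preserves finite coproducts, so that it is an $S$-cosheaf in $\AffProEt_{/X}$.
\begin{itemize}
\item For $U = \varnothing$, the functor $\mathscr F \mapsto (s^*\mathscr F)(\varnothing) = *$ is corepresented by the empty scheme.
\item For a decomposition $U = U_1 \amalg U_2$, the sheaf condition gives $(s^*\mathscr F)(U) \cong (s^*\mathscr F)(U_1) \times (s^*\mathscr F)(U_2)$. The right-hand side is $\Gamma(X_{(s_{U_1})} \amalg X_{(s_{U_2})},\mathscr F)$, and this disjoint union is again in $\AffProEt_{/X}$: a finite coproduct of cofiltered limits of affine étale schemes is the cofiltered limit of the pairwise coproducts.
\end{itemize}
Since corepresenting objects are unique up to unique isomorphism (again by \autoref{Lem fully faithful}), the canonical map $X_{(s_{U_1})} \amalg X_{(s_{U_2})} \to X_{(s_U)}$ is an isomorphism.

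Then \autoref{Lem cosheaf in schemes} and \ref{Par oriented fibre product} turn $C$ into the object $Y = C(S) = X_{(s)}$ together with a map $\pi_0(Y) \to S$, with $Y \times_S U \cong C(U)$. By construction, $\Phi_S(Y)$ sends $\mathscr F$ to $U \mapsto \Gamma(Y \times_S U,\mathscr F) \cong (s^*\mathscr F)(U)$. These isomorphisms are natural in both $U$ and $\mathscr F$, so $\Phi_S(Y) \cong s^*$.

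The main obstacle is the coproduct-preservation step: we must check that the isomorphism is the canonical map, and that the resulting clopen decomposition of $X_{(s)}$ matches the cosheaf structure. Both should follow formally from full faithfulness. An alternative route, which avoids rerunning \autoref{Lem Henselisation representable} on each $U$, is to observe that the idempotent sections of $s^*$ applied to $\mathbf 1 \amalg \mathbf 1$ induce the map $\pi_0(X_{(s)}) \to S$ directly.
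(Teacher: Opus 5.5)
Your proposal is correct and follows the same route as the paper's proof. The paper applies \autoref{Lem Henselisation representable} to each restriction $s|_U$ and then concludes that $s^*$ lies in the essential image of the fully faithful functor $\Phi_S$ of \autoref{Cor cosheaf fully faithful}; your explicit assembly of the pieces into a coproduct-preserving functor $\Clopen(S) \to \AffProEt_{/X}$ just spells out why that conclusion holds, namely that a fully faithful functor reflects finite products and isomorphisms.
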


\begin{proof}
For any clopen subset $U \subseteq S$, write $(s|_U)_* \colon \Sh(U) \to \Sh(X_\et)$ for the composite geometric morphism $\Sh(U) \hookrightarrow \Sh(S) \to \Sh(X_\et)$. Applying \autoref{Lem Henselisation representable} to $s|_U$ shows that $\Gamma(U,s^*(-)) \colon \Sh(X_\et) \to \Set$ is pro-representable for all $U \in \Clopen(S)$. Thus, $s^*$ is in the essential image of the functor $(\AffProEt_{/X} \vec\times_{\ProFin} \{S\})\op \to \Fun^\lex(\Sh(X_\et),\Sh(S))$ of \autoref{Cor cosheaf fully faithful}.
\end{proof}

Thus, $X_{(s)}$ comes equipped with a structure map $\pi_0(X_{(s)}) \to S$.

\begin{Lemma}\label{Lem pi0 iso}
Let $X$ be a scheme, let $S \in \ProFin$, and let $s_* \colon \Sh(S) \to \Sh(X_\et)$ be a geometric morphism. Then the structure map $\pi_0(X_{(s)}) \to S$ is an isomorphism.
\end{Lemma}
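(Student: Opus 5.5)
Write $W = X_{(s)} \in \AffProEt_{/X}$ for the Henselisation and $p \colon \pi_0(W) \to S$ for its structure map. By \autoref{Cor cosheaf representable}, for every clopen $U \subseteq S$ the preimage $W \times_S U = p^{-1}(U)$ pro-represents $\Gamma(U, s^*(-))$. Since $W$ is affine, $\pi_0(W)$ is profinite, and a continuous map of profinite sets is a homeomorphism as soon as it is bijective. It therefore suffices to show two things: that $p^{-1}(U)$ is nonempty whenever $U \neq \varnothing$, and that $p^{-1}(U)$ is connected whenever $U$ is a point.

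\emph{Surjectivity.} I will compare clopen subsets. If $U \subseteq S$ is a nonempty clopen, then $\Gamma(U, s^*\mathbf 1) = *$ is nonempty. The pro-object $p^{-1}(U)$ corepresents $\mathscr F \mapsto \Gamma(U, s^*\mathscr F)$, so evaluating at $\mathscr F = \varnothing$ gives $\Gamma(p^{-1}(U), \varnothing) = \Gamma(U, \varnothing) = \varnothing$. This forces $p^{-1}(U) \neq \varnothing$, because $\Gamma(\varnothing, \varnothing) = *$. Hence $p$ has dense image, and since its image is compact, it is surjective.

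\emph{Injectivity.} It is enough to show that $p$ induces a bijection $\Clopen(S) \to \Clopen(\pi_0(W))$, i.e.\ that every clopen of $W$ has the form $p^{-1}(U)$. A clopen decomposition $W = W_1 \amalg W_2$ gives an \'etale-representable object: by \autoref{Lem affproet} and limit arguments, the idempotent descends to some stage $W \to V$ with $V \in \AffEt_{/X}$, so $V = V_1 \amalg V_2$ with $W_i = W \times_V V_i$. Via \autoref{Lem fully faithful} and the description $\Gamma(W, \mathscr F) = \Gamma(S, s^*\mathscr F)$, the map $W \to V$ corresponds to a global section of $s^*V = s^*V_1 \amalg s^*V_2$. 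A global section of a coproduct in $\Sh(S)$ splits $S$ into clopens $U_1 \amalg U_2$ (coproducts are disjoint and $S$ is a profinite space). Naturality of the pro-representation in $U$ then identifies $W_i$ with $p^{-1}(U_i)$, because maps from $p^{-1}(U)$ to $V_1$ correspond to sections of $s^*V$ over $U$ landing in $s^*V_1$. Thus every clopen of $W$ is pulled back from $S$, so $p$ is injective and hence a homeomorphism.

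The main obstacle is this last identification: making precise that the decomposition of $W$ coming from the section $W \to V_1 \amalg V_2$ agrees with the pullback of $U_1 \amalg U_2$. I would handle it by applying fully faithfulness (\autoref{Cor cosheaf fully faithful}) to the two $S$-cosheaf structures on $W$ and checking that they corepresent the same functor.
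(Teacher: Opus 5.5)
Your proof is correct, but it is organised differently from the paper's. The paper argues in one stroke by Yoneda. For a finite set $T$, the universal property of $X_{(s)}$ evaluated on the constant sheaf $T \times X$ gives $\Hom(\pi_0(X_{(s)}),T) \cong \Hom_X(X_{(s)},T \times X) \cong \Hom(\mathbf 1,T \times \mathbf 1) \cong \Hom(S,T)$, because $s^*$ preserves the terminal object and finite coproducts. This passes to profinite $T$ by taking limits, and Yoneda in $\ProFin$ concludes.

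Your argument is in effect the two special cases $T = \varnothing$ and $T = \{1,2\}$ of that computation. The case $T = \varnothing$, applied over each nonempty clopen $U$, gives dense image and hence surjectivity by compactness. The case $T = \{1,2\}$ shows that every clopen of $\pi_0(W)$ is a preimage under $p$, hence injectivity. You then finish with the fact that a continuous bijection of profinite sets is a homeomorphism. What your route buys is that it works with the structure map $p$ directly through preimages, so the compatibility of the bijection with $p$, which the paper leaves implicit, is visible. The cost is length.

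Two simplifications are available:
\begin{itemize}
\item The ``main obstacle'' you flag is already settled by your preceding sentence. Naturality in $\mathscr F$ (along the monomorphism $s^*V_1 \hookrightarrow s^*V$) and in $U$ (restriction to $U_1$) gives $p^{-1}(U_i) \subseteq W_i$ for $i = 1,2$. Two partitions of $W$ with this property coincide, so no further appeal to full faithfulness is needed.
\item Descending the idempotent to a finite stage $V$ is unnecessary. A clopen decomposition of $W$ is just an $X$-map $W \to X \amalg X$, which is exactly the paper's case $T = \{1,2\}$.
\end{itemize}

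Finally, your opening promises to show that $p^{-1}(U)$ is connected for $U$ a point. What you actually prove, and what suffices, is that clopens of $\pi_0(W)$ are pulled back from $S$.
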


\begin{proof}
Note that $\pi_0(X_{(s)})$ is profinite since $X_{(s)}$ is affine (see \ref{Par pi0 profinite}). If~$T$ is a finite set, the universal property of $X_{(s)}$ gives
\begin{align*}
\Hom_{\ProFin}(\pi_0(X_{(s)}),T) \cong \Hom_X(X_{(s)},T \times X) \cong \Hom_S(\mathbf 1,T \times \mathbf 1) \cong \Hom_{\ProFin}(S,T).
\end{align*}
Thus, the same holds for profinite sets by taking limits, and the result follows from the Yoneda lemma.
\end{proof}

\begin{Lemma}\label{Lem Henselisation base change}
Let $X$ be a scheme, let $f \colon T \to S$ be a continuous map of profinite sets, and let $s_* \colon \Sh(S) \to \Sh(X_\et)$ be a geometric morphism. Then the natural map $X_{(s \circ f)} \to X_{(s)} \times_S T$ is an isomorphism.
\end{Lemma}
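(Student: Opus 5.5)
The plan is to compare the two objects as pro-representing objects, using full faithfulness of $\Phi_T$ from \autoref{Cor cosheaf fully faithful}. Both $X_{(s\circ f)}$ and $X_{(s)} \times_S T$ lie in $\AffProEt_{/X} \vec\times_{\ProFin} \{T\}$.

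For the first, \autoref{Lem Henselisation representable} shows it is in $\AffProEt_{/X}$, and \autoref{Cor cosheaf representable} gives its structure map to $T$. For the second, $X_{(s)} \times_S T$ is a cofiltered limit of finite disjoint unions of clopens of $X_{(s)}$, exactly as in the proof of \autoref{Lem acyc profinite}, hence lies in $\AffProEt_{/X}$. Its map $\pi_0 \to T$ comes from the projection. Since $\Phi_T$ is fully faithful, it suffices to show that both objects pro-represent the same left exact functor $\Sh(X_\et) \to \Sh(T)$, namely $f^* s^*$, compatibly with the natural map between them.

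Unwinding $\Phi_T$, I would show that for every $\mathscr F \in \Sh(X_\et)$ and every clopen $V \subseteq T$ there is a natural isomorphism
\[
\Gamma\big((X_{(s)} \times_S T) \times_T V, \mathscr F\big) \cong \Gamma(V, f^* s^* \mathscr F).
\]
Since $(X_{(s)} \times_S T)\times_T V = X_{(s)} \times_S V$, I may replace $T$ by $V$. So it suffices to prove $\Gamma(X_{(s)} \times_S T, \mathscr F) \cong \Gamma(T, f^*s^*\mathscr F)$ for all $T \to S$.

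First suppose $T$ is a finite disjoint union of clopens $U_j \subseteq S$. Then $X_{(s)} \times_S T = \coprod_j X_{(s)} \times_S U_j$, and the left side is $\prod_j \Gamma(U_j, s^*\mathscr F)$ by the definition of the cosheaf structure (\autoref{Cor cosheaf representable}). This is visibly $\Gamma(T, f^*s^*\mathscr F)$.

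For general $T$, write $T = \lim T_i$ with each $T_i$ of that form. On the left, $\Gamma(\lim_i Y_i, \mathscr F) = \colim_i \Gamma(Y_i, \mathscr F)$ for cofiltered limits in $\AffProEt_{/X}$; this is \cite[Exp.~VII, Rmq.~5.14(a)]{SGA4II}, as in \autoref{Lem fully faithful}. On the right, I need $\Gamma(T, f^*\mathscr G) = \colim_i \Gamma(T_i, f_i^*\mathscr G)$ for $\mathscr G \in \Sh(S)$. This is the standard continuity of sections of sheaves on profinite sets along cofiltered limits, i.e.\ $\Sh(T)$ being the limit topos. I would deduce it from the same SGA4 remark via \ref{Par profinite scheme}, identifying $\Sh(S)$ with the étale topos of $S\times\Spec k$.

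Finally I would check that these identifications are induced by the natural map $X_{(s\circ f)} \to X_{(s)}\times_S T$. That map is the one corresponding under $\Phi_T$ to the unit-type comparison $f^*s^* \to f^*s^*$, so naturality suffices.

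The main obstacle is this continuity step on the $\Sh(T)$ side: every section over $T$ of $f^*\mathscr G$ must come from some finite stage, and two sections agreeing on $T$ must already agree at some stage. I would argue this directly using compactness of $T$ and the espace étalé of $\mathscr G$, or else by the scheme-theoretic limit result above.
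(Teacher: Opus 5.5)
Your proposal is correct and follows essentially the same route as the paper. Both arguments reduce to showing that $X_{(s)} \times_S T$ corepresents $\Gamma_* f^* s^*$, handle finite disjoint unions of clopens of $S$ via the cosheaf structure on $X_{(s)}$, and pass to a general $T = \lim_i T_i$ using $\Gamma(\lim_i Y_i,\mathscr F) \cong \colim_i \Gamma(Y_i,\mathscr F)$ as in \autoref{Lem fully faithful}. You additionally make explicit the continuity $\Gamma(T,f^*\mathscr G) \cong \colim_i \Gamma(T_i,f_i^*\mathscr G)$ on the $\Sh(T)$ side, which the paper uses implicitly; your justification of it via \ref{Par profinite scheme} and the SGA4 limit theorem works.
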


\begin{proof}
It suffices to show that $X_{(s)} \times_S T$ represents the functor $\Gamma_*f^*s^* \colon \Sh(X_\et) \to \Set$. When $f$ is a clopen immersion with complement $f' \colon T' \to S$, the $S$-cosheaf condition gives $X_{(s)} = X_{(s \circ f)} \amalg X_{(s \circ f')}$, so $X_{(s \circ f)} = X_{(s)} \times_S T$ by naturality of the map of \autoref{Lem pi0 iso}. When $T = \coprod_{i=1}^n U_i$ for clopens $f_i \colon U_i \hookrightarrow S$, then $X_{(s \circ f)} = \coprod_{i=1}^n X_{(s \circ f_i)} = X_{(s)} \times_S T$. Finally, an arbitrary continuous map $T \to S$ is a cofiltered limit of such, so the result follows since the functor $\AffProEt_{/X} \to \Fun^\lex(\Sh(X_\et),\Set)\op$ of \autoref{Lem fully faithful} preserves cofiltered limits.
\end{proof}

\begin{Lemma}\label{Lem Henselisation acyclic}
Let $X$ be a scheme, let $S \in \ProFin$, and let $s_* \colon \Sh(S) \to \Sh(X_\et)$ be a geometric morphism. Then $X_{(s)}$ is acyclic.
\end{Lemma}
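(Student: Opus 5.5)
We must show that $X_{(s)}$ is quasi-compact and that every \'etale surjection $U \twoheadrightarrow X_{(s)}$ has a section. Quasi-compactness is immediate, since $X_{(s)}$ is affine by \autoref{Lem Henselisation representable}. For the section property, the plan is to descend the \'etale cover to some finite stage of the pro-system defining $X_{(s)}$ and then use the universal property of $X_{(s)}$ together with \ref{Par profinite section}.

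First write $X_{(s)} = \lim_{(V,\alpha)} V$ over the coinitial subcategory of $\mathcal I$ consisting of pairs $(V,\alpha)$ with $V \in \AffEt_{/X}$ and $\alpha \colon \mathbf 1 \to s^*V$, as in the proof of \autoref{Lem Henselisation representable}. Let $U \twoheadrightarrow X_{(s)}$ be an \'etale surjection. Since $X_{(s)}$ is quasi-compact, we may replace $U$ by a finite disjoint union of affine opens, so we may assume $U$ is affine. By limit arguments \cite[Tags \href{https://stacks.math.columbia.edu/tag/07RI}{07RI} and \href{https://stacks.math.columbia.edu/tag/07RP}{07RP}]{Stacks}, $U$ descends to an \'etale map $U_0 \to V_0$ at some stage $(V_0,\alpha_0)$. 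This map is surjective after possibly passing to a further stage, since surjectivity of finitely presented morphisms descends along cofiltered limits of qcqs schemes. Then $U_0 \in \AffEt_{/X}$, and $U_0 \twoheadrightarrow V_0$ is an epimorphism in $\Sh(X_\et)$.

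Next apply $s^*$. The map $s^*U_0 \to s^*V_0$ is an epimorphism, so its pullback $s^*U_0 \times_{s^*V_0} \mathbf 1 \to \mathbf 1$ along $\alpha_0$ is an epimorphism in $\Sh(S)$. By \ref{Par profinite section} it admits a section, which gives $\beta \colon \mathbf 1 \to s^*U_0$ lifting $\alpha_0$. Thus $(U_0,\beta) \to (V_0,\alpha_0)$ is a morphism in $\mathcal I$. The projection $X_{(s)} \to U_0$ associated with $(U_0,\beta)$ is a map over $V_0$, because $U_0 \to V_0$ is a morphism in $\mathcal I$. Hence it induces a map $X_{(s)} \to U_0 \times_{V_0} X_{(s)} = U$ over $X_{(s)}$, which is the desired section.

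The main point requiring care is this final compatibility. The map to $U_0$ must really be a morphism over $X_{(s)}$, and not merely over $V_0$. This holds because the projection $X_{(s)} \to V_0$ factors as the projection to $U_0$ followed by $U_0 \to V_0$, by functoriality of the limit over $\mathcal I$. The identification of the pro-object limit with a scheme limit, \autoref{Lem fully faithful}, makes these projections genuine scheme maps. The descent of surjectivity in the limit step is routine.
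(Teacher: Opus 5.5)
Your proposal is correct and is essentially the paper's first proof. The paper likewise descends the affine \'etale surjection to an \'etale surjection over some stage $T \in \AffEt_{/X}$ of the pro-system, and then uses the universal property of $X_{(s)}$ together with \ref{Par profinite section} to lift $\mathbf 1 \to s^*T$ through $s^*$ of the cover, which gives the required factorisation and hence a section. The paper also gives a second proof: it reduces via \autoref{Lem Henselisation base change} and \autoref{Lem pi0 iso} to the classical case $S = *$, and then concludes with \cite[Prop.~3.2]{Artin}.
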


\begin{proof}[First proof]
Let $f \colon U \twoheadrightarrow X_{(s)}$ be an \'etale surjection. By \autoref{Lem Henselisation representable}, the scheme $X_{(s)}$ is affine, so we may replace $U$ by a finite disjoint union of its affine opens to assume $U$ is affine, and hence $f$ is of finite presentation. Since $X_{(s)}$ is a cofiltered limit of affine \'etale $X$-schemes, we may find some $X_{(s)} \to T$ with $T \in \AffEt_{/X}$ and some \'etale surjection $f' \colon V \twoheadrightarrow T$ whose pullback to $X_{(s)}$ is $f$. Thus, finding a section of $f$ is the same as finding a factorisation of $X_{(s)} \to T$ into  $X_{(s)} \to V \to T$. By the universal property of $X_{(s)}$, this corresponds to finding a factorisation
\[
\begin{tikzcd}[row sep=1.2em,column sep=1em]
& s^*V \ar[two heads]{d}\\
\mathbf 1 \ar{r}\ar{ru} & s^*T\punct{,}
\end{tikzcd}
\]
which exists by \ref{Par profinite section}.
\end{proof}

Alternatively, we may use that we already know this result when $S$ is a point (the proof of which is essentially identical to the first proof above):

\begin{proof}[Second proof]
If $i \colon \{*\} \to S$ is a point, then \autoref{Lem Henselisation base change} implies that $X_{(s)} \times_S \{*\} = X_{(s \circ i)}$ is the Henselisation along a geometric morphism $\Set \to \Sh(X_\et)$, which is the spectrum of a strictly Henselian local ring by \cite[Exp.~VIII, Thm.~7.9]{SGA4II}. By \autoref{Lem pi0 iso}, this means that the connected components of $X_{(s)}$ are spectra of strictly Henselian local rings, which implies that $X_{(s)}$ is acyclic by \cite[Prop.~3.2]{Artin}.
\end{proof}

Recall from \autoref{Def AffProEt} that $\AffProEt_{/X}^{\acyc}$ denotes the category of acyclic objects in $\AffProEt_{/X}$. By \autoref{Lem pi0 iso} and \autoref{Lem Henselisation acyclic}, we see that $X_{(s)}$ gives an object of $\AffProEt_{/X}^{\acyc} \times_{\ProFin} \{S\}$. We provide a construction in the opposite direction as well.

\begin{Lemma}\label{Lem acyclic local}
Let $W$ be an acyclic (resp.~w-strictly local) scheme, and set $S = \pi_0(W)$. Then the geometric morphism $u_* \colon \Sh(W_\et) \to \Sh(S)$ (resp.~$u_* \colon \Sh(W_\proet) \to \Sh(\ProFin_{/S})$) induced by the continuous functor $u \colon \Clopen(S) \to \AffEt_{/W}$ (resp.~$u \colon \ProFin_{/S} \to \AffProEt_{/W}$) given by $T \mapsto W \times_S T$ is local. 
\end{Lemma}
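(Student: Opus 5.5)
We use the criterion that a geometric morphism $u_* \colon \mathscr X \to \mathscr Y$ is local when $u_*$ has a fully faithful right adjoint $u^!$, equivalently when $u_*$ admits a section $c$ in $\RTop$ with $u_* c_* \simeq \mathrm{id}$ and $c_*$ right adjoint to $u_*$. Here the centre will be the closed points. In the \'etale case, let $Z = W^\cl$ with the reduced structure, which is strictly profinite. By \autoref{Lem acyclic}(d) and \autoref{Lem strictly profinite} we have $\Sh(Z_\et) \simeq \Sh(S)$. Write $i \colon Z \hookrightarrow W$, and let $c_* = i_* \colon \Sh(S) \simeq \Sh(Z_\et) \to \Sh(W_\et)$. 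Two things then need checking.

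\textbf{Step 1: $u_* c_* \simeq \mathrm{id}$.} Both sides are geometric morphisms, so it suffices to compare the induced functors on $\Clopen(S)$. For $T \subseteq S$ clopen, $c^* u^* h_T$ is the sheaf on $Z$ represented by $Z \times_S T = Z \cap (W \times_S T)$, which corresponds to $T$ under $Z \cong S$.

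\textbf{Step 2: $c_*$ is right adjoint to $u_*$.} Equivalently, $u_* \simeq c^*$ as functors $\Sh(W_\et) \to \Sh(S)$. On a clopen $T$, the left side is $\Gamma(W \times_S T, \mathscr F)$ and the right side is $\Gamma(Z \times_S T, i^*\mathscr F)$. By \autoref{Lem acyclic}(b), $W \times_S T$ is again acyclic with closed points $Z \times_S T$, so we reduce to the case $T = S$. We must show that the restriction map $\Gamma(W,\mathscr F) \to \Gamma(Z, i^*\mathscr F)$ is a bijection for every \'etale sheaf $\mathscr F$.

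This is the main obstacle. For representable $\mathscr F = h_U$ with $U \to W$ affine \'etale, the claim is that sections of $U \to W$ correspond bijectively to sections of $U_Z \to Z$. I would deduce this from Henselianity. For acyclic $W$, the pair $(W,Z)$ is Henselian: one reduces to connected components as in \autoref{Lem wsl Henselian}, using that the components are strictly Henselian local by \autoref{Lem acyclic}(c). The bijection for Henselian pairs is then standard: clopens of $U_Z$ lift uniquely to clopens of $U$ via Stacks 09XI-type statements, and sections of \'etale maps correspond to clopen immersions. General $\mathscr F$ follows by writing $\mathscr F$ as a colimit of representables. Both sides commute with finite limits; filtered colimits are handled by quasi-compactness of $W$ and $Z$ (both functors are $\Gamma$ on a coherent object); and coequalisers are handled by presenting $\mathscr F$ as a quotient of coproducts of affines. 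Alternatively, one can invoke the affine analogue of Gabber's theorem for Henselian pairs. If $W$ is merely acyclic with $Z$ not closed, I would instead compare stalks componentwise via the cospecialisation maps of \autoref{Lem acyclic}(c). This piece may need the most care.

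\textbf{Pro-\'etale case.} Take the centre $\Sh(\ProFin_{/S}) \to \Sh(W_\proet)$ induced by $Z = W^\cl$. Here $\Sh(Z_\proet) \simeq \Sh(\ProFin_{/S})$ by \autoref{Lem strictly profinite weakly etale} together with the equivalence $Z \times_S (-)$ on strictly profinite objects from \autoref{Cor equiv wsl}. Step 1 is identical. For Step 2, it suffices to check on the w-strictly local basis (\autoref{Prop enough wsl kappa}) that $\Gamma(W \times_S T, \mathscr F) \to \Gamma(Z \times_S T, i^*\mathscr F)$ is a bijection. Alternatively, one can argue that $u$ has a left adjoint $V \mapsto \pi_0(V)$ on w-local objects, using \autoref{Lem w-local topology} and \autoref{Lem hypercover}, which gives $u^! $ directly.
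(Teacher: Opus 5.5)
There is a genuine gap. Your whole argument runs through three things: the closed subscheme $Z = W^\cl$, the Henselian pair $(W,Z)$, and the centre $i_*$. None of these exist when $W$ is acyclic but not w-strictly local. In that case $W^\cl$ is not closed: in \autoref{Ex acyclic not wsl}, the point $(\eta,\infty)$ lies in the closure of $W^\cl$. So there is no reduced closed subscheme structure, no Henselian pair, no affine proper base change, and no geometric morphism $i_*$ to serve as the centre. The suggestion to ``compare stalks componentwise via cospecialisation maps'' does not produce a functor $\Sh(S) \to \Sh(W_\et)$, let alone a right adjoint to $u_*$.

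This case is not marginal. \autoref{Cor geometric morphism} and \autoref{Prop equivalence S} need the lemma for Henselisations $X_{(s)}$ along arbitrary geometric morphisms. These are only known to be acyclic, and by \autoref{Lem coherent} they are w-strictly local exactly when $s_*$ is locally coherent.

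The missing idea is to use acyclicity directly at the level of sites, which avoids $W^\cl$ altogether.
\begin{itemize}
\item Each $u(T) = W \times_S T$ is acyclic by \autoref{Lem acyclic}(b). So for any \'etale cover $\{V_j \to W \times_S T\}$, the surjection $\coprod_j V_j \to W \times_S T$ has a section.
\item By quasi-compactness, this section refines the cover by a finite clopen decomposition of $W \times_S T$. Since $\pi_0(W \times_S T) \cong T$, these clopens all have the form $W \times_S T'$ with $T' \subseteq T$ clopen.
\item Thus $u$ is cocontinuous. Hence the restriction functor $u_*$ has a right adjoint $u^!$ given by right Kan extension \cite[Tag 00XR]{Stacks}.
\item $u^!$ is fully faithful because $u$ is fully faithful (\autoref{Cor equiv wsl}).
\end{itemize}

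In the pro-\'etale case, cocontinuity instead follows from \autoref{Lem w-local topology} and \autoref{Cor equiv wsl}. Any pro-\'etale cover of $W \times_S T$ is refined by a w-local cover $U \to W \times_S T$, and $U \cong (W \times_S T) \times_T \pi_0(U)$. Your aside that $\pi_0$ is left adjoint to $u$ points in this direction. Your main pro-\'etale Step 2, however, simply asserts the bijection $\Gamma(W \times_S T,\mathscr F) \cong \Gamma(Z \times_S T, i^*\mathscr F)$ for pro-\'etale sheaves, with no argument.

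In the w-strictly local \'etale case your Henselian-pair argument is correct. It relies on Gabber's affine analogue of proper base change \cite[Tag 09ZH]{Stacks}. In the paper that input is used only afterwards, in \autoref{Rmk wsl coherent}, to identify the already-constructed centre $u^!$ with $i_*$.
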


In other words, the pushforward $u_*$ has a fully faithful right adjoint $u^!$ \cite[Prop.~1.4]{JohnstoneMoerdijk} (using that for $u^* \dashv u_* \dashv u^!$, the functor $u^*$ is fully faithful if and only if $u^!$ is). Thus, $u$ defines a map $u^! \colon \Sh(S) \to \Sh(W_\et)$ (resp.~$u^! \colon \Sh(\ProFin_{/S}) \to \Sh(W_\proet)$) in $\RTop$.

\begin{proof}
We claim that $u$ is cocontinuous \cite[Tag \href{https://stacks.math.columbia.edu/tag/00XJ}{00XJ}]{Stacks}. For $u \colon \Clopen(S) \to \AffEt_{/W}$, the image $u(V) = W \times_S V$ is acyclic for every $V \in \Clopen(S)$ by \autoref{Lem acyclic}. Thus, every \'etale cover $\{U_i \to u(V)\}_{i \in I}$ can be refined by a cover of $u(V)$ by clopens by choosing a section of $\coprod_{i \in I} U_i \twoheadrightarrow u(V)$. As $\Clopen(S) \simeq \Clopen(W)$, we see that all clopens are in the image of $u$, proving that $u$ is cocontinuous. In the w-strictly local case, the functor $u \colon \ProFin_{/S} \to \AffProEt_{/S}$ is cocontinuous by \autoref{Lem w-local topology} and \autoref{Cor equiv wsl}. Thus, in both cases, $u_*$ has a right adjoint $u^!$ by \cite[Tag \href{https://stacks.math.columbia.edu/tag/00XR}{00XR}]{Stacks}. Note that $u$ is fully faithful by \autoref{Cor equiv wsl} (in the case of clopens, this is also easily seen directly). We conclude that $u^!$ is fully faithful on presheaves \cite[Exp.~I, Prop.~5.6(iv)]{SGA4I}, hence also on sheaves since this is a full subcategory and $u^!$ sends sheaves to sheaves.
\end{proof}

\begin{Cor}\label{Cor geometric morphism}
Let $X$ be a scheme, let $W \in \AffProEt_{/X}^{\acyc}$, and let $S = \pi_0(W)$. Then the left exact functor\vspace{-1em}
\begin{align*}
s^* = \Phi_S(W) \colon \Sh(X_\et) &\to \Sh(S) \\
\mathscr F &\mapsto \big(U \mapsto \Gamma(W \times_S U,\mathscr F)\big)
\end{align*}
of \autoref{Cor cosheaf fully faithful} comes from a geometric morphism of topoi $s_* \colon \Sh(S) \to \Sh(X_\et)$. If $W$ is \makebox{w-strictly} local, it extends to a geometric morphism of topoi $s_* \colon \Sh(\ProFin_{/S}) \to \Sh(X_\proet)$ such that $s^*\mathscr F$ is given by $(T \to S) \mapsto \Gamma(W \times_S T,\mathscr F)$ for all $\mathscr F \in \Sh(X_\proet)$.
\end{Cor}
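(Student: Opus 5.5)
The plan is to factor $s^*$ as a composite of inverse image functors of geometric morphisms already available. Write $f \colon W \to X$ for the structure map. It is a weakly \'etale map lying in $\AffProEt_{/X}$, so it induces a geometric morphism $f_* \colon \Sh(W_\et) \to \Sh(X_\et)$. The inverse image $f^*$ is computed on sections over objects of $\AffProEt_{/W}$ by $\Gamma(V,f^*\mathscr F)$; this is the colimit formula from the proof of \autoref{Lem fully faithful}, using \autoref{Lem affproet over} to view $V \in \AffProEt_{/W}$ as an object of $\AffProEt_{/X}$.

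Next, \autoref{Lem acyclic local} gives a local geometric morphism $u_* \colon \Sh(W_\et) \to \Sh(S)$ with fully faithful right adjoint $u^!$. The triple $u^* \dashv u_* \dashv u^!$ means $u_*$ is itself the inverse image part of a geometric morphism $\Sh(S) \to \Sh(W_\et)$, namely the one with direct image $u^!$. Since $u_*\mathscr G$ is just $U \mapsto \mathscr G(u(U)) = \mathscr G(W \times_S U)$, it is left exact and preserves colimits, which is what we need.

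The composite $\Sh(S) \xrightarrow{u^!} \Sh(W_\et) \xrightarrow{f_*} \Sh(X_\et)$ is then a geometric morphism, and its inverse image is $u_* f^*$. This sends $\mathscr F$ to $U \mapsto \Gamma(W \times_S U, f^*\mathscr F) = \Gamma(W\times_S U,\mathscr F)$, which is exactly $\Phi_S(W)$ in the notation of \autoref{Cor cosheaf fully faithful}. That proves the first claim.

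For the w-strictly local case I would run the same argument one level up. The weakly \'etale map $f$ induces $f_* \colon \Sh(W_\proet) \to \Sh(X_\proet)$, with $f^*\mathscr F$ given by restriction along $\AffProEt_{/W} \to \AffProEt_{/X}$. This restriction is well defined because the sites are compatible by \autoref{Lem affproet over}, and the comparison of $\Sh(X_{\kappa\text{-}\affproet})$ with $\Sh(X_{\kappa\text{-}\proet})$ lets us restrict to affine objects. The pro-\'etale version of \autoref{Lem acyclic local} gives a local $u_* \colon \Sh(W_\proet) \to \Sh(\ProFin_{/S})$, with $u_*\mathscr G$ given by $(T \to S) \mapsto \mathscr G(W \times_S T)$. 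Composing as before gives $s_* = f_* u^!$, with inverse image $(T \to S) \mapsto \Gamma(W \times_S T,\mathscr F)$.

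The main obstacle is checking that this extension is compatible with the étale version, i.e.\ that it really ``extends'' $s_*$. I would show that pulling back along $\nu^* \colon \Sh(X_\et) \to \Sh(X_\proet)$ recovers the étale formula. This rests on two facts: $\nu^*\mathscr F$ evaluated on $W \times_S T$ equals $\Gamma(W\times_S T,\mathscr F)$, by \autoref{Lem fully faithful} and the affine-pro-\'etale colimit formula; and restriction to clopens $T = U \subseteq S$ matches. A secondary point is making sure $f^*$ on pro-\'etale sheaves is literally restriction, which holds because $\AffProEt_{/W}$ is a slice of $\AffProEt_{/X}$ by \autoref{Lem affproet over}.
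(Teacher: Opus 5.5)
Your proposal is correct and is the paper's argument: the paper defines $s_*$ as the composite of $u^!$ from \autoref{Lem acyclic local} with the geometric morphism induced by $W \to X$, in both the \'etale and pro-\'etale settings, and reads off the inverse image as $u_* f^*$. Your extra compatibility check with $\nu^*$ is left implicit in the paper and goes through, provided that on the $S$-side you use the colimit formula $\Gamma(T,\alpha^*\mathscr G) \cong \colim_i \Gamma(T_i,\mathscr G)$ for every $\alpha \colon T \to S$, writing $T$ as a limit of finite disjoint unions $T_i$ of clopens of $S$, and not only for clopen $T$.
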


\begin{proof}
Take $s_*$ to be the composition of the geometric morphism $u^! \colon \Sh(S) \to \Sh(W_\et)$ (resp.~$u^! \colon \Sh(\ProFin_{/S}) \to \Sh(W_\proet)$) of \autoref{Lem acyclic local} with the geometric morphism $\Sh(W_\et) \to \Sh(X_\et)$ (resp.~$\Sh(W_\proet) \to \Sh(X_\proet)$) induced by the map $W \to X$.
\end{proof}

\begin{Rmk}\label{Rmk open}
The pullback $u_* \colon \Open(W) \to \Open(S)$ induced by the geometric morphism $u^! \colon \Sh(S) \to \Sh(W_\et)$ above takes an open $U \subseteq W$ to the open $U \cap W^\cl$ in $W^\cl \simeq \pi_0(W) \simeq S$ (under the identification of \autoref{Lem acyclic}(d)). Indeed, for $U \subseteq W$ open and $V \subseteq S$ clopen, we have $\Gamma(V,u_*\mathbf 1_U) \simeq \Gamma(u^{-1}(V),\mathbf 1_U)$, so $V \subseteq u_*U$ if and only if $u^{-1}(V) \subseteq U$, which is in turn equivalent to $V \subseteq U \cap W^\cl$ (i.e., $U$ contains the unique closed point of every connected component of $u^{-1}(V)$) since $U$ is stable under generisation.
\end{Rmk}

\begin{Rmk}\label{Rmk wsl coherent}
If $W$ is w-strictly local, there is a more geometric construction of the geometric morphisms $u^! \colon \Sh(S) \to \Sh(W_\et)$ and $u^! \colon \Sh(\ProFin_{/S}) \to \Sh(W_\proet)$ of \autoref{Lem acyclic local}. Indeed, the set of closed points $i \colon W^\cl \hookrightarrow W$ is closed, so we may endow it with the reduced induced scheme structure, which gives a strictly profinite scheme by \autoref{Rmk wsl}. By \autoref{Lem acyclic}(d), we may identify $\lvert W^\cl\rvert$ with $\pi_0(W) \simeq S$, and by \autoref{Lem strictly profinite}, we get an equivalence of topoi $\Sh(W^\cl_\et) \simeq \Sh(S)$. Likewise, there is a canonical identification $\Sh(W^\cl_\proet) \simeq \Sh(\ProFin_{/S})$. We claim that $u^!$ is the natural map of \'etale topoi $\Sh(W^\cl_\et) \to \Sh(W_\et)$ (resp.~of pro-\'etale topoi $\Sh(W^\cl_\et) \to \Sh(W_\proet)$). Indeed, for every $T \in \ProFin_{/S}$, the inclusion $W^\cl \times_S T \hookrightarrow W \times_S T$ is a Henselian pair by \autoref{Lem wsl Henselian} and \autoref{Lem acyc profinite}. Thus, for $\mathscr F \in \Sh(W_\et)$ (resp.~$\mathscr F \in \Sh(W_\proet)$), we have $\Gamma(W \times_S T,\mathscr F) \cong \Gamma(W^\cl \times_S T,i^*\mathscr F)$ by \cite[Tag \href{https://stacks.math.columbia.edu/tag/09ZH}{09ZH}]{Stacks}. In other words, the functor $u^! \colon \Sh(W_\et) \to \Sh(S)$ (resp.~$u^! \colon \Sh(W_\proet) \to \Sh(\ProFin_{/S})$) constructed in the proof of \autoref{Lem acyclic local} is isomorphic to $i^*$.
\end{Rmk}

\begin{Prop}\label{Prop equivalence S}
Let $X$ be a scheme and let $S$ be a profinite set. Then the fully faithful functor $\Phi_S \colon (\AffProEt_{/X} \vec\times_{\ProFin} \{S\})\op \to \Fun^\lex(\Sh(X_\et),\Sh(S))$ of \autoref{Cor cosheaf fully faithful} restricts to an equivalence
\[
\AffProEt_{/X}^{\acyc} \underset{\ProFin}\times \{S\} \xrightarrow\sim \Fun^*(\Sh(X_\et),\Sh(S))\op \simeq \Fun_*(\Sh(S),\Sh(X_\et)).
\]
\end{Prop}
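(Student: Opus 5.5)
Since $\Phi_S$ is fully faithful by \autoref{Cor cosheaf fully faithful}, and $\Fun^*(\Sh(X_\et),\Sh(S))$ is a full subcategory of $\Fun^\lex(\Sh(X_\et),\Sh(S))$, it suffices to identify essential images. Concretely, we must show two things. First, $\Phi_S$ sends every object of $\AffProEt_{/X}^{\acyc} \times_{\ProFin} \{S\}$ to an inverse image functor of a geometric morphism. Second, every geometric morphism is isomorphic to $\Phi_S(W)$ for some acyclic $W$ whose structure map $\pi_0(W) \to S$ is an isomorphism. We must also check that the full subcategory $\AffProEt_{/X}^{\acyc} \times_{\ProFin} \{S\}$ of the oriented fibre product is exactly the preimage of $\Fun^*$. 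We then get an equivalence onto its image, and the final identification $\Fun^*(\Sh(X_\et),\Sh(S))\op \simeq \Fun_*(\Sh(S),\Sh(X_\et))$ is just passing to right adjoints.

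For the first direction, let $(W,\pi_0(W) \xrightarrow\sim S)$ with $W$ acyclic. Transporting along the isomorphism, we may assume $S = \pi_0(W)$. Then \autoref{Cor geometric morphism} says precisely that $\Phi_S(W)$ is $s^*$ for a geometric morphism $s_* \colon \Sh(S) \to \Sh(X_\et)$. For the second direction, let $s_*$ be a geometric morphism. By \autoref{Cor cosheaf representable}, $s^*$ is pro-represented by an object of $\AffProEt_{/X} \vec\times_{\ProFin} \{S\}$. The underlying scheme is the Henselisation $X_{(s)}$, since evaluating at $U = S$ gives $\Gamma_*s^*$. By \autoref{Lem pi0 iso}, the structure map $\pi_0(X_{(s)}) \to S$ is an isomorphism, and by \autoref{Lem Henselisation acyclic}, $X_{(s)}$ is acyclic. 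So $s^* \simeq \Phi_S(X_{(s)})$ with $X_{(s)}$ in the desired subcategory.

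It remains to check that no other objects of the oriented fibre product map into $\Fun^*$. Suppose $(Y,\pi_0(Y) \to S)$ has $\Phi_S(Y) \simeq s^*$ for a geometric morphism. By the previous paragraph, $s^* \simeq \Phi_S(X_{(s)})$ as well. Full faithfulness of $\Phi_S$ then gives $Y \simeq X_{(s)}$ in the oriented fibre product, so $Y$ is acyclic and $\pi_0(Y) \to S$ is an isomorphism. Hence the essential image of the restriction is exactly $\Fun^*$, and the preimage of $\Fun^*$ is exactly the stated subcategory. Morphisms on both sides are just morphisms in the ambient categories, since both subcategories are full and natural transformations of inverse image functors are arbitrary natural transformations. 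This gives the equivalence.

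The main content has already been isolated in the preceding lemmas, so the only delicate point should be bookkeeping. One needs to check that the object produced by \autoref{Cor cosheaf representable} really has the Henselisation $X_{(s)}$ of \autoref{Def Henselisation} as its underlying scheme, with the same structure map $\pi_0(X_{(s)}) \to S$ that \autoref{Lem pi0 iso} refers to. I would verify this by unwinding the cosheaf correspondence of \autoref{Lem cosheaf in schemes}. The pieces $X_{(s|_U)}$ for clopens $U \subseteq S$ glue to $X_{(s)}$ via the cosheaf condition, as in the proof of \autoref{Lem Henselisation base change}. A second point to check is the variance: the equivalence stated is covariant in $\AffProEt^{\acyc}$ and lands in $\Fun^{*}(\cdots)\op$, consistent with $\Phi_S$ being defined on the opposite category.
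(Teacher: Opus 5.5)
Your proposal is correct and is essentially the paper's proof: the image lands in $\Fun^*$ by \autoref{Cor geometric morphism}, full faithfulness is inherited from \autoref{Cor cosheaf fully faithful}, and essential surjectivity comes from \autoref{Cor cosheaf representable}, \autoref{Lem pi0 iso}, and \autoref{Lem Henselisation acyclic}. Your extra check that the preimage of $\Fun^*$ in the oriented fibre product is exactly the acyclic fibre is correct but not needed for the statement.
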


\begin{proof}
By \autoref{Cor geometric morphism}, the image lands in $\Fun^*(\Sh(X_\et),\Sh(S))$. Full faithfulness is inherited from \autoref{Cor cosheaf fully faithful}, and essential surjectivity follows from \autoref{Cor cosheaf representable}, \autoref{Lem pi0 iso}, and \autoref{Lem Henselisation acyclic}.
\end{proof}

When $W$ is w-strictly local, we get a bound on $\size(W)$ in terms of $\size(S)$.

\begin{Lemma}\label{Lem size Hens}
Let $X$ be a scheme, let $W \in \AffProEt_{/X}^{\wsl}$, and set $S = \pi_0(W)$. Then
\[
\size(W) \leq \max(\size(X),\size(S)).
\]
In particular, if $\kappa > \size(X)$ is any cardinal, then $\size(W) < \kappa$ if and only if $\size(S) < \kappa$.
\end{Lemma}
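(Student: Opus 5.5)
The plan is to realise $W$ as a Henselisation along its own closed points, and then bound the size of that Henselisation using the cardinality estimates from the proof of \autoref{Prop enough wsl kappa}. The reverse inequality $\size(S) \leq \size(W)$ is \autoref{Lem pi0 kappa}, since $W$ is affine. Granting the main inequality, the ``in particular'' then follows at once: if $\size(S) < \kappa$ then $\size(W) \leq \max(\size(X),\size(S)) < \kappa$, and conversely $\size(S) \leq \size(W) < \kappa$.

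For the main inequality, first reduce to $X$ affine. Since $W \in \AffProEt_{/X}$, the argument in the proof of \autoref{Lem affproet descent} gives a map $W \to U$ with $U \in \AffEt_{/X}$. Moreover $U$ is étale over an affine open of $X$ (or a finite union of such). Writing $U = \Spec A$, we have $\size(A) \leq \size(X)$, because an étale algebra of finite presentation over a ring $R$ has size at most $\size(R)$, as noted in the proof of \autoref{Prop enough wsl kappa}. By \autoref{Lem affproet over}, $W \in \AffProEt_{/U}$, so we may replace $X$ by $U$ and assume $X = \Spec A$.

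Next, identify $W$ with a Henselisation in the sense of Bhatt--Scholze. Let $Z = W^\cl$ with its reduced structure, which is strictly profinite by \autoref{Rmk wsl}. We have $\lvert Z \rvert \cong S$ by \autoref{Lem acyclic}(d), and $\Gamma(Z,\mathscr O_Z)$ is absolutely flat. By \autoref{Lem wsl Henselian}, $W$ is Henselian along $Z$. Since $W$ is ind-étale over $A$, the universal property of the Henselisation $\Hens_A(-)$ of \cite[Def.~2.2.10]{BhattScholze} suggests $\mathscr O(W) \cong \Hens_A(\Gamma(Z,\mathscr O_Z))$. Concretely, both are ind-étale $A$-algebras, Henselian along $Z$, with the same reduction to $Z$, and one gets a comparison map between them. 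That this map is an isomorphism can be checked with \autoref{Lem acyclic iso closed points}: it is a weakly étale map of acyclic schemes inducing a bijection on closed points. Given this identification, the estimate already used in the proof of \autoref{Prop enough wsl kappa} gives
\[
\size(W) \leq \max(\size(A),\size(\Gamma(Z,\mathscr O_Z))).
\]

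It remains to bound $B = \Gamma(Z,\mathscr O_Z)$ by $\max(\size(A),\size(S))$. I expect this to be the main obstacle, since it is the step that genuinely uses the structure of absolutely flat rings. The ring $B$ is absolutely flat and ind-étale over $A$, with $\pi_0(\Spec B) = S$. Write $B = \colim_{V} B_V$ over finite clopen partitions $V$ of $S$; there are at most $\size(S)$ of these. Each localisation $B_z$ at a point $z \in S$ is a separable closure of a residue field $\kappa(\mathfrak p)$ of $A$, so $\lvert B_z \rvert \leq \size(A)$.

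Any element $b \in B$ is determined, on a finite clopen partition of $S$, by finitely many local data. Indeed, $b$ is locally on $S$ the image of an element of a finite étale-type algebra over $A$ localised at idempotents. So $B$ is a quotient of $\colim$ over (clopen partitions, finitely many elements of étale $A$-algebras), an index set of size at most $\max(\size(A),\size(S))$. The bound then follows from \autoref{Rmk colimit kappa small}.

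An alternative route to the same bound is to present $W$ as a limit over $\Clopen$-data of étale $A$-algebras. By \autoref{Lem acyc profinite converse}, $W$ is locally on $S$ cut out inside finite disjoint unions of étale neighbourhoods, which again yields an indexing set of size at most $\max(\size(A),\size(S))$.
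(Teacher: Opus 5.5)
Your treatment of the ``in particular'' clause and your reduction to $X = \Spec A$ (via a map $W \to U$ with $U \in \AffEt_{/X}$ and \autoref{Lem affproet over}) are fine. So is the identification $\mathscr O(W) \cong \Hens_A(B)$ for $B = \Gamma(W^\cl,\mathscr O)$, at least in substance. It is easiest to see by noting that $\Hom_X(W,U) \to \Hom_X(W^\cl,U)$ is bijective for $U \in \AffEt_{/X}$, because $W$ is Henselian along $W^\cl$. Hence the pro-representation of $W$ and the colimit defining $\Hens_A(B)$ have the same index category.

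The gap is the step you flag yourself, bounding $\size(B)$, and this is the whole content of the lemma. Your index set (clopen partitions plus finitely many elements of \'etale $A$-algebras) does not parametrise elements of $B$. To say that $b$ is, on $S_j$, ``the image of'' $a_j \in A'_j$, you must also specify a homomorphism $A'_j \to B$ over that piece, i.e.\ an $X$-map from the corresponding piece of $W^\cl$ to $\Spec A'_j$. The number of such maps is exactly what needs controlling. The pointwise bound $\lvert B_z \rvert \le \size(A)$ cannot supply this: $B$ only embeds into $\prod_{z \in S} B_z$, and $S$ can have $2^{\size(S)}$ points. Your alternative route via \autoref{Lem acyc profinite converse} leaves the same count unaddressed. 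Two further problems with this part:
\begin{itemize}
\item $B$ is not ind-\'etale over $A$ in general: take $W = X$ the spectrum of a strictly Henselian discrete valuation ring, so that $B$ is the residue field.
\item The detour through $\size(B)$ gains nothing. $B$ is a quotient of $\mathscr O(W)$, and the Bhatt--Scholze estimate $\lvert \Hom_A(A',B) \rvert \le \size(B)$ only moves the difficulty from $W$ to $B$.
\end{itemize}

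What is missing is a count of $\Hom_X(W,U) \cong \Hom_X(W^\cl,U)$, for $U \in \AffEt_{/X}$, in terms of $\size(S)$ rather than $\size(B)$. The base change $U \times_X W^\cl \to W^\cl$ is \'etale of finite presentation. Hence it is finite \'etale over the strata of a finite constructible stratification \cite[Tag 03S0]{Stacks}. Constructible subsets of the strictly profinite scheme $W^\cl$ are clopen, so the map is finite \'etale. Since the residue fields are separably closed, it is a clopen subscheme of $W^\cl \times \{1,\ldots,d\}$. This gives $\lvert \Hom_X(W,U) \rvert \le \lvert \Hom_{\ProFin}(S,\{1,\ldots,d\}) \rvert \le \size(S)$. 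There are at most $\size(A)$ isomorphism classes of $U \in \AffEt_{/X}$, each with $\size(U) \le \size(A)$. So the index category $(W \downarrow \AffEt_{/X})$ of the pro-representation $W \simeq \lim U$ has at most $\max(\size(A),\size(S))$ objects up to isomorphism. \autoref{Rmk colimit kappa small} then bounds $\size(W)$ directly. This is the paper's argument, and it makes the passage through $\Hens_A(B)$ and $\size(B)$ unnecessary.
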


We do not know if the same statement also holds for $W \in \AffProEt_{/X}^{\acyc}$.

\begin{proof}
The final statement follows from the first and \autoref{Lem pi0 kappa}. For the first statement, let $\kappa = \max(\size(X),\size(S))$, so that $\size(X) \leq \kappa$ and $\size(S) \leq \kappa$. If $X = \bigcup_{i \in I} X_i$ is an affine open cover, then $\size(X_i) \leq \kappa$ for all $i$. We may choose a finite clopen cover $W = \coprod_{j \in J} W_j$ refining the cover $W = \bigcup_{i \in I} W \times_X X_i$. If $S_j = \pi_0(W_j)$, we have $\size(S_j) \leq \kappa$ for all $j \in J$, and it suffices to show that $\size(W_j) \leq \kappa$ for all $j \in J$ since $\Gamma(W,\mathscr O_W) = \prod_{j \in J} \Gamma(W_j,\mathscr O_{W_j})$. In other words, replacing $X$, $W$, and $S$ by $X_i$, $W_j$, and $S_j$, we reduce to the case where $X$ is affine.

By \autoref{Prop equivalence S} and \autoref{Rmk wsl coherent}, we know that $W$ is obtained as the Henselisation $X_{(s)}$ along the geometric morphism $s_* \colon \Sh(S) \simeq \Sh(W^\cl_\et) \to \Sh(X_\et)$. Since $X_{(s)}$ is a pro-object in $\AffEt_{/X}$, it is canonically isomorphic to the limit
\[
X_{(s)} \simeq \lim_{\substack{\longleftarrow \\ (U,\alpha) \in (W \downarrow \AffEt_{/X})}} U
\]
taken over the comma category $(W \downarrow \AffEt_{/X})$ of pairs $(U,\alpha)$ with $U \in \AffEt_{/X}$ and $\alpha \in \Hom_X(W,U) \simeq \Gamma(S,s^*U)$. By \autoref{Rmk colimit kappa small}, it suffices to show that each $U \in \AffEt_{/X}$ has $\size(U) \leq \kappa$ and the category $(W \downarrow \AffEt_{/X})$ has $\leq \kappa$ objects (up to isomorphism).

Since $\size(X) \leq \kappa$, we conclude that the set of isomorphism classes in $\AffEt_{/X}$ has cardinality $\leq \kappa$, and any $U \in \AffEt_{/X}$ has $\size(U) \leq \kappa$ since it is finitely presented over~$X$. Finally, if $U \in \AffEt_{/X}$, we have to show that $\lvert \Gamma(S,s^*U) \rvert \leq \kappa$. Since $U \to X$ is an \'etale map of affine schemes, it is finitely presented. Thus, the map $V = U \times_X W^\cl \to W^\cl$ is a finitely presented \'etale morphism, hence there exists a finite constructible stratification of $W^\cl$ over which $V \to W^\cl$ is finite \'etale \cite[Tag \href{https://stacks.math.columbia.edu/tag/03S0}{03S0}]{Stacks}. Since $W^\cl$ is strictly profinite (\autoref{Rmk wsl}), any constructible subset of $W^\cl$ is clopen, and we conclude that $V \to W^\cl$ is finite \'etale. This forces $V$ to be isomorphic to a clopen inside $W^\cl \times \{1,\ldots,d\}$ for some $d \in \mathbf N$. In other words, $s^*U$ is a summand of a constant sheaf $S \times \{1,\ldots, d\}$, and
\[
\lvert \Gamma(S,s^*U) \rvert \leq \lvert \Hom_{\ProFin}(S,\{1,\ldots,d\}) \rvert \leq \lvert \Clopen(S) \rvert^d \leq \kappa,
\]
since $\Hom_{\ProFin}(S,\{1,\ldots,d\})$ is the set of $d$-tuples of pairwise distinct clopens in $S$ and $\lvert \Clopen(S) \rvert \leq \size(S)$ by \autoref{Def size profinite}.
\end{proof}

\subsection{The Galois category in terms of w-strictly local schemes}\label{Sec Galois}
In \autoref{Prop equivalence S}, we showed that geometric morphisms $s^* \colon \Sh(X_\et) \to \Sh(S)$ for a profinite set $S$ are classified by $S$-cosheaves in $\AffProEt_{/X}^{\acyc}$, generalising the result for $S = *$ from \cite[Exp.\ VIII, Thm.\ 7.9]{SGA4II}. In this section, we upgrade this to a statement about the unstraightening (= Grothendieck construction) of the condensed category of points; see \autoref{Thm equivalence} and \autoref{Cor loccoh}. As mentioned at the start of \hyperref[Sec condensed]{\S2}, the result \autoref{Thm equivalence} itself can be deduced from \cite[Thm.~6.3.14]{LurieUltra}, but our geometric proof gives finer control over the way the equivalence is constructed.

\begin{Def}\label{Def Stone over X}
If $\mathscr X$ is a topos, write $\Stone_{/\mathscr X}$ for the Grothendieck construction of $\PTS_*(\mathscr X) \simeq \PTS^*(\mathscr X)\op$. We may understand its objects and morphisms as follows:
\begin{itemize}
\item The objects of $\Stone_{/\mathscr X}$ are pairs $(S,s^*)$ of a profinite set $S$ and a geometric morphism $s^* \colon \mathscr X \to \Sh(S)$;
\item If $(S,s^*), (T,t^*) \in \Stone_{/\mathscr X}$, then a morphism $(\alpha,\nu) \colon (S,s^*) \to (T,t^*)$ consists of a continuous map $\alpha \colon S \to T$ and a natural transformation $\nu \colon t^* \to \alpha_*s^*$ (which is adjoint to a natural transformation $\alpha^*t^* \to s^*$).
\end{itemize}
In the special case $\mathscr X = \Sh(X_\et)$ for some scheme $X$, we will write $\Stone_{/X}$ for $\Stone_{/\Sh(X_\et)}$, and we will write $\Stone_{/X}^\loccoh$ for the full subcategory on locally coherent geometric morphisms $s^* \colon \Sh(X_\et) \to \Sh(S)$ (see \ref{Par coherent}).
\end{Def}

\begin{Def}
Let $X$ be a scheme. Define a functor $\Phi \colon \AffProEt_{/X}^{\acyc} \to \Stone_{/X}$ as follows.
\begin{itemize}
\item If $W \in \AffProEt_{/X}^{\acyc}$, set $S = \pi_0(W)$ and define $\Phi(W)$ to be the geometric morphism $s^* = \Phi_S(W) \colon \Sh(X_\et) \to \Sh(S)$ of \autoref{Cor geometric morphism}. Explicitly, $s^* \colon \Sh(X_\et) \to \Sh(S)$ takes a sheaf $\mathscr F \in \Sh(X_\et)$ to $U \mapsto \Gamma(W \times_S U,\mathscr F)$.
\item Let $f \colon V \to W$ be a map in $\AffProEt_{/X}^{\acyc}$ inducing the map $\alpha \colon S \to T$ after applying~$\pi_0$. Then $f$ is a map $\alpha_*V \to W$ in $\AffProEt_{/X} \vec\times_{\ProFin} \{T\}$ (see \ref{Par pushforward} for notation), and we define $\Phi(f) = \Phi_T(f)$ in the notation of \autoref{Cor geometric morphism}. Explicitly, if $\Phi(V) = (S,s^*)$ and $\Phi(W) = (T,t^*)$, then $\Phi(f)$ is the map $(\alpha,\nu) \colon (S,s^*) \to (T,t^*)$ given by the natural transformation $\nu \colon t^* \to \alpha_*s^*$ given on $U \in \Clopen(T)$ by the pullback map $\Gamma(W \times_T U,-) \to \Gamma(V \times_S \alpha^{-1}(U),-)$.
\end{itemize}
\end{Def}

\begin{Thm}\label{Thm equivalence}
If $X$ is a scheme, then the functor $\Phi \colon \AffProEt_{/X}^{\acyc} \to \Stone_{/X}$ is an equivalence.
\end{Thm}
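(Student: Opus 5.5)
We show that $\Phi$ is essentially surjective and fully faithful, reducing both to the fibrewise statement \autoref{Prop equivalence S}.

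\emph{Essential surjectivity.} Let $(S,s^*) \in \Stone_{/X}$. By \autoref{Prop equivalence S}, the functor $s^*$ is isomorphic to $\Phi_S(W)$ for some $W \in \AffProEt_{/X}^{\acyc} \times_{\ProFin}\{S\}$. Concretely, one can take $W = X_{(s)}$, using \autoref{Lem pi0 iso} to identify $\pi_0(W) \cong S$. Transporting along this identification of $\pi_0(W)$ with $S$, we get $\Phi(W) \cong (S,s^*)$ in $\Stone_{/X}$.

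\emph{Full faithfulness.} Fix $V, W \in \AffProEt_{/X}^{\acyc}$, and write $S = \pi_0(V)$ and $T = \pi_0(W)$. Both hom-sets decompose over continuous maps $\alpha \colon S \to T$.
\begin{itemize}
\item On the scheme side, every $X$-map $f \colon V \to W$ induces a unique $\alpha = \pi_0(f)$. Hence $\Hom_X(V,W) = \coprod_\alpha \Hom_{\AffProEt_{/X}\vec\times_{\ProFin}\{T\}}(\alpha_*V,W)$, where $\alpha_*V$ is $V$ with structure map $\pi_0(V) \to S \xrightarrow{\alpha} T$.
\item On the topos side, by \autoref{Def Stone over X}, $\Hom((S,s^*),(T,t^*)) = \coprod_\alpha \Hom_{\Fun^\lex(\Sh(X_\et),\Sh(T))}(t^*,\alpha_*s^*)$.
\end{itemize}
By \ref{Par pushforward}, $\alpha_*s^* = \alpha_*\Phi_S(V) \cong \Phi_T(\alpha_*V)$. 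Therefore, on the $\alpha$-summand, the map induced by $\Phi$ is
\[
\Hom(\alpha_*V,W) \to \Hom(\Phi_T(W),\Phi_T(\alpha_*V)).
\]
This is a bijection by \autoref{Cor cosheaf fully faithful}, since $\Phi_T$ is fully faithful on all of $(\AffProEt_{/X}\vec\times_{\ProFin}\{T\})\op$. This holds even though $\alpha_*V$ is in general not in the fibre over $T$, which is why we use the oriented fibre product here.

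\emph{Main obstacle.} The real difficulty is bookkeeping rather than any new input. We must check that the $\alpha$-summand decomposition is compatible with the explicit description of $\Phi(f)$ as pullback maps $\Gamma(W\times_T U,-) \to \Gamma(V\times_S\alpha^{-1}(U),-)$. We must also check that $\Phi$ respects composition, which amounts to verifying that the isomorphisms $\alpha_*\Phi_S \cong \Phi_T\alpha_*$ of \ref{Par pushforward} are coherent: for $\beta \circ \alpha$, they should compose correctly with the Grothendieck-construction composition $\nu' \circ \nu$. Both checks reduce to the naturality of the pullback maps on sections $\Gamma(-,\mathscr F)$, and are routine once written out on clopens $U \subseteq T$.
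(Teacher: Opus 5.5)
Your proposal is correct and follows essentially the same route as the paper's proof. Both arguments reduce full faithfulness to morphisms lying over a fixed $\alpha \colon S \to T$, identify these with $\Hom(\alpha_*V,W)$ in the oriented fibre product over $T$, and conclude using \autoref{Cor cosheaf fully faithful} together with \ref{Par pushforward}; essential surjectivity is reduced to the fibrewise statement \autoref{Prop equivalence S}.
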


\begin{proof}
By construction, the triangle
\[
\begin{tikzcd}[row sep=1em,column sep=-1em]
\AffProEt_{/X}^{\acyc} \ar{rr}{\Phi}\ar{rd}[swap]{\pi_0} & & \Stone_{/X} \ar{ld} \\
 & \ProFin &
\end{tikzcd}
\]
commutes: if $f \colon V \to W$ is a map in $\AffProEt_{/X}^{\acyc}$, the morphism of profinite sets underlying $\Phi(f)$ is $\pi_0(f)$. Thus, for fully faithfulness, it suffices to restrict to morphisms living above a fixed map $\alpha \colon S \to T$ in $\ProFin$. Let $V \in \AffProEt_{/X}^{\acyc} \times_{\ProFin} \{S\}$ and $W \in \AffProEt_{/X} \times_{\ProFin} \{T\}$. Then a morphism $f \colon V \to W$ lifts $\alpha$ if and only if the diagram
\[
\begin{tikzcd}
\pi_0(V) \ar{r}{\pi_0(f)}\ar{d}[rotate=90,anchor=north,xshift=.1em]{\sim} & \pi_0(W) \ar{d}[rotate=90,anchor=north,xshift=.1em]{\sim} \\
S \ar{r}[swap]{\alpha} & T
\end{tikzcd}
\]
commutes. These are given by $\Hom_{\AffProEt_{/X} \vec\times_{\ProFin} \{T\}}(\alpha_*V,W)$, and fully faithfulness follows from \autoref{Cor cosheaf fully faithful} and \ref{Par pushforward}. Likewise, for essential surjectivity, it suffices to restrict to objects living above a fixed profinite set $S$, where the result was proved in \autoref{Prop equivalence S}.
\end{proof}

\begin{Par}\label{Par cartesian}
In particular, the functor $\pi_0 \colon \AffProEt_{/X}^{\acyc} \to \ProFin$ is a cartesian fibration. A map $V \to W$ in $\AffProEt_{/X}^{\acyc}$ is $\pi_0$-cartesian if and only if the map $V \to W \times_{\pi_0(W)} \pi_0(V)$ is an isomorphism. This is more or less immediate from the definitions, but also follows from the theorem above and \autoref{Lem Henselisation base change}. By \autoref{Lem acyc profinite} and \autoref{Lem acyc profinite converse}, these are exactly the maps $V \to W$ that take closed points to closed points.
\end{Par}

For the pro-\'etale exodromy correspondence, we will want to use $\GAL(X)$ instead of $\PTS^*(X_\et)$; see \autoref{Def Galois category}. The importance of locally coherent geometric morphisms is explained by the following lemma.

\begin{Lemma}\label{Lem coherent}
Let $X$ be a scheme, let $W \in \AffProEt_{/X}^{\acyc}$, and let $(S,s^*) = \Phi(W)$. Then $W$ is w-strictly local if and only if the geometric morphism $s_* \colon \Sh(S) \to \Sh(X_\et)$ is locally coherent.
\end{Lemma}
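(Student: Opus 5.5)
The plan is to reduce to the case where $X$ is affine, identify $s^*$ of coherent objects with pullbacks to the closed points, and then use the clopen structure of $W$ to get both implications.

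\emph{Reduction to $X$ affine.} Since $W$ is acyclic, the Zariski cover $\{W \times_X X_i\}$ coming from an affine open cover $X = \bigcup X_i$ is an \'etale cover of $W$. It therefore has a section, so it is refined by a finite clopen decomposition $W = \coprod_j W_j$ with each $W_j$ mapping into some affine open $X_{i(j)}$. Correspondingly $S = \coprod_j S_j$ with $S_j = \pi_0(W_j)$. Both conditions are compatible with this decomposition. On the scheme side, being w-strictly local can be checked clopen by clopen using \autoref{Lem acyclic}(b), since $W^\cl = \coprod_j W_j^\cl$. On the topos side, local coherence of $s_*$ is local on $S$ and holds for $(s|_{S_j})_*$ as a morphism to $\Sh(X_\et)$ iff it holds as a morphism to $\Sh(X_{i(j),\et})$, by \autoref{Def Galois category}. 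So we may assume $X$ is affine. Then both topoi are coherent, and by \ref{Par coherent} locally coherent means coherent: $s^*$ preserves coherent objects. It suffices to test this on the generators $U \in \AffEt_{/X}$, because coherent objects are closed under finite limits and quotients of coherent objects by coherent equivalence relations, and $s^*$ preserves these.

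\emph{($\Rightarrow$)} Suppose $W$ is w-strictly local. By \autoref{Rmk wsl coherent}, $s^*$ is identified with pullback along $W^\cl \to W \to X$, under $\Sh(W^\cl_\et) \simeq \Sh(S)$. Hence $s^*U$ is the sheaf represented by $U \times_X W^\cl$. This scheme is \'etale and of finite presentation over the strictly profinite scheme $W^\cl$, so its espace \'etal\'e is qcqs, and $s^*U$ is coherent. (As in the proof of \autoref{Lem size Hens}, it is even a summand of a finite constant sheaf.)

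\emph{($\Leftarrow$)} Suppose $s_*$ is locally coherent. Write $W = \lim_i W_i$ with $W_i \in \AffEt_{/X}$.
\begin{itemize}
\item Every quasi-compact open $V \subseteq W$ is the preimage of a quasi-compact open $V_i \subseteq W_i$ for some $i$.
\item The projection $W \to W_i$ gives a global section $\mathbf 1 \to s^*W_i$. Local coherence then makes the induced morphism of slices $\Sh(S) \to \Sh(X_\et)_{/W_i} \simeq \Sh(W_{i,\et})$ coherent.
\item Hence $s^*V_i$ is coherent. By \autoref{Rmk open} it is the open subset $V \cap W^\cl$ of $S \cong W^\cl$ (\autoref{Lem acyclic}(d)). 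A coherent open of the profinite set $S$ is quasi-compact, hence clopen.
\end{itemize}
So there is a clopen $C \subseteq W$ with $C \cap W^\cl = V \cap W^\cl$. Each connected component of $W$ is local (\autoref{Lem acyclic}(c)) and $V$ is stable under generisation. A component meets $V \cap W^\cl$ iff its closed point lies in $V$, in which case the whole component lies in $V$; hence $C \subseteq V$.

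\emph{$W^\cl$ is closed.} Let $y \in W$ be non-closed, and let $x$ be the closed point of its component. Since $x$ is closed and $x \neq y$, we have $y \notin \overline{\{x\}}$, so there is a quasi-compact open $V \ni y$ with $x \notin V$. Take $C$ as above. Then $x \notin C$, so the component of $y$ is disjoint from $C$. Thus $V \setminus C$ is an open neighbourhood of $y$, and $(V \setminus C) \cap W^\cl = \varnothing$. Hence the complement of $W^\cl$ is open, i.e. $W$ is w-strictly local.

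The main obstacle is the step in ($\Leftarrow$) identifying $s^*V_i$ with the open $V \cap W^\cl$ of $S$ and extracting clopenness. This requires applying the local coherence condition to the slice over $W_i$, rather than only to $X$, and then matching the result with \autoref{Rmk open}.
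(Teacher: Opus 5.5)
Your proof is correct and follows the paper's argument. For ($\Rightarrow$) you use \autoref{Rmk wsl coherent}. For ($\Leftarrow$) you slice over an affine \'etale stage $W_i$, apply \autoref{Rmk open}, use that quasi-compact opens of $S$ are clopen, and take the same neighbourhood $V \setminus C$ (the paper's $U \cap u^{-1}(V^c)$). The only differences are minor: your reduction to affine $X$ is unnecessary, and you descend a single quasi-compact open of $W$ to some $W_i$, whereas the paper uses $\Sh(W_\et)^\coh = \colim_i \Sh(U_{i,\et})^\coh$ to show that $u^! \colon \Sh(S) \to \Sh(W_\et)$ is coherent.
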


\begin{proof}
If $W$ is w-strictly local, then $s_*$ is the geometric morphism $\Sh(W^\cl_\et) \to \Sh(X_\et)$ induced by the morphism of schemes $W^\cl \to X$ by \autoref{Rmk wsl coherent}, and is therefore locally coherent. Conversely, assume that $s_*$ is locally coherent. Writing $W$ as a cofiltered limit $\lim_{i \in I} U_i$ for $U_i \in \AffEt_{/X}$, we know that $s_*$ factors via \makebox{$\Sh(S) \to \Sh(W_\et) \to \Sh(U_{i,\et})$.} Since $\Sh(U_{i,\et})$ is coherent, the hypothesis implies that $\Sh(S) \to \Sh(U_{i,\et})$ is coherent. Since $\Sh(W_\et)^\coh = \colim_{i \in I\op} \Sh(U_{i,\et})^\coh$ \cite[Exp.~IX, Cor.~2.7.4]{SGA4III}, the induced geometric morphism $g_* = u^! \colon \Sh(S) \to \Sh(W_\et)$ of \autoref{Lem acyclic local} is also coherent. If $w \in W\setminus W^\cl$, we need to show that there exists an open neighbourhood of $w$ missing~$W^\cl$. By \autoref{Rmk open}, the pullback $g^* = u_* \colon \Open(W) \to \Open(S)$ is given by $U \mapsto U \cap W^\cl$. Let $x$ be the unique closed point in the connected component $u(w)$ of~$w$, and let $U \subseteq W$ be a quasi-compact open containing $w$ but not~$x$. Since $g_*$ is coherent, the open set $V = g^*U = U \cap W^\cl$ is clopen, hence $U' = U \cap u^{-1}(V^c)$ is an open such that $U' \cap W^\cl = \varnothing$. Since $x \not\in U$, we see that $u(w) = u(x) \not \in V$, so $w \in U'$. Thus, $U'$ is an open neighbourhood of $w$ missing $W^\cl$.
\end{proof}

\begin{Ex}\label{Ex acyclic not wsl}
Let $R$ be a Henselian $\mathbf Q$-valued valuation ring with separably algebraically closed residue field and fraction field (for instance $R = \mathscr O_{\mathbf C_p}$), and let $X = \Spec R$. Then $\Sh(X_\et) \simeq \Sh(X_\Zar)$ by \autoref{Lem strictly profinite}, and $X_\Zar$ is a Sierpi\'nski space. Thus, geometric morphisms $s_* \colon \Sh(S) \to \Sh(X_\Zar)$ from a profinite set $S$ are in bijection with open sets $U \subseteq S$, corresponding to the preimage of the generic point in $X_\Zar$. Since the generic point is quasi-compact, we see that $s_*$ is (locally) coherent if and only if $U$ is clopen. This gives many examples of geometric morphisms that are not locally coherent, so \autoref{Lem coherent} gives many examples of acyclic schemes that are not w-strictly local.

For instance, if $S = \mathbf N \cup \{\infty\}$, we can take the map $\Sh(S) \to \Sh(X_\Zar)$ given by the open set $\mathbf N$. The Henselisation $X_{(s)}$ is the pro-(open immersion) $W \subseteq X \times S$ given by $X \times \{\infty\} \cup \{\eta\} \times S$, where $\eta \in X$ denotes the generic point. The point $(\eta,\infty)$ is not closed, but is in the closure of $\{\eta\} \times S$, hence also in the closure of $W^\cl$. 
\end{Ex}

\begin{Cor}\label{Cor loccoh}
The equivalence $\Phi$ of \autoref{Thm equivalence} restricts to an equivalence
\[
\Phi^{\loccoh} \colon \AffProEt_{/X}^{\wsl} \xrightarrow\sim \Stone_{/X}^{\loccoh} \tag*{\qedsymbol} 
\]
\normalfont Note that by \autoref{Cor wsl affproet}, the left hand side agrees with $\ProEt_{/X}^\wsl$.
\end{Cor}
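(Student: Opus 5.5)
The plan is to deduce the corollary formally from \autoref{Thm equivalence} and \autoref{Lem coherent}. Both sides are full subcategories: $\AffProEt_{/X}^{\wsl} \subseteq \AffProEt_{/X}^{\acyc}$, since w-strictly local schemes are acyclic by \autoref{Def acyclic}, and $\Stone_{/X}^{\loccoh} \subseteq \Stone_{/X}$ by \autoref{Def Stone over X}. An equivalence of categories restricts to an equivalence between a full subcategory of the source and a full subcategory of the target, provided that an object of the source lies in the first subcategory exactly when its image lies (up to isomorphism) in the second. So it suffices to check that, for $W \in \AffProEt_{/X}^{\acyc}$, the object $W$ is w-strictly local if and only if $\Phi(W) = (S,s^*)$ is locally coherent.

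This is precisely \autoref{Lem coherent}. Full faithfulness of $\Phi^{\loccoh}$ is then inherited from $\Phi$, because both subcategories are full. For essential surjectivity, take $(S,s^*) \in \Stone_{/X}^{\loccoh}$. By \autoref{Thm equivalence} there is some $W \in \AffProEt_{/X}^{\acyc}$ with $\Phi(W) \cong (S,s^*)$. An isomorphism in $\Stone_{/X}$ covers a homeomorphism $\pi_0(W) \cong S$ and identifies the two geometric morphisms. Local coherence is invariant under such isomorphisms, so the geometric morphism attached to $W$ is locally coherent, and \autoref{Lem coherent} shows that $W$ is w-strictly local.

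The only point needing care is this isomorphism-invariance, and it is routine. Precomposing with the equivalence $\Sh(\pi_0(W)) \simeq \Sh(S)$ preserves local coherence, and naturally isomorphic geometric morphisms are simultaneously locally coherent or not. The closing note follows from \autoref{Cor wsl affproet}: every w-strictly local object of $\ProEt_{/X}$ already lies in $\AffProEt_{/X}$, so $\ProEt_{/X}^\wsl = \AffProEt_{/X}^\wsl$. I expect no real obstacle, since all of the substance is contained in \autoref{Lem coherent}.
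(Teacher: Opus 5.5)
Your proposal is correct and matches the paper. The paper gives no written proof and states the corollary as an immediate consequence of \autoref{Thm equivalence} and \autoref{Lem coherent}. Your remarks on fullness of both subcategories and on the isomorphism-invariance of local coherence just spell out the routine details it leaves implicit.
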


For later use, we also record a cardinal-sensitive version of \autoref{Cor loccoh}.

\begin{Def}
Let $X$ be a scheme, and let $\kappa > \size(X)$ be a cardinal. Write $\GAL_\kappa(X)$ for the restriction of $\GAL(X)$ to $\ProFin_\kappa$. Write $\Stone_{/X}^{\kappa,\loccoh} \subseteq \Stone_{/X}^{\loccoh}$ for the full subcategory spanned by those pairs $(S,s^*)$ with $S \in \ProFin_\kappa$ (i.e., $\size(S) < \kappa$), which is equivalently the Grothendieck construction of $\GAL_\kappa(X)$.
\end{Def}

By \autoref{Lem size Hens}, we therefore get:

\begin{Cor}\label{Cor loccoh size}
Let $X$ be a scheme, and let $\kappa > \size(X)$ be a cardinal. Then the equivalence of \autoref{Thm equivalence} restricts to an equivalence
\[
\Phi^{\kappa,\loccoh} \colon \AffProEt_{/X}^{\kappa,\wsl} \xrightarrow\sim \Stone_{/X}^{\kappa,\loccoh} \tag*{\qedsymbol} 
\]
\normalfont In \hyperref[Sec change of cardinal]{\S4.2}, we show that knowing $\GAL_\kappa(X)$ for a single value of $\kappa > \size(X)$ determines $\GAL_\lambda(X)$ for every cardinal $\lambda > \size(X)$.
\end{Cor}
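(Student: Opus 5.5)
The plan is to deduce the statement from \autoref{Cor loccoh} by checking that the equivalence $\Phi^{\loccoh}$ matches the two size conditions. Both $\AffProEt_{/X}^{\kappa,\wsl}$ and $\Stone_{/X}^{\kappa,\loccoh}$ are full subcategories of $\AffProEt_{/X}^{\wsl}$ and $\Stone_{/X}^{\loccoh}$ respectively. An equivalence restricts to an equivalence between full subcategories whenever an object lies in the first subcategory if and only if its image lies in the second. So it suffices to prove the following: for $W \in \AffProEt_{/X}^{\wsl}$, we have $\size(W) < \kappa$ if and only if $\Phi(W) = (S,s^*)$ satisfies $S \in \ProFin_\kappa$.

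By construction of $\Phi$ (see the commutative triangle in the proof of \autoref{Thm equivalence}), the profinite set underlying $\Phi(W)$ is $S = \pi_0(W)$. Hence we must show that $\size(W) < \kappa$ if and only if $\pi_0(W) \in \ProFin_\kappa$.

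\autoref{Lem size Hens} gives exactly that $\size(W) < \kappa \iff \size(\pi_0(W)) < \kappa$, using $\kappa > \size(X)$. It then remains to compare $\size(S) < \kappa$ with $S \in \ProFin_\kappa$, that is, with $\lvert C(S,\mathbf F_2)\rvert < \kappa$ (\autoref{Lem kappa-small}). The two conditions agree because $\kappa > \size(X) \geq \aleph_0$ forces $\kappa$ to be uncountable. Then $\max(\aleph_0,\lvert C(S,\mathbf F_2)\rvert) < \kappa$ holds if and only if $\lvert C(S,\mathbf F_2)\rvert < \kappa$.

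Essential surjectivity onto $\Stone_{/X}^{\kappa,\loccoh}$ follows the same way. Any $(S,s^*)$ with $S \in \ProFin_\kappa$ is $\Phi(W)$ for some w-strictly local $W$ with $\pi_0(W) \cong S$, and such a $W$ is $\kappa$-small by the above. The only point needing care is this cardinal bookkeeping, namely the mismatch between $\size$ and $\kappa$-smallness for profinite sets, which uncountability of $\kappa$ resolves.
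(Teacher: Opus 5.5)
Your proof is correct and follows the paper's route: the paper obtains this corollary by restricting \autoref{Cor loccoh} via \autoref{Lem size Hens}, using that $\Phi(W)$ lies over $S=\pi_0(W)$. Your remark that $\size(S)<\kappa$ and $S\in\ProFin_\kappa$ agree because $\kappa>\size(X)\geq\aleph_0$ forces $\kappa$ to be uncountable is exactly the bookkeeping the paper leaves implicit in the parenthetical of its definition of $\Stone_{/X}^{\kappa,\loccoh}$.
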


\section{Exodromy theorems}\label{Sec exodromy}
In this section, we prove the three main exodromy theorems. We start in \hyperref[Sec proetale exodromy]{\S3.1} with the pro-\'etale exodromy theorem, using the classification of w-strictly local schemes of \autoref{Cor loccoh} and \autoref{Cor loccoh size}. In \hyperref[Sec etale exodromy]{\S3.2}, we use this to extract an \'etale exodromy theorem, by identifying the essential image of the pullback $\Sh(X_\et) \to \Sh(X_\proet)$. This is used in \hyperref[Sec constructible exodromy]{\S3.3} to give an alternative proof of the constructible \'etale exodromy theorem of \cite[Thm.~12.1.6]{BGH}.

As explained in \hyperref[Sec lower vs higher]{Lower versus higher categories}, we switch to an $\infty$-categorical perspective here, in order to obtain exodromy theorems allowing very general coefficients. For instance, this proves an exodromy theorem for the $\infty$-category $D(X_\proet,\mathbf Z)$ without any additional effort; see \autoref{Ex D(Z)}.

\subsection{The \texorpdfstring{pro-\'etale}{pro-étale} exodromy correspondence}\label{Sec proetale exodromy}
Using the computation of $\AffProEt_{/X}^{\kappa,\wsl}$ of \autoref{Cor loccoh size}, we give a direct proof of the pro-\'etale exodromy theorem; see \autoref{Thm proetale exodromy}. Recall the following definition:

\begin{Def}
Let $\kappa$ be an uncountable cardinal, and let $\mathscr C, \mathscr D$ be $\kappa$-condensed $\infty$-categories. Then the $\infty$-category of \emph{continuous functors} from $\mathscr C$ to $\mathscr D$ is the end
\[
\Fun\cts(\mathscr C,\mathscr D) = \int_{S \in \ProFin_\kappa\op} \Fun\!\big(\mathscr C(S),\mathscr D(S)\big).
\]
Concretely, this is defined as a limit over the twisted arrow category \cite[Def.~2.6]{GepnerHaugsengNikolaus}. The underlying $\infty$-groupoid of $\Fun\cts(\mathscr C,\mathscr D)$ is the space of natural transformations from $\mathscr C$ to $\mathscr D$, when viewed as functors $\ProFin_\kappa\op \to \Cat_\infty$. The formula above upgrades this to an $\infty$-category of natural transformations.
\end{Def}

\begin{Par}\label{Par lax colimit}
If $\mathscr C \colon \ProFin_\kappa\op \to \Cat_\infty$ is a $\kappa$-condensed $\infty$-category, then \cite[Thm.~1.1, Def.~2.6, and Def.~2.9]{GepnerHaugsengNikolaus} shows that the cartesian fibration classifying $\mathscr C$ can be computed as the coend of $\infty$-categories
\[
\int^{S \in \ProFin_\kappa\op} \mathscr C(S) \times (\ProFin_\kappa)_{/S}.
\]
When applied to the functor $\GAL_\kappa(X)\op \simeq \PTS_*^{\loccoh}(X_\et) \colon \ProFin_\kappa\op \to \Cat_\infty$ of \autoref{Def Galois category}, the equivalence of \autoref{Cor loccoh size} therefore yields an equivalence
\[
\int^S \GAL_\kappa(X)\op(S) \times (\ProFin_\kappa)_{/S} \to \AffProEt_{/X}^{\kappa,\wsl}.
\]
Explicitly, this functor sends a pair $(s_* \colon \Sh(S) \to \Sh(X_\et),f \colon T \to S)$ to the Henselisation $X_{(s \circ f)}$, which by \autoref{Lem Henselisation base change} agrees with $X_{(s)} \times_S T$.
\end{Par}

\begin{Par}
If $\mathscr E$ is an $\infty$-category with small limits, then \autoref{Cor hypersheaf of categories} defines a (large) condensed $\infty$-category $\COND_\kappa(\mathscr E)$ whose value on $S \in \ProFin_\kappa$ is $\Sh((\ProFin_\kappa)_{/S},\mathscr E)$. If $\mathscr E = \mathcal S$, then we may also describe $\COND_\kappa(\mathcal S)(S)$ as $\Cond_\kappa(\mathcal S)_{/S}$.
\end{Par}

\begin{Thm}\label{Thm proetale exodromy}
Let $X$ be a scheme, let $\kappa > \size(X)$ be a cardinal, and let $\mathscr E$ be an $\infty$-category with small limits. Then there is a canonical equivalence of $\infty$-categories
\[
\mathcal{Ex} \colon \Sh^\hyp(X_{\kappa\text{-}\proet},\mathscr E) \xrightarrow{\sim} \Fun\cts\! \big(\GAL_\kappa(X),\COND_\kappa(\mathscr E)\big).
\]
\end{Thm}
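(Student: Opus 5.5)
Both sides will be identified with the same full subcategory of $\PSh(\AffProEt_{/X}^{\kappa,\wsl},\mathscr E)$, cut out by one descent condition. For the left side, \autoref{Prop enough wsl kappa} shows that $\AffProEt_{/X}^{\kappa,\wsl}$ is a basis of $X_{\kappa\text{-}\proet}$. Restriction along the inclusion therefore gives an equivalence
\[
\Sh^\hyp(X_{\kappa\text{-}\proet},\mathscr E) \simeq \Sh^\hyp(\AffProEt_{/X}^{\kappa,\wsl},\mathscr E),
\]
with the induced pro-\'etale topology on the right; this is the hypercomplete comparison lemma, which I would take from \hyperref[Sec appendix A]{Appendix A}. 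By \autoref{Lem w-local topology}, that topology is the w-local topology. Hence the left side is $\Sh^\hyp(X_{\kappa\text{-}\wl},\mathscr E)$: presheaves on $\AffProEt_{/X}^{\kappa,\wsl}$ satisfying hyperdescent for w-local hypercovers.

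For the right side, I unwind the end defining $\Fun\cts$. By \autoref{Cor hypersheaf of categories}, $S\mapsto \Sh^\hyp((\ProFin_\kappa)_{/S},\mathscr E)$ is a full subfunctor of $S\mapsto \PSh((\ProFin_\kappa)_{/S},\mathscr E)$. A natural transformation $\GAL_\kappa(X)\to \PSh((\ProFin_\kappa)_{/-},\mathscr E)$ is, by the universal property of the coend in \ref{Par lax colimit}, the same as a functor
\[
\Big(\int^S \GAL_\kappa(X)\op(S)\times(\ProFin_\kappa)_{/S}\Big)\op \to \mathscr E.
\]
One checks that the variances match, using that $\Fun(A,\PSh(B,\mathscr E))\simeq \Fun((A\op\times B)\op,\mathscr E)$, and that this identification is compatible with the end/coend formalism of \cite{GepnerHaugsengNikolaus}. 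By \ref{Par lax colimit} and \autoref{Cor loccoh size}, the coend is $\AffProEt_{/X}^{\kappa,\wsl}$, via $(s,T\to S)\mapsto X_{(s)}\times_S T$. So $\Fun\cts(\GAL_\kappa(X),\COND_\kappa(\mathscr E))$ is the full subcategory of $\PSh(\AffProEt_{/X}^{\kappa,\wsl},\mathscr E)$ on presheaves $F$ satisfying the following condition: for each $W$ with $S=\pi_0(W)$, the restriction $T\mapsto F(W\times_S T)$ is a hypersheaf on $(\ProFin_\kappa)_{/S}$. Since $\COND_\kappa(\mathscr E)$ is a full condensed subcategory, the descent condition is pointwise and imposes no extra conditions on morphisms.

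It remains to compare the two descent conditions, and this is the main point. By \autoref{Lem hypercover}, w-local hypercovers of $W$ correspond exactly to hypercovers of $S=\pi_0(W)$ in $\ProFin$ (the $\kappa$-bounds are preserved by \autoref{Lem size Hens}), via $W\times_S(-)$. Hence:
\begin{itemize}
\item If $F$ is a w-local hypersheaf, each restriction $T\mapsto F(W\times_S T)$ is a hypersheaf on $(\ProFin_\kappa)_{/S}$.
\item Conversely, every w-local hypercover of $W$ has the form $W\times_S T_\bullet$. Its descent condition is exactly the hyperdescent of $F(W\times_S -)$ along $T_\bullet\to S$.
\end{itemize}
The delicate points are bookkeeping. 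One must check that the pretopology in \autoref{Lem hypercover} (finite jointly surjective families) generates the effective epimorphism topology on $\ProFin_\kappa$, and that hyperdescent may be tested on hypercovers built from such a pretopology; I would use the general criterion from Appendix A. One must also check that the end really computes the full subcategory described above, including its functoriality in morphisms not lying over identities; this is automatic since cartesian morphisms $V\to W$ are base changes $W\times_{\pi_0 W}\pi_0 V$ by \ref{Par cartesian}.

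The resulting equivalence $\mathcal{Ex}$ is canonical. Explicitly, it sends $\mathscr F$ to the transformation taking $s\in\GAL_\kappa(X)(S)$ to the sheaf $T\mapsto \mathscr F(X_{(s)}\times_S T)$.
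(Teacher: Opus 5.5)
Your proposal is correct and follows essentially the same route as the paper. You restrict to the basis $\AffProEt_{/X}^{\kappa,\wsl}$ with the w-local topology via \autoref{Prop enough wsl kappa}, \autoref{Lem w-local topology} and \autoref{Rmk hypersheaf basis}, rewrite presheaves on the coend of \ref{Par lax colimit} as an end of functor categories, and then match the two hyperdescent conditions using \autoref{Lem hypercover}. The bookkeeping points you flag (the $\kappa$-bounds via \autoref{Lem size Hens}, and testing hyperdescent on pretopology hypercovers) are left implicit in the paper's proof.
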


\begin{proof}
Let $X_{\kappa\text{-}\wl}$ be the w-local site of \autoref{Def w-local topology}. By \autoref{Prop enough wsl kappa}, it forms a basis for $X_{\kappa\text{-}\proet}$, and the topology on $X_{\kappa\text{-}\wl}$ agrees with the restriction of the $\kappa$-pro-\'etale topology by \autoref{Lem w-local topology}. Thus, the restriction functor
\[
\Sh^\hyp(X_{\kappa\text{-}\proet},\mathscr E) \to \Sh^\hyp(X_{\kappa\text{-}\wl},\mathscr E)
\]
is an equivalence by \autoref{Rmk hypersheaf basis}. By \ref{Par lax colimit}, we have
\[
\PSh(\AffProEt_{/X}^{\kappa,\wsl},\mathscr E) \simeq \PSh\!\Big(\int^S \GAL_\kappa(X)\op(S) \times (\ProFin_\kappa)_{/S},\mathscr E\Big).
\]
Since a coend is a type of colimit, the right hand side simplifies to
\[
\int_S \PSh\!\big(\GAL_\kappa(X)\op(S) \times (\ProFin_\kappa)_{/S},\mathscr E\big) \simeq \int_S \Fun\!\big(\GAL_\kappa(X)(S),\PSh((\ProFin_\kappa)_{/S},\mathscr E)\big),
\]
thus giving an equivalence
\[
\Psi \colon \PSh(\AffProEt_{/X}^{\kappa,\wsl},\mathscr E) \simeq \int_S \Fun\!\big(\GAL_\kappa(X)(S),\PSh((\ProFin_\kappa)_{/S},\mathscr E)\big).
\]
To finish the proof, we need to show that $\mathscr F \in \PSh(\AffProEt_{/X}^{\kappa,\wsl},\mathscr E)$ is a hypersheaf for the w-local topology if and only if $\Psi(\mathscr F)(S) \colon \GAL_\kappa(X)(S) \to \PSh((\ProFin_\kappa)_{/S},\mathscr E)$ lands in hypersheaves for all $S \in \ProFin_\kappa$. If $s_* \colon \Sh(S) \to \Sh(X_\et)$ is a locally coherent point, then $\Psi(\mathscr F)(S)(s_*) \colon (\ProFin_\kappa)_{/S}\op \to \mathscr E$ takes $(T \to S) \in \ProFin_{/S}$ to $\mathscr F(X_{(s)} \times_S T)$ (see \ref{Par lax colimit}). Thus, \autoref{Lem hypercover} shows that $\mathscr F$ is a hypersheaf for the w-local topology if and only if $\Psi(\mathscr F)(S)$ is a hypersheaf for every $S \in \ProFin_\kappa$.
\end{proof}

\begin{Rmk}\label{Rmk strong limit}
If $\kappa$ is a strong limit cardinal, the proof above can be simplified further. Indeed, since a w-strictly local scheme $W$ is w-projective (see \autoref{Rmk w-projective}) if and only if $\pi_0(W)$ is extremally disconnected, the equivalence of \autoref{Cor loccoh size} restricts to an equivalence
\[
\AffProEt_{/X}^{\kappa,\operatorname{wp}} \xrightarrow\sim \ExtrDisc_{/X}^{\kappa,\loccoh}.
\]
If $\kappa > \size(X)$ is a strong limit cardinal, then $\ProFin_\kappa$ is generated by extremally disconnected spaces and $\AffProEt_{/X}$ is generated by w-projective schemes, and we may think of a $\kappa$-condensed $\infty$-category as a finite product preserving functor $\ExtrDisc_\kappa\op \to \Cat_\infty$. The restriction of the pro-\'etale topology to $\AffProEt_{/X}^{\kappa,\operatorname{wp}}$ is given by the extensive topology, whose covers are given by finite disjoint unions (this is a special case of \autoref{Lem w-local topology}, but is also immediate from the definition of w-projective scheme). Thus, a presheaf $\mathscr F \colon \big(\AffProEt_{/X}^{\kappa,\operatorname{wp}}\big)\op \to \mathscr E$ is a sheaf if and only if it preserves finite products, and the corresponding condition on $\Psi(\mathscr F)$ is that $\Psi(\mathscr F)(S) \colon \GAL_\kappa(X)(S) \to \PSh((\ExtrDisc_\kappa)_{/S},\mathscr E)$ lands in finite product preserving functors for all $S \in \ExtrDisc_\kappa$. This proves the result since the topology on $(\ExtrDisc_\kappa)_{/S}$ is also the extensive topology. Thus, the use of the w-local topology is replaced by considering functors that preserve finite products.
\end{Rmk}

\begin{Rmk}\label{Rmk description Ex}
The proof above yields a concrete description of the functor $\mathcal{Ex}$: for an $\mathscr E$-valued $\kappa$-pro-\'etale sheaf $\mathscr F$ and any $S \in \ProFin_\kappa$ the associated functor
\[
\mathcal{Ex}(\mathscr F)(S) \colon \GAL_\kappa(X)(S) \to \COND_\kappa(\mathscr E)(S) = \Sh^{\hyp}((\ProFin_\kappa)_{/S},\mathscr E)
\]
is the transpose of the functor
\[
\GAL_\kappa(X)(S) \times ((\ProFin_\kappa)_{/S})\op \to (X^{\wsl})\op \xrightarrow{\mathscr F} \mathscr E.
\]
Explicitly a geometric morphism $s_* \colon \Sh(S) \to X$ is sent to the sheaf on $(\ProFin_\kappa)_{/S}$ given by the formula
\[
\label{equ:Description_of_Ex_Functor}
(f \colon T \to S) \mapsto \mathscr F(X_{(s \circ f)}) \simeq \mathscr F(X_{(s)} \times_S T).
\]
\end{Rmk}

\begin{Ex}\label{Ex D(Z)}
Taking $\mathscr E$ to be the derived $\infty$-category $D(\mathbf Z)$, we get an equivalence
\[
D(X_{\kappa\text{-}\proet},\mathbf Z) \simeq \Fun\cts\!\big(\GAL_\kappa(X),D(\COND_\kappa(\Ab))\big).
\]
Indeed, if $\mathscr C$ is a $1$-site, then $\Sh^\hyp(\mathscr C,D(\mathbf Z))$ is canonically identified with the derived category $D(\mathscr C,\mathbf Z)$ of abelian sheaves on $\mathscr C$ \cite[Thm.~2.1.2.2]{LurieSAG}. Applying this to $X_{\kappa\text{-}\proet}$ and $(\ProFin_\kappa)_{/S}$ for all $S \in \ProFin_\kappa$, the result follows from \autoref{Thm proetale exodromy}.
\end{Ex}

\subsection{The \texorpdfstring{\'etale}{étale} exodromy theorem}\label{Sec etale exodromy}
In this section, we show that the pro-\'etale exodromy correspondence of \autoref{Thm proetale exodromy} restricts to a statement for \'etale sheaves. The key ingredient is the following recognition principle for \'etale sheaves inside pro-\'etale sheaves (see also \cite[Lem.~5.1.2]{BhattScholze} and \cite[Prop.~2.38]{HHLMMW}).

\begin{Prop}\label{Prop etale to proetale}
Let $X$ be a scheme, let $\kappa > \size(X)$ be a cardinal, and let $n \in \mathbf N$. Then the canonical functor
\[
\nu^* \colon \Sh_{\leq n}(X_\et,\mathcal S) \to \Sh_{\leq n}(X_{\kappa\text{-}\proet},\mathcal S)
\]
is fully faithful. For $\mathscr F \in \Sh_{\leq n}(X_{\kappa\text{-}\proet},\mathcal S)$, the following are equivalent:
\begin{enumerate}
\item $\mathscr F$ is in the essential image of $\nu^*$.
\item For any cofiltered diagram $Y \colon \mathcal I \to \AffEt_{/X}$ whose limit is $\kappa$-small, the canonical map
\[
\colim_{\substack{\longrightarrow \\ i \in \mathcal I}} \mathscr F(Y_i) \to \mathscr F\Big( \lim_{\substack{\longleftarrow \\ i \in \mathcal I}} Y_i \Big)
\]
is an equivalence.
\item For any cofiltered diagram $Y \colon \mathcal I \to \AffProEt^\kappa_{/X}$ whose limit is $\kappa$-small, the canonical map
\[
\colim_{\substack{\longrightarrow \\ i \in \mathcal I}} \mathscr F(Y_i) \to \mathscr F\Big( \lim_{\substack{\longleftarrow \\ i \in \mathcal I}} Y_i \Big)
\]
is an equivalence.
\end{enumerate}
\end{Prop}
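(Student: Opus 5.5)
We follow the strategy of \cite[Lem.~5.1.2]{BhattScholze}, working on the basis $\AffProEt^\kappa_{/X}$, which suffices by the Corollary after \autoref{Prop enough wsl kappa}. The key computation is that for $\mathscr G \in \Sh_{\leq n}(X_\et,\mathcal S)$ and $Y = \lim_i Y_i \in \AffProEt^\kappa_{/X}$ with $Y_i \in \AffEt_{/X}$, we have
\[
(\nu^*\mathscr G)(Y) \simeq \colim_i \mathscr G(Y_i).
\]
To prove this, let $\mathscr G'$ be the presheaf on $\AffProEt^\kappa_{/X}$ given by this formula. It is well defined by \autoref{Lem affproet}, since $\AffProEt^\kappa_{/X} \simeq \Pro^\kappa(\AffEt_{/X})$ and the formula is the left Kan extension along $\AffEt_{/X} \hookrightarrow \AffProEt^\kappa_{/X}$. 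We then check that $\mathscr G'$ is a pro-\'etale sheaf. Finite disjoint unions are clear. For a surjection $V \to Y$ in $\AffProEt^\kappa_{/X}$, we write it as a cofiltered limit of \'etale surjections $V_i \to Y_i$ of affines, using \cite[Cor.~6.1.14]{KashiwaraSchapira} and spreading out surjectivity as in the proof of \autoref{Lem affproet over}. Then the \v{C}ech nerve of $V \to Y$ is the limit of the \v{C}ech nerves of $V_i \to Y_i$, and the descent condition follows from \'etale descent for $\mathscr G$. The crucial input is that filtered colimits commute with the totalisations of cosimplicial $n$-truncated spaces; this holds because such totalisations agree with finite limits over $\Delta_{\leq n+1}$. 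This is where the truncation hypothesis enters, and it also rules out hypercover issues.

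Since $\mathscr G'$ restricts to $\mathscr G$ on $\AffEt_{/X}$ and is the left Kan extension, it is $\nu^*\mathscr G$ as a pro-\'etale sheaf; it is $n$-truncated since filtered colimits preserve $n$-truncatedness. Full faithfulness of $\nu^*$ then follows because $\nu_*\nu^*\mathscr G$ is the restriction of $\mathscr G'$ to $\Et_{/X}$, which is $\mathscr G$. Note that étale sheaves are determined by their values on $\AffEt_{/X}$.

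For the equivalences, (c)$\Rightarrow$(b) is trivial. (a)$\Rightarrow$(c) follows from the formula: given a cofiltered diagram $Y_i \in \AffProEt^\kappa_{/X}$, choose presentations of each $Y_i$ as limits in $\AffEt_{/X}$, reindex over the resulting cofiltered category, and commute the iterated colimits, which is formal in $\Pro(\AffEt_{/X})$. For (b)$\Rightarrow$(a), set $\mathscr G = \nu_*\mathscr F$, which is an $n$-truncated étale sheaf. The counit $\nu^*\nu_*\mathscr F \to \mathscr F$ evaluated on $Y = \lim Y_i$ is $\colim \mathscr F(Y_i) \to \mathscr F(Y)$, an equivalence by (b). So the counit is an equivalence on a basis, hence an equivalence of sheaves.

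I expect the main obstacle to be the sheaf verification for $\mathscr G'$. It requires spreading out pro-\'etale surjections between affine pro-\'etale objects to levelwise \'etale surjections while staying inside the $\kappa$-small world, and it requires handling coverings with infinitely many members. Such coverings reduce to finite ones by quasi-compactness of affines, combined with the finite disjoint union case. The $\kappa$-smallness is preserved since all the constructions are $\kappa$-small limits of finitely presented objects over $X$, with size controlled as in \autoref{Rmk colimit kappa small}.
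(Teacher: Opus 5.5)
Your proposal is correct and has the same structure as the paper's proof. Both rest on the fact that $\nu^*$ of an $n$-truncated étale sheaf is the unsheafified left Kan extension along $\AffEt_{/X} \to \AffProEt^\kappa_{/X}$; full faithfulness and (a)$\Leftrightarrow$(b)$\Leftrightarrow$(c) then follow formally from the counit and by commuting colimits. The only difference is that the paper cites \cite[Prop.~2.38]{HHLMMW} for this key input, whereas you reprove it along the lines of \cite[Lem.~5.1.2]{BhattScholze}. One small correction: to make $V \to Y$ a cofiltered limit of étale surjections you need the standard limit argument that a finitely presented map which becomes surjective after base change to the limit is already surjective at some finite stage. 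The proof of \autoref{Lem affproet over} does not supply this, since it only spreads out the map itself.
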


\begin{proof}
The fully faithfulness statement is given in \cite[Prop.~2.38]{HHLMMW} (see also \cite[Prop.~7.3.1.6]{WolfThesis} and \cite[Prop.~A.5.33]{MairThesis}), and the proof shows that $\nu^*$ is given by left Kan extension along $\AffEt_{/X} \to \AffProEt^\kappa_{/X}$ (without the need to sheafify). This proves the equivalence (a) $\Leftrightarrow$ (b). The implication (b) $\Rightarrow$ (c) holds since $\AffProEt_{/X} = \Pro(\AffEt_{/X})$ by \autoref{Lem affproet} and colimits commute with colimits. The converse is clear.
\end{proof}

\begin{Lemma}\label{Lem diagram of wsl cover}
Let $X$ be a scheme, let $\kappa > \size(X)$ be a cardinal, let $\mathcal I$ be a small category with a final object, and let $Y \colon \mathcal I \to \AffProEt^\kappa_{/X}$ be a diagram. Then there exists a diagram $W \colon \mathcal I \to \AffProEt^\kappa_{/X}$ of w-strictly local schemes with w-local maps and a map $W \to Y$ of diagrams that is levelwise surjective. Moreover, if $\mathcal I$ is cofiltered and $\lim_{i \in \mathcal I} Y_i$ is $\kappa$-small, then we may choose $W$ such that $\lim_{i \in \mathcal I} W_i$ is $\kappa$-small.
\end{Lemma}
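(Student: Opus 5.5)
Let $i_0 \in \mathcal I$ denote the final object. The plan is to build $W$ from a single w-strictly local cover of the terminal vertex and then pull it back along the structure maps. By \autoref{Prop enough wsl kappa} (applied to $Y_{i_0}$) and \autoref{Lem w-local topology}, there is a finite family of $\kappa$-small w-strictly local schemes covering $Y_{i_0}$ such that every point is hit. I would first arrange a w-strictly local cover of each $Y_i$ compatible with the diagram. The natural idea is to take, for each $i$, a w-strictly local cover $W_i' \twoheadrightarrow Y_i$ (a finite disjoint union, hence a single object of $\AffProEt^{\kappa,\wsl}_{/X}$). However, transition maps $W_i'\to W_j'$ need not exist. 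So instead I would replace the cover of $Y_i$ by one that is functorial in $i$. Concretely, I would set $W_i$ to be a w-strictly local cover of the fibre product $Y_i \times_{Y_{i_0}} W_{i_0}$ built functorially, using the Henselisation formalism of \S2.3. By \autoref{Cor loccoh} and \autoref{Lem Henselisation base change}, w-strictly local schemes over $X$ correspond to locally coherent points, and the canonical choice is the following. Let $Y_i^{\cons}$-type data give a profinite set of all geometric points of $Y_i$ with separable closures of bounded size. More simply, one can take $W_i = $ the w-strictly local scheme $(Y_i)_{(s_i)}$ attached to the locally coherent geometric morphism $s_{i*}\colon \Sh(S_i)\to\Sh(Y_{i,\et})$ coming from the strictly profinite scheme $S_i$, which is the absolute-flat/separable-closure construction of \cite[\S2.2]{BhattScholze} applied to $Y_i$ ($A \mapsto A^Z$, then $\overline{(-)}$).

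The key functoriality step is this. The Bhatt--Scholze constructions $A\mapsto A^Z$ and the strict Henselisation are functorial in $A$. The step of choosing separable closures of residue fields is not functorial, however. To fix this, I would do the non-functorial choice only once, at $i_0$. Take $Z_{i_0}$ to be the strictly profinite scheme of closed points of a w-strictly local cover of $Y_{i_0}$. For each $i$, form the strictly profinite scheme $Z_i$ by applying the functorial construction to $(Y_i)^Z \otimes_{Y_{i_0}^Z} Z_{i_0}$. This is absolutely flat over a scheme with separably closed residue fields. One then takes the separable closure relative to $Z_{i_0}$, which is functorial in $i$ because $\mathcal I \to \mathcal I_{/i_0}$ is an equivalence. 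Finally, set $W_i$ to be the Henselisation of $Y_i$ along $Z_i\to Y_i$, i.e.\ $\Hens_{A_i}(\mathscr O(Z_i))$. Transition maps come from functoriality of $\Hens$. Maps $Z_i\to Z_j$ preserve closed points, and the Henselisations are w-local by construction, so the transition maps $W_i\to W_j$ are w-local. Surjectivity of $W_i\to Y_i$ holds because $Z_i\to Y_i$ is surjective, which I would deduce from the surjectivity of $Z_{i_0}\to Y_{i_0}$ and base change. The $\kappa$-smallness follows as in the proof of \autoref{Prop enough wsl kappa} via \autoref{Rmk colimit kappa small}.

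For the final claim, suppose $\mathcal I$ is cofiltered and $Y_\infty=\lim Y_i$ is $\kappa$-small. Then $\lim W_i$ is a closed-ish pro-\'etale object over $Y_\infty$. I would bound its size by the same colimit-of-rings count. Its ring is the colimit of the rings of the $W_i$. To keep the index set small, I would first replace $\mathcal I$ by a coinitial subdiagram of size $<\kappa$, which is possible since $Y_\infty$ is $\kappa$-small; this step is shaky for singular $\kappa$. Alternatively, I would identify $\lim W_i$ with the same construction applied to $Y_\infty$ (using that $\Hens$ and the absolutely flat constructions commute with filtered colimits of rings), and bound it by $\size(Y_\infty)$ via \autoref{Lem size Hens}. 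I expect the main obstacle to be making the separable-closure choice genuinely functorial in $i$ while keeping w-locality of the transition maps.
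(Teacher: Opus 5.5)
Your strategy is exactly the paper's: make the one non-canonical choice at the final object $i_0$, base change it along $Y_i \to Y_{i_0}$, and Henselise. As written, though, the step you flag as the main obstacle is not closed. Saying the `separable closure relative to $Z_{i_0}$' is functorial in $i$ `because $\mathcal I \to \mathcal I_{/i_0}$ is an equivalence' does not make any choice of separable closures functorial.

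The missing observation is that no closure step, and no $(-)^Z$ detour, is needed. Set $Z_i := Z_{i_0} \times_{Y_{i_0}} Y_i$. This is affine and weakly \'etale over the strictly profinite scheme $Z_{i_0} = V^\cl$, since $Y_i \to Y_{i_0}$ is pro-\'etale by \autoref{Lem affproet over}. Hence $Z_i$ is already strictly profinite by \autoref{Lem strictly profinite weakly etale}. It is functorial in $i$ on the nose and surjects onto $Y_i$ by base change.

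The paper packages this as follows. Reduce to $Y_{i_0} = X$ affine, and put $S = \pi_0(V)$. The functor $s^* \colon Y \mapsto V^\cl \times_X Y$ lands in strictly profinite schemes, i.e.\ in $\ProFin_{/S}$. It has the left adjoint $s_! \colon (T \to S) \mapsto V \times_S T$, which is the Henselisation by \autoref{Lem Henselisation base change}. One takes $W = s_!s^*Y$ with the counit $W \to Y$.

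Your $\Hens_{Y_i}(Z_i)$ is isomorphic to $V \times_S \pi_0(Z_i)$, by \autoref{Prop equivalence S} and \autoref{Rmk wsl coherent}: both are w-strictly local in $\AffProEt_{/X}$ and induce the same geometric morphism $\Sh(\pi_0(Z_i)) \to \Sh(X_\et)$. In this form the transition maps are w-local by \autoref{Lem acyc profinite}, which is the justification your `by construction' lacks.

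For the final claim, drop your first route. $\kappa$-smallness of $\lim Y_i$ gives no bound on $\mathcal I$ and no small coinitial subdiagram. Your second route is the right one, and it needs nothing about $\Hens$ or the Bhatt--Scholze constructions commuting with filtered colimits of rings. Here $s^*$ is a fibre product and $s_!$ is $T \mapsto V \times_S T$, so both commute with cofiltered limits. This gives $\lim W_i \cong V \times_S \pi_0(V^\cl \times_X \lim Y_i)$, which is $\kappa$-small by \autoref{Rmk colimit kappa small}, \autoref{Lem pi0 kappa}, and \autoref{Lem size Hens}.
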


\begin{proof}
Let $0 \in \mathcal I$ be a final object. All $Y_i$ have a canonical map to $Y_0$, and \autoref{Lem affproet over} gives $(\AffProEt^\kappa_{/X})_{/Y_0} \simeq \AffProEt^\kappa_{/Y_0}$, so we may assume without loss of generality that $X$ is affine and $Y_0 = X$. By \autoref{Prop enough wsl kappa}, we may choose $V \in \AffProEt_{/X}^{\kappa,\wsl}$ and a map $V \to X$ such that every $x \in X$ is the image of some closed point $v \in V$. Set $S = \pi_0(V)$, which defines a geometric morphism $s_* \colon \Sh(S) \simeq \Sh(V^\cl_\et) \to \Sh(X_\et)$ as well as a geometric morphism $s_* \colon \Cond_{/S} \simeq \Sh(V^\cl_\proet) \to \Sh(X_\proet)$. In particular, we get a pullback
\[
s^* \colon \AffProEt_{/X} \to \AffProEt_{/V^\cl} \simeq \ProFin_{/S}.
\]
This has a left adjoint $s_!$ taking $(f \colon T \to S)$ to the Henselisation $X_{(s \circ f)}$, which agrees with $V \times_S T$ by \autoref{Lem Henselisation base change}. By \autoref{Lem acyc profinite}, we see that $s_!$ sends arbitrary maps of profinite sets to w-local maps of strictly w-local schemes. Let $W \to Y$ be given by the levelwise counit $s_!s^*Y_i \to Y_i$. Since $V^\cl \to X$ is surjective, the same goes for $V^\cl \times_X Y_i \to Y_i$. But $V^\cl \times_X Y_i$ represents the sheaf $s^*Y_i$, so we see that the Henselisation $s_!s^*Y_i \to Y_i$ is surjective (in fact, the map $(s_!s^*Y_i)^\cl = s^*Y_i \to Y_i$ is already surjective). By \autoref{Rmk colimit kappa small}, the fibre product $V^\cl \times_X Y_i$ is a $\kappa$-small scheme, so $\pi_0(V^\cl \times_X Y_i) = s^*Y_i$ is in $\ProFin_\kappa$ by \autoref{Lem pi0 kappa}. Thus, the Henselisation $W_i$ is $\kappa$-small by \autoref{Lem size Hens}. This proves the first statement. For the second, note that $s^*$ preserves cofiltered limits, and $s_!$ does as well since it is given by $T \mapsto V \times_S T$. Thus, $\lim_{i \in \mathcal I} W_i \simeq s_!s^*(\lim_{i \in \mathcal I} Y_i)$ is $\kappa$-small if $\lim_{i \in \mathcal I} Y_i$ is, by the same argument.
\end{proof}

\begin{Cor}\label{Cor diagram of wsl hypercover}
Let $X$ be a scheme, let $\kappa > \size(X)$ be a cardinal, let $\mathcal I$ be a small category with a final object, and let $Y \colon \mathcal I \to \AffProEt^\kappa_{/X}$ be a diagram. Then there exists a simplicial diagram $W \colon \Delta\op \to \Fun(\mathcal I,\AffProEt^\kappa_{/X})$ with an augmentation $W \to Y$ such that the following hold:
\begin{enumerate}
\item For every $n \in \Delta\op$, the diagram $W_{n,\bullet} \in \Fun(\mathcal I,\AffProEt^\kappa_{/X})$ is a diagram of w-strictly local schemes with w-local maps;
\item For every $i \in \mathcal I$, the augmented simplicial scheme $W_{\bullet,i} \to Y_i$ is a pro-\'etale hypercovering.
\end{enumerate}
If $\mathcal I$ is cofiltered and $\lim_{i \in \mathcal I} Y_i$ is $\kappa$-small, we may choose $W$ such that each $\lim_{i \in \mathcal I} W_{n,i}$ is $\kappa$-small.
\end{Cor}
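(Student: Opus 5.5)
The plan is to build the hypercovering by the standard inductive coskeletal construction, carried out in the functor category $\Fun(\mathcal I,\AffProEt^\kappa_{/X})$. At each stage, the covering step is supplied by \autoref{Lem diagram of wsl cover}.

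We construct the truncations $W_{\leq n}$ by induction on $n$. For $n=0$, apply \autoref{Lem diagram of wsl cover} to $Y$. This produces a diagram $W_0$ of w-strictly local schemes with w-local maps, together with a levelwise surjection $W_0 \to Y$. Since each $W_{0,i}$ and $Y_i$ lie in $\AffProEt_{/X}$, the map $W_{0,i} \to Y_i$ is weakly étale; being surjective and affine, it is an fpqc (pro-étale) cover.

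For the inductive step, suppose the augmented $n$-truncated simplicial object $W_{\leq n} \to Y$ has been constructed in $\Fun(\mathcal I,\AffProEt^\kappa_{/X})$. Form the matching object
\[
M_{n+1} = (\operatorname{cosk}_n W_{\leq n})_{n+1}.
\]
This is computed levelwise in $i$ as a finite limit over $Y_i$ of the $W_{k,i}$ with $k \leq n$. It lies in $\Fun(\mathcal I,\AffProEt_{/X})$, because $\AffProEt_{/X} = \Pro(\AffEt_{/X})$ is closed under finite limits over $X$: affine étale schemes over affine étale bases have affine fibre products. It is also $\kappa$-small, by \autoref{Rmk colimit kappa small}, since finite limits of $\kappa$-small affine schemes are $\kappa$-small.

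Now apply \autoref{Lem diagram of wsl cover} to $M_{n+1}$; this is allowed because $\mathcal I$ still has a final object. We obtain a diagram $V$ of w-strictly local schemes with w-local maps and a levelwise surjection $V \to M_{n+1}$. Set $W_{n+1} = V \amalg L_{n+1}$, where $L_{n+1}$ is the latching part (the degenerate simplices), in the usual way that splits off degeneracies. Alternatively, one can use Stacks \href{https://stacks.math.columbia.edu/tag/01G5}{01G5}-style hypercoverings, which only require the map to $(\operatorname{cosk}_n)_{n+1}$ to be a covering. The degenerate pieces $W_{k}$ with $k \leq n$ are already w-strictly local with w-local transition maps. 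Finite disjoint unions preserve both properties, since a disjoint union of w-strictly local schemes is w-strictly local and w-local maps are detected componentwise. This gives (a). Condition (b) holds by construction, since each map $W_{n+1,i} \to M_{n+1,i}$ is a surjective weakly étale map of affine schemes, hence a pro-étale covering.

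For the final statement, assume $\mathcal I$ is cofiltered and $\lim_i Y_i$ is $\kappa$-small. We keep track inductively of the fact that each $\lim_i W_{k,i}$ is $\kappa$-small. Limits commute with limits, so
\[
\lim_i M_{n+1,i} \simeq (\operatorname{cosk}_n \lim_i W_{\leq n,i})_{n+1}
\]
is a finite limit of $\kappa$-small affine schemes, hence $\kappa$-small. The second part of \autoref{Lem diagram of wsl cover} then lets us choose $V$ with $\lim_i V_i$ $\kappa$-small. Finite coproducts preserve this property.

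The main obstacle I expect is bookkeeping rather than a conceptual difficulty. One point to check is that the simplicial structure maps involving degeneracies are w-local between w-strictly local objects. Another is that the matching objects indeed stay inside $\AffProEt^\kappa_{/X}$, so that \autoref{Lem diagram of wsl cover} applies. If the splitting into degenerate and nondegenerate parts is awkward, I would instead use the Stacks convention, under which only the face-direction covering condition is needed.
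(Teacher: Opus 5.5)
Your proposal is correct and follows the paper's proof: the paper reduces to $X$ affine by replacing $X$ with $Y_0$ (so that $\AffProEt_{/X}$ has finite limits), then runs the standard split-hypercover construction, with \autoref{Lem diagram of wsl cover} supplying the covers and closure of w-strictly local schemes (and w-local maps) under finite coproducts handling the degenerate summands. Your claim that $\AffProEt_{/X}$ is closed under finite limits over an arbitrary $X$ is false for non-affine $X$, but it is harmless because your matching objects are limits over the affine $Y_i$, which is exactly what the paper's reduction arranges; also, condition (a) only concerns maps in the $\mathcal I$-direction, so you do not need to check w-locality of degeneracies.
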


\begin{proof}
As in the proof of \autoref{Lem diagram of wsl cover}, replace $X$ by $Y_0$ to assume $X$ is affine. In particular, $\AffProEt_{/X}$ has finite limits in this case. The result follows from \autoref{Lem diagram of wsl cover} and the standard hypercover arguments (see e.g.\ \cite[Tag \href{https://stacks.math.columbia.edu/tag/094J}{094J}]{Stacks} or \cite[Exp.~V$^{\text{bis}}$, \S5.1]{SGA4II}), since a finite coproduct of w-strictly local schemes is w-strictly local.
\end{proof}

\begin{Def}\label{Def underline E}
Let $\mathscr E$ be a compactly assembled presentable $\infty$-category \cite[\S21.1.2]{LurieSAG}. For any uncountable cardinal $\kappa$, define the $\kappa$-condensed $\infty$-category $\underline{\mathscr E}$ via the assertion
\begin{align*}
\underline{\mathscr E} \colon \ProFin_\kappa\op &\to \Cat_{\infty} \\
S &\mapsto \Sh(S,\mathscr E).
\end{align*}
These are indeed $\kappa$-condensed $\infty$-categories by \cite[Cor.~2.8(2) and Ex.~1.28]{HaineDescent} (for $\mathscr E = \Set$, see also \autoref{Lem Sh(S) sheaf}). For simplicity, we denote $\underline{\mathcal S_{\leq n}}$ by $\underline{\mathcal S}_{\leq n}$, which takes $S \in \ProFin_\kappa$ to the $(n+1)$-category $\Sh(S,\mathcal S_{\leq n}) \simeq \Sh_{\leq n}(S,\mathcal S)$.
\end{Def}

\begin{Par}
For $S \in \ProFin_\kappa$, there is a fully faithful functor $\Sh(S,\mathcal S) \to \Cond_\kappa(\mathcal{S})_{/S}$. This functor is natural in $S$, giving a functor $\underline{\mathcal S} \to \COND_\kappa(\mathcal S)$ of condensed categories. It follows that for any $\kappa$-condensed $\infty$-category~$\mathscr C$, the $\infty$-category $\Fun\cts(\mathscr C,\underline{\mathcal S})$ is canonically a full subcategory of $\Fun\cts(\mathscr C,\COND_\kappa(\mathcal S))$.
\end{Par}

\begin{Thm}\label{Thm etale exodromy}
Let $X$ be a scheme, and let $\kappa > \size(X)$ be a cardinal. For any $n \in \mathbf N$, the equivalence $\mathcal{Ex}$ restricts to an equivalence
\[
\mathcal{Ex}^{\et}_{\leq n} \colon \Sh_{\leq n}(X_\et,\mathcal S) \xrightarrow \sim \Fun\cts(\GAL_\kappa(X),\underline{\mathcal{S}}_{\leq n}).
\]
If $\mathscr E$ is a compactly assembled presentable $\infty$-category, this induces an equivalence
\[
\mathcal{Ex}^{\et} \colon \Sh^{\post}(X_\et,\mathscr E) \xrightarrow \sim \Fun\cts(\GAL_\kappa(X),\underline{\mathcal{E}}).
\]
\normalfont Here, $\Sh^{\post}(X_\et)$ denotes the \emph{Postnikov completion} of the $\infty$-topos $\Sh(X_\et,\mathcal S)$, as defined in \cite[Def.~A.7.2.5]{LurieSAG}, and $\Sh^{\post}(X_\et,\mathscr E) \simeq \Sh^{\post}(X_\et) \otimes \mathscr E$; see \autoref{Def sheaf hypersheaf} and \autoref{Rmk presentable tensor}. 
\end{Thm}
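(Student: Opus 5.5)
The plan is to identify both sides of $\mathcal{Ex}^\et_{\leq n}$ as full subcategories of the two sides of \autoref{Thm proetale exodromy} with $\mathscr E = \mathcal S$. Then we check that $\mathcal{Ex}$ matches the two defining conditions. On the left, $n$-truncated objects are hypercomplete, so $\Sh_{\leq n}(X_{\kappa\text{-}\proet},\mathcal S)$ sits inside $\Sh^\hyp(X_{\kappa\text{-}\proet},\mathcal S)$. By \autoref{Prop etale to proetale}, $\nu^*$ identifies $\Sh_{\leq n}(X_\et,\mathcal S)$ with those truncated $\mathscr F$ satisfying condition (c), i.e.\ sending $\kappa$-small cofiltered limits in $\AffProEt^\kappa_{/X}$ to colimits. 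On the right, for each $S$, the category $\Sh_{\leq n}(S,\mathcal S)$ sits inside $\Sh_{\leq n}((\ProFin_\kappa)_{/S},\mathcal S)$. The analogous recognition principle (the case $X = S\times\Spec k$ of \autoref{Prop etale to proetale}, via \ref{Par profinite scheme} and \autoref{Lem strictly profinite}) identifies it with the truncated sheaves $G$ such that $G(\lim T_i) = \colim G(T_i)$ for cofiltered systems of profinite sets over $S$.

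Using the formula of \autoref{Rmk description Ex}, $\mathcal{Ex}(\mathscr F)(S)(s_*)$ is $T \mapsto \mathscr F(X_{(s)}\times_S T)$. Truncatedness clearly matches on both sides, since it is checked objectwise and the w-strictly local schemes form a basis. First, the easy direction: suppose $\mathscr F$ satisfies (c). Then $X_{(s)}\times_S(-)$ preserves cofiltered limits (\autoref{Lem acyc profinite}, limits computed in schemes), so each $\mathcal{Ex}(\mathscr F)(S)(s_*)$ takes cofiltered limits to colimits. Hence $\mathcal{Ex}(\mathscr F)$ lands in $\Fun\cts(\GAL_\kappa(X),\underline{\mathcal S}_{\leq n})$.

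Conversely, suppose $\mathcal{Ex}(\mathscr F)$ lands in $\underline{\mathcal S}_{\leq n}$. Then $\mathscr F$ takes w-local cofiltered limits of w-strictly local schemes, i.e.\ systems $X_{(s)}\times_S T_i$, to colimits. We must upgrade this to arbitrary $\kappa$-small cofiltered diagrams $Y\colon\mathcal I\to\AffProEt^\kappa_{/X}$. After adjoining a final object, which does not affect the limit, we apply \autoref{Cor diagram of wsl hypercover}. This gives a simplicial diagram $W_{\bullet,\bullet}$ with each $W_{m,\bullet}$ a w-local diagram of w-strictly local schemes, levelwise hypercovering $Y$, and with $\lim_i W_{m,i}$ $\kappa$-small. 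By \autoref{Cor equiv wsl}, a w-local cofiltered diagram of w-strictly local schemes is of the form $W_{m,0}\times_{S}T_i$ for $S=\pi_0(W_{m,0})$. So $\colim_i\mathscr F(W_{m,i}) \simeq \mathscr F(\lim_i W_{m,i})$ for each $m$. Since cofiltered limits of hypercovers are hypercovers, $\lim_i W_{\bullet,i}\to\lim_i Y_i$ is a hypercover, and hyperdescent gives
\[
\mathscr F(\lim_i Y_i)\simeq \lim_{\Delta}\colim_i \mathscr F(W_{\bullet,i}), \qquad \colim_i\mathscr F(Y_i)\simeq\colim_i\lim_\Delta \mathscr F(W_{\bullet,i}).
\]
The main obstacle is commuting the filtered colimit with the totalisation. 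Here $n$-truncatedness is what saves us: for $n$-truncated spaces, $\lim_\Delta$ agrees with the finite limit $\lim_{\Delta_{\leq n+1}}$, and filtered colimits commute with finite limits in $\mathcal S$. This proves the first equivalence; I expect the careful bookkeeping of this totalisation argument (and checking that the limit hypercover is indeed a pro-étale hypercover) to be the delicate point.

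For general compactly assembled $\mathscr E$, we pass to the limit over $n$. The category $\Sh^\post(X_\et)$ is $\lim_n\Sh_{\leq n}(X_\et,\mathcal S)$ along truncations, and the right side is compatibly $\lim_n\Fun\cts(\GAL_\kappa(X),\underline{\mathcal S}_{\leq n})$, since $\Sh(S,\mathcal S)$ is Postnikov complete ($S$ has finite homotopy dimension) and $\Fun\cts$ commutes with limits in the target. This gives the $\mathcal S$-valued version. Finally, we tensor with $\mathscr E$ using \autoref{Rmk presentable tensor}. We need that $\Fun\cts(\GAL_\kappa(X),-)$ and $\Sh(S,-)$ commute with $-\otimes\mathscr E$, which holds because compactly assembled $\mathscr E$ is dualizable, so $-\otimes\mathscr E$ preserves limits in $\PrL$.
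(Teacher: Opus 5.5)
Your proposal is correct and follows essentially the same route as the paper. You identify both sides inside the pro-\'etale exodromy equivalence, verify condition (c) of \autoref{Prop etale to proetale} using the w-local hypercover of \autoref{Cor diagram of wsl hypercover} with the filtered colimit commuted past a totalisation truncated via $n$-truncatedness, and then pass to the limit over $n$ and tensor with the dualisable $\mathscr E$. The only differences are cosmetic: the paper proves the easy direction from $s^*_\proet\nu^* \simeq s^*_\et$ rather than from condition (c), and phrases the w-local step as the strictly profinite case applied to $W^\cl_{m,i}$ rather than writing the diagram as $W_{m,0}\times_S T_i$.
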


\begin{proof}
If $\mathscr F \in \Sh_{\leq n}(X_\et,\mathcal S)$ and $s_* \colon \Sh(S) \to \Sh(X_\et)$ is a locally coherent geometric morphism, then $s^*_\proet \mathscr F = s^*_\et \mathscr F$ lands in $\Sh_{\leq n}(S,\mathcal S) \subseteq \Cond_\kappa(\mathcal S)_{/S}$. By \autoref{Rmk description Ex}, this means that $\mathcal{Ex}(\mathscr F)$ is in the full subcategory $\Fun\cts(\GAL_\kappa(X),\underline{\mathcal S}_{\leq n})$.

We first note that the converse is true if $X$ is strictly profinite: if $S = \lvert X \rvert$, then $\Sh(X_{\kappa\text{-}\proet},\mathcal S)$ is canonically identified with $\Sh((\ProFin_\kappa)_{/S},\mathcal S) \simeq \Cond_\kappa(\mathcal S)_{/S}$, and $\Sh(X_\et,\mathcal S)$ with $\Sh(S,\mathcal S)$, and likewise for $\Sh_{\leq n}$. Applying $\mathcal{Ex}(\mathscr F)(S) \colon \GAL_\kappa(X)(S) \to \Cond_\kappa(\mathcal S)_{/S}$ to this equivalence $s_* \colon \Sh(S) \xrightarrow \sim \Sh(X_\et)$ shows that $\mathcal{Ex}(\mathscr F) \in \Fun\cts(\GAL_\kappa(X),\underline{\mathcal S}_{\leq n})$ implies $\mathscr F \in \Sh_{\leq n}(X_\et,\mathcal S)$.

Now let $X$ be an arbitrary scheme, and let $\mathscr F \in \Sh(X_{\kappa\text{-}\proet},\mathcal S)$ be a sheaf such that $\mathcal{Ex}(\mathscr F)$ lies in $\Fun\cts(\GAL_\kappa(X),\underline{\mathcal S}_{\leq n})$. Note that this implies that $\mathscr F$ is $n$-truncated. We will check condition (c) of \autoref{Prop etale to proetale} to prove that $\mathscr F$ is in the essential image of $\Sh_{\leq n}(X_\et,\mathcal S) \to \Sh_{\leq n}(X_{\kappa\text{-}\proet},\mathcal S)$. Let $Y \colon \mathcal I \to \AffProEt^\kappa_{/X}$ be a cofiltered diagram whose limit is $\kappa$-small. For any $i_0 \in \mathcal I$, the forgetful functor $\mathcal I_{/i_0} \to \mathcal I$ is coinitial, so we may assume $\mathcal I$ has a terminal object $0$. Choose a hypercover $W \to Y$ as in \autoref{Cor diagram of wsl hypercover}, and write $W_\infty \to Y_\infty$ for the limit over $i \in \mathcal I$. Since truncated sheaves are hypercomplete \cite[Lem.~6.5.2.9]{LurieHTT}, the diagrams
\[
\mathscr F(Y_i) \to \mathscr F(W_{0,i})  \mathrel{\substack{\rightarrow \\[-0.6ex] \rightarrow}} \mathscr F(W_{1,i}) \mathrel{\substack{\rightarrow\\[-0.6ex] \rightarrow \\[-0.6ex] \rightarrow}} \cdots
\]
for $i \in \mathcal I \cup \{\infty\}$ are limit diagrams. Moreover, since $\mathscr F$ is $n$-truncated, we may truncate this diagram after the $(n+1)$-st step without affecting the limit; see e.g.\ \cite[Prop.~A.1]{HesselholtPstragowski}. In particular, they are finite limits, so they commutes with filtered colimits \cite[Ex.~7.3.4.7]{LurieHTT}. Thus, in the natural diagram
\begin{equation}
\begin{tikzcd}
\colim\limits_{\substack{\longrightarrow \\ i \in \mathcal I}} \mathscr F(Y_i) \ar{r}\ar[start anchor={[yshift=8pt]}]{d} & \colim\limits_{\substack{\longrightarrow \\ i \in \mathcal I}} \mathscr F(W_{0,i}) \ar[shift left=1]{r}\ar[shift right=1]{r}\ar[start anchor={[yshift=8pt]}]{d} & \colim\limits_{\substack{\longrightarrow \\ i \in \mathcal I}} \mathscr F(W_{1,i}) \ar[shift left=2]{r}\ar{r}\ar[shift right=2]{r}\ar[start anchor={[yshift=8pt]}]{d} & \cdots\\
\makebox*{$\colim \mathscr F(Y_i)$}{$\mathscr F(Y_\infty)$} \ar{r} & \makebox*{$\colim \mathscr F(W_{0,i})$}{$\mathscr F(W_{0,\infty})$} \ar[shift left=1]{r}\ar[shift right=1]{r} & \makebox*{$\colim \mathscr F(W_{1,i})$}{$\mathscr F(W_{1,\infty})$} \ar[shift left=2]{r}\ar{r}\ar[shift right=2]{r} & \cdots\punct{,}
\end{tikzcd}\label{Dia hypercover}
\end{equation}
both rows are limit diagrams. On the other hand, if $n \in \mathbf N$, then the schemes $W_{n,i}$ for $i \in \mathcal I \cup \{\infty\}$ are w-strictly local and the transition maps are w-local. Thus, the values $\mathscr F(W_{n,i})$ are given by the values on the strictly profinite schemes $W_{n,i}^\cl$, and we have $W_{n,\infty}^\cl = \lim_{i \in \mathcal I} W_{n,i}^\cl$. By the strictly profinite case treated above, the pullback of $\mathscr F$ to each $W_{n,i}^\cl$ is an \'etale sheaf, so \autoref{Prop etale to proetale} shows that the maps
\[
\colim_{\substack{\longrightarrow \\ i \in \mathcal I}} \mathscr F(W_{n,i}) \to \mathscr F(W_{n,\infty})
\]
are equivalences for all $n \in \mathbf N$. Thus, all vertical maps in \eqref{Dia hypercover} except the left one are equivalences, hence so is the one on the left since both rows are limit diagrams. This proves the first statement. When $\mathscr E = \mathcal S$, the left hand side in the second statement is by definition obtained by taking the limit of the left hand side of the first statement \cite[Def.~A.7.2.5]{LurieSAG}. The same goes for the right hand side since $\Sh(S,\mathcal S)$ is Postnikov complete for any $S \in \ProFin$ by \cite[Cor.~7.2.1.10 and Thm.~7.2.3.6]{LurieHTT}. This proves the second statement when $\mathscr E = \mathcal S$.

If $\mathscr E$ is arbitrary, we have $\Sh^{\post}(X_\et,\mathscr E) \simeq \Sh^{\post}(X_\et,\mathcal S)\otimes \mathscr E$ and $\Sh(S,\mathscr E) \simeq \Sh(S,\mathcal S) \otimes \mathscr E$. Thus, we get $\Fun(\GAL_\kappa(X)(S),\underline{\mathscr E}(S)) \simeq \Fun(\GAL_\kappa(X)(S),\underline{\mathcal S}(S)) \otimes \mathscr E$ for any $S \in \ProFin_\kappa$. The result follows from the $\mathcal S$-valued one since $- \otimes \mathscr E \colon \LTop \to \LTop$ preserves limits when $\mathscr E$ is compactly assembled \cite[Lem.~2.16]{HaineBaseChange}.
\end{proof}

\begin{Rmk}\label{Rmk difference with Lurie}
In the case where $\mathscr{E} = \Set$ and $X$ is qcqs, \autoref{Thm etale exodromy} is closely related to Lurie's strong conceptual completeness theorem \cite[Thm.~0.0.6]{LurieUltra} for the \'etale topos of $X$. However, there is a subtle difference, since Lurie considers the condensed category of all points $\PTS^*(X_\et)$ \cite[Def.~6.3.8]{LurieUltra}, while we only take (locally) coherent points into account (see \autoref{Def Galois category}).

Up to translation from ultracategories into condensed categories, Lurie's result \cite[Thm.~0.0.6]{LurieUltra} says that $\Sh(X_\et,\Set)$ can also be recovered as $\Fun\cts(\PTS^*_\kappa(\Sh(X_\et)),\underline{\Set})$. In other words, for the purpose of \autoref{Thm etale exodromy}, it is irrelevant whether one works with all points or only locally coherent points. However, the analogous claim fails for the pro-\'etale exodromy correspondence. More precisely, the functor
\begin{equation}
\Fun\cts\! \big(\GAL_\kappa(X),\COND_\kappa(\mathcal S)\big) \to  \Fun\cts\! \big(\PTS_\kappa^*(X_\et),\COND_\kappa(\mathcal S)\big) \label{eq Gal to Pts}
\end{equation}
induced by restricting along the inclusion $\GAL_\kappa(X) \to \PTS_\kappa^*(X_\et)$ is \emph{not} an equivalence in general. Indeed, one can recover an idempotent complete $\kappa$-condensed $\infty$-category $\mathscr C$ from the internal functor category $\mathcal Fun(\mathscr C,\COND_\kappa(\mathcal S))$, see \cite[Prop.~2.1.5.10]{MartiniWolf}. But the condensed $\infty$-category $\mathcal Fun(\mathscr C,\COND_\kappa(\mathcal S))$ is given by $S \mapsto \Fun\cts(\mathscr C \times h_S,\COND_\kappa(\mathcal S))$, and \autoref{Prop equivalence S} implies that $\GAL_\kappa(X) \times h_S \simeq \GAL_\kappa(X \times S)$, and likewise for $\PTS_\kappa^*$. The existence of geometric morphisms that are not locally coherent (see \autoref{Ex acyclic not wsl}) implies that the inclusion $\GAL_\kappa \hookrightarrow \PTS_\kappa^*$ is not an equality, so \eqref{eq Gal to Pts} cannot be an equivalence in general.

However, Haine has recently shown that $\GAL_\kappa(X)$ and $\PTS_\kappa^*(X_\et)$ have the same condensed classifying spaces \cite[Thm.~2.20]{HaineClassifying}, so the difference disappears again if one only cares about \emph{locally constant} pro-\'etale sheaves.

The place in the proof of \autoref{Thm proetale exodromy} where we crucially used that we are working with $\GAL_\kappa(X)$ and not $\PTS^*_\kappa(\Sh(X_\et))$ is the use of \autoref{Lem w-local topology} (and its corollary \autoref{Lem hypercover}). While $\AffProEt_{/X}^{\acyc}$ still forms a basis for the pro-\'etale topology, the topology cannot be described simply in terms of covers of the form $W \times_S T \to W$ for $W \in \AffProEt_{/X}^{\acyc}$ and $T \twoheadrightarrow S = \pi_0(W)$ a continuous surjection of profinite sets.
\end{Rmk}

\begin{Ex}\label{Ex D(Z) left completed}
The $\infty$-category $D(\mathbf Z)$ is compactly generated, so in particular compactly assembled \cite[Ex.~21.1.2.3]{LurieSAG}. Thus, \autoref{Thm etale exodromy} gives an equivalence
\[
\widehat D(X_\et,\mathbf Z) \simeq \Fun\cts\!\big(\GAL_\kappa(X),\underline{D(\mathbf Z)}\,\big),
\]
where $\widehat D(X_\et,\mathbf Z)$ denotes the left completion of $D(X_\et,\mathbf Z)$ \cite[Prop.~1.2.1.17]{LurieHA}. This is closely related to the identification of \cite[\S5.3]{BhattScholze} of $\widehat D(X_\et,\mathbf Z)$ inside $D(X_\proet,\mathbf Z)$.
\end{Ex}

\subsection{Exodromy for constructible \texorpdfstring{\'etale}{étale} sheaves}\label{Sec constructible exodromy}
Finally, we explain how to obtain the exodromy correspondence for constructible sheaves, as in \cite[Thm.~12.1.6]{BGH}, as a consequence of \autoref{Thm etale exodromy}.

\begin{Par}
Recall that an object $X$ of a locally coherent $\infty$-topos $\mathscr X$ is \emph{constructible} if $X \times U$ is a truncated and coherent object of $\mathscr X_{/U}$ for every coherent object $U \in \mathscr X$ \cite[Def.~2.3.4]{LurieDAGXIII}. We write $\mathscr X\cons \subseteq \mathscr X$ for the full $\infty$-subcategory on constructible objects.

When $\mathscr X = \Sh(X_\et,\mathcal S)$ for a scheme $X$, then a sheaf $\mathscr F$ is constructible if and only if the following condition holds: for every affine open $U \subseteq X$, there exists a finite constructible stratification $U = \bigcup_{i=1}^n U_i$ and finite \'etale covers $V_i \to U_i$ such that each $\mathscr F|_{V_i}$ is equivalent to the constant sheaf $\underline{Y_i}$ on some $\pi$-finite space $Y_i$ \cite[Thm.~2.3.24 and Prop.~2.3.5]{LurieDAGXIII} (see also \cite[Prop.~9.5.4]{BGH}). We will write $\Sh\cons(X_\et,\mathcal S) \subseteq \Sh(X_\et,\mathcal S)$ for the full \makebox{$\infty$-subcategory} on constructible sheaves.
\end{Par}

\begin{Lemma}
Let $\mathscr X$ be a locally coherent $\infty$-topos. Then the functor $e^* \colon \mathscr X \to \mathscr X^{\post}$ restricts to a fully faithful functor $\mathscr X\cons \to \mathscr X^{\post}$.
\end{Lemma}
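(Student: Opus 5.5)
The plan is to exploit the fact that constructible objects are truncated, and that on truncated objects the Postnikov completion functor $e^* \colon \mathscr X \to \mathscr X^{\post}$ is an equivalence. Recall from \cite[Def.~A.7.2.5]{LurieSAG} that $\mathscr X^{\post} = \lim_n \tau_{\leq n}\mathscr X$, with transition maps the truncation functors. The functor $e^*$ sends $F$ to the compatible system $(\tau_{\leq n}F)_n$. For every $n$, the composite $\mathscr X_{\leq n} \hookrightarrow \mathscr X \xrightarrow{e^*} \mathscr X^{\post} \to \mathscr X_{\leq n}$ is the identity, and \cite[Prop.~A.7.2.4 or its surroundings]{LurieSAG} identifies $(\mathscr X^{\post})_{\leq n} \simeq \mathscr X_{\leq n}$ via $e^*$.

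I would carry this out in three steps. First, note that a constructible object $F$ of a locally coherent $\infty$-topos is truncated only locally: $F \times U$ is truncated in $\mathscr X_{/U}$ for each coherent $U$, but the truncation level may depend on $U$. Thus $F$ need not be globally $n$-truncated for a single $n$. Second, the claim is therefore local, and I would reduce to the case of a fixed coherent $U$. Mapping spaces in $\mathscr X$ and in $\mathscr X^{\post}$ both satisfy descent along a cover $\coprod U_i \to \mathbf 1$ by coherent objects. For $\mathscr X$ this is clear. For $\mathscr X^{\post}$ it follows because $e^*$ is a left exact left adjoint (the pullback of a geometric morphism), so it preserves the covering, its Čech nerve, and slices, with $(\mathscr X_{/U})^{\post} \simeq \mathscr X^{\post}_{/e^*U}$. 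Hence
\[
\Map(F,G) \simeq \lim \Map_{/U_\bullet}(F|_{U_\bullet},G|_{U_\bullet}),
\]
and similarly after applying $e^*$, so it suffices to prove full faithfulness on each $\mathscr X_{/V}$ with $V$ a finite product of the $U_i$. Third, over such a $V$ (coherent, assuming the finite products remain coherent, or else refining to coherent objects), both $F|_V$ and $G|_V$ are $n$-truncated for some common $n$. The map $\Map(F,G) \to \Map(e^*F,e^*G)$ is then an equivalence, because $e^*$ restricts to an equivalence $\mathscr X_{\leq n} \simeq (\mathscr X^{\post})_{\leq n}$, with inverse given by the projection to the $n$-th stage.

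The main obstacle is the descent step for $\mathscr X^{\post}$, namely checking that slicing commutes with Postnikov completion and that $e^*$ preserves the relevant limit of mapping spaces. If this is awkward, the alternative is to argue directly. Use $\Map_{\mathscr X^{\post}}(e^*F,e^*G) \simeq \lim_n \Map(\tau_{\leq n}F,\tau_{\leq n}G) \simeq \lim_n \Map(F,\tau_{\leq n}G)$. Then show that $\Map(F,G) \to \lim_n \Map(F,\tau_{\leq n}G)$ is an equivalence by observing that $G \to \lim_n \tau_{\leq n}G$ is an equivalence when $G$ is locally truncated. This last fact can be checked locally on coherent $U$, where the tower $\tau_{\leq n}G$ is eventually constant, using that limits and truncations in $\mathscr X$ are compatible with restriction to $\mathscr X_{/U}$.
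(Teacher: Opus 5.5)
Your fallback argument is correct and is essentially the paper's proof. The paper identifies the unit $G\to e_*e^*G$ with $G\to\lim_n\tau_{\le n}G$; your computation $\Map_{\mathscr X^{\post}}(e^*F,e^*G)\simeq\Map(F,\lim_n\tau_{\le n}G)$ is the same thing. It then shows this map is an equivalence for constructible $G$ by pulling back along an effective epimorphism $f\colon\coprod_i U_i\to\mathbf 1$ with each $U_i$ coherent. Here $f^*$ is conservative, preserves limits and commutes with truncations, and on each factor $\mathscr X_{/U_i}$ of $\mathscr X_{/\coprod_i U_i}\simeq\prod_i\mathscr X_{/U_i}$ the tower is eventually constant. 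You should make this the main proof.

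Your primary route has a gap, exactly at the point you flag. Over $V$, the objects $F\times V$ and $G\times V$ are truncated only relative to $V$, and $V$ itself need not be truncated. Already in $\mathcal S$, a connected space whose homotopy groups are all finite but infinitely many of them nonzero is coherent and not truncated. So the equivalence $\mathscr X_{\le n}\simeq(\mathscr X^{\post})_{\le n}$ says nothing about $\Map_{/V}(F\times V,G\times V)$. To run that argument you would need an identification $(\mathscr X_{/V})^{\post}\simeq\mathscr X^{\post}_{/e^*V}$ compatible with $e^*$. That identification is not formal, and you neither prove nor cite it; the direct argument avoids it entirely. A minor point: you do not need finite products of the $U_i$ to be coherent, since relative truncatedness is stable under pullback.
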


\begin{proof}
By \cite[Rmk.~A.7.3.6]{LurieSAG}, the unit $X \to e_*e^*X$ is given by the map $X \to \lim_n \tau_{\leq n} X$.
We need to show that this is an equivalence when $X$ is constructible. Let $U = \coprod_{i \in I} U_i \to \mathbf 1$ be an effective epimorphism where each $U_i$ is coherent. For $i \in I$, denote by $f_i \colon \mathscr X_{/U_i} \to \mathscr X$ the natural geometric morphism, and define $f \colon \mathscr X_{/U} \to \mathscr X$ likewise. If $X$ is constructible, then $f_i^*X$ is truncated for all $i \in I$, so $f_i^*X \to \lim_n \tau_{\leq n} f_i^*X$ is an equivalence. Then the same goes for $f^*X \to \lim_n \tau_{\leq n} f^*X$. By \cite[Prop.~6.3.1.9]{LurieHTT} and since $f$ is \'etale, the right hand side is $f^*\lim_n \tau_{\leq n}X$. Then $X \to \lim_n \tau_{\leq n} X$ is an equivalence by \cite[Lem.~6.2.3.16]{LurieHTT}.
\end{proof}

The main tool needed to extract the constructible exodromy theorem from \autoref{Thm etale exodromy} is a descent principle for coherent objects:

\begin{Lemma}\label{Lem reflect coherence}
Let $f \colon \mathscr X \to \mathscr Y$ be coherent a geometric morphism between coherent $\infty$-topoi, and assume that $f^*$ is conservative. If $n \in \mathbf N$ and $Y \in \mathscr Y$, then $Y$ is $n$-coherent if and only if $f^*Y$ is $n$-coherent.
\end{Lemma}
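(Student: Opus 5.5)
We argue by induction on $n$, using Lurie's definition of $n$-coherence in an $\infty$-topos \cite[Def.~A.2.0.12]{LurieSAG}. An object is $0$-coherent if it is quasi-compact. It is $n$-coherent if it is quasi-compact and, for every pair of maps $U \to Y \leftarrow V$ from $(n-1)$-coherent objects $U$ and $V$, the fibre product $U \times_Y V$ is $(n-1)$-coherent. Since $\mathscr Y$ is coherent, it has a generating family of coherent objects. The morphism $f$ is coherent, so $f^*$ preserves $n$-coherent objects for every $n$. This gives the forward implication at once, and it remains to prove that if $f^*Y$ is $n$-coherent then so is $Y$.

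\emph{Base case $n = 0$.} Suppose $f^*Y$ is quasi-compact. Choose an effective epimorphism $\coprod_{i \in I} U_i \twoheadrightarrow Y$ with each $U_i$ coherent. Then $\coprod_i f^*U_i \twoheadrightarrow f^*Y$ is again an effective epimorphism, so some finite subfamily $J \subseteq I$ already covers $f^*Y$. We now need the following fact: a left exact, colimit-preserving, conservative functor reflects effective epimorphisms. This holds because a map $g \colon A \to B$ is an effective epimorphism if and only if $\tau_{\leq -1}$ of $g$ in $\mathscr X_{/B}$ is an equivalence, that is, if and only if the map from the colimit of the Čech nerve of $g$ to $B$ is an equivalence. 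Both of these are preserved by $f^*$ and reflected by conservativity. Applying the fact, $\coprod_{j \in J} U_j \to Y$ is an effective epimorphism, so $Y$ is quasi-compact.

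\emph{Inductive step.} Suppose $f^*Y$ is $n$-coherent. By the base case, $Y$ is quasi-compact. Let $U \to Y \leftarrow V$ be maps from $(n-1)$-coherent objects. Then $f^*U$ and $f^*V$ are $(n-1)$-coherent, since $f$ is coherent. Because $f^*$ is left exact, $f^*(U \times_Y V) \simeq f^*U \times_{f^*Y} f^*V$, and this is $(n-1)$-coherent by the hypothesis on $f^*Y$. By the induction hypothesis, which applies to arbitrary objects of $\mathscr Y$, the object $U \times_Y V$ is $(n-1)$-coherent, so $Y$ is $n$-coherent.

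\emph{Main subtlety.} We must check that "coherent geometric morphism" means that $f^*$ preserves $n$-coherent objects for all $n$, and not merely for a generating family. Lurie's conventions give this \cite[Def.~A.2.1.6]{LurieSAG}, which suffices here. One could also test $n$-coherence only against maps from a generating class of coherent objects, but the formulation with arbitrary $(n-1)$-coherent test objects makes the induction go through without that refinement.
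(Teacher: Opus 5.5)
Your proof is correct and follows essentially the same route as the paper. The base case uses that conservativity and exactness of $f^*$ let it reflect effective epimorphisms, so a finite subcover of $f^*Y$ descends to $Y$. The inductive step uses left exactness together with preservation and reflection of $(n-1)$-coherence; the paper packages this step as an appeal to \cite[Rmk.~A.2.0.15]{LurieSAG}, whereas you unpack the definition. One correction: the forward direction, that a coherent $f^*$ preserves $n$-coherent objects for every $n$, is for the paper a cited result (\cite[Cor.~3.4.6]{BGH}) rather than part of the definition, so you should cite it rather than appeal to conventions.
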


\begin{proof}
If $Y$ is $n$-coherent, then $f^*Y$ is $n$-coherent by \cite[Cor.~3.4.6]{BGH}. For the converse, first note that since $f^*$ is conservative, the same goes for the pullback $\mathscr Y_{/U} \to \mathscr X_{/f^*U}$ for any $U \in \mathscr Y$. Since $f^*$ commutes with $\tau_{\leq -1}$ \cite[Prop.~6.3.1.9]{LurieHTT}, we see that $f^*$ reflects effective epimorphisms \cite[Cor.~6.2.3.5]{LurieHTT}. Note that $f^*$ always preserves effective epimorphisms \cite[Rmk.~6.2.3.6]{LurieHTT}.

We prove the statement by induction on $n$. For $n = 0$, let $\coprod_{i \in I} U_i \to Y$ be an effective epimorphism and assume that $f^*Y$ is quasi-compact. Then $\coprod_{i \in I} f^*U_i \simeq f^*(\coprod_{i \in I} U_i) \to f^*Y$ is an effective epimorphism, so there is a finite subset $J \subseteq I$ such that \makebox{$\coprod_{j \in J} f^*U_j \to f^*Y$} is an effective epimorphism. Then $\coprod_{j \in J} U_j \to Y$ is an effective epimorphism since $f^*$ reflects effective epimorphisms, so $Y$ is quasi-compact. If $n \geq 1$, then the induction hypothesis shows that $f^*$ reflects $(n-1)$-coherence of objects. The result for $n$-coherent objects now follows from \cite[Rmk.~A.2.0.15]{LurieSAG} since $f^*$ is left exact and preserves and reflects $(n-1)$-coherence.
\end{proof}

\begin{Def}
Define the $\kappa$-condensed category $\underline{\mathcal{S}}{}\cons$ of constructible spaces to be the full $\kappa$-condensed subcategory of $\underline{\mathcal S}$ that for every $S \in \ProFin_\kappa$ is given by the full subcategory
\[
\Sh\cons(S,\mathcal S) \subseteq \Sh(S,\mathcal S)
\]
of constructible sheaves on $S$. Note that a sheaf $\mathscr F$ is constructible if and only if there is a clopen decomposition $S = \coprod_{i=1}^n S_i$ and $\pi$-finite spaces $Y_i$ such that $\mathscr F|_{S_i} \simeq \underline{Y_i}$ for all $i$.

In particular, $\underline{\mathcal S}{}\cons$ is the discrete $\kappa$-condensed $\infty$-category on the $\infty$-category $\mathcal S_\pi$ of $\pi$-finite spaces. Thus, one may also denote it by $\mathcal S_\pi$ or $\underline{\mathcal S}{}_\pi$. We choose not to use $\underline{\mathcal S}{}_\pi$ because the similarly labelled $\kappa$-condensed $\infty$-category $\underline{\mathcal S}$ of \autoref{Def underline E} does \emph{not} arise as a discrete $\kappa$-condensed $\infty$-category.

However, the $\kappa$-condensed $\infty$-category $\underline{\mathcal S}$ of \autoref{Def underline E} is discrete as $\kappa$-condensed \emph{presentable} $\infty$-category. This fact, along with the awareness of a result like \autoref{Thm etale exodromy}, was pointed out by Clark Barwick before the work on the present paper started.
\end{Def}

\begin{Thm}\label{Thm constructible exodromy}
Let $X$ be a scheme. The \'etale exodromy correspondence restricts to an equivalence
\[
\mathcal{Ex}\cons^\et \colon \Sh\cons(X_\et,\mathcal S) \xrightarrow \sim \Fun\cts (\GAL(X),\underline{\mathcal S}{}\cons).
\]
\end{Thm}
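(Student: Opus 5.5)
We will deduce the statement from \autoref{Thm etale exodromy} by identifying both sides as full subcategories of the two sides of the equivalence $\mathcal{Ex}^\et \colon \Sh^{\post}(X_\et,\mathcal S) \xrightarrow\sim \Fun\cts(\GAL_\kappa(X),\underline{\mathcal S})$, for some fixed $\kappa > \size(X)$. By the preceding lemma, $\Sh\cons(X_\et,\mathcal S)$ embeds fully faithfully into $\Sh^{\post}(X_\et,\mathcal S)$. On the other side, $\underline{\mathcal S}{}\cons \subseteq \underline{\mathcal S}$ is a full condensed subcategory, so $\Fun\cts(\GAL_\kappa(X),\underline{\mathcal S}{}\cons)$ is the full subcategory of functors $G$ such that $G(S)(s_*) \in \Sh\cons(S,\mathcal S)$ for every $S$ and every locally coherent $s_*$. (The statement is phrased with $\GAL(X)$; we read it as $\GAL_\kappa(X)$, which is independent of $\kappa$ by \hyperref[Sec change of cardinal]{\S4.2}.) So it suffices to show: a Postnikov complete sheaf $\mathscr F$ is constructible if and only if $s^*\mathscr F$ is constructible for every locally coherent $s_* \colon \Sh(S) \to \Sh(X_\et)$. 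By \autoref{Rmk description Ex}, $\mathcal{Ex}^\et(\mathscr F)(S)(s_*)$ is exactly $s^*\mathscr F$, so this is the required characterisation.

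\emph{Forward direction.} If $\mathscr F$ is constructible, then for a locally coherent $s_*$ we may pass to a finite clopen decomposition of $S$ on which $s_*$ factors through a qcqs open $U \subseteq X$ with $\Sh(S_i) \to \Sh(U_\et)$ coherent. Coherent pullback preserves truncated coherent objects (\cite[Cor.~3.4.6]{BGH}), and constructibility is local, so $s^*\mathscr F$ is constructible.

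\emph{Converse.} Constructibility is Zariski local, so we may reduce to $X$ affine. Choose, via \autoref{Prop enough wsl kappa}, a w-strictly local $V \in \AffProEt^{\kappa,\wsl}_{/X}$ with $V^\cl \to X$ surjective, and let $S = \pi_0(V)$ with $s_* \colon \Sh(S) \simeq \Sh(V^\cl_\et) \to \Sh(X_\et)$, which is coherent by \autoref{Lem coherent}. Since $V^\cl \to X$ is surjective, $s^*$ is conservative: stalks at all geometric points of $X$ are detected, and Postnikov complete sheaves on $X_\et$ are detected by stalks on truncations. First, $s^*\mathscr F$ constructible implies it is $m$-truncated for some $m$ because $S$ is quasi-compact; since $s^*$ commutes with truncations and is conservative, $\mathscr F \to \tau_{\leq m}\mathscr F$ is an equivalence. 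Then \autoref{Lem reflect coherence}, applied to each affine étale $U \to X$ (the base change of $s$ is still coherent with conservative pullback), shows that $\mathscr F \times U$ is $n$-coherent for every $n$. Hence $\mathscr F$ is truncated and coherent locally, i.e.\ constructible.

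The main obstacle is the conservativity of $s^*$ on Postnikov complete sheaves. Stalks are conservative only on hypercomplete objects; truncated objects are hypercomplete, so we first establish truncatedness as above (using only conservativity on truncated objects, or arguing via homotopy sheaves) before invoking \autoref{Lem reflect coherence}, which we apply inside the truncated subcategory. A second point to check is that \autoref{Lem reflect coherence} requires a coherent geometric morphism between coherent topoi; this is why we reduce to $X$ affine and work with each coherent slice separately.
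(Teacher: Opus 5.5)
Your proposal is correct and follows the paper's proof essentially step by step. The forward direction uses that (locally) coherent pullback preserves constructibility; for the converse you reduce to $X$ affine, pick a w-strictly local $W$ with $W^\cl \to X$ surjective via \autoref{Prop enough wsl kappa}, and then reflect truncatedness and coherence using conservativity of $s^*$, the identity $s^*\tau_{\leq n} \simeq \tau_{\leq n}s^*$, and \autoref{Lem reflect coherence}. Your extra care that $s^*$ is only conservative on truncated (hypercomplete) objects, so that truncatedness should be established before invoking coherence, is a welcome sharpening of the paper's terser assertion that $s^*$ is conservative.
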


\begin{proof}
If $S \in \ProFin$ and $s_* \colon \Sh(S) \to \Sh(X_\et)$ is a locally coherent geometric morphism, then we can view $\Sh(S)$ as the \'etale site on the strictly profinite scheme $X_{(s)}^\cl$. In particular, the geometric morphism $s$ extends to a geometric morphism $s_* \colon \Sh(S,\mathcal S) \to \Sh(X_\et,\mathcal S)$ on $\infty$-topoi. Thus, if $\mathscr F \in \Sh\cons(X_\et,\mathcal S)$, then $s^*\mathscr F$ is constructible by \cite[Prop.~2.3.6]{LurieDAGXIII}, showing that $\mathcal{Ex}^\et(\mathscr F)$ lands in $\Fun\cts(\GAL(X),\underline{\mathcal S}{}\cons)$.

Conversely, suppose $\mathscr F \in \Sh^{\post}(X_\et,\mathcal S)$ lands in $\Fun\cts(\GAL(X),\underline{\mathcal S}{}\cons)$ under $\mathcal{Ex}^\et$. To check that $\mathscr F$ is constructible, we may proceed locally, so we may assume $X$ is affine. By \autoref{Prop enough wsl kappa}, there exists $W \in \AffProEt_{/X}^\wsl$ such that $W^\cl \to X$ is surjective. Let $S = \lvert W^\cl \rvert$ and let $s_* \colon \Sh(S,\mathcal S) \to \Sh(X_\et,\mathcal S)$ be the induced geometric morphism of \makebox{$\infty$-topoi.} Then $s^*$ is conservative, hence reflects coherence of objects by \autoref{Lem reflect coherence}. It also reflects $n$-truncated objects for all $n \in \mathbf N$ since $s^*\tau_{\leq n} \simeq \tau_{\leq n}s^*$ \cite[Prop.~6.3.1.9]{LurieHTT} and since $s^*$ is conservative. Thus, $\mathscr F$ is truncated and coherent since $s^*\mathscr F$ is.
\end{proof}

In the setting of sheaves of sets, the descent statement of \autoref{Lem reflect coherence} may be replaced by \cite[Tag \href{https://stacks.math.columbia.edu/tag/095Q}{095Q}]{Stacks}.

\section{Formal properties of the Galois category}\label{Sec formal}
In this section, we prove two formal properties of the $\kappa$-condensed category $\GAL_\kappa(X)$. In \hyperref[Sec van Kampen]{\S4.1}, we prove a Van Kampen theorem, and in \hyperref[Sec change of cardinal]{\S4.2}, we prove an accessibility statement for the functor $\GAL(X) \colon \ProFin\op \to \Cat$.

\subsection{Van Kampen theorem}\label{Sec van Kampen}
In \autoref{Thm proetale exodromy}, we constructed an equivalence
\[
\Sh^\hyp(X_{\kappa\text{-}\proet},\mathcal S) \simeq \Fun\cts(\GAL_\kappa(X),\COND_\kappa(\mathcal S)).
\]
On the other hand, by continuous straightening/unstraightening (see \ref{Par internal straightening}), we get an equivalence
\[
\Fun\cts(\GAL_\kappa(X),\COND_\kappa(\mathcal S)) \simeq \LFib_{/\GAL_\kappa(X)}
\]
to the category of continuous left fibrations over $\GAL_\kappa(X)$. In this section, we explain how to think about the equivalence
\begin{equation}
\Sh^\hyp(X_{\kappa\text{-}\proet},\mathcal S) \simeq \LFib_{/\GAL_\kappa(X)}\label{Eq equivalence left fibration}
\end{equation}
directly. Since the forgetful functor $\LFib_{/\GAL_\kappa(X)} \to \Cond_\kappa(\Cat_\infty)$ preserves colimits, this implies a Van Kampen theorem for the functor $X \mapsto \GAL_\kappa(X)$; see \autoref{Thm van Kampen}.

\begin{Lemma}\label{Lem Gal left fibration}
Let $U \to X$ be a weakly \'etale morphism in $\Sch^\kappa$. Then the left fibration corresponding to the representable sheaf $h_U$ under the equivalence \eqref{Eq equivalence left fibration} is given by the natural functor $\GAL_\kappa(U) \to \GAL_\kappa(X)$.
\end{Lemma}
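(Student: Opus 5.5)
Under $\mathcal{Ex}$, the representable sheaf $h_U$ (hypercomplete, being representable in a subcanonical site of the relevant kind) goes to the functor $\GAL_\kappa(X) \to \COND_\kappa(\mathcal S)$ described in \autoref{Rmk description Ex}. It sends a locally coherent point $s_* \colon \Sh(S) \to \Sh(X_\et)$ to the condensed set over $S$ given by
\[
(T \to S) \mapsto h_U(X_{(s)} \times_S T) = \Hom_X(X_{(s)} \times_S T, U).
\]
The plan is to identify this functor directly with the straightening of $\GAL_\kappa(U) \to \GAL_\kappa(X)$, working at the level of Grothendieck constructions. By \autoref{Cor loccoh size}, the cartesian fibration classifying $\GAL_\kappa(X)\op$ is $\AffProEt_{/X}^{\kappa,\wsl}$, fibred over $\ProFin_\kappa$ via $\pi_0$. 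Similarly, $\GAL_\kappa(U)\op$ corresponds to $\AffProEt_{/U}^{\kappa,\wsl}$, which by \autoref{Lem affproet over} (for $U$ affine; in general, after checking on a cover using \autoref{Cor wsl affproet}) is the slice $(\AffProEt_{/X}^{\kappa,\wsl})_{/U}$ of w-strictly local $W$ equipped with an $X$-map $W \to U$.

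First, I check that the functor $\GAL_\kappa(U) \to \GAL_\kappa(X)$, given by composing $\Sh(S) \to \Sh(U_\et) \to \Sh(X_\et)$, corresponds under $\Phi$ to the forgetful functor $(\AffProEt_{/X}^{\wsl})_{/U} \to \AffProEt_{/X}^{\wsl}$. This amounts to showing that the Henselisation of $U$ along $t$ equals the Henselisation of $X$ along the composite, as schemes over $X$. Both pro-represent $\Gamma_* t^* j^*$ with $j^* \colon \Sh(X_\et) \to \Sh(U_\et)$ restriction, and $\Gamma(W,j^*\mathscr F) = \Gamma(W,\mathscr F)$ for $W$ over $U$. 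Local coherence is preserved, since $U \to X$ is weakly \'etale and w-strict locality is intrinsic to $W$ by \autoref{Lem coherent}.

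Second, fix $s$ with $W = X_{(s)}$. The fibre of the forgetful functor over $(S,s)$, taken in the condensed sense, is the condensed set over $S$ whose value on $T \to S$ is the set of lifts of $W \times_S T$ to $U$, namely $\Hom_X(W \times_S T, U)$. This matches the formula above. I then verify that the forgetful functor from a slice is a right fibration of total categories that preserves cartesian morphisms, the cartesian morphisms being the maps $V \to W$ with $V \cong W \times_{\pi_0 W}\pi_0 V$, by \ref{Par cartesian}. Consequently it straightens to a continuous left fibration over $\GAL_\kappa(X)$ whose associated functor is $s \mapsto h_U(X_{(s)} \times_S -)$.

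The main obstacle is bookkeeping. One must check that the internal (continuous) straightening of \ref{Par internal straightening} agrees with straightening the slice-fibration of Grothendieck constructions, with the variances handled consistently. This includes the $\op$ between $\GAL$ and $\PTS_*^{\loccoh}$, and the fact that the left fibration over $\GAL$ corresponds to a right fibration over its opposite. I would handle this by using the coend description in \ref{Par lax colimit} and the proof of \autoref{Thm proetale exodromy}. There, $\Psi(h_U)$ is literally the presheaf on $\int^S \GAL_\kappa(X)\op(S)\times(\ProFin_\kappa)_{/S}$ represented by $U$. Its unstraightening is the slice category, and this is compatible with the internal unstraightening because both come from the same end/coend formula.
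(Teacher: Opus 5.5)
Your proposal follows essentially the same route as the paper. Both compute the functor attached to $h_U$ via \autoref{Rmk description Ex}, read off the $S$-points of its unstraightening as pairs of a locally coherent point $s$ and an $X$-map $X_{(s)} \to U$, and identify these with the fibre over $S$ of $\AffProEt^{\kappa,\wsl}_{/U}$ via \autoref{Cor loccoh size}. The paper sidesteps the end/coend bookkeeping you flag by using the explicit pullback description of continuous unstraightening in \ref{Par Grothendieck construction} directly on $S$-points: the global sections over $S$ of $T \mapsto \Hom_X(X_{(s)} \times_S T,U)$ are just $\Hom_X(X_{(s)},U)$. Your check that Henselising $U$ along $t$ and $X$ along the composite give the same $X$-scheme spells out what the paper compresses into ``applying the same to $U$ instead of $X$''.

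One step is misjustified. \autoref{Lem affproet over} needs $U \in \AffProEt_{/X}$, and an affine scheme weakly \'etale over $X$ need not lie there (see the remark following \autoref{Lem affproet}). The paper explicitly notes that this lemma cannot be used here, and reducing to an affine cover of $U$ does not help for the same reason. Argue directly instead:
\begin{itemize}
\item If $W \in \AffProEt^{\kappa,\wsl}_{/X}$ comes with an $X$-map $W \to U$, then $W \to U$ is weakly \'etale by \cite[Tag 0951]{Stacks}. Hence $W \in \AffProEt^{\kappa,\wsl}_{/U}$ by \autoref{Cor wsl affproet} applied to $U$.
\item Conversely, for $W \in \AffProEt^{\kappa,\wsl}_{/U}$, the composite $W \to U \to X$ is weakly \'etale. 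Hence $W \in \AffProEt^{\kappa,\wsl}_{/X}$ by the same corollary.
\end{itemize}
This identifies $(\AffProEt^{\kappa,\wsl}_{/X})_{/U}$ with $\AffProEt^{\kappa,\wsl}_{/U}$ for arbitrary $U$, with no affine case or covering argument needed.
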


\begin{proof}
By \autoref{Rmk description Ex}, the continuous functor $\GAL_\kappa(X) \to \COND_\kappa(\mathcal S)$ corresponding to the representable sheaf $h_U$ defined by $U$ takes an $S$-points $s_* \colon \Sh(S) \to \Sh(X_\et)$ to the sheaf $T \mapsto \Hom_X(X_{(s)} \times_S T,U)$ on $(\ProFin_\kappa)_{/S}$. By the description of continuous unstraightening of \ref{Par Grothendieck construction}, this means that the $S$-points of the corresponding left fibration $\mathscr C \to \GAL_\kappa(X)$ are given by pairs of an $S$-point $s_* \colon \Sh(S) \to \Sh(X_\et)$ and an element of $\Hom_X(X_{(s)},U)$. By \autoref{Cor loccoh size}, the locally coherent $\kappa$-small $S$-points of $\Sh(X_\et)$ correspond to the fibre $\AffProEt_{/X}^{\kappa,\wsl} \times_{\ProFin} \{S\}$. Applying the same to $U$ instead of $X$ gives the result, since $(\AffProEt_{/X}^{\kappa,\wsl})_{/U} \simeq \AffProEt_{/U}^{\kappa,\wsl}$ (by \autoref{Cor wsl affproet} and \cite[Tag \href{https://stacks.math.columbia.edu/tag/0951}{0951}]{Stacks}). (Note that we cannot use \autoref{Lem affproet over} unless we assume $U \in \AffProEt_{/X}$.)
\end{proof}

This allows us to prove a Van Kampen theorem for $\GAL(X)$:

\begin{Thm}\label{Thm van Kampen}
The functor $\GAL_\kappa \colon \Sch^\kappa \to \Cond_\kappa(\Cat_\infty)$ is a pro-\'etale hypercosheaf.
\end{Thm}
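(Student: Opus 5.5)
The plan is to deduce the Van Kampen property from the exodromy equivalence \eqref{Eq equivalence left fibration}, combined with \autoref{Lem Gal left fibration} and the fact that the forgetful functor $\LFib_{/\GAL_\kappa(X)} \to \Cond_\kappa(\Cat_\infty)_{/\GAL_\kappa(X)}$ preserves colimits (this is the computational tool \autoref{Lem colim left fibrations} from the appendix).

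\textbf{Step 1: reduction.} Let $X \in \Sch^\kappa$ and let $U_\bullet \to X$ be a pro-\'etale hypercover in $\Sch^\kappa$. We must show that $\colim_{\Delta\op} \GAL_\kappa(U_n) \to \GAL_\kappa(X)$ is an equivalence in $\Cond_\kappa(\Cat_\infty)$. Since colimits in a slice category are computed in the underlying category, it suffices to compute this colimit in $\Cond_\kappa(\Cat_\infty)_{/\GAL_\kappa(X)}$.

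\textbf{Step 2: translate to sheaves.} In $\Sh^\hyp(X_{\kappa\text{-}\proet},\mathcal S)$ the hypercover condition says exactly that $\colim_{\Delta\op} h_{U_n} \simeq h_X = \mathbf 1$ after hypersheafification. This is the standard characterisation of hypercovers in the hypercomplete topos, applied to representables; note that each $U_n$ is $\kappa$-small and weakly \'etale over $X$, so $h_{U_n}$ makes sense in the $\kappa$-site. Apply the equivalence \eqref{Eq equivalence left fibration}. By \autoref{Lem Gal left fibration}, $h_{U_n}$ corresponds to $\GAL_\kappa(U_n) \to \GAL_\kappa(X)$, and $h_X$ corresponds to the identity of $\GAL_\kappa(X)$. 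This correspondence must be functorial in $U_n$, i.e. natural for maps $U \to U'$ over $X$; that follows from the construction in \autoref{Lem Gal left fibration} via \autoref{Cor loccoh size}. Hence $\colim \GAL_\kappa(U_n) \simeq \GAL_\kappa(X)$ in $\LFib_{/\GAL_\kappa(X)}$.

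\textbf{Step 3: leave $\LFib$.} Use the colimit preservation of $\LFib_{/\GAL_\kappa(X)} \to \Cond_\kappa(\Cat_\infty)_{/\GAL_\kappa(X)}$ to conclude that the colimit holds in $\Cond_\kappa(\Cat_\infty)$. Since all covers used are hypercovers, this yields the hypercosheaf property, and not merely the \v{C}ech cosheaf property.

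\textbf{Main obstacle.} I expect the hardest part to be Step 3: verifying that $\LFib_{/\mathscr C} \hookrightarrow \Cond_\kappa(\Cat_\infty)_{/\mathscr C}$ preserves colimits. This fails for general full subcategories and is the content of \autoref{Lem colim left fibrations}. The argument would be pointwise on $S$, using that left fibrations are closed under colimits in $\Cat_{\infty/\mathscr C(S)}$, together with a check that the sheafification step preserves left fibrations. A secondary point is the naturality in $U$ of the identification in \autoref{Lem Gal left fibration}, which requires the equivalence of \autoref{Cor loccoh size} to be compatible with the slices $(\AffProEt^{\kappa,\wsl}_{/X})_{/U} \simeq \AffProEt^{\kappa,\wsl}_{/U}$.
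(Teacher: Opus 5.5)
Your proposal is correct and follows the paper's proof essentially step for step. You reduce to the slice over $\GAL_\kappa(X)$ via \cite[Prop.~1.2.13.8]{LurieHTT}, pass to $\LFib_{/\GAL_\kappa(X)}$ using \autoref{Lem colim left fibrations}, and then identify the diagram with $h_{U_\bullet} \to h_X$ in $\Sh^\hyp(X_{\kappa\text{-}\proet},\mathcal S)$ via \eqref{Eq equivalence left fibration} and \autoref{Lem Gal left fibration}, where the colimit condition is exactly the hypercover condition; the paper likewise leaves the naturality in $U$ that you flag implicit.
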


\begin{proof}
Let $U_\bullet \to U$ be a pro-\'etale hypercover in $\Sch^\kappa$. By \autoref{Rmk hypersheaf hypercover}, we must show that the diagram
\[
\cdots \mathrel{\substack{\rightarrow\\[-0.6ex] \rightarrow\\[-0.6ex] \rightarrow}}\GAL_\kappa(U_1) \mathrel{\substack{\rightarrow\\[-0.6ex]  \rightarrow}} \GAL_\kappa(U_0) \to \GAL_\kappa(U)
\]
is a colimit diagram in $\Cond_\kappa(\Cat_\infty)$. It suffices to do this in $\Cond_\kappa(\Cat_\infty)_{/\GAL_\kappa(U)}$ by \cite[Prop.~1.2.13.8]{LurieHTT}. Each $\GAL_\kappa(U_i) \to \GAL_\kappa(U)$ is a left fibration by \autoref{Lem Gal left fibration}, so it suffices to check the hypercosheaf condition in $\LFib_{/\GAL_\kappa(U)}$ by \autoref{Lem colim left fibrations}. Under the equivalence \eqref{Eq equivalence left fibration}, this is equivalent to $\Sh^\hyp(U_{\kappa\text{-}\proet},\mathcal S)$, and the diagram corresponds to the diagram
\[
\cdots \mathrel{\substack{\rightarrow\\[-0.6ex] \rightarrow\\[-0.6ex] \rightarrow}} h_{U_1} \mathrel{\substack{\rightarrow\\[-0.6ex] \rightarrow}} h_{U_0} \to h_U
\]
by \autoref{Lem Gal left fibration}. The condition that this is a colimit diagram in $\Sh^\hyp(U_{\kappa\text{-}\proet},\mathcal S)$ is exactly the condition that $U_\bullet \to U$ is a hypercover.
\end{proof}

\begin{Cor}\label{Cor hypersheaf}
Let $\mathscr C$ be a $\kappa$-condensed $\infty$-category. Then the functor
\[
\Fun\cts(\GAL_\kappa(-),\mathscr C) \colon \Sch^\kappa \to \Cat_\infty
\]
is a pro-\'etale hypersheaf.
\end{Cor}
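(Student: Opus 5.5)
The plan is to deduce \autoref{Cor hypersheaf} formally from \autoref{Thm van Kampen}: an $\infty$-functor $\mathscr C\op$-valued contravariant construction that turns colimits into limits will transport the hypercosheaf property of $\GAL_\kappa$ into a hypersheaf property. Concretely, write $\Fun\cts(-,\mathscr C) \colon \Cond_\kappa(\Cat_\infty)\op \to \widehat{\Cat}_\infty$ for the functor $\mathscr D \mapsto \Fun\cts(\mathscr D,\mathscr C)$. The first step is to show that this functor takes small colimits in $\Cond_\kappa(\Cat_\infty)$ to limits in $\widehat{\Cat}_\infty$. The second step is to compose it with $\GAL_\kappa \colon \Sch^\kappa \to \Cond_\kappa(\Cat_\infty)$ and apply \autoref{Thm van Kampen}.

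For the first step, I would identify the underlying $\infty$-groupoid of $\Fun\cts(\mathscr D,\mathscr C)$ with $\Map_{\Cond_\kappa(\Cat_\infty)}(\mathscr D,\mathscr C)$, and more generally $\Map_{\Cat_\infty}(K,\Fun\cts(\mathscr D,\mathscr C)) \simeq \Map_{\Cond_\kappa(\Cat_\infty)}(\mathscr D \times \underline K,\mathscr C)$ for every small $\infty$-category $K$, where $\underline K$ is the constant condensed $\infty$-category. This follows from the end formula: $\Fun(K,-)$ commutes with the limit over the twisted arrow category, and $\Fun(K,\Fun(\mathscr D(S),\mathscr C(S))) \simeq \Fun(\mathscr D(S)\times K,\mathscr C(S))$, which reassembles into maps out of $\mathscr D \times \underline K$. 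Since $\Cond_\kappa(\Cat_\infty)$ is a localisation of the presheaf $\infty$-category $\Fun(\ProFin_\kappa\op,\Cat_\infty)$, which is cartesian closed, and the sheaf condition is preserved by exponentials, the functor $- \times \underline K$ preserves colimits in $\Cond_\kappa(\Cat_\infty)$. Hence $\mathscr D \mapsto \Map(\mathscr D \times \underline K,\mathscr C)$ takes colimits to limits for every $K$. Because the functors $\Map_{\Cat_\infty}(K,-)$ for $K \in \{\Delta^0,\Delta^1\}$ jointly detect limits in $\widehat{\Cat}_\infty$, the first step follows.

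For the second step, let $U_\bullet \to U$ be a pro-\'etale hypercover in $\Sch^\kappa$. By \autoref{Thm van Kampen}, $\colim_{\Delta\op} \GAL_\kappa(U_\bullet) \simeq \GAL_\kappa(U)$ in $\Cond_\kappa(\Cat_\infty)$. Applying the first step gives $\Fun\cts(\GAL_\kappa(U),\mathscr C) \simeq \lim_\Delta \Fun\cts(\GAL_\kappa(U_\bullet),\mathscr C)$. The same argument handles the finite-coproduct condition, since a disjoint union of schemes is a coproduct-type hypercover. By \autoref{Rmk hypersheaf hypercover}, this is the hypersheaf condition.

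The main obstacle is the first step, in particular making precise that $\Fun\cts$ is an internal hom compatible with the colimit-preservation of $-\times \underline K$ in the sheaf $\infty$-category. Colimits in $\Cond_\kappa(\Cat_\infty)$ are sheafified colimits, so I must check that $-\times\underline K$ commutes with sheafification. I would argue this by noting that sheafification is left exact and that $\underline K$ is the sheafification of a constant presheaf. Size issues are harmless here, since $\mathscr C$ may be large and all limits are taken in $\widehat{\Cat}_\infty$.
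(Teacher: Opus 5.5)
Your proposal is correct and follows the paper's route: the paper deduces the corollary from \autoref{Thm van Kampen}, using that products in $\Cat_\infty(\mathscr X)$ commute with colimits, so that $\Fun(-,\mathscr C)$ takes colimits to limits (\autoref{Rmk internal hom colim}). The only difference is cosmetic: the paper works with the internal hom $\mathcal Fun(-,\mathscr C)$ and then takes global sections, whereas you test directly against constant categories $\underline K$ for $K \in \{\Delta^0,\Delta^1\}$.
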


\begin{proof}
This follows from \autoref{Thm van Kampen} by \autoref{Rmk internal hom colim}.
\end{proof}

\begin{Ex}
The exodromy results of \autoref{Thm proetale exodromy}, \autoref{Thm etale exodromy}, and \autoref{Thm constructible exodromy} compute $\Fun\cts(\GAL_\kappa(-),\mathscr C)$ when $\mathscr C$ is $\COND_\kappa(\mathscr E)$, $\underline{\mathscr E}$, and $\underline{\mathcal S}{}\cons$ respectively. Thus, \autoref{Cor hypersheaf} shows that $\Sh^\hyp((-)_\proet,\mathscr E)$, $\Sh^{\post}((-)_\et,\mathscr E)$, and $\Sh\cons((-)_\et,\mathcal S)$ are pro-\'etale hypersheaves. These statements can also be proved directly. For the first, this is \autoref{Cor hypersheaf of categories}.
\end{Ex}

\begin{Par}\label{Par cosheaf results}
In the qcqs case and when $\kappa$ is strongly inaccessible, \cite{HHLMMW} proves two related descent statements for $\GAL_\kappa$:
\begin{itemize}
\item it is a hypercosheaf for the topology whose covers are given by jointly surjective integral maps \cite[Cor.~6.16]{HHLMMW};
\item it becomes a pro-\'etale hypercosheaf after applying the `invert everything' functor \cite[Prop.~3.17 and Prop.~3.38]{HHLMMW}.
\end{itemize}
\end{Par}

\begin{Rmk}
Since $\GAL_\kappa(X)$ is a $\kappa$-condensed $1$-category, the continuous unstraightening of a sheaf $\mathscr F \in \Sh^\hyp(X_{\kappa\text{-}\proet},\mathcal S)$ is an $n$-category if and only if $\mathscr F$ is $n$-truncated (if $n \geq 1$). Thus, \eqref{Eq equivalence left fibration} restricts to an equivalence
\[
\Sh(X_{\kappa\text{-}\proet},\mathcal S_{\leq n}) \simeq \LFib_{/\GAL_\kappa(X)} \underset{\COND_\kappa(\Cat_\infty)}\times \COND_\kappa(\Cat_n).
\]
By \autoref{Prop etale to proetale}, the pullback $\Sh(X_\et,\mathcal S_{\leq n}) \to \Sh(X_{\kappa\text{-}\proet},\mathcal S_{\leq n})$ is fully faithful. Under the equivalence above, the essential image of $\Sh(X_\et,\mathcal S_{\leq n})$ corresponds to the class of \makebox{$n$-categories} in $\LFib_{/\GAL_\kappa(X)}$ for which the structure map $\mathscr C \to \GAL_\kappa(X)$ has `discrete' fibres in a certain sense. This suggests thinking of the left fibration $\mathscr C \to \GAL_\kappa(X)$ defined by an \'etale sheaf $\mathscr F \in \Sh(X_\et,\mathcal S_{\leq n})$ as some sort of `espace \'etal\'e' of $\mathscr F$, analogous to the equivalence $\operatorname{LocalHomeo}_{/X} \simeq \Sh(X)$ for a topological space $X$.
\end{Rmk}

\subsection{Change of cardinal}\label{Sec change of cardinal}
The definition $\GAL_\kappa(X)$ appears to depend on a choice of auxiliary cardinal $\kappa > \size(X)$. In this section, we show that knowing a single value of $\GAL_\kappa(X)$ for $\kappa > \size(X)$ determines all values of $\kappa > \size(X)$.

If $\lambda > \kappa > \size(X)$, write $\nu \colon \ProFin_\kappa \hookrightarrow \ProFin_\lambda$ for the inclusion, inducing an adjunction $\nu_* \colon \Cat_\infty(\Cond_\lambda) \leftrightarrows \Cat_\infty(\Cond_\kappa):\!\nu^*$ as in \ref{Par internal cat}. It is clear that $\nu_*\GAL_\lambda(X) \simeq \GAL_\kappa(X)$. Conversely, we will prove in \autoref{Cor Kan extended} that $\GAL_\lambda(X) \simeq \nu^*\GAL_\kappa(X)$, so that knowing $\GAL_\kappa(X)$ for a single value of $\kappa > \size(X)$ determines $\GAL_\lambda(X)$ for any other $\lambda > \size(X)$. In particular, this implies that the functor $\GAL(X) \colon \ProFin\op \to \Cat$ is accessible, so $\GAL(X)$ is a condensed category in the sense of \cite[Def.~2.11]{ClausenScholze}.

In the lemma below, we write $\SpecTop$ for the category of spectral topological spaces and spectral continuous functions \cite[Tag \href{https://stacks.math.columbia.edu/tag/08YG}{08YG}]{Stacks}.

\begin{Lemma}\label{Lem kappa-compact?}
Let $X$ be a qcqs scheme, and let $\kappa = \size(X)^+$ be the successor cardinal. Let $P$ be a $\kappa$-filtered poset, let $p \mapsto S_p$ be a functor $P\op \to \SpecTop$ such that $S_q \to S_p$ is surjective for all $p \to q$ in $P$, and let $S = \lim_{p \in P\op} S_p$. Then the natural map
\[
\colim_{\substack{\longrightarrow \\ p \in P}} \Map_{\LTop^\coh}\!\big(\Sh(X_\et),\Sh(S_p)\big) \to \Map_{\LTop^\coh}\!\big(\Sh(X_\et),\Sh(S)\big)
\]
is an equivalence.
\end{Lemma}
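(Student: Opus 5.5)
The plan is to pass from coherent topoi to their pretopoi of coherent objects and then run a standard ``$\kappa$-small source, $\kappa$-filtered target'' argument. Throughout, $\Map_{\LTop^\coh}(\Sh(X_\et),\Sh(S_p))$ is read as the space of coherent geometric morphisms $\Sh(S_p) \to \Sh(X_\et)$, given by their pullback functors. For $p \to q$ the map $S_q \to S_p$ induces the transition map by precomposition with $\Sh(S_q) \to \Sh(S_p)$. The map to the right hand side is precomposition with $\Sh(S) \to \Sh(S_p)$.

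\textbf{Step 1 (reduce to pretopoi).} Since $X$ is qcqs, $\Sh(X_\et)$ is coherent, and so is $\Sh(T)$ for every spectral space $T$. By the SGA4 description of coherent topoi, coherent geometric morphisms $\Sh(T) \to \Sh(X_\et)$ are the same as coherent functors $\mathscr C \to \Sh(T)^\coh$, where $\mathscr C = \Sh(X_\et)^\coh$. Here a coherent functor is one preserving finite limits, finite coproducts and effective epimorphisms. This identification is functorial in $T$ and is an equivalence on groupoids of morphisms. So it suffices to show
\[
\colim_{p \in P} \Fun^{\coh}\big(\mathscr C,\Sh(S_p)^\coh\big)^{\simeq} \to \Fun^{\coh}\big(\mathscr C,\Sh(S)^\coh\big)^{\simeq}
\]
is an equivalence.

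\textbf{Step 2 (size of $\mathscr C$).} I would check that $\mathscr C$ is essentially $\kappa$-small, i.e.\ has at most $\size(X)$ isomorphism classes of objects and morphisms. Every coherent object is a quotient of a finite coproduct of representables $h_U$ with $U \in \AffEt_{/X}$ by a coherent equivalence relation. That relation is again covered by finitely many affine étale schemes. There are at most $\size(X)$ such $U$ up to isomorphism, and $\lvert\Hom(U,V)\rvert \leq \size(X)$. Hence the count is at most $\size(X) < \kappa$, and $\kappa = \size(X)^+$ is regular.

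\textbf{Step 3 (continuity of the target).} I would show that $\Sh(S)^\coh \simeq \colim_p \Sh(S_p)^\coh$ as a filtered colimit of pretopoi in $\Cat$. This is the topological analogue of the statement $\Sh(W_\et)^\coh = \colim \Sh(U_{i,\et})^\coh$ from SGA4 Exp.~IX, Cor.~2.7.4 used in the proof of Lemma~\ref{Lem coherent}. Coherent sheaves on a spectral space are locally constructibly given by finite data, namely quasi-compact opens and finite sets. The quasi-compact opens of $S$ form the filtered colimit of those of the $S_p$. Surjectivity of $S_q \to S_p$ makes the transition maps on lattices of quasi-compact opens injective. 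I expect this surjectivity to be what ensures that the transition functors are faithful and conservative, and that epimorphisms in $\Sh(S)^\coh$ already descend to some finite stage. I expect this step to be the main obstacle. I would prove it directly via Hochster duality, describing coherent objects as finite colimits of $h_U$ for $U$ quasi-compact open.

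\textbf{Step 4 ($\kappa$-compactness argument).} With $\mathscr C$ $\kappa$-small and $P$ $\kappa$-filtered, I would verify surjectivity and injectivity separately.

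For essential surjectivity, take a coherent functor $F\colon \mathscr C \to \colim_p \Sh(S_p)^\coh$. Its values on the $<\kappa$ objects and morphisms, and the relations between them, are each defined at some stage. The same holds for the witnesses that $F$ preserves each of the $<\kappa$ relevant finite limits, finite coproducts and effective epimorphisms. By $\kappa$-filteredness all of these live at a common stage $p$. There they define a coherent functor, because preservation of a given limit, coproduct or epimorphism is detected at some finite stage by Step 3.

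Full faithfulness on groupoids follows the same way: a natural isomorphism has $<\kappa$ components, all of which, together with their naturality squares, descend to a common stage.
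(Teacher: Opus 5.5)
Your argument is correct. It shares the paper's skeleton:
\begin{itemize}
\item pass to pretopoi via \cite[Exp.~VI, Exc.~3.11]{SGA4II};
\item bound the size of $\Sh(X_\et)^\coh$ (the paper does this by writing constructible sheaves as finite colimits of representables, \cite[Exp.~IX, Prop.~2.7]{SGA4III});
\item use $\Sh(S)^\coh\simeq\colim_p\Sh(S_p)^\coh$, which the paper simply quotes (see \cite[8.1.4]{BGH});
\item conclude by $\kappa$-compactness.
\end{itemize}

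The real difference is how the pretopos condition is handled. The paper first gets the equivalence for $\Map_{\Cat}$. It then shows that a lift $s^*\colon\Sh(X_\et)^\coh\to\Sh(S_p)^\coh$ is a morphism of pretopoi as soon as $\pi_p^*\circ s^*$ is. Surjectivity of the transition maps makes $\pi_p\colon S\to S_p$ surjective \cite[Tag~0A2W]{Stacks}, so $\pi_p^*$ is conservative. Since $\pi_p^*$ preserves the relevant finite (co)limits, it reflects their preservation by $s^*$. So the condition already holds at the stage where the lift lives, with no extra bookkeeping.

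You instead push each of the $<\kappa$ preservation conditions to some later stage, using that an arrow invertible in a filtered colimit of categories is invertible at a finite stage and that the transition functors are pretopos morphisms. You then use $\kappa$-filteredness a second time to find a common bound. This is valid.

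As a consequence, your proof never uses surjectivity, and your guess in Step~3 about its role is misplaced. The continuity $\Sh(S)^\coh\simeq\colim_p\Sh(S_p)^\coh$ holds for any cofiltered limit of spectral spaces along spectral maps: the quasi-compact opens of $S$ form the colimit of those of the $S_p$, by compactness in the constructible topology. Likewise, a map $f\colon x\to y$ in a pretopos is an effective epimorphism iff $\operatorname{im}(f)\to y$ is an isomorphism, and that is again detected at a finite stage.

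So your route proves the lemma without the surjectivity hypothesis, which the paper arranges in \autoref{Cor Kan extended} by passing to the cofinal subcategory of surjections. The paper's route keeps that hypothesis in exchange for a one-line conservativity argument.
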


Note that the qcqs hypothesis is needed to guarantee that $\Sh(X_\et)$ is coherent. We don't know if $\Sh(X_\et)$ is in fact $\kappa$-compact in $\LTop^\coh$, but we won't need this more general statement.

\begin{proof}
By \cite[Exp.~VI, Exc.~3.11]{SGA4II}, the functor $\mathscr X \mapsto \mathscr X^{\coh}$ is an equivalence between $\LTop^\coh$ and the category $\PreTop$ of small pretopoi. Applying this to $\Sh(X_\et)$ gives the category $\Sh(X_\et)^\coh = \Sh\cons(X_\et)$ of constructible \'etale sheaves (of sets). By \cite[Exp.~IX, Prop.~2.7]{SGA4III}, every constructible sheaf is a finite colimit of sheaves representable by finitely presented \'etale $X$-schemes. In particular, the set of constructible sheaves up to isomorphism has cardinality $\leq \size(X)$, and for constructible sheaves $\mathscr F$ and $\mathscr G$, the set $\Hom(\mathscr F,\mathscr G)$ has cardinality $\leq \size(X)$ (compare with the proof of \autoref{Prop enough wsl kappa}). Since ($\kappa$-)filtered colimits in $\Cat$ are computed as pointwise colimits in $\Cat_\infty(\mathcal S) \subseteq \Fun(\Delta\op,\mathcal S)$, this implies that $\Sh(X_\et)^\coh$ is $\kappa$-compact in $\Cat$. Since $\Sh(S)^\coh = \colim_{p \in P} \Sh(S_p)^\coh$ (see also \cite[8.1.4]{BGH}) and $P$ is $\kappa$-filtered, we conclude that the map
\[
\colim_{\substack{\longrightarrow \\ p \in P}} \Map_{\Cat}\!\big(\Sh(X_\et)^\coh,\Sh(S_p)^\coh\big) \to \Map_{\Cat}\!\big(\Sh(X_\et)^\coh,\Sh(S)^\coh\big)
\]
is an equivalence. It remains to prove that a functor $s^* \colon \Sh(X_\et)^\coh \to \Sh(S_p)^\coh$ is a morphism of pretopoi (i.e., preserves finite limits, finite coproducts, and quotients by equivalence relations) if and only if the composition $\pi_p^* \circ s^*$ is, where $\pi_p \colon S \to S_p$ is the natural map. Since $\pi_p^*$ is a morphism of pretopoi, it is clear that $\pi_p^* \circ s^*$ is a morphism of pretopoi if $s^*$ is. Conversely, suppose $\pi_p^* \circ s^*$ is a morphism of pretopoi. The surjectivity of the transition maps $S_q \to S_p$ implies the same for $\pi_p$ \cite[Tag \href{https://stacks.math.columbia.edu/tag/0A2W}{0A2W}]{Stacks}. Thus $\pi_p^*$ is conservative, so if $\mathscr D$ is a small category, then $s^*$ preserves all $\mathscr D$-indexed (co)limits if $\pi_p^*$ and $\pi_p^* \circ s^*$ do so.
\end{proof}

\begin{Cor}\label{Cor Kan extended}
Let $X$ be a scheme, and let $\lambda > \kappa > \size(X)$ be cardinals. Then $\nu^*\GAL_\kappa(X) \simeq \GAL_\lambda(X)$, where $\nu^* \colon \Cat_\infty(\Cond_\kappa) \to \Cat_\infty(\Cond_\lambda)$ denotes the pullback.
\end{Cor}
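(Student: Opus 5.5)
We need to show that $\GAL_\lambda(X)$ is the left Kan extension (sheafified pullback) of $\GAL_\kappa(X)$ along $\nu \colon \ProFin_\kappa \hookrightarrow \ProFin_\lambda$. The plan is to first reduce to the qcqs (indeed affine) case. For this we use the Zariski cosheaf property of $\GAL$ together with the fact that $\nu^*$ commutes with colimits, being a left adjoint. Concretely, if $S \in \ProFin_\lambda$ and $s_*$ is a locally coherent point, then by \autoref{Def Galois category} there is a finite clopen decomposition of $S$ with each piece factoring coherently through a qcqs open $U_i$. So both sides are glued from the values on qcqs opens. Alternatively, I would work directly with the Grothendieck constructions. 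By \autoref{Cor loccoh size}, $\GAL_\lambda(X)$ has Grothendieck construction $\AffProEt^{\lambda,\wsl}_{/X}$, and this is determined by its restriction to objects lying over qcqs opens.

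In the qcqs case, I will describe $\nu^*\GAL_\kappa(X)$ on $S \in \ProFin_\lambda$ as the sheafification of the presheaf $S \mapsto \colim_{S \to S'} \GAL_\kappa(X)(S')$, where $S'$ ranges over $\kappa$-small quotients of $S$. Write $S = \lim_{p\in P\op} S_p$ as a cofiltered limit of its $\kappa$-small quotients along surjections. Using $\kappa' = \size(X)^+ \le \kappa$, the index poset $P$ of $\kappa$-small (hence $\kappa'$-small) quotients is $\kappa'$-filtered: a $\kappa'$-small family of quotients has a joint refinement that is still $\kappa'$-small. \autoref{Lem kappa-compact?}, applied in its 1-categorical form via pretopoi (its proof gives the equivalence on categories and not just on mapping spaces, since $\Sh(X_\et)^\coh$ is $\kappa'$-compact in $\Cat$), then shows that the natural comparison
\[
\colim_{p \in P} \GAL(X)(S_p) \to \GAL(X)(S)
\]
is an equivalence. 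Here $\GAL(X)(S)=\Fun^*_\coh(\Sh(X_\et),\Sh(S))$ because $X$ is qcqs.

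It follows that the presheaf left Kan extension of $\GAL_\kappa(X)$ already agrees with $\GAL_\lambda(X)$ on every $S\in\ProFin_\lambda$. Since $\GAL_\lambda(X)$ is a sheaf, sheafification changes nothing. The canonical comparison map $\nu^*\GAL_\kappa(X) \to \GAL_\lambda(X)$, adjoint to the identification $\nu_*\GAL_\lambda(X)\simeq\GAL_\kappa(X)$, is therefore an equivalence.

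I expect the cofinality and filteredness bookkeeping to be the main obstacle. The hard part is checking that the colimit over all maps $S \to S'$ with $S'$ $\kappa$-small is computed by the subposet of surjective quotients, and that this subposet is $\size(X)^+$-filtered even when $\kappa$ is singular. I would handle it by noting that any map $S\to S'$ factors through its image, which is a $\kappa$-small quotient. I would then take joint refinements via images of $S \to \prod S_{p_j}$. These are $\kappa$-small when the family has size $\le\size(X)$, because $\kappa > \size(X)$ and $\lvert C(-,\mathbf F_2)\rvert$ is bounded by the Boolean subalgebra generated.
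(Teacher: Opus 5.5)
Your overall route is the paper's. You reduce to the qcqs case using that $\GAL_\kappa$ and $\GAL_\lambda$ are Zariski (indeed pro-\'etale) hypercosheaves by \autoref{Thm van Kampen} and that $\nu^*$ preserves colimits; your alternative via Grothendieck constructions is too vague to replace this. In the qcqs case you show that the \emph{presheaf} left Kan extension already computes $\GAL_\lambda(X)(S)$, by passing to the cofinal poset of surjective $\kappa$-small quotients $S \twoheadrightarrow S_p$ and applying \autoref{Lem kappa-compact?}. Your ``1-categorical form'' of that lemma is fine once you note that $\Sh(X_\et)^\coh \times [n]$ is also $\size(X)^+$-compact in $\Cat$. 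That makes $\Fun(\Sh(X_\et)^\coh,-)$, and not just $\Map(\Sh(X_\et)^\coh,-)$, commute with the colimit; the paper gets the same effect by applying the lemma to $S_p \times [n]$.

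The genuine problem is the step you singled out as the main obstacle: it fails exactly in the singular case you wanted to cover. The Boolean subalgebra generated by $\{C(S_{p_j},\mathbf F_2)\}_{j \in J}$ has cardinality at most $\max(\aleph_0,\sum_j \lvert C(S_{p_j},\mathbf F_2)\rvert)$. When $\operatorname{cf}(\kappa) \leq \size(X)$, this sum can reach $\kappa$ even though $\lvert J\rvert \leq \size(X)$ and every term is $<\kappa$. For instance, take $X = \Spec \mathbf F_p$, $\kappa = \aleph_\omega$, $\lambda = \aleph_{\omega+1}$, $S = \{0,1\}^{\aleph_\omega}$, and the projections $S \twoheadrightarrow T_n = \{0,1\}^{\aleph_n}$. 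Since $S \to \prod_n T_n$ is injective, any $T \in \ProFin_\kappa$ under $S$ with maps to every $T_n$ under $S$ contains $S$ as a closed subset, so $\lvert C(T,\mathbf F_2)\rvert \geq \aleph_\omega$. Hence neither your poset $P$ nor $(S \downarrow \ProFin_\kappa)\op$ is $\aleph_1$-filtered, whereas \autoref{Lem kappa-compact?} needs $\size(X)^+ = \aleph_1$-filtered. The paper's proof makes the same assertion, namely that $(S\downarrow\ProFin_\kappa)\op$ has $\kappa$-small colimits, and it fails for the same reason. Your argument, like the paper's, is fine whenever $\operatorname{cf}(\kappa) > \size(X)$.

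The repair is short. The cardinal $\kappa_0 = \size(X)^+$ is regular and uncountable, so $\ProFin_{\kappa_0}$ is closed under $\kappa_0$-small limits. Your argument therefore proves $\nu^*_{\kappa_0\to\mu}\GAL_{\kappa_0}(X) \simeq \GAL_\mu(X)$ for every $\mu \geq \kappa_0$. Since these pullbacks are left adjoint to composable restrictions, $\nu^*_{\kappa_0\to\lambda} \simeq \nu^*_{\kappa\to\lambda}\circ\nu^*_{\kappa_0\to\kappa}$. Applying the cases $\mu = \kappa$ and $\mu = \lambda$ then gives $\nu^*\GAL_\kappa(X) \simeq \GAL_\lambda(X)$. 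Equivalently, show that $p \mapsto \GAL(X)(S_p)$ on $P$ is left Kan extended from the $\size(X)^+$-filtered subposet of $\size(X)^+$-small quotients, by applying \autoref{Lem kappa-compact?} to each $S_p$. The colimit over $P$ then reduces to one over that subposet. Finally, your parenthetical ``$\kappa$-small (hence $\kappa'$-small)'' has the implication backwards.
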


\begin{proof}
First assume that $X$ is qcqs. Write $\nu_{\text{pre}}^* \colon\! 
 \PSh(\ProFin_\kappa,\Cat_\infty) \to \PSh(\ProFin_\lambda,\Cat_\infty)$ for the left Kan extension; then $\nu^*$ is obtained by applying $\nu_{\text{pre}}^*$ followed by sheafification (if $\kappa$ and $\lambda$ are regular, the last step is not needed \cite[Prop.~2.1.8]{Mann}, but this will not play a role in our proof). We know that $\nu_*\GAL_\lambda(X) \simeq \GAL_\kappa(X)$, and it suffices to show that the counit $\nu_{\text{pre}}^*\GAL_\kappa(X) \to \GAL_\lambda(X)$ is an equivalence. Let $S \in \ProFin_\lambda$, and let $\mathscr C = (S \downarrow \ProFin_\kappa)\op$. Then the map $(\nu_{\text{pre}}^*\GAL_\kappa(X))(S) \to \GAL_\lambda(X)(S)$ is given by
\begin{equation}
\colim_{\substack{\longrightarrow \\ T \in \mathscr C}} \Fun_\coh^*(\Sh(X_\et),\Sh(T)) \to \Fun_\coh^*(\Sh(X_\et),\Sh(S)).\label{Eq left Kan}
\end{equation}
Note that the category $\mathscr C$ is $\size(X)^+$-filtered since it has $\kappa$-small colimits and $\kappa \geq \size(X)^+$. Let $\mathscr C' \subseteq \mathscr C$ be the full subcategory on morphisms $S \to T$ that are surjective. Since $(S \downarrow \ProFin_\kappa)$ has finite limits and every map $S \to T$ factors as $S \twoheadrightarrow T' \hookrightarrow T$ with $T' \in \ProFin_\kappa$, we see that $\mathscr C'$ is $\size(X)^+$-filtered and the inclusion $\mathscr C' \subseteq \mathscr C$ is cofinal (see \cite[Prop.~3.2.4]{KashiwaraSchapira} for the $\omega$-filtered case). We may choose a $\size(X)^+$-filtered poset $P$ and a cofinal functor $P \to \mathscr C'$ (see e.g.\ \cite[Prop.~5.3.1.16]{LurieHTT}). Writing this as $p \mapsto (S \to S_p)$, we know that each $S \to S_p$ is surjective and $S = \lim_{p \in P} S_p$. Then each $S_q \to S_p$ for $p \to q$ in $P$ is also surjective, so the diagram $p \mapsto S_p \times [n]$ satisfies the hypothesis of \autoref{Lem kappa-compact?} for all $n \in \mathbf N$, where $[n]$ denotes the finite T$_0$-space given by the Alexandrov topology on~$[n]$. Since $\Map([n],\Fun_\coh^*(\Sh(X_\et),\Sh(T))) \simeq \Map_{\LTop^\coh}(\Sh(X_\et),\Sh(T \times [n]))$ for all $T \in \SpecTop$ and filtered colimits in complete Segal spaces are computed in $\Fun(\Delta\op,\mathcal S)$, we conclude from \autoref{Lem kappa-compact?} that \eqref{Eq left Kan} is an equivalence. This proves the result when $X$ is qcqs.

If $X$ is an arbitrary $\kappa$-small scheme, choose an affine open cover $X = \bigcup_{i \in I} U_i$ and affine covers $U_i \cap U_j = \bigcup_{k \in K_{i,j}} V_k$. Then the $U_i$ and $V_k$ are $\kappa$-small affine schemes, so the counit $\nu^*\GAL_\kappa(W) \to \GAL_\lambda(W)$ is an equivalence for $W = U_i$ and $W = V_k$. The result for $X$ now follows from \autoref{Thm van Kampen} since $\nu^*$ preserves colimits.
\end{proof}

\appendix

\titleformat{\section}[block]{\Large\bfseries\scshape\filcenter}{Appendix \thesection.}{1ex}{}
\section{Generalities on \texorpdfstring{$\infty$}{∞}-topoi}\label{Sec appendix A}
In this appendix, we review some general results on $\infty$-topoi that are used in the paper.

\subsection{Sheaves with values}
This section is a review of (hyper)sheaves with values in an $\infty$-category $\mathscr E$. We give a general definition, which we then compare with other standard definitions in special cases.

\begin{Def}
Write $\widehat{\Cat}{}_\infty^R$ for the $\infty$-category of large $\infty$-categories with small limits, where morphisms are limit-preserving functors. Likewise, write $\widehat{\Cat}{}_\infty^L$ for the $\infty$-category of large $\infty$-categories with small colimits, where morphisms are colimit-preserving functors. Note that $(-)\op$ gives an equivalence $\widehat{\Cat}{}_\infty^R \simeq \widehat{\Cat}{}_\infty^L$. Given $\mathscr C \in \widehat{\Cat}{}_\infty^R$ and $\mathscr D \in \widehat{\Cat}{}_\infty$, write $\Fun^R(\mathscr C,\mathscr D) \subseteq \Fun(\mathscr C,\mathscr D)$ for the full (large) $\infty$-subcategory of limit-preserving functors, and likewise for $\Fun^L$. (As in \cite[Notation 5.1.5.1]{LurieHTT}, we use this notation even outside the presentable setting, noting that limit preserving functors need not necessarily be right adjoints in this level of generality.)
\end{Def}

\begin{Def}\label{Def sheaf with values}
Let $\mathscr X$ be an $\infty$-topos, and let $\mathscr E$ be an $\infty$-category. Then we write $\Sh(\mathscr X,\mathscr E)$ for the $\infty$-category $\Fun^R(\mathscr X\op,\mathscr E)$ of limit preserving functors $\mathscr X\op \to \mathscr E$. (See also \cite[Notation 6.3.5.16]{LurieHTT} and \cite[Def.~1.3.1.4]{LurieSAG}.)
\end{Def}

\begin{Rmk}\label{Rmk presentable tensor}
If $\mathscr E$ is presentable, then $\Sh(\mathscr X,\mathscr E)$ agrees with the Lurie tensor product $\mathscr X \otimes \mathscr E$ \cite[Prop.~4.8.1.17]{LurieHA}.
\end{Rmk}

However, sheaves with values are also of interest in cases where $\mathscr E$ is not presentable. For instance, in \hyperref[Sec van Kampen]{\S4.1}, we study hypercosheaves of $\kappa$-condensed $\infty$-categories, meaning that we take $\mathscr E = \Cond_\kappa(\Cat_\infty)\op$.

\begin{Rmk}\label{Rmk sheaf limit preserving functor}
If $\mathscr X$ is a presheaf $\infty$-topos $\PSh(\mathscr C)$ for a small $\infty$-category $\mathscr C$ and $\mathscr E$ has small limits, then the universal property of $\PSh(\mathscr C)$ \cite[Thm.~5.1.5.6]{LurieHTT} shows that $\Sh(\mathscr X,\mathscr E) \simeq \PSh(\mathscr C,\mathscr E)$. If~$\mathscr X$ is an accessible left exact localisation $\iota \colon \mathscr X \leftrightarrows \PSh(\mathscr C) :\! L$ at some class of morphisms $S$ inside a presheaf category $\PSh(\mathscr C)$ for some small $\infty$-category $\mathscr C$, then composition with the localisation $L \colon \PSh(\mathscr C) \to \mathscr X$ gives a functor
\[
F \colon \Sh(\mathscr X,\mathscr E) \to \Sh(\PSh(\mathscr C),\mathscr E) \simeq \PSh(\mathscr C,\mathscr E).
\]
Since $L$ is an accessible localisation, the functor $F$ above is fully faithful and identifies $\Fun^R(\mathscr X\op,\mathscr E)$ with the full $\infty$-subcategory of $\Fun^R(\PSh(\mathscr C)\op,\mathscr E)$ of functors that take morphisms in $S$ to equivalences \cite[Prop.~5.5.4.20]{LurieHTT}. In other words, $\Sh(\mathscr X,\mathscr E)$ consists of those functors $\mathscr C\op \to \mathscr E$ such that the unique small limit preserving extension $\PSh(\mathscr C)\op \to \mathscr E$ factors through $\mathscr X\op$.
\end{Rmk}

\begin{Rmk}\label{Rmk sheaf local}
Assume $\mathscr X$ is a left exact localisation \makebox{$\iota \colon \mathscr X \leftrightarrows \PSh(\mathscr C) :\! L$} for some small $\infty$-category $\mathscr C$ and that $\mathscr E$ has small limits. A more common reformulation of the final statement of \autoref{Rmk sheaf limit preserving functor} is that $\Sh(\mathscr X,\mathscr E) \subseteq \PSh(\mathscr C,\mathscr E)$ consists of those functors $\mathscr F \colon \mathscr C\op \to \mathscr E$ such that the presheaf of spaces $\Map_{\mathscr E}(E,\mathscr F(-))$ is in $\mathscr X$ for all $E \in \mathscr E$. Indeed, for every $E \in \mathscr E$, the functor $\Map_{\mathscr E}(E,-)$ preserves limits, so the diagram
\[
\begin{tikzcd}
\PSh(\mathscr C,\mathscr E) \ar{r}{\sim}[swap]{\phi}\ar{d}[swap]{\Map_{\mathscr E}(E,-)} & \Fun^R(\PSh(\mathscr C)\op,\mathscr E) \ar{d}{\Map_{\mathscr E}(E,-)} \\
\PSh(\mathscr C) \ar{r}{\sim}[swap]{\psi} & \Fun^R(\PSh(\mathscr C)\op,\mathcal S)
\end{tikzcd}
\]
commutes (up to natural isomorphism). Since the functors $\Map_{\mathscr E}(E,-)$ for $E \in \mathscr E$ are jointly conservative, we see that $\phi(\mathscr F)$ sends morphisms in the class $S$ of \autoref{Rmk sheaf limit preserving functor} to equivalences if and only if the same holds for $\psi(\Map_{\mathscr E}(E,\mathscr F))$ for all $E \in \mathscr E$. Since $\psi$ is given by $\mathscr G \mapsto \Map_{\PSh(\mathscr C)}(-,\mathscr G)$, this means that $\Map_{\mathscr E}(E,\mathscr F)$ is $S$-local for all $E \in \mathscr E$, i.e.\ lies in $\mathscr X$.
\end{Rmk}

\begin{Def}\label{Def sheaf hypersheaf}
If $\mathscr X = \Sh(\mathscr C)$ for a small $\infty$-category $\mathscr C$ with a Grothendieck topology, and $\mathscr E$ is an $\infty$-category, we write $\Sh(\mathscr C,\mathscr E)$ for $\Sh(\Sh(\mathscr C),\mathscr E)$. Likewise, if $\bullet \in \{\hyp,\post\}$, we write $\Sh^\bullet(\mathscr C,\mathscr E)$ for $\Sh(\Sh^\bullet(\mathscr C),\mathscr E)$.
\end{Def}

\begin{Rmk}\label{Rmk sheaf condition}
Let $\mathscr X = \Sh(\mathscr C)$ for a small $\infty$-category $\mathscr C$ with a Grothendieck topology, and assume that $\mathscr E$ has small limits. Then the class $\overline S$ of morphisms in $\PSh(\mathscr C)$ that become equivalences in $\Sh(\mathscr C)$ is generated by the set $S$ of monomorphisms $\mathcal I \hookrightarrow h_C$ that correspond to covering sieves on $C \in \mathscr C$ \cite[Def.~6.2.2.6]{LurieHTT}. By \autoref{Rmk sheaf local}, we see that $\Sh(\mathscr X,\mathscr E)$ coincides with those functors $\mathscr F \colon \mathscr C\op \to \mathscr E$ such that for every covering sieve $\mathscr C_{/C}^{(0)} \subseteq \mathscr C_{/C}$, the natural map
\begin{equation}
\mathscr F(C) \to \lim_{U \in \mathscr C_{/C}^{(0)}} \mathscr F(U)\label{Eq sheaf}
\end{equation}
is an equivalence. This is the obvious extension of the usual sheaf condition (see for instance \cite[Def.~6.2.2.6]{LurieHTT} or \cite[Exp.~I, 4.3 and Exp.~II, Def.~2.1]{SGA4I}) to $\mathscr E$-valued sheaves. When the Grothendieck topology on $\mathscr C$ comes from a pretopology \cite[Def.~2.1]{PortaYu}, this is equivalent to the more familiar condition that the diagram
\[
\mathscr F(C) \to \prod_{i \in I} \mathscr F(U_i) \mathrel{\substack{\rightarrow\\[-0.6ex] \rightarrow}} \prod_{i,j \in I} \mathscr F(U_i \times_C U_j) \mathrel{\substack{\rightarrow\\[-0.6ex] \rightarrow\\[-0.6ex] \rightarrow}} \cdots
\]
is a limit diagram in $\mathscr E$ for every covering $\{U_i \to C\}_{i \in I}$. Indeed, if $\mathscr C_{/C}^{(0)} \subseteq \mathscr C_{/C}$ is the sieve generated by $\{U_i \to C\}_{i \in I}$, then the corresponding monomorphism $\mathcal I \hookrightarrow h_C$ is the $(-1)$-truncation of $\coprod_{i \in I} h_{U_i} \to h_C$ \cite[Lem.~6.2.3.18]{LurieHTT}. In other words, $\mathcal I$ is the geometric realisation of the \v Cech nerve of $\coprod_{i \in I} h_{U_i} \to h_C$ \cite[Prop.~6.2.3.4]{LurieHTT}. The unique limit-preserving extension of $\mathscr F$ to $\PSh(\mathscr C)\op$ takes $\coprod_{i \in I} h_{U_i}$ to $\prod_{i \in I} \mathscr F(U_i)$, and analogously for the iterated fibre products since they are representable in $\mathscr C$ by \cite[Def.~2.1(iii)]{PortaYu}. The geometric realisation is then taken to the limit above.
\end{Rmk}

\begin{Rmk}\label{Rmk hypersheaf hypercover}
Likewise, let $\mathscr X = \Sh^\hyp(\mathscr C)$ for a small $\infty$-category $\mathscr C$ with a Grothendieck topology, and assume that $\mathscr E$ has small limits. Let $\overline S \subseteq \Fun([1],\PSh(\mathscr C))$ be the class of morphisms that become equivalences after hypersheafification. We say that an augmented simplicial presheaf $\mathscr G_\bullet \to h_C$ is a \emph{hypercovering} of $C \in \mathscr C$ if each $\mathscr G_n$ is a coproduct $\coprod_{i \in I_n} h_{U_i}$ of representable presheaves and the map $\lvert \mathscr G_\bullet \rvert \to h_C$ is in $\overline S$. If $S \subseteq \Fun([1],\PSh(\mathscr C))$ denotes the class of morphisms $\lvert \mathscr G_\bullet \rvert \to h_C$ arising in this way, then $\overline S$ is generated by $S$ under colimits in $\Fun([1],\PSh(\mathscr C))$. We conclude that $\mathscr F \colon \mathscr C\op \to \mathscr E$ is in $\Sh^\hyp(\mathscr C,\mathscr E)$ if and only if, for every hypercovering $\mathscr G_\bullet \to h_C$ with $\mathscr G_n \simeq \coprod_{i \in I_n} h_{U_i}$, the diagram
\[
\mathscr F(C) \to \prod_{i \in I_0} \mathscr F(U_i) \mathrel{\substack{\rightarrow\\[-0.6ex] \rightarrow}} \prod_{i \in I_1} \mathscr F(U_1) \mathrel{\substack{\rightarrow\\[-0.6ex] \rightarrow \\[-0.6ex] \rightarrow}} \cdots
\]
is a limit diagram in $\mathscr E$.
\end{Rmk}

\begin{Rmk}
If $\mathscr E$ does not have small limits, \autoref{Def sheaf with values} does not always give the expected answer. For instance, in the presheaf case of \autoref{Rmk sheaf limit preserving functor}, one would like to consider $\PSh(\mathscr C,\mathscr E)$. For instance, if $\mathscr C = *$ is a point, we expect to get~$\mathscr E$. But instead, \autoref{Def sheaf with values} gives the objects $x \in \mathscr E$ for which all small limits over the constant diagram with value $x$ exist. For instance, when $\mathscr E$ is the category of finite dimensional $k$-vector spaces, $\Sh(\PSh(*),\mathscr E)$ only contains the $0$ object. The situation is similar in the (hyper)sheaf cases of \autoref{Rmk sheaf condition} and \autoref{Rmk hypersheaf hypercover}.

However, the advantage of \autoref{Def sheaf with values} is that it only depends on the $\infty$-topos $\mathscr X$, and not on a presentation as a presheaf/sheaf/hypersheaf $\infty$-category. As we showed above, in cases where $\mathscr E$ has small limits, it coincides with the hands-on definition of presheaves/sheaves/hypersheaves.
\end{Rmk}

\begin{Rmk}\label{Rmk hypersheaf basis}
Let $\mathscr C$ be a small $\infty$-category with a Grothendieck topology, and let $\mathscr B \subseteq \mathscr C$ be a basis, i.e.\ a full subcategory such that for every $C \in \mathscr C$, the sieve generated by $\mathscr B \times_{\mathscr C} \mathscr C_{/C} \subseteq \mathscr C_{/C}$ is covering. Let $\mathscr E$ be an $\infty$-category with small limits. Then the restriction $\Sh^\hyp(\mathscr C,\mathscr E) \to \Sh^\hyp(\mathscr B,\mathscr E)$ is an equivalence. Indeed, for $\mathscr E = \mathcal S$, this is proved in \cite[Cor.~A.7]{Aoki}, and the general result follows by the definition of $\Sh^\hyp(-,\mathscr E)$ as $\Fun^R(\Sh^\hyp(-)\op,\mathscr E)$. (When the topologies on $\mathscr C$ and $\mathscr B$ come from pretopologies, it can also be proved using the methods of \cite[\S2.4]{PortaYu}. When the representable presheaves on $\mathscr C$ are hypersheaves, this is \cite[Prop.~2.22]{PortaYu}, but it is not hard to remove this hypothesis.)
\end{Rmk}

In the remainder of this section, we will show that $\mathscr E$-valued sheaves on an $\infty$-topos $\mathscr X$ form a sheaf of large $\infty$-categories on $\mathscr X$; see \autoref{Prop sheaf of categories} for a precise statement. We first need to construct it as a presheaf.

\begin{Par}\label{Par large codomain fibration}
Let $\mathscr X$ be an $\infty$-topos. Then the codomain functor $\Fun([1],\mathscr X) \to \mathscr X$ is a large cocartesian fibration by \cite[Cor.~2.4.7.11]{LurieHTT} or \cite[Tag \href{https://kerodon.net/tag/01UU}{01UU}]{Kerodon}. Therefore, its straightening is a functor $\mathscr X \to \widehat{\Cat}_\infty$, informally given by $U \mapsto \mathscr X_{/U}$ and taking $f \colon V \to U$ to the natural functor $f_! \colon \mathscr X_{/V} \to \mathscr X_{/U}$ given by $(g \colon W \to V) \mapsto (f \circ g \colon W \to U)$. The latter has a right adjoint given by base change, so the functor $\mathscr X \to \widehat{\Cat}_\infty$ lands in $\PrL$. (Note that this is not the same thing as the functor $\mathscr X\op \to \PrL$ of \cite[Prop.~6.1.1.4]{LurieHTT}, which exists because the base change map $f^* \colon \mathscr X_{/U} \to \mathscr X_{/V}$ preserves small colimits, hence admits a further right adjoint $f_*$.)

If $\mathscr E$ is an $\infty$-category with small limits, there is a functor $\Fun^R(-,\mathscr E) \colon (\widehat{\Cat}{}_\infty^R)\op \to \widehat{\Cat}_\infty$ taking $\mathscr C \in \widehat{\Cat}{}_\infty^R$ to $\Fun^R(\mathscr C,\mathscr E)$ and taking a limit-preserving functor $f \colon \mathscr C \to \mathscr D$ to the precomposition $\Fun^R(\mathscr D,\mathscr E) \to \Fun^R(\mathscr C,\mathscr E)$. Using the equivalence $(-)\op \colon \widehat{\Cat}{}_\infty^R \xrightarrow\sim \widehat{\Cat}{}_\infty^L$, we get the functor
\[
\Fun^R((-)\op,\mathscr E) \colon (\widehat{\Cat}{}_\infty^L)\op \to \widehat{\Cat}_\infty.
\]
Then the composition
\[
\mathscr X\op \to (\PrL)\op \subseteq (\widehat{\Cat}{}_\infty^L)\op \to \widehat{\Cat}_\infty
\]
is a functor given by $U \mapsto \Fun^R(\mathscr X_{/U}\op,\mathscr E) = \Sh(\mathscr X_{/U},\mathscr E)$ and taking a morphism $f \colon V \to U$ in $\mathscr X$ to the functor $\Sh(\mathscr X_{/U},\mathscr E) \to \Sh(\mathscr X_{/V},\mathscr E)$ given by precomposition with the limit-preserving functor $\mathscr X_{/V}\op \to \mathscr X_{/U}\op$. We denote this functor by $\Sh(\mathscr X_{/-},\mathscr E) \colon \mathscr X\op \to \widehat{\Cat}_\infty$.
\end{Par}

\begin{Ex}\label{Ex large codomain fibration spaces}
If $\mathscr E = \mathcal S$, then the functor $\Sh(\mathscr X_{/-},\mathcal S)$ is canonically identified with $\Fun^L(\mathcal S,\mathscr X_{/-}\op)\op \simeq \mathscr X_{/-}$, which is classified by the Cartesian fibration $\Fun([1],\mathscr X) \to \mathscr X$ of \cite[Lem.~6.1.1.1 and Prop.~6.1.1.4]{LurieHTT}. Thus, the functor $\Sh(\mathscr X_{/-},\mathcal S) \colon \mathscr X\op \to \widehat{\Cat}_\infty$ preserves small limits by \cite[Thm.~6.1.3.9, Prop.~6.1.3.10, and Prop.~5.5.3.13]{LurieHTT}. The following result shows that this holds more generally:
\end{Ex}

\begin{Prop}\label{Prop sheaf of categories}
Let $\mathscr X$ be an $\infty$-topos, and let $\mathscr E$ be an $\infty$-category with small limits. Then the functor $\Sh(\mathscr X_{/-},\mathscr E) \colon \mathscr X\op \to \widehat{\Cat}_\infty$ preserves small limits.
\end{Prop}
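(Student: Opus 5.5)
We reduce the statement to the case $\mathscr E = \mathcal S$ of \autoref{Ex large codomain fibration spaces} by way of a Yoneda embedding. If $\mathscr E$ is locally small, the Yoneda embedding $y \colon \mathscr E \to \PSh(\mathscr E)$ preserves small limits and is fully faithful. In general we pass to a larger universe and use $\mathscr E \to \PSh(\mathscr E,\widehat{\mathcal S})$, as indicated in the notational conventions. Composition with $y$ gives a natural transformation
\[
\Sh(\mathscr X_{/-},\mathscr E) \to \Sh(\mathscr X_{/-},\PSh(\mathscr E,\widehat{\mathcal S})) \simeq \Fun(\mathscr E\op,\Sh(\mathscr X_{/-},\widehat{\mathcal S})),
\]
where the equivalence holds because limit-preserving functors into a functor category are computed pointwise. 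Each component of this transformation is fully faithful. An object $\mathscr F \in \Sh(\mathscr X_{/U},\PSh(\mathscr E,\widehat{\mathcal S}))$ lies in the essential image exactly when, for every $V \in \mathscr X_{/U}$, the presheaf $\mathscr F(V)$ on $\mathscr E$ is representable.

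The first step is to show that $U \mapsto \Sh(\mathscr X_{/U},\widehat{\mathcal S})$ preserves small limits. Here $\mathscr X$ is a small-presentable $\infty$-topos, and $\widehat{\mathcal S}$ has small limits. A limit-preserving functor $\mathscr X_{/U}\op \to \widehat{\mathcal S}$ is the same as a limit-preserving functor from the opposite of the small generating site, that is, an object of $\mathscr X_{/U}\otimes \widehat{\mathcal S}$ in the large universe. So the argument of \autoref{Ex large codomain fibration spaces} applies verbatim after enlarging the universe: the functor $U \mapsto \mathscr X_{/U}$ preserves small limits by descent \cite[Thm.~6.1.3.9]{LurieHTT}, and tensoring with $\widehat{\mathcal S}$ in $\PrL$ of the larger universe preserves these limits. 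The reason is that for $\mathscr X\op \to \PrL$ a limit is computed in $\widehat{\Cat}_\infty$ and agrees with a colimit in $\PrR$, and $-\otimes \widehat{\mathcal S}$ preserves colimits. Alternatively, one can verify directly that $\Fun^R(\mathscr X_{/U}\op,-)$ converts the colimit $\mathscr X_{/\colim U_i} \simeq \colim^{\PrL} \mathscr X_{/U_i}$ along the functors $f_!$ into a limit. Applying $\Fun(\mathscr E\op,-)$ then preserves limits.

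The second step is to check that the full subcategories cut out by the representability condition are compatible with limits. Write $U = \colim U_i$ in $\mathscr X$. Consider a compatible family of $\mathscr F_i \in \Sh(\mathscr X_{/U_i},\mathscr E)$, and let $\mathscr F$ be the glued object in $\Sh(\mathscr X_{/U},\PSh(\mathscr E,\widehat{\mathcal S}))$. We must show that $\mathscr F(V)$ is representable for every $V \to U$. By descent, $V \simeq \colim_i V\times_U U_i$. Since $\mathscr F$ is limit-preserving in the variable $V$, we get $\mathscr F(V) \simeq \lim_i \mathscr F(V\times_U U_i) = \lim_i \mathscr F_i(V\times_U U_i)$. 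This is a small limit of representables. Because $y$ preserves small limits and $\mathscr E$ has small limits, the limit is again representable. Since a limit of fully faithful inclusions is fully faithful, the induced functor from $\Sh(\mathscr X_{/U},\mathscr E)$ to $\lim_i \Sh(\mathscr X_{/U_i},\mathscr E)$ is then an equivalence.

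The main obstacle is the first step, namely justifying cleanly that $U\mapsto \Fun^R(\mathscr X_{/U}\op,\widehat{\mathcal S})$ preserves small limits, with its universe bookkeeping. I would handle it by identifying $\Fun^R(\mathscr X_{/U}\op,\widehat{\mathcal S})$ with $\Fun^R(\mathcal S\text{-valued}\ldots)$ through the presentation of $\mathscr X$ as a localisation of $\PSh(\mathscr C)$, using \autoref{Rmk sheaf limit preserving functor}. I would also use the formula $\colim^{\PrL}_i \mathscr X_{/U_i} \simeq \mathscr X_{/U}$, which holds by \cite[Thm.~6.1.3.9]{LurieHTT} together with the equivalence $\PrL \simeq (\PrR)\op$. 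Since the colimit is taken in $\PrL$, $\Fun^R((-)\op,\mathscr E)$ sends it to a limit for any $\mathscr E$ with small limits; this last point is in fact the whole argument.
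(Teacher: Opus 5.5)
Your overall architecture is the paper's. You use the Yoneda embedding $\mathscr E\hookrightarrow\PSh(\mathscr E,\widehat{\mathcal S})$ and reduce pointwise to the case $\widehat{\mathcal S}$. Your second step is exactly the paper's proof that the comparison square is a pullback: write $V\simeq\colim_i V\times_U U_i$ in $\mathscr X_{/U}$, then use that a small limit of representables is representable. The gap is in your first step, the case $\mathscr E=\widehat{\mathcal S}$. You rightly flag it as the crux, but the arguments you offer for it do not work.

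\emph{The tensor-product argument does not typecheck.} $\widehat{\mathcal S}$ is not presentable in the small universe. $\mathscr X_{/U}$ is not presentable relative to the larger universe. So "$\mathscr X_{/U}\otimes\widehat{\mathcal S}$ in the larger $\PrL$" is only meaningful once you know that $\mathscr C\mapsto\Fun^R(\mathscr C\op,\widehat{\mathcal S})$ carries the colimit $\mathscr X_{/U}\simeq\colim^{\PrL}_{f_!}\mathscr X_{/U_i}$ to a colimit in the larger $\PrL$. That is what you are trying to prove.

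\emph{Your closing claim is not formal.} You claim that $\Fun^R((-)\op,\mathscr E)$ turns every colimit in $\PrL$ into a limit for any $\mathscr E$ with small limits. The universal property of a colimit in $\PrL$ only computes $\Fun^L(-,\mathscr D)$ for presentable $\mathscr D$. Here $\Fun^R(\mathscr C\op,\mathscr E)\simeq\Fun^L(\mathscr C,\mathscr E\op)\op$, and $\mathscr E\op$ is not presentable; already $\widehat{\mathcal S}\op$ is not. Applied to this diagram, the claim is literally the proposition. If it were formal, your Yoneda step and representability check would be superfluous.

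Two smaller slips. A limit-preserving $\mathscr X_{/U}\op\to\widehat{\mathcal S}$ corresponds to a $\widehat{\mathcal S}$-valued \emph{sheaf} on a small site, not a limit-preserving functor on it. The limit along the $f^*$ is a \emph{limit} in $\PrR$, not a colimit there.

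The missing idea, which is the paper's route, is to make descent happen in a topos in the larger universe. Set $\widehat{\mathscr X}=\Sh(\mathscr X,\widehat{\mathcal S})$; this is an $\infty$-topos in the larger universe. By \cite[Prop.~2.4.3]{Martini}, $\Sh(\mathscr X_{/U},\widehat{\mathcal S})\simeq\widehat{\mathscr X}_{/U}$ naturally in $U$. Hence $\Sh(\mathscr X_{/-},\widehat{\mathcal S})$ is the composite of $\mathscr X\op\to\widehat{\mathscr X}\op$ with $U\mapsto\widehat{\mathscr X}_{/U}$.
\begin{itemize}
\item The first functor preserves small limits because $\mathscr X\hookrightarrow\widehat{\mathscr X}$ preserves small colimits \cite[Rmk.~6.3.5.17]{LurieHTT}.
\item The second preserves small limits by descent in $\widehat{\mathscr X}$, i.e.\ \autoref{Ex large codomain fibration spaces} applied in the larger universe.
\end{itemize}
Your hint about presenting $\mathscr X$ as a localisation of $\PSh(\mathscr C)$ can be completed to this. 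It identifies $\Fun^R(\mathscr X_{/U}\op,\widehat{\mathcal S})$ with the local objects of $\PSh(\mathscr C,\widehat{\mathcal S})_{/U}$. You would still need to show that this identification is natural in $U$, matching restriction along $f_!$ with pullback in $\widehat{\mathscr X}$. You would also need that $\mathscr X\to\widehat{\mathscr X}$ preserves small colimits, since that is where descent actually enters.
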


When $\mathscr E$ is presentable, this follows quickly from the $\mathcal S$-valued case of \autoref{Ex large codomain fibration spaces}: applying \cite[Prop.~4.8.1.17]{LurieHA} twice shows that $\Sh(\mathscr X_{/-},\mathscr E)$ is equivalent to $\Fun^R(\mathscr E\op,\mathscr X_{/-})$, and $\Fun^R(\mathscr E\op,-) \colon \PrR \to \PrR$ preserves limits (see also \cite[Rmk.~2.1.0.5]{LurieSAG}).

\begin{proof}
The result for $\mathscr E = \mathcal S$ was explained in \autoref{Ex large codomain fibration spaces}. When $\mathscr E = \widehat{\mathcal S}$ is an \makebox{$\infty$-category} of large spaces (in a larger universe), the functor $\Sh(\mathscr X_{/-},\widehat{\mathcal S}\,) \colon \mathscr X\op \to \widehatt{\Cat}_\infty$ is canonically equivalent to the composition of the inclusion $\mathscr X\op \to \widehat{\mathscr X}\op = \Sh(\mathscr X,\widehat{\mathcal S}\,)\op$ with the functor $\Sh(\widehat{\mathscr X}_{/-},\widehat{\mathcal S}\,) \colon \widehat{\mathscr X}\op \to \widehatt{\Cat}_\infty$ (defined as in \ref{Par large codomain fibration}) by \cite[Prop.~2.4.3]{Martini}. The first functor preserves small limits by \cite[Rmk.~6.3.5.17]{LurieHTT}, and the second by \autoref{Ex large codomain fibration spaces} applied within the larger universe.

If $\mathscr E \in \widehat{\Cat}_\infty$ is an arbitrary $\infty$-category with small limits, we consider the Yoneda embedding $\mathscr E \to \PSh(\mathscr E,\widehat{\mathcal S}\,)$. Let $D \colon \mathcal I \to \mathscr X$ be a small diagram given by $i \mapsto U_i$, and let $U$ be its colimit in $\mathscr X$. We get a commutative square
\begin{equation}
\begin{tikzcd}
\Sh(\mathscr X_{/U},\mathscr E) \ar{r}\ar{d} & \lim\limits_{i \in \mathcal I} \Sh(\mathscr X_{/U_i},\mathscr E) \ar{d} \\
\Sh(\mathscr X_{/U},\PSh(\mathscr E,\widehat{\mathcal S}\,)) \ar{r} & \lim\limits_{i \in \mathcal I} \Sh(\mathscr X_{/U_i},\PSh(\mathscr E,\widehat{\mathcal S}\,))
\end{tikzcd}\label{Dia large Yoneda}
\end{equation}
whose vertical arrows are fully faithful. Since limits in presheaves are pointwise, the bottom arrow is identified with
\[
\PSh\!\big(\mathscr E,\Sh(\mathscr X_{/U},\widehat{\mathcal S}\,)\big) \to \PSh\!\Big(\mathscr E,\lim_{i \in \mathcal I} \Sh(\mathscr X_{/U_i},\widehat{\mathcal S}\,)\Big),
\]
which is an equivalence of (very large) $\infty$-categories by the case $\widehat{\mathcal S}$ treated above. Thus, the top map in \eqref{Dia large Yoneda} is fully faithful. To show it is an equivalence, it suffices to show that \eqref{Dia large Yoneda} is a pullback square in $\widehatt{\Cat}_\infty$. Suppose $F \colon \mathscr X_{/U}\op \to \PSh(\mathscr E,\widehat{\mathcal S}\,)$ is a limit preserving functor whose restriction to $\mathscr X_{/U_i}\op$ lands in $\mathscr E$ for all $i$. The colimit diagram $D^{\triangleright} \colon \mathcal I^{\triangleright} \to \mathscr X$ taking $i \in \mathcal I$ to $U_i$ and the cone point to $U$ naturally lifts to a colimit diagram in $\mathscr X_{/U}$ \cite[Prop.~1.2.13.8]{LurieHTT}. If $f \colon V \to U$ is any object of $\mathscr X_{/U}$, the pullback diagram $f^*D^{\triangleright}$ is a colimit diagram in $\mathscr X_{/V}$ since colimits in $\mathscr X$ are universal \cite[Prop.~6.1.3.10(1)]{LurieHTT}. The forgetful functor $\mathscr X_{/V} \to \mathscr X_{/U}$ takes it to a colimit diagram in $\mathscr X_{/U}$ by another application of \cite[Prop.~1.2.13.8]{LurieHTT}. Writing $V_i = f^*U_i$, we conclude that $V = \colim_{i \in \mathcal I} V_i$ in $\mathscr X_{/U}$. The maps $V_i \to U_i$ witness $V_i$ as an object of $\mathscr X_{/U_i}$, showing that $F(V_i)$ is representable for all $i \in \mathcal I$. Thus, $F(V)$ is representable for all $V \in \mathscr X_{/U}$ since $F$ preserves small limits and $\mathscr E \subseteq \PSh(\mathscr E,\widehat{\mathcal S}\,)$ is closed under small limits. Thus, \eqref{Dia large Yoneda} is a pullback square, which finishes the proof.
\end{proof}

\begin{Cor}\label{Cor hypersheaf of categories}
Let $\mathscr C$ be an $\infty$-site, and let $\mathscr E$ be an $\infty$-category with small limits. Then the functor\vspace{-.5em}
\begin{align*}
\mathscr C\op &\to \widehat{\Cat}_\infty \\
U &\mapsto \Sh^\hyp(\mathscr C_{/U},\mathscr E)\\[-1.7em]
\end{align*}
is a hypersheaf.
\end{Cor}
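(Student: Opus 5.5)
We reduce the statement to \autoref{Prop sheaf of categories} applied to the $\infty$-topos $\mathscr X = \Sh^\hyp(\mathscr C)$. The functor in question should be identified with the composite
\[
\mathscr C\op \xrightarrow{h\op} \mathscr X\op \xrightarrow{\Sh(\mathscr X_{/-},\mathscr E)} \widehat{\Cat}_\infty,
\]
where $h$ is the hypersheafified Yoneda embedding $U \mapsto h_U^\hyp$.

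The first step is to identify $\Sh^\hyp(\mathscr C_{/U},\mathscr E)$ with $\Sh(\mathscr X_{/h_U},\mathscr E)$, naturally in $U$. By \autoref{Def sheaf hypersheaf}, the left side is $\Fun^R(\Sh^\hyp(\mathscr C_{/U})\op,\mathscr E)$. It then suffices to know that $\Sh^\hyp(\mathscr C_{/U}) \simeq \Sh^\hyp(\mathscr C)_{/h_U^\hyp}$, equivalently that hypercompletion commutes with slicing. For sheaves this is the standard equivalence $\Sh(\mathscr C_{/U}) \simeq \Sh(\mathscr C)_{/h_U}$. For hypercompletion, use that $\mathscr X_{/X}$ is hypercomplete whenever $\mathscr X$ is, together with the fact that the étale geometric morphism $\Sh(\mathscr C)_{/h_U} \to \Sh(\mathscr C)$ preserves and detects $\infty$-connective morphisms (by \cite[Prop.~6.3.1.9]{LurieHTT}-type arguments, since $f^*$ commutes with truncations). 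These identifications should be functorial in $U$: the slice functors $f_!$ of \ref{Par large codomain fibration} correspond to the forgetful functors $\mathscr C_{/V} \to \mathscr C_{/U}$, which induce the restriction maps. I expect checking this coherence to be mostly bookkeeping via the codomain fibration.

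The second step is the descent condition itself. By \autoref{Rmk hypersheaf hypercover}, applied to the target $\widehat{\Cat}_\infty$ (which has small limits), the composite is a hypersheaf if and only if it sends every hypercovering $\mathscr G_\bullet \to h_C$ with $\mathscr G_n = \coprod_{i\in I_n} h_{U_i}$ to a limit diagram. Extend the functor to all of $\PSh(\mathscr C)\op$ by $\mathscr G \mapsto \Sh(\mathscr X_{/\mathscr G^\hyp},\mathscr E)$. This extension preserves small limits, because hypersheafification $\PSh(\mathscr C) \to \mathscr X$ preserves colimits and \autoref{Prop sheaf of categories} says $\Sh(\mathscr X_{/-},\mathscr E)$ takes colimits in $\mathscr X$ to limits. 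Since a hypercovering becomes an equivalence $\lvert\mathscr G_\bullet\rvert^\hyp \xrightarrow{\sim} h_C^\hyp$ after hypersheafification, its value on $h_C$ is the limit of its values on the $\mathscr G_n$. Finally, the value on $\coprod_i h_{U_i}$ is $\prod_i \Sh^\hyp(\mathscr C_{/U_i},\mathscr E)$, again by \autoref{Prop sheaf of categories} applied to coproducts. This gives exactly the limit diagram required in \autoref{Rmk hypersheaf hypercover}.

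The main obstacle is the first step, namely making the identification $\Sh^\hyp(\mathscr C_{/U}) \simeq \mathscr X_{/h^\hyp_U}$ natural in $U$ as a functor to $\widehat{\Cat}_\infty$. If that proves awkward, the fallback is to define the functor in the statement as the composite above. One would then check only objectwise that it agrees with $\Sh^\hyp(\mathscr C_{/U},\mathscr E)$, using \autoref{Rmk hypersheaf basis} to move between sites where convenient.
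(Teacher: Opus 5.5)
Your proposal is correct and follows the paper's proof: the paper likewise identifies $\Sh^\hyp(\mathscr C_{/U})$ with $\Sh^\hyp(\mathscr C)_{/h_U^\hyp}$ and then applies \autoref{Prop sheaf of categories} to $\mathscr X = \Sh^\hyp(\mathscr C)$. Your second step via hypercoverings is harmless but not needed, because by \autoref{Def sheaf hypersheaf} being a hypersheaf just means extending to a limit-preserving functor on $\Sh^\hyp(\mathscr C)\op$, and \autoref{Prop sheaf of categories} gives exactly that.
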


\begin{proof}
Since $\Sh^\hyp(\mathscr C_{/U}) \simeq \Sh^\hyp(\mathscr C)_{/h_U^\hyp}$, this follows from \autoref{Prop sheaf of categories} applied to $\mathscr X = \Sh^\hyp(\mathscr C)$.
\end{proof}

\subsection{Internal higher category theory}\label{App internal}
We need some generalities about higher category theory internal to an $\infty$-topos $\mathscr X$. In our application, this will be the $\infty$-topos of $\kappa$-condensed spaces $\Cond_\kappa(\mathcal S)$, but we review the basics in larger generality.

\begin{Par}\label{Par internal cat}
If $\mathscr X$ is an $\infty$-topos, write $\Cat_\infty(\mathscr X) \subseteq \Fun(\Delta\op,\mathscr X)$ for the complete Segal objects; see for instance \cite[Def.~13.1.1]{BGH} or \cite[Def.~3.2.4]{Martini}. Note that the inclusion $\Cat_\infty(\mathscr X) \subseteq \Fun(\Delta\op,\mathscr X)$ is an accessible localisation; see for instance \cite[Prop.~3.2.9]{Martini}. If $f_* \colon \mathscr Y \to \mathscr X$ is a geometric morphism, then it induces an adjunction
\[
\begin{tikzcd}
\Fun(\Delta\op,\mathscr Y) \ar[yshift=-.4em]{r}[swap]{f_*}\ar[phantom,"\dashv" rotate=-90]{r} & \Fun(\Delta\op,\mathscr X)\ar[yshift=.4em]{l}[swap]{f^*}\punct{.}
\end{tikzcd}
\]
Both $f_*$ and $f^*$ preserve complete Segal objects since these are determined by finite limit conditions. Thus, they induce an adjunction
\begin{equation}
\begin{tikzcd}
\Cat_\infty(\mathscr Y) \ar[yshift=-.4em]{r}[swap]{f_*}\ar[phantom,"\dashv" rotate=-90]{r} & \Cat_\infty(\mathscr X)\ar[yshift=.4em]{l}[swap]{f^*}\punct{.}
\end{tikzcd}\label{Dia adjunction cat}
\end{equation}
\end{Par}

\begin{Par}\label{Par Cat}
If $\mathscr X$ is an $\infty$-topos, the $\infty$-category $\Cat_\infty(\mathscr X)$ is canonically equivalent to the $\infty$-category $\Sh(\mathscr X,\Cat_\infty)$ of \autoref{Def sheaf with values}. Indeed, for $\mathscr X = \mathcal S$, this is the complete Segal space model for $\Cat_\infty$ \cite[Thm.~4.11]{JoyalTierney}. In general, if $\mathscr X \leftrightarrows \PSh(\mathscr C)$ is an accessible left exact localisation at a strongly saturated class of morphisms $S$, then both $\Cat_\infty(\mathscr X)$ and $\Sh(\mathscr X,\Cat_\infty)$ are defined inside $\PSh(\Delta\times\mathscr C)$ as the objects that satisfy the completeness and Segal conditions in the first variable and are $S$-local in the second. Note also that $\Sh(\mathscr X,\Cat_\infty) \simeq \mathscr X \otimes \Cat_\infty$ by \autoref{Rmk presentable tensor} and \cite[Tag \href{https://kerodon.net/tag/06NJ}{06NJ}]{Kerodon}.
\end{Par}

\begin{Par}
Let $\mathscr X$ be an $\infty$-topos. A map $f \colon \mathscr C \to \mathscr D$ in $\Cat_\infty(\mathscr X)$ is a \emph{left fibration} if the square
\begin{equation}
\begin{tikzcd}
\mathcal Fun([1],\mathscr C) \ar{r}{f \circ -}\ar{d}[swap]{\ev_0} & \mathcal Fun([1],\mathscr D) \ar{d}{\ev_0} \\
\mathscr C \ar{r}[swap]{f} & \mathscr D
\end{tikzcd}\label{Dia left fibration}
\end{equation}
is a pullback in $\Cat_\infty(\mathscr X)$ \cite[Def.~4.1.1]{Martini}, where $\mathcal Fun$ denotes the internal functor category (denoted by $[-,-]$ in \cite[\S3.4]{Martini}) and $[1]$ denotes the constant category object (the image of $[1]$ under the functor $\Gamma^* \colon \Cat_\infty = \Cat_\infty(\mathcal S) \to \Cat_\infty(\mathscr X)$ induced by the terminal morphism of $\infty$-topoi $\Gamma_* \colon \mathscr X \to \mathcal S$). Given $\mathscr C \in \Cat_\infty(\mathscr X)$, we write $\LFib_{/\mathscr C} \subseteq \Cat_\infty(\mathscr X)_{/\mathscr C}$ for the full $\infty$-subcategory of left fibrations. (This is denoted $\LFib(\mathscr C)$ in \cite[\S4]{Martini}, where $\LFib_{\mathscr C}$ is reserved for an internal variant \cite[\S4.5]{Martini}.)
\end{Par}

\begin{Par}\label{Par preserve left fibration}
Let $f_* \colon \mathscr Y \to \mathscr X$ be a geometric morphism of $\infty$-topoi. Then $f_*$ and $f^*$ preserve left fibrations. Indeed, we have $\mathcal Fun([1],f_*\mathscr C) \simeq f_*\mathcal Fun([1],\mathscr C)$ and likewise for $f^*$. The results again follows since the square \eqref{Dia left fibration} is a finite limit condition, and both $f_*$ and $f^*$ preserve finite limits. In particular, if $\iota \colon \mathscr X \leftrightarrows \PSh(\mathcal I) :\! L$ is an accessible left exact localisation, then a functor $f \colon \mathscr C \to \mathscr D$ in $\Cat_\infty(\mathscr X)$ is a left fibration if and only if $\iota(f)$ is a left fibration in $\Cat_\infty(\PSh(\mathcal I))$.
\end{Par}

\begin{Rmk}\label{Rmk internal hom colim}
If $\mathscr X$ is an $\infty$-topos and $\mathscr C \in \Cat_\infty(\mathscr X)$ is an object, then the functor $\mathscr C \times (-) \colon \Cat_\infty(\mathscr X) \to \Cat_\infty(\mathscr X)$ preserves colimits. Indeed, the analogous statement is true in $\Fun(\Delta\op,\mathscr X)$ since it is an $\infty$-topos \cite[Prop.~6.1.3.10(1)]{LurieHTT}. The result in $\Cat_\infty(\mathscr X)$ follows since the localisation functor $L \colon \Fun(\Delta\op,\mathscr X) \to \Cat_\infty(\mathscr X)$ preserves finite products \cite[Cor.~3.2.12]{Martini} and colimits in $\Cat_\infty(\mathscr X)$ are computed by taking the colimit in $\Fun(\Delta\op,\mathscr X)$ and applying $L$.

Using the equivalence $\Map(\mathscr C,\mathcal Fun(-,\mathscr E)) \simeq \Map(\mathscr C \times (-),\mathscr E)$ for $\mathscr E \in \Cat_\infty(\mathscr X)$, we conclude that $\mathcal Fun(-,\mathscr E) \colon \Cat_\infty(\mathscr X)\op \to \Cat_\infty(\mathscr X)$ preserves limits for any $\mathscr E \in \Cat_\infty(\mathscr X)$. In particular, taking global sections, we see that $\Fun(-,\mathscr E) \colon \Cat_\infty(\mathscr X)\op \to \Cat_\infty$ preserves limits.
\end{Rmk}

\begin{Lemma}\label{Lem colim left fibrations}
Let $\mathscr X$ be an $\infty$-topos and let $\mathscr C \in \Cat_\infty(\mathscr X)$. Then the inclusion functor $\LFib_{/\mathscr C} \to \Cat_\infty(\mathscr X)_{/\mathscr C}$ preserves small colimits.
\end{Lemma}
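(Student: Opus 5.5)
The plan is to reduce to the case of the presheaf $\infty$-topos, and there to the case of ordinary $\infty$-categories ($\mathscr X = \mathcal S$), where left fibrations over $\mathscr C$ are classified by $\Fun(\mathscr C,\mathcal S)$.

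\emph{Reduction to presheaves.} Choose an accessible left exact localisation $\iota \colon \mathscr X \leftrightarrows \PSh(\mathcal I) :\! L$. Then $\iota$ and $L$ induce an adjunction on $\Cat_\infty(-)$ as in \ref{Par internal cat}, and $L$ preserves left fibrations by \ref{Par preserve left fibration}. Moreover, by \ref{Par preserve left fibration}, $\iota$ detects left fibrations. Colimits in $\Cat_\infty(\mathscr X)_{/\mathscr C}$ are computed by applying $L$ to colimits in $\Cat_\infty(\PSh(\mathcal I))_{/\iota\mathscr C}$; here one uses $L\iota \simeq \mathrm{id}$ and the fact that colimits in a slice are computed in the underlying category by \cite[Prop.~1.2.13.8]{LurieHTT}. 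The same holds in $\LFib$: if $\LFib_{/\iota\mathscr C}$ is closed under colimits, then so is $\LFib_{/\mathscr C}$, because $L$ preserves left fibrations and is a left adjoint. So it suffices to treat $\mathscr X = \PSh(\mathcal I)$.

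\emph{The presheaf case.} We have $\Cat_\infty(\PSh(\mathcal I)) \simeq \Fun(\mathcal I\op,\Cat_\infty)$ by \ref{Par Cat}, with colimits computed pointwise. The internal $\mathcal Fun([1],-)$ evaluates at $i$ to $\Fun([1]\times h_i\text{-stuff})$, so one has to check that being a left fibration in the internal sense amounts to a pointwise condition. I would argue that $f$ is an internal left fibration if and only if, for every morphism $i \to j$ in $\mathcal I$, the functor $f(j) \to f(i)$ restricts compatibly and each $f(i)$ is a left fibration; more precisely, square \eqref{Dia left fibration} evaluated at $i$ involves $\mathcal Fun([1],\mathscr C)(i) = \Fun([1],\mathscr C(i))$, since $[1]$ is constant. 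Hence $f$ is a left fibration if and only if each $f(i) \colon \mathscr C(i) \to \mathscr D(i)$ is a left fibration of $\infty$-categories. Since colimits are pointwise, we reduce to $\mathscr X = \mathcal S$.

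\emph{The case $\mathscr X = \mathcal S$.} Here $\LFib_{/\mathscr C} \simeq \Fun(\mathscr C,\mathcal S)$, and the inclusion into $(\Cat_\infty)_{/\mathscr C}$ has a right adjoint. Concretely, it is left adjoint to taking the maximal left fibration, or one can use that the inclusion is the left adjoint of the covariant model structure localisation: $\LFib_{/\mathscr C}$ is the reflective localisation of $(\Cat_\infty)_{/\mathscr C}$ via the localisation $\mathscr E \mapsto$ the left fibration freely generated, which is wrong direction. So I would instead show closure directly. Colimits in $(\Cat_\infty)_{/\mathscr C}$ of left fibrations agree with those in $\LFib$ because both are computed by the unstraightening/Grothendieck construction: the colimit of $p_\alpha \colon \mathscr E_\alpha \to \mathscr C$ in $\Cat_\infty$ is the oplax colimit, and for diagrams of left fibrations this can be checked using the covariant model structure on $s\Set_{/\mathscr C}$. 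That structure is a left Bousfield localisation of the Joyal model structure over $\mathscr C$ with the same cofibrations, and its fibrant objects are left fibrations; left fibrations are closed under homotopy colimits since left anodyne-ness is stable (cf.\ \cite[Prop.~2.1.4.?]{LurieHTT}).

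\textbf{Main obstacle.} This last point is the main obstacle: showing that a Joyal-homotopy colimit of left fibrations over $\mathscr C$ is again a left fibration. I would try to characterise left fibrations $\mathscr E \to \mathscr C$ among all functors as those for which $\mathscr E \times_{\mathscr C} \mathscr C_{c/}\to \mathscr E_c$ is an equivalence, and use universality of colimits of $\infty$-categories under pullback along the cartesian fibration/exponentiable functors $\mathscr C_{c/}\to\mathscr C$ (cocartesian fibrations are exponentiable, so pullback preserves colimits).
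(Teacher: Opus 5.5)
Your two reduction steps are the same as the paper's. For general $\mathscr X$ you pass to $\iota\colon\mathscr X\leftrightarrows\PSh(\mathcal I)\colon L$, compute the colimit there and apply $L$, using \ref{Par preserve left fibration}. For $\PSh(\mathcal I)$ you note that colimits and the left fibration condition are both pointwise, since $[1]$ is constant.

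\textbf{The gap is the base case $\mathscr X=\mathcal S$:} you need that a colimit in $\Cat_\infty$ of left fibrations over $\mathscr C$ is again a left fibration. This is the entire content of the lemma. The paper does not reprove it but imports it from \cite[Cor.~A.5]{Ramzi} (see also \cite[Prop.~9.8.11]{CisinskiSynthetic}), and none of your suggested arguments supplies it.
\begin{itemize}
\item Saying that the inclusion $\LFib_{/\mathscr C}\to(\Cat_\infty)_{/\mathscr C}$ has a right adjoint is circular. Both sides are presentable, so by the adjoint functor theorem this is equivalent to the statement you want.
\item The covariant model structure does not help. It is a left Bousfield localisation of the Joyal structure over $\mathscr C$. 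Fibrant objects of such a localisation are closed under homotopy limits of the original structure, but nothing forces closure under homotopy colimits. Stability properties of left anodyne maps say nothing about left fibrations being closed under Joyal homotopy colimits. Also, a colimit in $\Cat_\infty$ is not an oplax colimit.
\item Your proposed characterisation is not correct as stated. The natural comparison goes $\mathscr E_c\to\mathscr E\times_{\mathscr C}\mathscr C_{c/}$, $e\mapsto(e,\mathrm{id}_c)$. It fails to be an equivalence even for the left fibration $\mathrm{id}\colon\mathscr C\to\mathscr C$, where it is $*\to\mathscr C_{c/}$.
\item More fundamentally, any criterion phrased via fibres $\mathscr E_c=\mathscr E\times_{\mathscr C}\{c\}$ stalls. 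The inclusion $\{c\}\to\mathscr C$ is not exponentiable in general, so you cannot know a priori that fibres of a colimit in $(\Cat_\infty)_{/\mathscr C}$ are colimits of fibres. For left fibrations that is a consequence of the lemma, not an input.
\end{itemize}

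A criterion that is visibly stable under colimits comes from the Segal model. Using the Segal conditions, one checks that a map $\mathscr D\to\mathscr C$ of complete Segal objects is a left fibration if and only if each map $\mathscr D_n\to\mathscr D_0\times_{\mathscr C_0}\mathscr C_n$ (induced by the vertex $0\in[n]$) is an equivalence. The argument then runs as follows.
\begin{itemize}
\item Given left fibrations $\mathscr D^\alpha\to\mathscr C$, let $\mathscr D$ be their levelwise colimit in $\Fun(\Delta\op,\mathscr X)_{/\mathscr C}$.
\item Universality of colimits in $\mathscr X$ gives $\mathscr D_n\simeq\mathscr D_0\times_{\mathscr C_0}\mathscr C_n$ for all $n$.
\item Since $\mathscr C$ is a complete Segal object, this forces $\mathscr D$ to be one as well. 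For instance, $\mathscr D_1\times_{\mathscr D_0}\mathscr D_1\simeq\mathscr D_0\times_{\mathscr C_0}\mathscr C_1\times_{\mathscr C_0}\mathscr C_1\simeq\mathscr D_0\times_{\mathscr C_0}\mathscr C_2\simeq\mathscr D_2$, and completeness is checked similarly.
\item As $\Cat_\infty(\mathscr X)\subseteq\Fun(\Delta\op,\mathscr X)$ is reflective, $\mathscr D$ is then also the colimit in $\Cat_\infty(\mathscr X)$, and it is a left fibration over $\mathscr C$.
\end{itemize}
This works directly for any $\infty$-topos $\mathscr X$, so it would also make your two reduction steps unnecessary.
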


\begin{proof}
Recall that a colimit in $\Cat_\infty(\mathscr X)_{/\mathscr C}$ of a diagram $D \colon \mathscr D \to \Cat_\infty(\mathscr X)_{/\mathscr C}$ is the colimit $D(\infty)$ of the composition $\mathscr D \to \Cat_\infty(\mathscr X)_{/\mathscr C} \to \Cat_\infty(\mathscr X)$ together with its canonical map to $\mathscr C$ \cite[Prop.~1.2.13.8]{LurieHTT}. Since $\LFib_{/\mathscr C}$ is a full $\infty$-subcategory of $\Cat_\infty(\mathscr X)_{/\mathscr C}$, it suffices to show that, for any small diagram $D \colon \mathscr D \to \LFib_{/\mathscr C}$ with colimit $D(\infty)$ in $\Cat_\infty(\mathscr X)$, the functor $D(\infty) \to \mathscr C$ is a left fibration. For $\mathscr X = \mathcal S$, this is proved in \cite[Cor.~A.5]{Ramzi}. When $\mathscr X = \PSh(\mathcal I)$ is a presheaf $\infty$-topos for some small $\infty$-category $\mathcal I$, we have $\Cat_\infty(\mathscr X) \simeq \PSh(\mathcal I,\Cat_\infty)$, and the left fibration condition of \eqref{Dia left fibration} is checked pointwise for $i \in \mathcal I$. Since colimits are also computed pointwise \cite[Cor.~5.1.2.3]{LurieHTT}, we deduce the result for $\mathscr X = \PSh(\mathcal I)$ from the result for~$\mathcal S$. 

Finally, let $\mathscr X$ be an arbitrary $\infty$-topos $\mathscr X$, and choose an accessible left exact localisation $\iota \colon \mathscr X \leftrightarrows \PSh(\mathcal I) :\! L$ for some small $\infty$-category $\mathcal I$ \cite[Def.~6.1.0.4]{LurieHTT}. The adjunction \eqref{Dia adjunction cat} shows that $L \colon \Cat_\infty(\PSh(\mathcal I)) \to \Cat_\infty(\mathscr X)$ preserves colimits, so $D(\infty)$ is computed by taking the colimit in $\Cat_\infty(\PSh(\mathcal I))$ and applying $L$. Since each $D(d) \to \mathscr C$ for $d \in \mathscr D$ is a left fibration, the same goes for the colimit in $\Cat_\infty(\PSh(\mathcal I))_{/\mathscr C}$ by the case proved above and \ref{Par preserve left fibration}. The result for $D(\infty)$ follows since $L$ preserves left fibrations, again by \ref{Par preserve left fibration}.
\end{proof}

Cisinski pointed out that a fully synthetic proof of the lemma above can be found in \cite[Prop.~9.8.11]{CisinskiSynthetic} (phrased for right fibrations, which is equivalent by the auto-equivalence $(-)\op \colon \Cat_\infty(\mathscr X) \to \Cat_\infty(\mathscr X)$).

\begin{Par}\label{Par internal straightening}
Let $\mathscr X$ be an $\infty$-topos, and let $\mathscr C \in \Cat_\infty(\mathscr X)$. The internal straightening/ unstraightening theorem \cite[Thm.~4.5.1]{Martini} gives an equivalence
\[
\LFib_{/\mathscr C} \simeq \Fun_{\widehat{\Cat}_\infty(\mathscr X)}(\mathscr C,\mathscr X_{/-}),
\]
where $\mathscr X_{/-} \in \widehat{\Cat}_\infty(\mathscr X) = \Sh(\mathscr X,\widehat{\Cat}_\infty)$ denotes the large internal $\infty$-category given by $B \mapsto \mathscr X_{/B}$ (see \autoref{Ex large codomain fibration spaces}). In fact, the equivalence of [\emph{loc.~cit.}] is an internal variant of the one above, but we only need this external variant.
\end{Par}

\begin{Par}\label{Par Grothendieck construction}
Concretely, if $F \in \Fun_{\widehat{\Cat}_\infty(\mathscr X)}(\mathscr C,\mathscr X_{/-})$, the associated left fibration $\mathscr D \to \mathscr C$ is given by the fibre product
\[
\begin{tikzcd}[row sep=1em,column sep=1em]
\mathscr D \ar{r}\ar{d}\arrow[dr, phantom, "\lrcorner" , very near start, color=black] & (\mathscr X_{/-})_*\ar{d}\\
\mathscr C \ar{r} & \mathscr X_{/-}\punct{,}
\end{tikzcd}
\]
where $(\mathscr X_{/-})_* \in \widehat{\Cat}_\infty(\mathscr X) \simeq \Sh(\mathscr X,\widehat{\Cat}_\infty)$ is the sheaf of large $\infty$-categories $\Sh(\mathscr X_{/-},\mathcal S_*)$ of \autoref{Prop sheaf of categories}, whose value on $U \in \mathscr X$ is $\Sh(\mathscr X_{/U},\mathcal S_*) \simeq \mathscr X_{/U} \otimes \mathcal S_* \simeq (\mathscr X_{/U})_*$ by \autoref{Rmk presentable tensor} and \makebox{\cite[Ex.~4.8.1.21]{LurieHA}.} Thus, if $U \in \mathscr X$, then $\mathscr D(U) \simeq \mathscr C(U) \times_{\mathscr X_{/U}} (\mathscr X_{/U})_*$ consists of a pair of an object $x \in \mathscr C(U)$ and a global section $s$ of $F(x) \in \mathscr X_{/U}$.
\end{Par}

\section{Scheme-theoretic classification of w-strictly local schemes}\label{Sec appendix B}
If $X$ is a scheme, then the acyclic (resp.~w-strictly local) objects in $\AffProEt_{/X}$ are classified by geometric morphisms (resp.~locally coherent geometric morphisms) by \autoref{Thm equivalence} (resp.~\autoref{Cor loccoh}). In the w-strictly local case, we give a third perspective that is more scheme-theoretic in nature, and closely related to the construction of enough w-strictly local schemes in $\AffProEt_{/X}$ \cite[Lem.~2.4.9]{BhattScholze}. See \autoref{Prop Xcons} for the statement, and \autoref{Rmk construction wsl} for a comparison with the construction of \cite{BhattScholze}. The contents of this section do not play a role in the main body of the paper, and are merely added for completeness.

\begin{BDef}\label{Def Xcons}
Let $X = \Spec A$ be an affine scheme. 
Define $A\cons$ as the tensor product $\bigotimes_{f \in A} (A_f \times A/(f))$ in $A$-algebras, and write $X\cons = \Spec A\cons$ for the corresponding product $\prod_{f \in A} D(f) \amalg V(f)$ in $\Sch_{/X}$. Note that $A\cons$ agrees with the ring $A^Z/I_{A^Z}$ of \cite[\S2.2]{BhattScholze}; see \cite[Tag \href{https://stacks.math.columbia.edu/tag/0975}{0975}]{Stacks}. In particular, by \cite[Ex.~2.2.5]{BhattScholze}, the monomorphism $X\cons \hookrightarrow X$ is the terminal map from a zero-dimensional reduced scheme to $X$. Note that $X\cons \hookrightarrow X$ is a bijection on points, and identifies $\lvert X\cons \rvert$ with the constructible topology on $X$ (this justifies the notation $X\cons$).

If $X = \Spec A$ is an affine scheme and $U = D(f) \subseteq X$ is a standard affine open, then $U\cons \cong X\cons \times_X U$, for instance because they satisfy the same universal property for maps from zero-dimensional reduced schemes to $X$. Thus, we may extend the definition to arbitrary schemes: the association $\operatorname{AffOpen}(X)\op \to \CRing$ taking $U$ to $\mathscr O_X(U)\cons$ extends uniquely to a quasi-coherent sheaf of $\mathscr O_X$-algebras $\mathscr O_{X,\operatorname{cons}}$, and we define $X\cons$ as $\mathbf{Spec}_X \mathscr O_{X,\operatorname{cons}}$. The underlying topological space of $X\cons$ local constructible topology of \autoref{Ex lcons}. The scheme $X\cons$ is zero-dimensional and reduced (but not necessarily affine), so in particular, every qcqs open $V \subseteq X\cons$ is affine. The map $X\cons \to X$ is an affine monomorphism, and every map $Z \to X$ from a zero-dimensional scheme $Z$ factors uniquely via $X\cons$.
\end{BDef}

\begin{BPar}\label{Par Henselisation coincides}
Let $Y \to X$ be a morphism of schemes. Analogously to \cite[Def.~2.2.10]{BhattScholze}, we would like to define the Henselisation $\Hens_X(Y) \in \AffProEt_{/X}$ as the limit over all factorisations $Y \to U \to X$ with $U \in \AffEt_{/X}$. However, this only makes sense when the functor $\Hom_X(Y,-) \colon \AffEt_{/X} \to \Set$ is filtering (i.e., its category of elements is filtered \cite[Ch.~VII, \S6, Def.~2]{MacLaneMoerdijk}). This is true when $X$ is affine, since $\AffEt_{/X}$ has finite limits in that case, and $\Hom_X(Y,-)$ preserves them. On the other hand, this fails for the identity $X \to X$ if $X$ is a projective variety of positive dimension, for then the category of elements is empty.

On the other hand, if $Y$ is strictly profinite (see \autoref{Def strictly profinite}) then $\Sh(Y_\et) \simeq \Sh(S)$ where $S = \lvert Y \rvert$ (see \autoref{Lem strictly profinite}), and $\Hens_X(Y)$ is the topos-theoretic Henselisation $X_{(s)}$ of the geometric morphism $s_* \colon \Sh(S) \simeq \Sh(Y_\et) \to \Sh(X_\et)$ (see \autoref{Def Henselisation}). Indeed, as left exact functors $\AffEt_{/X} \to \Set$, both are given by the association taking $U$ to $\Hom_X(Y,U) \simeq \Hom_Y(Y,s^*U) = \Gamma_*s^*U$.

In particular, $\Hens_X(Y)$ is always representable in this case by \autoref{Lem Henselisation representable}. Moreover, we conclude from \autoref{Lem Henselisation acyclic} and \autoref{Lem coherent} that $\Hens_X(Y)$ is \makebox{w-strictly} local, since the geometric morphism $\Sh(Y_\et) \to \Sh(X_\et)$ induced by a morphism of schemes $Y \to X$ is always locally coherent. Moreover, for any map $Y' \to Y$ of strictly profinite schemes over $X$, the map $\Hens_X(Y') \to \Hens_X(Y) \times_{\lvert Y\rvert} \lvert Y'\rvert$ is an isomorphism by \autoref{Lem Henselisation base change}. Thus, the map $\Hens_X(Y') \to \Hens_X(Y)$ is w-local by \autoref{Lem acyc profinite}.
\end{BPar}

\begin{BDef}
Let $X$ be a scheme. Write $\AffProEt_{/X}^{\wsl,\wl}$ for the full subcategory of $\AffProEt_{/X}$ whose objects are w-strictly local schemes and whose morphisms are w-local morphisms. Write $\Sch_{/X}^{\operatorname{sp}}$ for the full subcategory of $\Sch_{/X}$ on strictly profinite schemes.
\end{BDef}

Thus, \ref{Par Henselisation coincides} constructs a functor $\Hens_X(-) \colon \Sch_{/X}^{\operatorname{sp}} \to \AffProEt_{/X}^{\wsl,\wl}$. By definition, there is a natural transformation $Y \to \Hens_X(Y)$ induced by the factorisations $Y \to U \to X$ for $U \in \AffEt_{/X}$.

\begin{BLemma}\label{Lem homeomorphism}
Let $X$ be a scheme, and let $Y \in \Sch_{/X}^{\operatorname{sp}}$. Then the map $Y \to W = \Hens_X(Y)$ induces a homeomorphism $f \colon Y \to W^\cl$. Moreover, the following are equivalent:
\begin{enumerate}
\item $f$ is an isomorphism of schemes;
\item $Y \to X$ induces separable algebraic maps on residue fields;
\item $Y \to X\cons$ is in $\AffProEt_{/X\cons}$.
\end{enumerate}
In particular, any strictly profinite scheme $Y$ that is weakly \'etale over $X\cons$ is in $\AffProEt_{/X\cons}$.
\end{BLemma}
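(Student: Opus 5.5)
For the homeomorphism, let $S = \lvert Y\rvert$ and $s_* \colon \Sh(S) \simeq \Sh(Y_\et) \to \Sh(X_\et)$, so that $W = X_{(s)}$ by \ref{Par Henselisation coincides}. By \autoref{Lem pi0 iso} we have $\pi_0(W) \cong S$, and by \autoref{Lem acyclic}(d) we have $W^\cl \cong \pi_0(W)$. It remains to check that $Y \to W$ lands in $W^\cl$ and that the composite $Y \to W^\cl \to \pi_0(W) \cong S$ is the identity. The composite $Y \to \pi_0(W)$ is the structure map of the $S$-cosheaf structure: under the universal property, the clopen decomposition of $W$ indexed by $T$ finite corresponds to maps $S \to T$. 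This gives the identity on underlying sets. Each point $y$ maps into the connected component $W_y$, which is $\Spec$ of a strictly Henselian local ring. To see that $y$ maps to the closed point, I would base change along $\{y\} \to S$ using \autoref{Lem Henselisation base change}. This reduces to $Y = \Spec k$ with $k$ separably closed, where $W$ is the strict Henselisation at the image point $x$ and $\Spec k \to W$ factors through the closed point by construction. So $f$ is a continuous bijection of profinite spaces, hence a homeomorphism.

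For (a)$\Leftrightarrow$(b): residue fields of $W^\cl$ are separable closures $\kappa(x)^{\mathrm{sep}}$, since the connected components are strict Henselisations $\mathcal O^{\mathrm{sh}}_{X,\bar x}$ by the second proof of \autoref{Lem Henselisation acyclic}. Both $Y$ and $W^\cl$ (reduced) are strictly profinite, and $f$ is a homeomorphism. So $f$ is an isomorphism iff it is an isomorphism on local rings, which are the residue fields (since both schemes are zero-dimensional and reduced). This happens iff $\kappa(x)^{\mathrm{sep}} \to \kappa(y)$ is an isomorphism for all $y$, i.e.\ iff $\kappa(y)$ is separable algebraic over $\kappa(x)$ (it is separably closed, so it then equals the separable closure). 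For (a)$\Rightarrow$(c): $W^\cl \to X$ factors through $X\cons$ (it is zero-dimensional reduced), and $W^\cl = W \times_X X\cons$. Indeed, $W\times_X X\cons$ is zero-dimensional reduced and maps to $W$ bijectively onto... in fact it is better to argue directly. $W \times_X X\cons \in \AffProEt_{/X\cons}$ by base change. Its points over each $x$ are the fibre of $W \to X$ over $x$, and only closed points can lie over the constructible stratum containing them. Hence it is w-strictly local and strictly profinite, and equals $W^\cl$. This identification is the step I expect to be the main obstacle. If it resists, I would alternatively use \autoref{Lem strictly profinite weakly etale} together with the fact that $W^\cl \hookrightarrow W\times_X X\cons$ is a closed immersion, which is also weakly étale as a map between absolutely flat schemes (since $W^\cl$ is clopen there, cut out by idempotents).

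For (c)$\Rightarrow$(b): weakly étale maps induce separable algebraic residue field extensions, and $X\cons \to X$ is the identity on residue fields. The final claim follows because weakly étale over $X\cons$ gives (b) directly, and then (b)$\Rightarrow$(a)$\Rightarrow$(c).
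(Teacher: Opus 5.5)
Your homeomorphism argument (identifying $\lvert Y\rvert \to \pi_0(W)$ with the inverse of the structure isomorphism of \autoref{Lem pi0 iso}, then reducing to a single geometric point via \autoref{Lem Henselisation base change}) is correct. It is a more hands-on alternative to the paper, which deduces the homeomorphism from the equivalence $\Sh(Y_\et) \simeq \Sh(W^\cl_\et)$ over $\Sh(X_\et)$. Your (a)$\Leftrightarrow$(b) and (c)$\Rightarrow$(b) are also fine.

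The gap is in (a)$\Rightarrow$(c), on which your (b)$\Rightarrow$(c) and the final assertion depend. The identification $W^\cl = W \times_X X\cons$ is false. Since $X\cons \to X$ is a bijective monomorphism inducing isomorphisms on residue fields, $W \times_X X\cons \to W$ is bijective on points. So it can only agree with $W^\cl$ if every point of $W$ is closed. For instance, let $X = \Spec R$ with $R$ a strictly Henselian discrete valuation ring, and let $Y$ be its closed point. Then $W = X$ and (a) holds, but $W \times_X X\cons = X\cons$ has two points, and its generic residue field $\operatorname{Frac}(R)$ is not separably closed.

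Your fallback is not right as stated either, for two reasons.
\begin{enumerate}
\item $W^\cl$ is in general only an intersection of clopens of $W \times_X X\cons$, not a clopen. For example, over $X = \Spec \mathbf Z$, closed points of $W$ lying over the generic point can be limits of generic points of components whose closed points lie over closed points of $X$.
\item \autoref{Lem strictly profinite weakly etale} only gives strict profiniteness. Weak \'etaleness does not by itself give membership in $\AffProEt_{/X\cons}$; that is exactly the ``in particular'' statement you are trying to prove, so using it here would be circular.
\end{enumerate}

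The fallback can be repaired. Write $W = \lim_i U_i$ with $U_i \in \AffEt_{/X}$. Then $W \times_X X\cons = \lim_i U_i \times_X X\cons$ lies in $\AffProEt_{/X\cons}$, and it is $\Spec B$ with $B$ absolutely flat. Write $W^\cl = V(I)$. It is the cofiltered limit of the clopen subschemes $V(I_\alpha)$, where $I_\alpha$ runs over finitely generated subideals of $I$ (each generated by an idempotent). Hence $W^\cl$ lies in $\AffProEt_{/\Spec B} \simeq (\AffProEt_{/X\cons})_{/\Spec B}$ by \autoref{Lem affproet over}.

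The paper's route avoids $W \times_X X\cons$ altogether. Since $X\cons \to X$ induces isomorphisms on residue fields, $Y \to X\cons$ still satisfies (b), so you may replace $X$ by $X\cons$. Then (b)$\Rightarrow$(a) gives $Y \cong \Hens_{X\cons}(Y)^\cl$. Moreover, $\Hens_{X\cons}(Y)$ is weakly \'etale over the zero-dimensional reduced scheme $X\cons$, hence absolutely flat. So it coincides with its subscheme of closed points, and $Y \cong \Hens_{X\cons}(Y) \in \AffProEt_{/X\cons}$.
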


\begin{proof}
By \autoref{Prop equivalence S} and \autoref{Rmk wsl coherent}, we can recover the geometric morphism $\Sh(Y_\et) \to \Sh(X_\et)$ from $W$ by taking the geometric morphism $\Sh(W_\et^\cl) \to \Sh(X_\et)$ induced by the morphism of schemes $W^\cl \to X$. In the factorisation $\Sh(Y_\et) \to \Sh(W_\et^\cl) \to \Sh(X_\et)$, we conclude that the first map is an equivalence, hence $f$ is a homeomorphism of strictly profinite schemes by \autoref{Lem strictly profinite} and \cite[Exp.~IV, Cor.~4.2.4(a)]{SGA4I}.

If $f$ is an isomorphism of schemes, then $Y \to X$ is the composition of the closed immersion $W^\cl \hookrightarrow W$ with the weakly \'etale map $W \to X$, hence induces separable algebraic maps on residue fields. Conversely, if $Y \to X$ induces separable algebraic maps on residue fields, then $Y \to W^\cl$ induces isomorphisms on residue fields: both are separable algebraic over the residue fields of $X$, and both are separably algebraically closed since $Y$ and $W^\cl$ are strictly profinite. Thus, $Y \to W^\cl$ induces isomorphisms on local rings at all points, hence is an isomorphism of ringed spaces since it is also a homeomorphism. This proves \makebox{(a) $\Leftrightarrow$ (b).}

Since $Y \to X$ factors over $X\cons$ and $X\cons \to X$ induces isomorphisms of residue fields, implication (c) $\Rightarrow$ (b) follows from \cite[Tag \href{https://stacks.math.columbia.edu/tag/092R}{092R}]{Stacks}. Conversely, assume $Y \to X$ induces separable algebraic maps on residue fields; then the same holds for $Y \to X\cons$. Thus, we may replace $X$ with $X\cons$ and assume $X$ is $0$-dimensional and reduced. By (b) $\Rightarrow$ (a), we see that $Y \to W^\cl$ is an isomorphism. But $W$ is in $\AffProEt_{/X}$, hence absolutely flat \cite[Tag \href{https://stacks.math.columbia.edu/tag/092I}{092I}(1)]{Stacks}. Thus, $W^\cl \to W$ is an isomorphism, which shows that $Y \cong W$ is in $\AffProEt_{/X}$. This proves (b) $\Rightarrow$ (c). The final statements follows since such a scheme $Y$ satisfies condition (b), again by \cite[Tag \href{https://stacks.math.columbia.edu/tag/092R}{092R}]{Stacks}.
\end{proof}

\begin{BProp}\label{Prop Xcons}
Let $X$ be a scheme. Then the functors
\begin{align*}
\AffProEt_{/X}^{\wsl,\wl} &\leftrightarrows \AffProEt_{/X\cons}^{\operatorname{sp}} \\
W &\mapsto W^\cl\\
\Hens_X(V) &\mapsfrom V
\end{align*}
are mutually inverse equivalences.
\end{BProp}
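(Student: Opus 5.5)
First we check that both functors are well defined. For $W \in \AffProEt_{/X}^{\wsl}$, the reduced subscheme $W^\cl$ is strictly profinite by \autoref{Rmk wsl}. Since it is zero-dimensional, the map $W^\cl \to X$ factors uniquely through $X\cons$. The composite $W^\cl \hookrightarrow W \to X$ induces separable algebraic extensions on residue fields, so \autoref{Lem homeomorphism} (condition (b)$\Rightarrow$(c), with $Y = W^\cl$) shows $W^\cl \in \AffProEt^{\operatorname{sp}}_{/X\cons}$. A w-local map $V \to W$ sends closed points to closed points, hence restricts to a map $V^\cl \to W^\cl$ over $X\cons$. So $(-)^\cl$ is a functor.

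Conversely, for $V \in \AffProEt^{\operatorname{sp}}_{/X\cons}$, the composite $V \to X$ is a map from a strictly profinite scheme. By \ref{Par Henselisation coincides}, $\Hens_X(V)$ is then a w-strictly local object of $\AffProEt_{/X}$, and maps $V' \to V$ go to w-local maps. So $\Hens_X$ is a functor as well.

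Next we identify the two composites.
\begin{itemize}
\item \emph{$W^\cl$ composed with $\Hens_X$.} For $V \in \AffProEt^{\operatorname{sp}}_{/X\cons}$, \autoref{Lem homeomorphism}, via (c)$\Rightarrow$(a), says the natural map $V \to \Hens_X(V)^\cl$ is an isomorphism of schemes, natural in $V$. It is compatible with the maps to $X\cons$, by the uniqueness of factorisations through $X\cons$.
\item \emph{$\Hens_X$ composed with $W^\cl$.} For $W$ w-strictly local, the factorisations $W^\cl \to W \to U$ with $U \in \AffEt_{/X}$ give a map $\Hens_X(W^\cl) \to W$. By \ref{Par Henselisation coincides}, $\Hens_X(W^\cl)$ is the topos-theoretic Henselisation $X_{(s)}$ along $s_* \colon \Sh(W^\cl_\et) \to \Sh(X_\et)$. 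By \autoref{Rmk wsl coherent}, this $s$ is exactly $\Phi(W)$. Hence \autoref{Prop equivalence S} (essential surjectivity together with the pro-representability statement) identifies $W \cong X_{(s)}$ as objects pro-representing $s^*$, and the canonical map above is that isomorphism.
\end{itemize}

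Naturality of the counit in w-local maps $V \to W$ follows from the functoriality of $\Hens_X$, because the canonical maps are defined by universal properties.

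The main obstacle I expect is checking that the isomorphism $\Hens_X(W^\cl) \cong W$ really is the canonical comparison map and not just some abstract isomorphism. Here is how I would do it. Both sides are the limit over $(W \downarrow \AffEt_{/X})$, as in the proof of \autoref{Lem size Hens}. So it suffices to show that restriction
\[
\Hom_X(W,U) \to \Hom_X(W^\cl,U)
\]
is bijective for every $U \in \AffEt_{/X}$. Under \autoref{Lem fully faithful}, the left side is $\Gamma(W,U)$ and the right side is $\Gamma(W^\cl, i^*U)$. These agree by the Henselian pair argument of \autoref{Rmk wsl coherent}, namely \autoref{Lem wsl Henselian} together with Stacks 09ZH.

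Finally, fully faithfulness on morphisms follows from \autoref{Cor loccoh}, since w-local maps correspond to cartesian maps by \ref{Par cartesian}.
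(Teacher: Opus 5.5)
Your proof is correct and follows essentially the same route as the paper. The paper packages the same ingredients as an adjunction $\Hom_X^\wl(\Hens_X(V),W) \simeq \Hom_{X\cons}(V,W^\cl)$, with unit invertible by \autoref{Lem homeomorphism} and counit $\Hens_X(W^\cl) \to W$ invertible by \autoref{Cor geometric morphism} and \autoref{Rmk wsl coherent}; your Henselian-pair check that $\Hom_X(W,U) \to \Hom_X(W^\cl,U)$ is bijective is exactly what underlies the latter, and your closing appeal to \autoref{Cor loccoh} is superfluous, since natural isomorphisms for both composites already give the equivalence.
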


The right hand side is easier to understand, since $X\cons$ is reduced and $0$-dimensional.

\begin{proof}
The functor $V \mapsto \Hens_X(V)$ is defined in \ref{Par Henselisation coincides}. Conversely, if $W \in \AffProEt_{/X}^{\wsl}$, then the map $W^\cl \to X\cons$ is in $\AffProEt_{/X\cons}$ by \autoref{Lem homeomorphism}. By definition, any w-local morphism $W' \to W$ maps $(W')^\cl$ to $W^\cl$, which defines the opposite functor.

If $V \in \AffProEt_{/X\cons}^{\operatorname{sp}}$ and $W \in \AffProEt_{/X}^\wsl$, then every map $V \to W$ factors uniquely as $V \to \Hens_X(V) \to W$ by the definition of $\Hens_X(-)$. Since $V$ identifies with $\Hens_X(V)^\cl$ by \autoref{Lem homeomorphism}, the induced map $\Hens_X(V) \to W$ is w-local if and only if the image of $V \to W$ lands in $W^\cl$. This gives an adjunction
\[
\Hom_X^\wl(\Hens_X(V),W) \simeq \Hom_{X\cons}(V,W^\cl).
\]
The counit $\Hens_X(W^\cl) \to W$ is an isomorphism by \autoref{Cor geometric morphism} and \autoref{Rmk wsl coherent}, and the unit $V \to \Hens_X(V)^\cl$ is an isomorphism by \autoref{Lem homeomorphism}.
\end{proof}

\begin{BCor}\label{Cor core}
If $X$ is a scheme, the inclusion $X\cons \hookrightarrow X$ induces an equivalence
\[
\GAL(X\cons) \simeq \Core(\GAL(X)).
\]
\end{BCor}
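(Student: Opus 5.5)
The plan is to pass through the Grothendieck constructions. Both sides are condensed $1$-categories, that is, sheaves $\ProFin\op \to \Cat$. By \autoref{Cor loccoh}, the unstraightening of $\GAL(X)$ is identified, as a cartesian fibration over $\ProFin$ via $\pi_0$, with $\AffProEt_{/X}^\wsl$. Under this identification, the core $\Core(\GAL(X))$, which takes $S$ to the maximal subgroupoid of $\GAL(X)(S)$, corresponds to the wide subcategory of $\AffProEt_{/X}^\wsl$ containing exactly those morphisms that are cartesian over $\ProFin$. Indeed, a morphism in the Grothendieck construction factors as a cartesian map followed by a map in a fibre, and it lies in the core's unstraightening exactly when the fibre part is invertible. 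By \ref{Par cartesian}, the cartesian morphisms are exactly the maps taking closed points to closed points, that is, the w-local maps. So the unstraightening of $\Core(\GAL(X))$ is $\AffProEt_{/X}^{\wsl,\wl}$, still fibred over $\ProFin$ by $\pi_0$, and it is now a right fibration.

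On the other side, $X\cons$ is zero-dimensional and reduced. Every w-strictly local scheme pro-étale over $X\cons$ is absolutely flat, by the argument at the end of \autoref{Lem homeomorphism}, hence strictly profinite. Therefore $\AffProEt_{/X\cons}^\wsl = \AffProEt_{/X\cons}^{\operatorname{sp}}$, and every map between such schemes is automatically w-local. Consequently the unstraightening of $\GAL(X\cons)$ is $\AffProEt_{/X\cons}^{\operatorname{sp}}$ with every morphism cartesian. This also shows directly that $\GAL(X\cons)$ is valued in groupoids.

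\autoref{Prop Xcons} supplies the equivalence $\AffProEt_{/X}^{\wsl,\wl} \simeq \AffProEt_{/X\cons}^{\operatorname{sp}}$ via $W \mapsto W^\cl$. This functor commutes with the projections to $\ProFin$ because $\pi_0(W) \cong W^\cl$ by \autoref{Lem acyclic}(d). An equivalence of right fibrations over $\ProFin$ straightens to an equivalence of the corresponding functors $\ProFin\op \to \Cat$, which gives $\GAL(X\cons) \simeq \Core(\GAL(X))$.

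The main obstacle is to check that this equivalence is the one \emph{induced by the inclusion} $i \colon X\cons \hookrightarrow X$, namely postcomposition of $s_* \colon \Sh(S) \to \Sh(X\cons_\et)$ with $i_*$. To settle this, I will use \autoref{Rmk wsl coherent}. For $V \in \AffProEt_{/X\cons}^{\operatorname{sp}}$, the point of $X\cons$ it defines is $\Sh(V_\et) \to \Sh(X\cons_\et)$. Composing with $i_*$ gives $\Sh(V_\et) \to \Sh(X_\et)$, whose Henselisation is $\Hens_X(V)$ by \ref{Par Henselisation coincides}. This matches the inverse functor in \autoref{Prop Xcons}. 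A secondary point is that the composite $i_* s_*$ is locally coherent, which holds because it is induced by a morphism of schemes. Granting these, the functor $\GAL(X\cons) \to \GAL(X)$ lands in the core and is levelwise an equivalence onto it.
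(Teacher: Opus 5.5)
Your proposal is correct and follows essentially the same route as the paper: unstraighten $\GAL(X)$ via \autoref{Cor loccoh}, identify the core with the cartesian (i.e.\ w-local) morphisms via \ref{Par cartesian}, and match $\AffProEt_{/X}^{\wsl,\wl}$ with the unstraightening of $\GAL(X\cons)$ using \autoref{Prop Xcons} and a second application of \autoref{Cor loccoh}. You also make explicit several points the paper leaves implicit: that w-strictly local over $X\cons$ means strictly profinite, compatibility with $\pi_0$, and that the equivalence is the one induced by $X\cons \hookrightarrow X$ via \ref{Par Henselisation coincides}; the only slip is that a morphism in a cartesian fibration factors as a fibre map followed by a cartesian one (not the reverse), which does not affect the argument.
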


\begin{proof}
By \autoref{Cor loccoh}, the condensed category $\GAL(X)$ is the straightening of the fibred category $\AffProEt_{/X}^\wsl$. The unstraightening of the core is then given by restricting to the cartesian morphisms in $\AffProEt_{/X}^\wsl$. By \ref{Par cartesian}, a morphism $V \to W$ in $\AffProEt_{/X}^\wsl$ is cartesian if and only if it is is w-local. Thus, the unstraightening of $\Core(\GAL(X))$ is given by $\AffProEt_{/X}^{\wsl,\wl}$, which by \autoref{Prop Xcons} is canonically identified with $\AffProEt_{/X\cons}^\wsl$. Another application of \autoref{Cor loccoh} gives the result.
\end{proof}

The corollary can also be deduced from the stratified methods in \cite{BGH}: a qcqs scheme $X$ is canonically stratified by its profinite Zariski poset $(\lvert X \rvert\cons,\leq)$, and the diagram
\[
\begin{tikzcd}[row sep=1.3em, column sep=1.2em]
\GAL(X\cons) \ar{r}\ar{d} & \GAL(X)\ar{d} \\
(\lvert X\rvert\cons,=) \ar[hook]{r} & (\lvert X \rvert\cons,\leq)
\end{tikzcd}
\]
is a pullback, since both vertical maps are conservative with fibre above $x \in X$ given by $\Gal(\overline{\kappa(x)}/\kappa(x))$. On the other hand, since the vertical maps are conservative, the fibre product identifies with $\Core(\GAL(X))$.

\begin{BRmk}\label{Rmk Xcons size}
If $\kappa > \size(X)$, then the equivalence of \autoref{Prop Xcons} restricts to an equivalence $\AffProEt_{/X}^{\kappa,\wsl,\wl} \simeq \AffProEt_{/X\cons}^{\kappa,\wsl}$. Indeed, if $\size(W) < \kappa$, then $\size(W^\cl) < \kappa$. Conversely, if $\size(V) < \kappa$, then $\size(\lvert V \rvert) < \kappa$ by \autoref{Lem pi0 kappa}. Since $\Hens_X(V)$ agrees with the Henselisation $X_{(s)}$ along the geometric morphism $s_* \colon \Sh(\lvert V \rvert) \to \Sh(X_\et)$, we see that $\size(\Hens_X(V)) < \kappa$ by \autoref{Lem size Hens}.

Likewise, \autoref{Cor core} restricts to an equivalence $\GAL_\kappa(X\cons) \simeq \Core(\GAL_\kappa(X))$ if $\kappa > \size(X)$ (which automatically implies $\kappa > \size(X\cons)$).
\end{BRmk}

\begin{BRmk}\label{Rmk construction wsl}
The above results give a slight simplification of the construction of enough w-strictly local schemes in $\AffProEt_{/X}$ \cite[Cor.~2.2.14]{BhattScholze}: start with a cover $\{U_i \to X\cons\}$ of $X\cons$ by affine opens (any qcqs open of $X$ gives an affine open in $X\cons$), choose $V_i \in \AffProEt_{/U_i}$ strictly profinite and surjecting onto $U_i$ as in \cite[Lem.~2.2.7]{BhattScholze}, and take $W_i = \Hens_X(V_i)$. The method of \cite[\S2.2]{BhattScholze} differs from this in that it first replaces $X\cons$ by its Zariski Henselisation $X^Z$, which is a little harder to construct than $X\cons$. This first step is needed in \cite{BhattScholze} to prove that the obtained Henselisations actually form Henselian pairs, which follows from \cite[Lem.~2.2.12]{BhattScholze}. By contrast, our method proves directly that the Henselisation is w-strictly local (\autoref{Lem Henselisation acyclic} and \autoref{Lem coherent}, which can also be done directly via scheme theory instead of topos theory), hence a Henselian pair by \autoref{Lem wsl Henselian}.

Moreover, \autoref{Prop Xcons} shows that all w-strictly local schemes in $\AffProEt_{/X}$ are constructed in this way.
\end{BRmk}

\bibliographystyle{alphaurledit}
{\phantomsection\footnotesize\bibliography{Exodromy.bib}}

@article {Aoki,
    AUTHOR = {Aoki, Ko},
     TITLE = {Tensor triangular geometry of filtered objects and sheaves},
   JOURNAL = {Math. Z.},
  FJOURNAL = {Mathematische Zeitschrift},
    VOLUME = {303},
      YEAR = {2023},
    NUMBER = {3 Paper No. 62},
       DOI = {10.1007/s00209-023-03210-z},
}

@article {Artin,
    AUTHOR = {Artin, M.},
     TITLE = {On the joins of {H}ensel rings},
   JOURNAL = {Adv. Math.},
  FJOURNAL = {Advances in Mathematics},
    VOLUME = {7},
      YEAR = {1971},
     PAGES = {282--296},
       DOI = {10.1016/S0001-8708(71)80007-5},
}

@book {ArtinMazur,
    AUTHOR = {Artin, M. and Mazur, B.},
     TITLE = {{\'E}tale homotopy},
    SERIES = {Lecture Notes in Mathematics},
SHORTSERIES = {LNM},
    NUMBER = {100},
 PUBLISHER = {Springer-Verlag},
      YEAR = {1986},
       DOI = {10.1007/BFb0080957},
}

@misc {BGH,
    AUTHOR = {Barwick, Clark and Glasman, Saul and Haine, Peter J.},
     TITLE = {Exodromy},
      YEAR = {2018},
HOWPUBLISHED = {Preprint},
    EPRINT = {1807.03281},
EPRINTTYPE = {arxiv},
}

@misc {Pyknotic,
    AUTHOR = {Barwick, Clark and Haine, Peter J.},
     TITLE = {Pyknotic objects, {I}. {B}asic notions},
      YEAR = {2019},
  PUBSTATE = {Preprint},
    EPRINT = {1904.09966},
EPRINTTYPE = {arxiv},
}

@article {BhattScholze,
    AUTHOR = {Bhatt, Bhargav and Scholze, Peter},
     TITLE = {The pro-\'{e}tale topology for schemes},
   JOURNAL = {Ast\'{e}risque},
  FJOURNAL = {Ast\'{e}risque},
    VOLUME = {369},
      YEAR = {2015},
     PAGES = {99--201},
       DOI = {10.24033/ast.960},
}

@misc {CisinskiSynthetic,
    AUTHOR = {Cisinski, Denis-Charles and Cnossen, Bastiaan and Nguyen, Kim and Walde, Tashi},
     TITLE = {Synthetic category theory},
HOWPUBLISHED = {Book in preparation},
      YEAR = {2026},
      NOTE = {Version of 11 May 2026},
       URL = {https://cisinski.app.uni-regensburg.de/publikationen.html},
}

@misc {ClausenScholze,
    AUTHOR = {Clausen, Dustin and Scholze, Peter},
     TITLE = {Lectures on Condensed Mathematics},
      YEAR = {2026},
HOWPUBLISHED = {Preprint/lecture notes},
    EPRINT = {2605.03658},
EPRINTTYPE = {arxiv},
}

@misc {vDdBExodromy,
      AUTHOR = {van Dobben {de} Bruyn, Remy},
       TITLE = {Grothendieck {G}alois theory and \'etale exodromy},
        YEAR = {2024},
HOWPUBLISHED = {Preprint},
      EPRINT = {2410.06278},
  EPRINTTYPE = {arxiv},
}

@book {Friedlander,
    AUTHOR = {Friedlander, Eric M.},
     TITLE = {\'{E}tale homotopy of simplicial schemes},
    SERIES = {Annals of Mathematics Studies},
    NUMBER = {104},
 PUBLISHER = {Princeton University Press},
      YEAR = {1982},
       DOI = {10.1515/9781400881499},
}

@article {GepnerHaugsengNikolaus,
    AUTHOR = {Gepner, David and Haugseng, Rune and Nikolaus, Thomas},
     TITLE = {Lax colimits and free fibrations in {$\infty$}-categories},
   JOURNAL = {Doc. Math.},
  FJOURNAL = {Documenta Mathematica},
    VOLUME = {22},
      YEAR = {2017},
     PAGES = {1225--1266},
       DOI = {10.25537/dm.2017v22.1225-1266},
}

@misc {HaineClassifying,
    AUTHOR = {Haine, Peter J.},
     TITLE = {Classifying anima of condensed $\infty$-categories of points},
      YEAR = {2026},
HOWPUBLISHED = {Preprint},
    EPRINT = {2602.21330},
EPRINTTYPE = {arxiv},
}

@misc{HaineDescent,
    AUTHOR = {Haine, Peter J.},
     TITLE = {Descent for sheaves on compact {H}ausdorff spaces},
      YEAR = {2022},
HOWPUBLISHED = {Preprint},
    EPRINT = {2210.00186},
EPRINTTYPE = {arxiv},
}

@article {HaineBaseChange,
    AUTHOR = {Haine, Peter J.},
     TITLE = {From nonabelian basechange to basechange with coefficients},
   JOURNAL = {J. Pure Appl. Algebra},
  FJOURNAL = {Journal of Pure and Applied Algebra},
    VOLUME = {229},
      YEAR = {2025},
    NUMBER = {7, Paper No. 107993},
       DOI = {10.1016/j.jpaa.2025.107993},
}

@misc{HHLMMW,
    AUTHOR = {Haine, Peter J. and Holzschuh, Tim and Lara, Marcin and Mair, Catrin and Martini, Louis and Wolf, Sebastian},
     TITLE = {The condensed homotopy type of a scheme},
      YEAR = {2025},
HOWPUBLISHED = {Preprint},
      NOTE = {With an appendix by Bogdan Zavyalov},
    EPRINT = {2510.07443},
EPRINTTYPE = {arxiv},
}

@article {HemoRicharzScholbach,
    AUTHOR = {Hemo, Tamir and Richarz, Timo and Scholbach, Jakob},
     TITLE = {Constructible sheaves on schemes},
   JOURNAL = {Adv. Math.},
  FJOURNAL = {Advances in Mathematics},
    VOLUME = {429},
      YEAR = {2023},
      NOTE = {Paper No. 109179},
       DOI = {10.1016/j.aim.2023.109179},
}

@article {HesselholtPstragowski,
    AUTHOR = {Hesselholt, Lars and Pstr\k{a}gowski, Piotr},
     TITLE = {Dirac geometry {I}: {C}ommutative algebra},
   JOURNAL = {Peking Math. J.},
  FJOURNAL = {Peking Mathematical Journal},
    VOLUME = {8},
      YEAR = {2025},
    NUMBER = {3},
     PAGES = {405--480},
       DOI = {10.1007/s42543-023-00072-6},
}

@article {Hoyois,
    AUTHOR = {Hoyois, Marc},
     TITLE = {Higher {G}alois theory},
   JOURNAL = {J. Pure Appl. Algebra},
  FJOURNAL = {Journal of Pure and Applied Algebra},
    VOLUME = {222},
      YEAR = {2018},
    NUMBER = {7},
     PAGES = {1859--1877},
       DOI = {10.1016/j.jpaa.2017.08.010},
}

@article {JohnstoneMoerdijk,
    AUTHOR = {Johnstone, P. T. and Moerdijk, I.},
     TITLE = {Local maps of toposes},
   JOURNAL = {Proc. London Math. Soc. (3)},
  FJOURNAL = {Proceedings of the London Mathematical Society. Third Series},
    VOLUME = {58},
      YEAR = {1989},
    NUMBER = {2},
     PAGES = {281--305},
       DOI = {10.1112/plms/s3-58.2.281},
}

@incollection {JoyalTierney,
    AUTHOR = {Joyal, Andr\'e and Tierney, Myles},
     TITLE = {Quasi-categories vs {S}egal spaces},
 BOOKTITLE = {Categories in algebra, geometry and mathematical physics},
    SERIES = {Contemp. Math.},
    NUMBER = {431},
     PAGES = {277--326},
 PUBLISHER = {American Mathematical Society},
      YEAR = {2007},
       DOI = {10.1090/conm/431/08278},
}

@book {KashiwaraSchapira,
    AUTHOR = {Kashiwara, Masaki and Schapira, Pierre},
     TITLE = {Categories and sheaves},
    SERIES = {Grundlehren der mathematischen Wissenschaften},
    NUMBER = {332},
 PUBLISHER = {Springer-Verlag},
      YEAR = {2006},
       DOI = {10.1007/3-540-27950-4},
}

@misc {Kerodon,
 SHORTHAND = {Kerodon},
  SORTNAME = {Kerodon},
    AUTHOR = {Lurie, Jacob},
     TITLE = {Kerodon},
       URL = {https://www.kerodon.net},
      YEAR = {2018--2026},
}

@misc {LurieGoodwillie,
    AUTHOR = {Lurie, Jacob},
     TITLE = {$(\infty,2)$-{C}ategories and the {G}oodwillie calculus {I}},
HOWPUBLISHED = {Preprint},
      YEAR = {2009},
EPRINTTYPE = {arxiv},
    EPRINT = {0905.0462},
}

@misc {LurieHA,
   SHORTHAND = {HA},
    SORTNAME = {HA},
      AUTHOR = {Lurie, Jacob},
       TITLE = {Higher algebra},
        YEAR = {2017},
HOWPUBLISHED = {Book in preparation},
        NOTE = {Version of 18 September 2017},
         URL = {https://www.math.ias.edu/~lurie/papers/HA.pdf},
}

@misc {LurieDAGXIII,
    AUTHOR = {Lurie, Jacob},
     TITLE = {Derived algebraic geometry {XIII}: Rational and $p$-adic homotopy theory},
HOWPUBLISHED = {Preprint},
      YEAR = {2011},
       URL = {https://www.math.ias.edu/~lurie/papers/DAG-XIII.pdf},
      NOTE = {Version of 15 December 2011},
}

@book {LurieHTT,
 SHORTHAND = {HTT},
  SORTNAME = {HTT},
    AUTHOR = {Lurie, Jacob},
     TITLE = {Higher topos theory},
    SERIES = {Annals of Mathematics Studies},
    NUMBER = {170},
 PUBLISHER = {Princeton University Press},
      YEAR = {2009},
       DOI = {10.1515/9781400830558},
}

@misc {LurieSAG,
 SHORTHAND = {SAG},
  SORTNAME = {SAG},
    AUTHOR = {Lurie, Jacob},
     TITLE = {Spectral algebraic geometry},
      YEAR = {2018},
       URL = {https://www.math.ias.edu/~lurie/papers/SAG-rootfile.pdf},
HOWPUBLISHED = {Book in preparation},
      NOTE = {Version of 3 February 2018},
}

@misc {LurieUltra,
 SHORTHAND = {Lur18},
    AUTHOR = {Lurie, Jacob},
     TITLE = {Ultracategories},
      YEAR = {around 2018},
HOWPUBLISHED = {Preprint},
       URL = {https://www.math.ias.edu/~lurie/papers/Conceptual.pdf},
}

@book {MacLaneMoerdijk,
    AUTHOR = {Mac Lane, Saunders and Moerdijk, Ieke},
     TITLE = {Sheaves in geometry and logic},
    SERIES = {Universitext},
 PUBLISHER = {Springer-Verlag},
      YEAR = {1994},
       DOI = {10.1007/978-1-4612-0927-0},
}

@misc {Mair,
    AUTHOR = {Mair, Catrin},
     TITLE = {On {G}alois categories and condensed contractible schemes},
      YEAR = {2026},
HOWPUBLISHED = {Preprint},
    EPRINT = {2605.10358},
EPRINTTYPE = {arxiv},
}

@phdthesis {MairThesis,
    AUTHOR = {Mair, Catrin},
     TITLE = {The Role of Condensed Mathematics in Homotopy Theory},
    SCHOOL = {Technische Universit\"at Darmstadt},
      YEAR = {2025},
       DOI = {10.26083/tuprints-00029744},
}

@book {MakkaiPare,
    AUTHOR = {Makkai, Michael and Par\'{e}, Robert},
     TITLE = {Accessible categories: the foundations of categorical model theory},
    SERIES = {Contemporary Mathematics},
    NUMBER = {104},
 PUBLISHER = {American Mathematical Society},
      YEAR = {1989},
       DOI = {10.1090/conm/104},
}

@phdthesis {Mann,
    AUTHOR = {Mann, Lucas},
     TITLE = {A $p$-adic 6-functor formalism in rigid analytic geometry},
    SCHOOL = {Rheinische Friedrich-Wilhelms-Universit\"at Bonn},
      YEAR = {2022},
       URL = {https://hdl.handle.net/20.500.11811/10125},
}

@misc {Martini,
    AUTHOR = {Martini, Louis},
     TITLE = {Yoneda's lemma for internal higher categories},
      YEAR = {2021},
HOWPUBLISHED = {Preprint},
    EPRINT = {2103.17141},
EPRINTTYPE = {arxiv},
}

@misc {MartiniWolf,
    AUTHOR = {Martini, Louis and Wolf, Sebastian},
     TITLE = {Presentability and topoi in internal higher category theory},
      YEAR = {2022},
HOWPUBLISHED = {Preprint},
EPRINTTYPE = {arxiv},
    EPRINT = {2209.05103},
}

@incollection {Olivier,
    AUTHOR = {Olivier, Jean Pierre},
     TITLE = {Fermeture int\'{e}grale et changements de base absolument plats},
    SERIES = {Colloque d'alg\`ebre commutative},
    NUMBER = {4},
 PUBLISHER = {Univ. Rennes},
      YEAR = {1972},
       URL = {https://www.numdam.org/item/?id=PSMIR_1972___4_A9_0},
}

@article {PortaYu,
    AUTHOR = {Porta, Mauro and Yu, Tony Yue},
     TITLE = {Higher analytic stacks and {GAGA} theorems},
   JOURNAL = {Adv. Math.},
  FJOURNAL = {Advances in Mathematics},
    VOLUME = {302},
      YEAR = {2016},
     PAGES = {351--409},
       DOI = {10.1016/j.aim.2016.07.017},
}

@article {Ramzi,
    AUTHOR = {Ramzi, Maxime},
     TITLE = {A monoidal {G}rothendieck construction for $\infty$-categories},
   JOURNAL = {Nagoya Math. J.},
  FJOURNAL = {Nagoya Mathematical Journal},
    VOLUME = {261},
      YEAR = {2026},
    NUMBER = {e8},
       DOI = {10.1017/nmj.2025.10086},
}

@misc {RezkNotes,
    AUTHOR = {Rezk, Charles},
     TITLE = {Introduction to quasicategories},
HOWPUBLISHED = {Lecture notes},
      YEAR = {2022},
       URL = {https://rezk.web.illinois.edu/quasicats.pdf},
      NOTE = {Version of 29 November 2022}
}

@article {Schroer,
    AUTHOR = {Schr\"{o}er, Stefan},
     TITLE = {Geometry on totally separably closed schemes},
   JOURNAL = {Algebra Number Theory},
  FJOURNAL = {Algebra \& Number Theory},
    VOLUME = {11},
      YEAR = {2017},
    NUMBER = {3},
     PAGES = {537--582},
       DOI = {10.2140/ant.2017.11.537},
}

@book {SGA1,
 SHORTHAND = {SGA1},
  SORTNAME = {SGA1},
    AUTHOR = {Grothendieck, Alexander},
     TITLE = {S\'eminaire de G\'eom\'etrie Alg\'ebrique du Bois-Marie 1960--1961 - 
              Rev\^etements \'etales et groupe fondamental (SGA 1)},
    SERIES = {Lecture Notes in Mathematics},
    NUMBER = {224},
 PUBLISHER = {Springer-Verlag},
      YEAR = {1971},
       DOI = {10.1007/BFb0058656},
}

@book {SGA3II,
 SHORTHAND = {SGA3$_{\text{II}}$},
  SORTNAME = {SGA3II},
LABELWIDTH = {SGA3II},
    AUTHOR = {Demazure, M. and Grothendieck, A.},
     TITLE = {S\'eminaire de g\'eom\'etrie alg\'ebrique du Bois-Marie 1962--1964 - 
              Sch\'emas en groupes (SGA 3). {T}ome 2},
    SERIES = {Lecture Notes in Mathematics},
    NUMBER = {152},
 PUBLISHER = {Springer-Verlag},
      YEAR = {1970},
       DOI = {10.1007/BFb0059005},
}

@book {SGA4I,
 SHORTHAND = {SGA4$_{\text{I}}$},
  SORTNAME = {SGA41},
LABELWIDTH = {SGA4I},
    AUTHOR = {Artin, M. and Grothendieck, A. and Verdier, J. L.},
     TITLE = {S\'eminaire de g\'eom\'etrie alg\'ebrique du Bois-Marie 1963--1964 - 
             Th\'eorie des topos et cohomologie \'etale des sch\'emas (SGA 4). {T}ome 1},
    SERIES = {Lecture Notes in Mathematics},
    NUMBER = {269},
 PUBLISHER = {Springer-Verlag},
      YEAR = {1972},
       DOI = {10.1007/BFb0081551},
}

@book {SGA4II,
 SHORTHAND = {SGA4$_{\text{II}}$},
  SORTNAME = {SGA42},
LABELWIDTH = {SGA4II},
    AUTHOR = {Artin, M. and Grothendieck, A. and Verdier, J. L.},
     TITLE = {S\'eminaire de G\'eom\'etrie Alg\'ebrique du Bois-Marie 1963--1964 -
              Th\'eorie des topos et cohomologie \'etale des sch\'emas (SGA 4). {T}ome 2},
    SERIES = {Lecture Notes in Mathematics},
    NUMBER = {270},
 PUBLISHER = {Springer-Verlag},
      YEAR = {1972},
       DOI = {10.1007/BFb0061319},
}

@book {SGA4III,
 SHORTHAND = {SGA4$_{\text{III}}$},
  SORTNAME = {SGA43},
LABELWIDTH = {SGA4III},
    AUTHOR = {Artin, M. and Grothendieck, A. and Verdier, J. L.},
     TITLE = {S\'eminaire de G\'eom\'etrie Alg\'ebrique du Bois-Marie 1963--1964 - 
              Th\'eorie des topos et cohomologie \'etale des sch\'emas (SGA 4). {T}ome 3},
    SERIES = {Lecture Notes in Mathematics},
SHORTSERIES = {LNM},
    NUMBER = {305},
 PUBLISHER = {Springer-Verlag},
      YEAR = {1973},
       DOI = {10.1007/BFb0070714},
}

@misc {Stacks,
   SHORTHAND = {Stacks},
    SORTNAME = {Stacks},
      AUTHOR = {{The Stacks Project Authors}},
       TITLE = {The Stacks Project},
         URL = {https://stacks.math.columbia.edu},
        YEAR = {2005--2026},
}

@article {Treumann,
    AUTHOR = {Treumann, David},
     TITLE = {Exit paths and constructible stacks},
   JOURNAL = {Compos. Math.},
  FJOURNAL = {Compositio Mathematica},
    VOLUME = {145},
      YEAR = {2009},
    NUMBER = {6},
     PAGES = {1504--1532},
       DOI = {10.1112/S0010437X09004229},
}

@phdthesis {TreumannThesis,
    AUTHOR = {Treumann, David},
     TITLE = {Exit paths and constructible stacks},
    SCHOOL = {Princeton University},
 PUBLISHER = {ProQuest LLC},
      YEAR = {2007},
     PAGES = {84},
       URL = {https://www.proquest.com/dissertations-theses/exit-paths-constructible-stacks/docview/304828996/se-2},
}

@article {Wall,
    AUTHOR = {Wall, C. T. C.},
     TITLE = {Finiteness conditions for {CW}-complexes},
   JOURNAL = {Ann. of Math. (2)},
  FJOURNAL = {Annals of Mathematics. Second Series},
    VOLUME = {81},
    NUMBER = {1},
      YEAR = {1965},
     PAGES = {56--69},
       DOI = {10.2307/1970382},
}

@article {Wolf,
    AUTHOR = {Wolf, Sebastian},
     TITLE = {The pro-\'{e}tale topos as a category of pyknotic presheaves},
   JOURNAL = {Doc. Math.},
  FJOURNAL = {Documenta Mathematica},
    VOLUME = {27},
      YEAR = {2022},
     PAGES = {2067--2106},
       DOI = {10.4171/dm/x26},
}

@phdthesis {WolfThesis,
    AUTHOR = {Wolf, Sebastian},
     TITLE = {Internal higher categories and applications},
    SCHOOL = {Universit\"at Regensburg},
      YEAR = {2025},
       URL = {https://epub.uni-regensburg.de/76465/},
}
\addcontentsline{toc}{section}{References}

\end{document}